\newcommand{\msout}[1]{\ifmmode\text{\sout{\ensuremath{#1}}}\else\sout{#1}\fi}
\newcommand{\cancel}[1]{
	\ifmmode
	{\color{red  }
		\msout{#1}}
	\else
	{\color{red}
		\sout{#1}}
	\fi
}
\newcommand{\replace}[2]{
	\ifmmode
	{\color{red}\msout{#1}}{\color{blue}\uwave{#2}}
	\else
	{\color{red}\sout{#1}}{\color{blue}\uwave{#2}}
	\fi
}
\numberwithin{equation}{section}
\theoremstyle{theorem}
\newtheorem{thm}{Theorem}[section]
\newtheorem{cor}[thm]{Corollary}
\newtheorem{lem}[thm]{Lemma}
\newtheorem{prop}[thm]{Proposition}
\newtheorem{conj}[thm]{Conjecture}
\theoremstyle{definition}
\newtheorem{defn}[thm]{Definition}
\newtheorem{rem}[thm]{Remark}
\newcommand{\ben}{\begin{equation*}}
\newcommand{\een}{\end{equation*}}
\newcommand{\be}{\begin{equation}}
\newcommand{\ee}{\end{equation}}
\newcommand{\bea}{\begin{eqnarray}}
\newcommand{\eea}{\end{eqnarray}}
\newcommand{\beastar}{\begin{eqnarray*}}
\newcommand{\eeastar}{\end{eqnarray*}}
\newcommand{\bc}{\begin{center}}
\newcommand{\ec}{\end{center}}
\def\R{\mathbb{R}}
\def\Z{\mathbb{Z}}
\def\D{\mathbb{D}}
\def\T{\mathbb{T}}
\def\H{\mathbb{H}}
\def\C{\mathbb{C}}
\def\R{\mathbb{R}}
\def\Z{\mathbb{Z}}
\def\D{\mathbb{D}}
\def\T{\mathbb{T}}
\def\C{\mathbb{C}}
\def\H{{\mathbb{H}}} 
\def\Ch{{\widehat{\C}}}
\def\Int{\operatorname{Int}}
\newcommand{\del}{\partial}
\def\CA{{\mathcal A}}
\def\CF{{\mathcal F}}
\def\CM{{\mathcal M}}
\def\CP{{\mathcal P}}
\def\CP{{\mathcal P}}
\def\CW{{\mathcal W}}
\def\opname#1{\mathop{\kern0pt{\rm #1}}\nolimits}
\def\Im{\opname{Im}}
\def\supp{\operatorname{supp}}
\def\leng{\operatorname{leng}}
\def\coker{\operatorname{Coker}}
\def\Chord{\operatorname{Chord}}
\def\Image{\operatorname{Image}}
\def\Gam{{\Gamma}} 
\def\pD{{\mathcal D}}
\DeclareMathOperator{\Is}{Isom\raisebox{0.1em}{$\hspace{-0.2em}^+$}\hspace{-0.2em}}
\newcommand{\eqsn}[2][c]{
	\ifmmode{
		\begin{aligned}[#1] #2 \end{aligned}}
	\else{\begin{align*} #2 \end{align*}}
	\fi}
\newcommand{\eqs}[2][c]{
	\ifmmode{
		\begin{aligned}[#1] #2 \end{aligned}}
	\else{\begin{align} #2 \end{align}}
	\fi}
\newcommand{\psl}[1][\C]{\textup{PSL}(2,#1)}
\newcommand{\Hb}[1][3]{\overline{\mathbb{H}^#1}}
\newcommand{\bdH}[1][3]{\partial\overline{\mathbb{H}^#1}}
\newcommand{\hr}{\H^2\times\R}
\newcommand{\w}{\widetilde}
\newcommand{\cord}{{\rm Cord}}
\newcommand{\ba}{\begin{equation}\begin{aligned}}
\newcommand{\ea}{\end{aligned}\end{equation}}
\newcommand{\ban}{\begin{equation*}\begin{aligned}}
\newcommand{\ean}{\end{aligned}\end{equation*}}
\begin{document}

\title[Formality of Floer complex]
{Formality of Floer complex of the ideal boundary of
hyperbolic knot complement}

\author{Youngjin Bae, Seonhwa Kim, Yong-Geun Oh}

\thanks{SK and YO are supported by the IBS project IBS-R003-D1.
YO is also partially supported by the National Science Foundation under Grant No. DMS-1440140 during his residence at the Mathematical Sciences Research Institute in Berkeley, California in the fall of 2018.
YB was partially supported by IBS-R003-D1 and JSPS International Research Fellowship Program.}

\address{Youngjin Bae\\
   Research Institute for Mathematical Sciences, Kyoto University\\
   Kyoto Prefecture, Kyoto, Sakyo Ward, Kitashirakawa Oiwakecho, Japan 606-8317}
\email{ybae@kurims.kyoto-u.ac.jp}
\address{Seonhwa Kim\\
Center for Geometry and Physics, Institute for Basic Sciences (IBS), Pohang, Korea} \email{ryeona17@ibs.re.kr}
\address{Yong-Geun Oh\\
Center for Geometry and Physics, Institute for Basic Sciences (IBS), Pohang, Korea \& Department of Mathematics,
POSTECH, Pohang, Korea} \email{yongoh1@postech.ac.kr}

\date{January 4, 2019; Revised on March 4, 2019}

\begin{abstract} This is a sequel to the authors' article \cite{BKO}.
We consider a hyperbolic knot $K$ in a closed 3-manifold $M$ and the
cotangent bundle of its complement $M \setminus K$.
We equip $M \setminus K$ with a hyperbolic metric $h$ and its cotangent bundle
$T^*(M \setminus K)$ with the induced kinetic energy Hamiltonian $H_h = \frac{1}{2} |p|_h^2$ and Sasakian almost complex structure $J_h$, and associate a wrapped Fukaya category to $T^*(M\setminus K)$
whose wrapping is given by $H_h$. We then consider the conormal $\nu^*T$ of a horo-torus $T$ as its object.
We prove that all non-constant Hamiltonian chords are transversal and of Morse index 0
relative to the horo-torus $T$, and so that the structure maps satisfy
$\widetilde{\mathfrak m}^k = 0$ unless $k \neq 2$ and an $A_\infty$-algebra associated to $\nu^*T$
is reduced to a noncommutative algebra concentrated to degree 0.
We prove that the wrapped Floer cohomology $HW(\nu^*T; H_h)$ with respect to $H_h$ is well-defined
and isomorphic to the Knot Floer cohomology $HW(\del_\infty(M \setminus K))$ that was introduced
in \cite{BKO} for arbitrary knot $K \subset M$. We also define a reduced cohomology,
denoted by $\widetilde{HW}^d(\del_\infty(M \setminus K))$, by modding out constant chords and
prove that if $\widetilde{HW}^d(\del_\infty(M \setminus K))\neq 0$ for some $d \geq 1$,
then $K$ cannot be hyperbolic. On the other hand, we prove that all torus knots
have $\widetilde{HW}^1(\del_\infty(M \setminus K)) \neq 0$.
\end{abstract}

\keywords{Hyperbolic knots, Knot Floer algebra, horo-torus, formality, totally geodesic triangle}

\subjclass[2010]{Primary 53D35, Secondary 57M27}

\maketitle

\setcounter{tocdepth}{1}
\tableofcontents

\section{Introduction}

The symplectic idea of constructing knot invariants using the conormal lift of a knot (or link) in $\R^3$
as a Legendrian submanifold in the unit cotangent bundle has been explored in symplectic-contact geometry,
especially exploited by Ekholm-Etnyre-Ng-Sullivan \cite{EENS}
in their construction of knot contact homology who proved that this analytic invariant
recovers Ng's combinatorial invariants of the knot \cite{lenny-ng}. It has been also observed
(see \cite{ENS} for example) that the data of the knot contact homology can be obtained from
a version of wrapped Fukaya category on the ambient space, the symplectization of the unit
cotangent bundle $ST^*\R^3$ or of an open subset thereof.

In \cite{BKO}, the authors considered the knot complement $M \setminus K$
of arbitrary orientable closed 3-manifold $M$ directly, and constructed
its associated Fukaya category on it. We emphasize that the base space
\emph{$N:=M \setminus K$ is non-compact}. We take a tubular neighborhood
$N(K)$ of $K$ and consider its boundary $T: = \del(N(K))$.
We define a cylindrical adjustment $g_0$ of the induced metric $g|_{M \setminus K}$
of a smooth metric $g$ of $M$. (See Section \ref{sec:comparison} for the precise definition thereof.)
Then the construction in \cite{BKO} associates an $A_\infty$ algebra
$$
CW_g(\nu^*T,T^*(M\setminus K)): = CW(\nu^*T,T^*(M\setminus K); H_{g_0}).
$$
We denote the associated cohomology by
$$
HW_g(\nu^*T,T^*(M\setminus K)): = HW(\nu^*T,T^*(M\setminus K); H_{g_0}).
$$
It was shown in \cite{BKO} that this cohomology does not depend on the choices of
smooth metric $g$ on $M$, of the tubular neighborhood $N(K)$ but depends on the
isotopy class of knot $K$. In particular, we defined  the wrapped Floer cohomology
as an invariant of the knot $K$.We denote the resulting common graded group by
$$
 HW(\del_\infty(M\setminus K)) = \bigoplus_{d=0}^\infty HW^d(\del_\infty(M\setminus K))
$$
which is called the \emph{knot Floer algebra }in \cite{BKO}. Since the group is independent of
the choice of tubular neighborhood of $K$, one may regard this group as the Moore homology version
(in the horizontal direction) of the wrapped Floer cohomology of the asymptotic boundary
$
\del_\infty(M \setminus K)
$
of non-compact manifold $M \setminus K$. (See \cite{BKO}.)

\subsection{Formality of Floer complex $CW(\nu^*T;H_h)$} \label{sec:statemainresult}

In this paper, we specialize our focus on the case of \emph{hyperbolic knots}, i.e.,
of the knots $K\subset M$ (or links) such that the complement $N$ admits a complete metric
of constant curvature $-1$. We exploit the presence of hyperbolic metric $h$ on the complement
$M\setminus K$ for the computation of $HW(\del_\infty(M\setminus K))$,
even though the metric $h$ cannot be smoothly extended to $M$ itself.
In other words,
the wrapping we put in the definition of wrapped Floer cohomology is
of different nature from that of \cite{BKO}.

We utilize the special geometry of hyperbolic metric in the calculation of
the associated $A_\infty$ structures, by considering ($CW(\nu^*T),\frak m^k$)
associated to a special choice of the above mentioned tubular neighborhood $N(K)$ so that each component of whose
boundary $\del N(K)$ is given by a horo-torus contained in the cusp-neighborhood of $K$
with respect to the hyperbolic metric.
Although we will mostly restrict ourselves to the case of knots for the simplicity of exposition,
we would like to emphasize that main results of the present paper also apply to the links whose
ramification to the study of links is worthwhile to investigate.

Consider the kinetic energy Hamiltonian $H = \frac12|p|_h^2$ of the hyperbolic metric $h$ on $M \setminus K$.
We first prove the following general properties of the Hamiltonian
chords associated to the conormal  $\nu^*T$  and their associated geodesic cords attached to $T$.
\footnote{We will follow the terminology adopted by Ng \cite{lenny-ng} the term \emph{chord} for the Hamiltonian
trajectory attached to the conormal and the term \emph{cord} for the corresponding geodesic attached to the base of the
conormal.}

It follows from an argument in hyperbolic geometry and using the special geometry of horo-torus that
 for each $e \in \pi_1(M\setminus K, T)$, there exists a unique
geodesic cord $c_e$ of $T$ attached to $T$ for each homotopy class $e \in \pi_1(N,T)$.

\begin{thm}[Theorem \ref{thm:index-nullity}]\label{thm:nondeg}
Let $N$ and $T$ be as above. Then
for any geodesic cord $c \in \text{\rm Cord}(T)$, both
Morse index and nullity of $c$ vanish. In particular, any non-constant Hamiltonian
chord associated to $\nu^*T$ is rigid and nondegenerate.
\end{thm}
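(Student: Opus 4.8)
The plan is to reduce the statement to a computation in the hyperbolic geometry of geodesic cords attached to a horo-torus $T$ in a cusp neighborhood, and then to translate that into the Morse-theoretic data of the corresponding Hamiltonian chords of $H_h=\frac12|p|_h^2$ relative to $\nu^*T$. First I would recall that a Hamiltonian chord of the kinetic energy Hamiltonian of a metric $h$ with boundary conditions on $\nu^*T$ corresponds precisely to a geodesic of $(N,h)$ meeting $T$ orthogonally at both endpoints (a \emph{cord}), and that the nondegeneracy and the Morse index of the chord are computed by the index form (second variation of the energy functional) of that cord with both endpoints constrained to $T$; the nullity of the chord equals the dimension of the space of Jacobi fields along $c$ that are tangent to $T$ at both ends and whose covariant derivative satisfies the natural transversality (Robin-type) condition coming from the second fundamental form of $T$. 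So the theorem is equivalent to showing that for every geodesic cord $c$ of $T$, the constrained index form is positive definite.

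The key geometric input is that $T$ is a horo-torus: lifting to $\H^3$, the universal cover, $T$ lifts to a horosphere $\widetilde T$ based at a parabolic fixed point, and a geodesic cord lifts to a geodesic segment running between two distinct horospheres, each a lift of $\widetilde T$ under the deck action, meeting both orthogonally. The crucial fact about horospheres in $\H^3$ is that they are \emph{convex} — in fact each horosphere is a level set of a Busemann function, it has constant principal curvatures equal to $1$ with respect to the inward normal, and the sublevel region (the horoball) is geodesically convex. Thus I would carry out the second-variation computation along a lift of $c$: for a variation field $J$ along the geodesic that is tangent to the two horospheres at the endpoints, the index form is
\[
I(J,J)=\int_0^\ell\bigl(|\nabla_t J|^2 - \langle R(J,\dot c)\dot c, J\rangle\bigr)\,dt - \langle S_{T}(J),J\rangle\big|_{\text{endpoints}},
\]
and in $\H^3$ the curvature term gives $-\langle R(J,\dot c)\dot c,J\rangle = |J|^2$ for $J\perp \dot c$, while the horosphere boundary terms contribute $-|J|^2$ at each end (with the sign dictated by orientation of the normals), so convexity makes the boundary contribution have the right sign. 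A clean way to package this is to use the Busemann function $b$ associated to the relevant horoball: the function $t\mapsto b(c(t))$ along the geodesic is convex (indeed one checks $\frac{d^2}{dt^2} b(c(t))\ge 0$ since the Hessian of a Busemann function in $\H^n$ is $\mathrm{Id}-db\otimes db\succeq 0$), which forces a cord to be \emph{strictly} length-minimizing among nearby curves from $T$ to $T$ in its homotopy class, hence the index form is positive definite and both index and nullity vanish.

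Concretely, I would argue: given a cord $c$ from $p\in T$ to $q\in T$ of length $\ell$, lift to a geodesic $\widetilde c$ from $\widetilde p\in\widetilde T_1$ to $\widetilde q\in \widetilde T_2$, where $\widetilde T_1,\widetilde T_2$ are the horospherical lifts through the two endpoints, based at distinct points $\xi_1\neq\xi_2\in\partial\H^3$. Orthogonality at both ends means $\widetilde c$ is the common perpendicular of the two disjoint horoballs $B_1,B_2$. Let $b_i$ be the Busemann function of $B_i$ normalized so $B_i=\{b_i\le 0\}$, $\partial B_i = \widetilde T_i$. Then $b_1+b_2$ restricted to any curve from $\widetilde T_1$ to $\widetilde T_2$ is $\ge -\ell$ with equality only along $\widetilde c$, by strict convexity of $b_1+b_2$ along geodesics transverse to the horospheres plus the fact that $\widetilde c$ realizes the minimal distance. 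This strict convexity is exactly the positivity of the index form after accounting for the Robin boundary terms (the first derivative of $b_i$ along an orthogonal geodesic exit is $1$, matching the principal curvature $1$ of the horosphere), so the constrained index form of $c$ is positive definite; therefore nullity $=0$ (no nonzero constrained Jacobi field) and Morse index $=0$. The final sentence of the theorem — rigidity and nondegeneracy of the corresponding Hamiltonian chord — then follows from the standard dictionary between the index form of a constrained geodesic and the linearized operator / Conley–Zehnder-type index of the associated Hamiltonian chord with Lagrangian boundary conditions $\nu^*T$, where vanishing nullity is precisely nondegeneracy (transversality of $\phi_{H_h}^1(\nu^*T)$ and $\nu^*T$) and vanishing index gives rigidity (Morse index $0$).

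The main obstacle I anticipate is not the convexity estimate itself — that is essentially forced by the horosphere geometry — but rather setting up the boundary/Robin terms in the index form with the correct signs and normalization, i.e.\ verifying carefully that the second fundamental form of the horo-torus $T$ (equivalently the Hessian of the Busemann function) enters the index form with the sign that \emph{adds} to positivity rather than subtracting from it, and handling the two endpoint contributions simultaneously; a secondary technical point is making rigorous the passage from "strict convexity of $b_1+b_2$ along geodesics" to "positive-definiteness of the constrained index form," which requires identifying the Jacobi field space with the tangent space to the relevant variations and checking there are no degenerate directions hidden at the endpoints.
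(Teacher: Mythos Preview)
Your proposal is correct and follows essentially the same route as the paper: both arguments reduce to showing that the constrained index form of the energy functional on $\mathcal P(N,T)$ is positive definite at every geodesic cord, using the two hyperbolic inputs you identify --- the curvature term $-\langle R(V,\dot c)\dot c,V\rangle = |\dot c|^2|V|^2$ and the fact that horospheres have all principal curvatures equal to $1$. The paper carries this out by a direct computation (its Proposition~\ref{prop:meancurvature} and Proposition~\ref{prop:formula-2nd}), arriving at the explicit formula
\[
d^2E(c)(V,V)=\int_0^1\Bigl(\Bigl|\tfrac{DV}{dt}\Bigr|^2+|\dot c|^2|V|^2\Bigr)\,dt+|\dot c(0)|\,|V(0)|^2+|\dot c(1)|\,|V(1)|^2,
\]
from which positivity and vanishing nullity are read off immediately; this resolves precisely the sign/orientation issue you flag as your main obstacle, and is somewhat cleaner than the Busemann-function detour you sketch (which, as you note, still requires identifying strict convexity of $b_1+b_2$ with positive-definiteness of the index form --- a step the paper's direct computation simply bypasses).
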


This enables us to work with the kinetic energy Hamiltonian, without perturbation,
in the construction of an $A_\infty$ algebra generated by
the set $\text{Chord}(H;\nu^*T)$ of Hamiltonian chords of $H$ attached to $L:=\nu^*T$.
The relevant perturbed pseudo-holomorphic equation is nothing but
\be\label{eq:CRXH}
(du - \beta \otimes X_H)^{(0,1)} = 0
\ee
for a map $u:\Sigma \to T^*N$ satisfying suitable (moving) Lagrangian boundary
condition together with asymptotic conditions converging to Hamiltonian chords of
the kinetic energy Hamiltonian $H_h$ given above.
We especially study the Cauchy-Riemann equation
\eqref{eq:CRXH} with respect to the Sasakian almost complex structure on $T^*N$ of $h$ on $N$: It is given by
\be\label{eq:SasakianJg}
J_h (X) = X^\flat, \quad J_h(\alpha) = - \alpha^\sharp
\ee
under the splitting $T(T^*N) \simeq TN \oplus T^*N$ via the Levi-Civita connection of $h$.

The graded Floer chain complex $CW(L;H_h)$ is a free abelian group generated by the Hamiltonian chords:
$$
CW(L;H_h) : = C^*(T) \oplus \bigoplus_{x\in\Chord^*(L;H_h)}\Z \cdot x,
$$
where $C^*(T)$ is a cochain complex of $T \cong \T^2$, e.g., the de Rham complex of $T$
or the Morse complex of a Morse function of $T$.
Here the grading is given by the grading of the Hamiltonian chords $|x|$. We establish the
$C^0$ estimates, especially the horizontal $C^0$ estimate, in Section \ref{subsec:z-coordinate},
which enables us to directly define the wrapped Floer complex as an $A_\infty$ algebra
for the hyperbolic metric, \emph{without making a cylindrical adjustment} unlike in \cite{BKO}.

We show that $(C^*(T),d)$ forms a sub-complex of $CW(L;H_h)$ and so can define the
reduced complex
$$
\widetilde{CW}^*(L;H_h) = CW(L;H_h)/ C^*(T)
$$
and denote its induced $A_\infty$ operators by $\widetilde{\mathfrak m}^k$ and
its cohomology by $\widetilde{HW}^*(L;H_h)$. We then prove

\begin{thm}[Theorem \ref{thm:comparison}]\label{thm:comparison-intro} Suppose $K$ is a hyperbolic knot on $M$. Then
we have an (algebra) isomorphism
$$
HW^d(\nu^*T;H_h) \cong HW^d(\del_\infty(M \setminus K))
$$
for all integer $d \geq 0$. In particular $\widetilde{HW}^d(\del_\infty(M \setminus K)) = 0$ for all $d > 0$ and
$\widetilde{HW}^0(\del_\infty(M \setminus K))$ is a free abelian group generated by $\mathscr G_{M\setminus K}$.
\end{thm}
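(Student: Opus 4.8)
The plan is to adapt the invariance argument of \cite{BKO} for $HW_g(\nu^*T,T^*(M\setminus K))=HW(\del_\infty(M\setminus K))$ to the hyperbolic metric. In \cite{BKO} one varies the smooth metric on $M$; here one must instead join the hyperbolic data $(H_h,J_h)$ to a cylindrically adjusted pair $(H_{g_0},J_{g_0})$, the genuinely new feature being that $h$ does not extend smoothly over $M$, so the argument has to be confined to compact regions of $N$ by the horizontal $C^0$ estimate of Section \ref{subsec:z-coordinate}. First I would match generators. Both $CW(\nu^*T;H_h)$ and $CW_g(\nu^*T,T^*(M\setminus K))$ split as $C^*(T)$ plus the free module on the non-constant chords, with $C^*(T)$ a subcomplex. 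By Theorem \ref{thm:nondeg}, for the hyperbolic metric each nontrivial class in $\pi_1(N,T)$ is represented by a unique non-degenerate geodesic cord of Morse index $0$, so the non-constant chords of $CW(\nu^*T;H_h)$ form a canonical set, all lying in degree $0$, which is identified with $\mathscr G_{M\setminus K}$; since $\widetilde{\mathfrak m}^1=0$, the reduced cohomology $\widetilde{HW}^*(\nu^*T;H_h)$ is the free abelian group on $\mathscr G_{M\setminus K}$ concentrated in degree $0$. With the choices of \cite{BKO} the non-constant generators are indexed by the same set and also in degree $0$, so in each degree the two sides have matching ranks.

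Next I would construct the comparison map. Both $H_h$ and $H_{g_0}$ are kinetic-energy Hamiltonians of geometries on $N$ for which the horizontal $C^0$ estimate holds — the former by Section \ref{subsec:z-coordinate}, the latter by \cite{BKO} — and one joins them by a path $(H_s,J_s)$, $s\in[0,1]$, of such data with the estimate holding along the whole family, with constants depending continuously on $s$. This is the decisive point: for any fixed pair of asymptotic chords the parametrized moduli space of continuation solutions, and later of holomorphic polygons over the family, is confined to a fixed compact subset of $N$ and is therefore compact. It is exactly this confinement that rescues the comparison despite the very different growth of $H_h$ (exponential along the horospherical direction, the cusp metric shrinking exponentially) and of $H_{g_0}$, a discrepancy that would defeat a naive energy estimate. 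The moduli spaces for the $h$-data are transverse by Theorem \ref{thm:nondeg} with no perturbation, so the continuation map $\Phi$ is well-defined; running the homotopy in both directions shows that $\Phi$ is a chain homotopy equivalence respecting the subcomplexes $C^*(T)$, and since the non-constant part of every differential vanishes in degree $0$, $\Phi$ is in fact a degreewise isomorphism of complexes and descends to the reduced complexes.

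Finally I would check compatibility with the product. Since $\widetilde{\mathfrak m}^k=0$ for $k\neq 2$, the reduced algebra on the hyperbolic side is an ordinary associative $\Z$-algebra concentrated in degree $0$, given by counting rigid holomorphic triangles attached to $\nu^*T$, while the product on $HW(\del_\infty(M\setminus K))$ is the one supplied by \cite{BKO}. Counting parametrized holomorphic triangles over the family $(H_s,J_s)$ shows that $\Phi$ intertwines the two products up to homotopy, so the induced bijection $HW^d(\nu^*T;H_h)\to HW^d(\del_\infty(M\setminus K))$ is an algebra homomorphism, hence an algebra isomorphism for every $d\geq 0$. Passing to the reduced theory via the descended $\Phi$ and using the first paragraph then yields $\widetilde{HW}^d(\del_\infty(M\setminus K))=0$ for $d>0$ and $\widetilde{HW}^0(\del_\infty(M\setminus K))$ free abelian on $\mathscr G_{M\setminus K}$.

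The main obstacle is the middle step: making the continuation map between $H_h$ and $H_{g_0}$ well-defined and functorial when the two Hamiltonians grow at incompatible rates near the cusp and, in the hyperbolic case, the metric cannot be completed over $M$. Everything there rests on the horizontal $C^0$ estimate of Section \ref{subsec:z-coordinate}, which pins down the compact regions to which all continuation solutions and holomorphic polygons are confined, together with the action and energy bookkeeping needed to keep the interpolating family admissible throughout and the continuation maps composable.
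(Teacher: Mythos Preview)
Your outline has a genuine gap at the ``running the homotopy in both directions'' step. In wrapped Floer theory the continuation map is only defined along a \emph{monotone} homotopy of Hamiltonians (increasing direction): the vertical $C^0$-estimate, i.e.\ the bound on $|p|$, fails otherwise, and this is exactly what the paper warns about in the remark preceding Proposition~\ref{prop:continuation}. Since $h\le h_i$ on the cusp (so $H_h\ge H_{h_i}$), the continuation map goes \emph{from} a cylindrical adjustment \emph{to} the hyperbolic data, not back. You cannot simply reverse the homotopy to get an inverse; the horizontal $C^0$-estimate of Section~\ref{subsec:z-coordinate} controls only the base direction and does not rescue the vertical blow-up for a non-monotone family. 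Moreover, for an arbitrary cylindrical adjustment $g_0$ of a smooth metric $g$ on $M$ there is no evident inequality between $H_h$ and $H_{g_0}$ at all, so even one direction of your proposed interpolation is not a monotone homotopy.

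The paper circumvents this by an indirect, direct-limit argument rather than a single two-sided continuation. It constructs a sequence of cylindrical adjustments $h_i$ of $h$ (Proposition~\ref{prop:vanishing-hi} arranges them to have nonpositive curvature, so that their cords are again rigid of index~$0$---a point you take for granted but which is not automatic from \cite{BKO}), and shows in Proposition~\ref{prop:hi-smoothadjust} that each $h_i$ is \emph{simultaneously} the cylindrical adjustment $g[i]_0$ of some smooth metric $g[i]$ on $M$. Thus $HW(\nu^*T;H_{h_i})\cong HW(\del_\infty(M\setminus K))$ by the invariance of \cite{BKO}, while the monotone continuation maps $\iota_{jh}:CW(\nu^*T;H_{h_j})\to CW(\nu^*T;H_h)$ (whose horizontal estimate, for the \emph{non-autonomous} equation, is obtained not by a maximum principle but by an elliptic estimate \`a la \cite{gil-trud}) assemble into an isomorphism $\varinjlim HW(\nu^*T;H_{h_i})\cong HW(\nu^*T;H_h)$ (Theorem~\ref{thm:inftyrank}). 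Finally Lemma~\ref{lem:gi-gilim} shows each term of the direct system already computes the limit. The upshot is that the comparison is never made by a direct back-and-forth between $H_h$ and a single $H_{g_0}$; the intermediate metrics $h_i=g[i]_0$ are the essential bridge.
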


One main ingredient of the proof of this theorem is
the following formality of $(\widetilde{CW}^*(L;H_h), \{\widetilde{\mathfrak m}^k\}_{k=1}^\infty)$
we prove in the present paper.

\begin{thm}[Theorem \ref{thm:formal}]\label{thm:formality}  Let
$h$ be the hyperbolic metric of $N$ and $J=J_h$ be the Sasakian almost
complex structure of $h$ on $T^*N$, and let $T$ be a horo-torus as above.
Consider the kinetic energy Hamiltonian associated to the metric $h$
$$
H_h(q,p) = \frac{1}{2} |p|_h^2
$$
and the associated perturbed Cauchy-Riemann equations \eqref{eq:CRXH} equipped with
some boundary condition associated to the conormal $\nu^*T$.
Let $\widetilde{\mathfrak m} = \{\widetilde{\mathfrak m}^k\}_{k=1}^\infty$ be the corresponding $A_\infty$ maps.
Then we have $\widetilde{\mathfrak m}^k = 0$ for all $k \neq 2$.
\end{thm}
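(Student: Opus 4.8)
\emph{Plan.} The proof is a grading argument resting on the index computation of Theorem~\ref{thm:nondeg}. By that theorem every geodesic cord $c \in \operatorname{Cord}(T)$ has vanishing Morse index and nullity relative to $T$. Under the Morse-theoretic correspondence between the kinetic-energy action functional of $H_h$ on $T^*N$ and the energy functional on the space of arcs with endpoints on $T$ (an Abbondandolo--Schwarz type identification), the degree $|x|$ of a non-constant Hamiltonian chord $x \in \Chord(H_h;\nu^*T)$ lying over a cord $c$ equals the Morse index of $c$; hence $|x| = 0$ for every non-constant chord, and the vanishing nullity yields the asserted nondegeneracy and transversality. Since the constant chords form the subcomplex $C^*(T)$, the cochain complex of $T \cong \T^2$, which is exactly what one quotients out, the reduced complex $\widetilde{CW}^*(L;H_h) = CW(L;H_h)/C^*(T)$ is a free abelian group \emph{concentrated in degree $0$}.

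\medskip

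Granting this, the theorem is immediate. The $\Z$-grading on $CW(L;H_h)$ is well-defined as in \cite{BKO} (the canonical grading of $T^*N$ for orientable $N$ together with the vanishing Maslov class of the conormal $\nu^*T$), and each $A_\infty$ operation $\widetilde{\mathfrak m}^k$ has degree $2-k$ with respect to it. Therefore $\widetilde{\mathfrak m}^k$ carries $(\widetilde{CW}^0)^{\otimes k}$ into $\widetilde{CW}^{2-k}$, and this target group vanishes for every $k \neq 2$: it is $\widetilde{CW}^{1} = 0$ when $k = 1$, and it sits in negative degree, hence is $0$, when $k \geq 3$. Consequently $\widetilde{\mathfrak m}^k = 0$ for all $k \neq 2$, while $\widetilde{\mathfrak m}^2$ is unconstrained, leaving precisely an associative, in general noncommutative, algebra concentrated in degree $0$.

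\medskip

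For this argument to be meaningful one must of course know the operations $\widetilde{\mathfrak m}^k$ are defined in the first place, i.e., that the moduli spaces of solutions of \eqref{eq:CRXH} for the Sasakian $J_h$ with moving conormal boundary conditions are compact up to breaking. Sphere bubbling is excluded by exactness of $T^*N$ and disk bubbling by exactness of $\nu^*T$; the substantive point, which replaces the cylindrical adjustment of \cite{BKO}, is the horizontal $C^0$-estimate established in Section~\ref{subsec:z-coordinate}, giving Gromov compactness directly for the hyperbolic metric. One also invokes the fact recorded just before the statement that $(C^*(T),d)$ is a subcomplex of $CW(L;H_h)$, so that the quotient operations $\widetilde{\mathfrak m}^k$ descend, and the rigidity of Theorem~\ref{thm:nondeg}, together with the special Sasakian geometry of totally geodesic cords, is what permits running the construction ``without perturbation'' of $J_h$.

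\medskip

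\emph{Main obstacle.} Formality itself is the short degree count above; the genuine content is upstream, in two places: (i) Theorem~\ref{thm:nondeg} and the Morse-theoretic identification that convert it into the statement ``$|x| = 0$ for every non-constant chord'', where the hyperbolic metric and the special geometry of the horo-torus are used; and (ii) the $C^0$-estimates of Section~\ref{subsec:z-coordinate} that make the wrapped $A_\infty$ structure well-defined for the non-extendable metric $h$. Once both are available, no further analysis of higher moduli spaces is required, since for $k \neq 2$ the \emph{target} of $\widetilde{\mathfrak m}^k$ is already the zero group.
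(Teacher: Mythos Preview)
Your proof is correct and follows essentially the same approach as the paper: once Theorem~\ref{thm:nondeg} forces every non-constant chord to have degree $0$, the grading shift $2-k$ of $\widetilde{\mathfrak m}^k$ lands in a zero group for $k\neq 2$, and the $C^0$-estimates of Section~\ref{sec:C0estimates} are exactly what the paper invokes to make the operations well-defined. The paper phrases the conclusion via the dimension formula $|x^0|=\sum_j|x^j|+2-k$ for zero-dimensional moduli, but this is the same degree count you give.
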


This theorem is a consequence of the standard Fredholm theory
combined with the geometric properties of the hyperbolic metric $h$  stated in
Theorem \ref{thm:nondeg} in the study of moduli space of \eqref{eq:CRXH}.
(See Section \ref{sec:index} for details.)

The following is an immediate corollary of Theorem \ref{thm:nondeg}, Theorem \ref{thm:comparison-intro}
and Theorem \ref{thm:formality}.

\begin{cor}\label{cor:d=0} Suppose $K \subset M$ is a hyperbolic knot. Then
$$
\widetilde{HW}^d(\del_\infty(M \setminus K)) \cong
\begin{cases} \Z^{\oplus [(I,\del I),(M \setminus K, T)]} \quad &\text{for } d= 0 \\
0 \quad & \text{for } d \neq 0,
\end{cases}
$$
\end{cor}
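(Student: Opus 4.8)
The plan is to read the statement off the three quoted results. Theorem~\ref{thm:comparison-intro} will let me replace $\del_\infty(M\setminus K)$ by the explicit reduced complex $\widetilde{CW}^*(\nu^*T;H_h)$, and Theorems~\ref{thm:nondeg} and~\ref{thm:formality} will let me evaluate that complex on the nose; the only work beyond citing the three theorems is to match the resulting basis with the topological index set $[(I,\del I),(M\setminus K,T)]$.

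First I would pin down $\widetilde{CW}^*(\nu^*T;H_h)$ as a graded group. By Theorem~\ref{thm:nondeg} every non-constant Hamiltonian chord $x$ attached to $L=\nu^*T$ is nondegenerate and its associated geodesic cord has vanishing Morse index and nullity, so under the grading $|x|$ of $CW(L;H_h)$ every non-constant generator lies in degree $0$. Since $\widetilde{CW}^*(L;H_h)=CW(L;H_h)/C^*(T)$ is, by construction, the free abelian group on the non-constant chords in $\Chord^*(L;H_h)$, this gives
$$\widetilde{CW}^d(L;H_h)=0\ (d\ne 0),\qquad \widetilde{CW}^0(L;H_h)=\bigoplus_x \Z\cdot x,$$
the sum running over the non-constant Hamiltonian chords of $H_h$ on $\nu^*T$. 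Next I would apply Theorem~\ref{thm:formality} with $k=1$ (which is $\ne 2$): $\widetilde{\mathfrak m}^1=0$, so the reduced differential vanishes and $\widetilde{HW}^*(L;H_h)=\widetilde{CW}^*(L;H_h)$ with exactly the description above, concentrated in degree $0$.

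Then I would transport this along Theorem~\ref{thm:comparison-intro}. That theorem already furnishes $\widetilde{HW}^d(\del_\infty(M\setminus K))\cong\widetilde{HW}^d(\nu^*T;H_h)$, hence $=0$ for $d>0$, and $\widetilde{HW}^0(\del_\infty(M\setminus K))\cong\Z\langle\mathscr G_{M\setminus K}\rangle$; the remaining degrees $d<0$ vanish because $HW(\del_\infty(M\setminus K))=\bigoplus_{d\ge 0}HW^d$ is non-negatively graded by definition (equivalently, because the reduced Floer complex is concentrated in degree $0$ by the previous step). It then remains to identify the basis: by the previous step, $\widetilde{HW}^0(\nu^*T;H_h)$ is freely generated by the non-constant Hamiltonian chords of $H_h$ on $\nu^*T$, each of which is precisely one non-constant geodesic cord of the horo-torus $T$, and by the uniqueness statement recorded just before Theorem~\ref{thm:nondeg} these cords are in bijection with the homotopy classes of cords of $T$, i.e.\ with $[(I,\del I),(M\setminus K,T)]$. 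Comparing this with the description of $\mathscr G_{M\setminus K}$ coming from \cite{BKO} then yields $\widetilde{HW}^0(\del_\infty(M\setminus K))\cong\Z^{\oplus[(I,\del I),(M\setminus K,T)]}$, and the Corollary follows.

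The only point that is not pure bookkeeping is this final identification of index sets: one must check that perpendicular geodesic cords of the cusp horo-torus $T$ are in natural bijection with $[(I,\del I),(M\setminus K,T)]$ — with the trivial homotopy class accounted for by the subcomplex $C^*(T)$ rather than by any non-constant cord, since a horosphere in $\H^3$ admits no geodesic perpendicular to it at two distinct points — and that this bijection is compatible with the comparison isomorphism of Theorem~\ref{thm:comparison-intro} and with the generating set $\mathscr G_{M\setminus K}$ of \cite{BKO}. This uses only the concrete cusp geometry in $\H^3$ (horospheres, parabolic stabilizers, unique perpendicular geodesics) already exploited in the proof of Theorem~\ref{thm:nondeg}, so I expect no genuine difficulty; given the three cited theorems, everything else is formal.
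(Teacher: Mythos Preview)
Your proposal is correct and matches the paper's approach exactly: the paper simply declares this an immediate corollary of Theorems~\ref{thm:nondeg}, \ref{thm:comparison-intro}, and~\ref{thm:formality} without further argument, so your more detailed unpacking (grading in degree $0$ via Theorem~\ref{thm:nondeg}, vanishing of $\widetilde{\mathfrak m}^1$ via Theorem~\ref{thm:formality}, transport via Theorem~\ref{thm:comparison-intro}, and identification of the basis with homotopy classes of cords via Propositions~\ref{prop:cordG} and the uniqueness result that follows it) is exactly what the paper intends the reader to supply. Your observation about the trivial homotopy class being absorbed by $C^*(T)$ is also apt.
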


The next theorem is the first step towards
making an explicit calculation of the map $\widetilde{\mathfrak m}^2$.
To describe the matrix coefficients of $\widetilde{\mathfrak m}^2$, we will relate solutions of \eqref{eq:CRXH}
to the hyperbolic triangles on $N$ truncated by $T$.
We recall that the projection $\pi \circ \gamma^j$ of non-constant Hamiltonian trajectory of $H$ is
a geodesic cord and conversely each geodesic is uniquely lifted to a Hamiltonian chord.

Let $N$ and $T$ be as above and $u$ be any solution to \eqref{eq:CRXH}
with its asymptotic triple $(\gamma^0, \,  \gamma^1, \, \gamma^2)$ of Hamiltonian chords.
Denote by $c^j = \pi \circ \gamma^j$ the associated geodesic cords for the pair $(N,T)$.
%
\begin{thm}[Theorem \ref{thm:hexagon}]\label{thm:classify-3dim}
Let $u$ be any solution to \eqref{eq:CRXH} with its asymptotic triple $(\gamma^0, \,  \gamma^1, \, \gamma^2)$ of
Hamiltonian chords. Denote by $c^j = \pi \circ \gamma^j$ the associated geodesics on $N$.
There exists a totally geodesic immersed ideal triangle $\Delta$ whose ideal edges contain
each geodesic triples $(c^0, c^1, c^2)$ such that  the map $f: \Sigma \to N$ defined by
$$
f(\zeta) = \pi \circ u(\zeta)
$$
has its image contained in  $\Delta$.	
\end{thm}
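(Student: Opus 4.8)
The plan is to analyze the Cauchy–Riemann equation \eqref{eq:CRXH} under the Sasakian almost complex structure $J_h$ by projecting it to the base $N$ and showing that the projected map $f = \pi \circ u$ is a (branched) harmonic/minimal map whose image must lie in a totally geodesic surface. First I would recall the well-known fact that, for the Sasakian $J_h$ attached to a Riemannian metric $h$, a $J_h$-holomorphic curve $u = (q, p): \Sigma \to T^*N$ that satisfies the \emph{homogeneous} equation ($\beta \otimes X_H$ absent) projects to a harmonic map $f = q$, and more generally that the $H$-perturbed equation \eqref{eq:CRXH} projects, in a strip-like/cylindrical region and then globally after absorbing $\beta$, to an equation of the form $\tau_h(f) = (\text{zeroth-order term built from } p \text{ and } \beta)$; along the Hamiltonian chords the projection $\pi \circ \gamma^j = c^j$ is a geodesic on $N$ by the computation already recalled in the excerpt ("$\pi \circ \gamma^j$ of a non-constant Hamiltonian trajectory of $H$ is a geodesic cord"). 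The boundary condition $u(\partial\Sigma) \subset \nu^*T$ means $f(\partial\Sigma) \subset T$ with $df$ meeting $T$ in the normal direction — i.e. $f$ hits the horo-torus $T$ orthogonally along $\partial\Sigma$.

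Next I would set up the comparison argument: the three asymptotic cords $c^0, c^1, c^2$ each lift, in the universal cover $\widetilde N = \mathbb H^3$, to geodesic segments whose endpoints on $T$ lift to points on horospheres, and (using Theorem \ref{thm:nondeg}, which gives uniqueness of the geodesic cord $c_e$ in each class $e \in \pi_1(N,T)$, hence rigidity) the three lifted geodesics determine three complete geodesics in $\mathbb H^3$ whose ideal endpoints are a fixed triple of points on $\partial_\infty \mathbb H^3 = S^2$. Any three distinct points on $S^2$ span a unique totally geodesic ideal triangle $\widetilde\Delta \subset \mathbb H^3$ (the hyperbolic plane through those three ideal points); its image $\Delta$ under the covering projection is the desired totally geodesic immersed ideal triangle, and the three input geodesics $\widetilde c^j$ lie on the three ideal edges of $\widetilde\Delta$ by construction. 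So the statement reduces to: the lift $\widetilde f$ of $f = \pi \circ u$ to $\mathbb H^3$ has image inside this ideal triangle $\widetilde\Delta$. The natural mechanism is a convexity/maximum-principle argument — the totally geodesic plane $P \supset \widetilde\Delta$ is a minimal surface, hence (being a hyperplane in $\mathbb H^3$) it separates $\mathbb H^3$ and the signed distance function to $P$ is (sub/super)harmonic along any harmonic map; since $\widetilde f$ is asymptotic along all three ends to geodesics lying in $P$, and (by the horizontal $C^0$ estimate established in Section \ref{subsec:z-coordinate}) the image of $\widetilde f$ stays in a controlled region, the distance-to-$P$ function attains its maximum and minimum on the "boundary at infinity" where it vanishes, forcing $\widetilde f \equiv$ into $P$; then one runs the same argument with the three geodesic half-planes in $P$ bounding $\widetilde\Delta$ to confine $\widetilde f$ to $\widetilde\Delta$ itself, using that along $\partial\Sigma$ the map $f$ lies on $T$ and meets it orthogonally so that no boundary contribution spoils the maximum principle.

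The main obstacle I expect is making the maximum-principle step fully rigorous for the \emph{$H$-perturbed} equation \eqref{eq:CRXH} rather than the homogeneous one: the perturbation term $\beta \otimes X_H$ contributes a non-harmonic piece to $f = \pi\circ u$, so the composition $\operatorname{dist}(\cdot, P) \circ \widetilde f$ is only \emph{subharmonic up to a controlled error} driven by $p = du^{\mathrm{vert}}$ and the cutoff $\beta$. Handling this requires either (i) choosing $\beta$ supported away from the region where the argument is run and arguing on that region plus a removable-singularity/asymptotics analysis near the punctures where \eqref{eq:CRXH} becomes the $X_H$-equation whose $\pi$-image is exactly a geodesic in $P$, or (ii) exploiting that for the Sasakian $J_h$ the energy identity decomposes the harmonic energy of $f$ into a "geometric" piece plus the Hamiltonian action, so that a curve confined near $\Delta$ at infinity and of finite energy is pinned to $\Delta$. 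A secondary technical point is controlling the behavior at the three punctures — one needs the asymptotic convergence of $u$ to the Hamiltonian chords $\gamma^j$ (exponential convergence from nondegeneracy, Theorem \ref{thm:nondeg}) to know that $\widetilde f$ genuinely limits onto the ideal vertices of $\widetilde\Delta$, which is what makes the "boundary values at infinity vanish" claim legitimate. Once these analytic inputs are in place, the geometric conclusion — image in a totally geodesic ideal triangle — follows from the rigidity of totally geodesic planes in $\mathbb H^3$.
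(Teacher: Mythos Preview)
Your overall architecture is right and matches the paper's: lift to $\mathbb H^3$, observe that the three asymptotic geodesic cords lie on the edges of a single ideal triangle $\widetilde\Delta$ (because geodesics perpendicular to a horosphere converge to its center), and then use a maximum principle to trap the solution in the plane $P$ containing $\widetilde\Delta$. The setup paragraph is essentially the paper's Lemma \ref{lem:3d-classify}.

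Where you diverge from the paper, and where there is a genuine gap, is in the maximum-principle step. You propose to project to the base, treat $f=\pi\circ u$ as ``harmonic up to a controlled error'' coming from the $\beta\otimes X_H$ term, and then apply sub/superharmonicity of the signed distance to $P$. Neither of your two proposed fixes for the perturbation actually closes: option (i) fails because $\beta$ is the pullback of $dt$ on the slit domain and cannot be chosen to vanish on any open region where the argument must run; option (ii) is not a pointwise differential inequality and does not yield a maximum principle. The paper's resolution is different and more concrete. After an isometry placing $P=\{x=0\}$, it does \emph{not} work on the base at all, nor with the distance function, but computes directly on $T^*\mathbb H^3$ the Laplacian of the specific scalar $\varphi=(x/z)\circ v$ (here $v=\psi_\eta^{-1}\circ u$) against the full perturbed equation. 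The outcome (Proposition \ref{prop:max-principle-z/x}) is
\[
\Delta\varphi\, dA=\varphi\bigl(v^*\omega-v^*dH\wedge\beta\bigr)+2\varphi\,H(v)\,(\beta\circ j)\wedge\beta+d\varphi\wedge(\beta-v^*\theta),
\]
and the point is that the first two terms are nonnegative multiples of $\varphi$ (Lemma \ref{lem:positive}), so the strong maximum principle applies at positive interior maxima of $\varphi$ (and symmetrically at negative minima) \emph{with the perturbation term present}. The paper explicitly notes that the naive choices $x$ or the distance function do not yield a usable formula; the quotient $x/z$ is what makes the computation close. Your boundary remark (``no boundary contribution spoils the maximum principle'') is also too quick: the paper needs a separate Hopf-lemma argument at a boundary maximum, using the conormal structure $T_{(q,p)}\nu^*T=\langle H_1,H_2,V_3\rangle$ together with the already-established $z$-coordinate bound to control the sign of the inward normal derivative of $\varphi$.
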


The following conjecture is an important one to resolve.

\begin{conj}\label{conj:main} The map $u \mapsto \pi \circ u$ induces a one-one
correspondence between the set of
Floer triangles associated to the triple $(\gamma^0, \gamma^1, \gamma^2)$ and
that of geodesic triangle associated to the triple $(c^0,c^1,c^2)$ with
$c^i = \pi \circ \gamma^i$.
\end{conj}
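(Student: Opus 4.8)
The plan is to use Theorem~\ref{thm:classify-3dim} to confine every Floer triangle to the cotangent bundle of the ideal triangle it spans, and then to solve and count the resulting two-dimensional problem explicitly by hyperbolic geometry. \emph{Step 1 (reduction to $T^*\Delta$).} Let $u$ be a solution of \eqref{eq:CRXH} with asymptotic triple $(\gamma^0,\gamma^1,\gamma^2)$, and let $\Delta$ and $c^j=\pi\circ\gamma^j$ be as in Theorem~\ref{thm:classify-3dim}, so that $\pi\circ u$ has image in $\Delta$. Each $c^j$ runs out to the cusp and is therefore perpendicular to $T$ where it meets it, so at each $q\in T\cap\Delta$ the normal line $\nu_q\Delta$ lies inside $T_qT$. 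Set $\Delta^{\circ}:=\{(q,p)\in T^*N:q\in\Delta,\ p|_{\nu_q\Delta}=0\}\cong T^*\Delta$; the preceding remark gives $\nu^*T\cap\Delta^{\circ}=\nu^*_{\Delta}(T\cap\Delta)$ (the conormal of the horocycle $T\cap\Delta$ inside $T^*\Delta$), and, $\Delta$ being totally geodesic, $\Delta^{\circ}$ is invariant under both $J_h$ and the flow of $X_{H_h}$ and contains $\gamma^0,\gamma^1,\gamma^2$. Writing $u=(\pi\circ u,p)$ with $\pi\circ u\in\Delta$, the normal part $p^{\perp}:=p|_{\nu\Delta}$ is a section of a real line bundle over $\Sigma$; projecting \eqref{eq:CRXH} onto the $J_h$-complex normal directions of $\Delta^{\circ}$ shows that $e:=\tfrac12|p^{\perp}|^2$ is a nonnegative subsolution of an elliptic inequality with $e|_{\partial\Sigma}=0$ and $e\to 0$ at the three punctures, so $e\equiv 0$ by the maximum principle. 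Hence every Floer triangle lies entirely in $\Delta^{\circ}\cong T^*\Delta$ and solves the perturbed Cauchy--Riemann equation for the Sasakian structure of the hyperbolic metric on the convex surface $\Delta$, with boundary on $\nu^*_{\Delta}(T\cap\Delta)$.

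\emph{Step 2 (the two-dimensional count).} On $T^*\Delta$ the base map $f=\pi\circ u:\Sigma\to\Delta$ satisfies a uniformly elliptic quasilinear system; the conormal boundary condition forces $f$ to meet the horocycle $\mathcal H:=T\cap\Delta$ orthogonally along the three boundary arcs and to converge to the geodesic cords $c^j$ at the punctures, so that, after truncation of $\Delta$ along $T$, $f$ identifies $\Sigma$ with the geodesic--horocyclic hexagon bounded by $c^0,c^1,c^2$ and three arcs of $T$. Since the thrice-punctured disk carries no conformal modulus, the domain is rigid. I would then identify the moduli space $\mathcal M(\gamma^0;\gamma^1,\gamma^2)$ with a space of geodesic configurations on $\Delta$ by an adiabatic/neck-stretching argument of Abbondandolo--Schwarz and Ekholm type: in the limit a kinetic-energy Floer triangle in $T^*\Delta$ degenerates onto a $Y$-shaped union of geodesic arcs in $\Delta$ whose three external ends meet $\mathcal H$ perpendicularly in the homotopy classes of $c^0,c^1,c^2$. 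An explicit computation in $\H^2$, using the isometry group and the convexity of ideal triangles, shows that for the prescribed triple of cords there is exactly one such configuration and that it fills precisely $\Delta$; hence $\#\mathcal M=1$ for each $\Delta$. As all these moduli spaces are $0$-dimensional and, by Theorem~\ref{thm:index-nullity}, regular for $J_h$, this yields the bijection: $u\mapsto\pi\circ u$ sends the unique solution in $\Delta^{\circ}$ to $\Delta$; it is injective because $\Delta$ determines $\Delta^{\circ}$ and hence $u$; and it is surjective because solving the two-dimensional problem inside $T^*\Delta$ realizes every geodesic triangle with sides $(c^0,c^1,c^2)$.

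\emph{Main obstacle.} The hard part is Step 2 in the presence of the cusp end. One must prove energy and $C^0$ estimates --- with the horizontal $C^0$ estimates of Section~\ref{subsec:z-coordinate} as the natural input --- showing that neither a Floer triangle nor any configuration arising in the adiabatic limit escapes into the cusp, so that the moduli spaces are compact and no spurious broken objects appear; and one must prove surjectivity, i.e.\ a gluing/implicit-function statement producing an honest solution near each degenerate geodesic tree. Once compactness and transversality are secured, the explicit enumeration of geodesic ideal triangles in $\H^2$ completes the proof.
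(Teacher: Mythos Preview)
This statement is labeled \emph{Conjecture} in the paper, and the authors explicitly write that they ``leave the study of this conjecture elsewhere.'' There is no proof in the paper for you to compare against; what the paper does prove is the one-way map (Theorem~\ref{thm:hexagon} in the appendix): every Floer triangle projects into a truncated ideal triangle $\dot\Theta$. So your task here was not to match a proof but to supply one, and your proposal should be read as an outline toward resolving an open problem.

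Your Step~1 overlaps with, but is not the same as, what the paper actually establishes. The paper shows $\pi\circ v$ lands in the plane $\{x=0\}$ by applying the strong maximum principle to the base function $x/z$ (Proposition~\ref{prop:max-principle-z/x}); it does \emph{not} prove that the momentum $p$ is tangent to $\Delta$, i.e.\ that $v$ lands in $T^*\Delta\subset T^*\H^3$. Your argument for this---that $e=\tfrac12|p^{\perp}|^2$ is a subsolution of an elliptic inequality vanishing on $\partial\Sigma$---is plausible but not justified: you have not written down the equation $p^{\perp}$ satisfies, nor verified the sign needed for the maximum principle, nor checked that the conormal boundary condition really forces $p^{\perp}|_{\partial\Sigma}=0$ along the horocyclic arcs (your claim $\nu_q\Delta\subset T_qT$ holds at the endpoints of the cords, where the geodesic meets $T$ orthogonally, but along the interior of a horocycle arc the normal to $\Delta$ need not be tangent to $T$). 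This step needs an honest computation in the spirit of Appendix~\ref{sec:Deltaxz}.

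Step~2 is where the conjecture actually lives, and here your proposal is a sketch rather than a proof. The appeal to an ``adiabatic/neck-stretching argument of Abbondandolo--Schwarz and Ekholm type'' names a strategy but does not execute it: you would need (i) a degeneration theorem sending kinetic-energy Floer polygons in $T^*\Delta$ to gradient trees on $\Delta$, with the correct boundary/asymptotic conditions and with compactness in the cusp (the horizontal $C^0$ estimates of Section~\ref{subsec:z-coordinate} are a start but were proved for the autonomous equation, not for a one-parameter family), and (ii) a gluing theorem in the reverse direction. Neither is available off the shelf for this noncompact, Morse--Bott setting. Your closing paragraph correctly identifies exactly these as the obstacles; but identifying them is not the same as overcoming them, so as written this remains a program, not a proof.
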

One outcome of Conjecture \ref{conj:main} would be that calculation of structure constants
of the algebra $HW(\del_\infty(M\setminus K))$
is reduced to a counting problem of geodesic triangles. We hope
to come back to investigate validity of this conjecture elsewhere.

Finally, we study the torus knots and prove the following as a consequence of
Theorem \ref{thm:torusKnot}.

\begin{thm}\label{thm:torus} Let $K \subset S^3$ be any torus knot. Then we have
$\widetilde{HW}^d(\partial_\infty(S^3\setminus K))$ is non-zero for $d=0, 1$ and zero otherwise.
\end{thm}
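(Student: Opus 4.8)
The plan is to replay the analysis behind Theorems \ref{thm:nondeg}--\ref{thm:formality}, with the hyperbolic metric replaced by the natural Seifert ($\H^2\times\R$) geometry of a torus knot complement, and to observe that this geometry makes the reduced complex concentrated in degrees $0$ and $1$ with vanishing differential. Write $K=K_{p,q}$ and $N=S^3\setminus K$. Recall that $N$ is the mapping torus of a \emph{periodic} diffeomorphism $\phi$ (of order $pq$) of a compact surface $\Sigma$ with one boundary circle; since $\langle\phi\rangle$ is finite cyclic we may fix a complete hyperbolic metric $g_\Sigma$ on $\Sigma$ with geodesic boundary for which $\phi$ is an isometry rotating $\partial\Sigma$. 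The product metric $g_\Sigma+dt^2$ on $\Sigma\times\R$ then descends to a complete metric $g_0$ on $N$, locally isometric to $\H^2\times\R$, in which the torus $T:=(\partial\Sigma\times\R)/\langle\phi\rangle$ is flat and totally geodesic and the vertical Seifert circle action $\Psi$, generated by the \emph{parallel} vector field $\partial_t$, acts by isometries preserving $T$ and $\nu^*T$. After attaching a flat cylindrical end $T\times[0,\infty)$ and establishing the horizontal $C^0$--estimate of the kind proved in Section \ref{subsec:z-coordinate} (geodesics and Floer strips cannot escape the compact core), the construction produces a well-defined $A_\infty$--algebra $CW(\nu^*T;H_{g_0})$, and---by the metric independence of \cite{BKO} together with a comparison argument parallel to Theorem \ref{thm:comparison-intro}---its cohomology is canonically isomorphic to $HW(\del_\infty(S^3\setminus K))$. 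This identification (and the index computation below) is the content of Theorem \ref{thm:torusKnot}.

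I would then classify the Hamiltonian chords. Since $\partial_t$ is parallel and $\dot c\perp T\ni\partial_t$ at its endpoints, every geodesic cord $c$ of $(N,g_0)$ attached to $T$ has $\langle\dot c,\partial_t\rangle\equiv 0$, hence lies in a single leaf $\Sigma\times\{t_0\}$ and projects to an orthogeodesic arc $\bar c$ of $(\Sigma,\partial\Sigma)$; conversely, every orthogeodesic arc of $(\Sigma,\partial\Sigma)$, placed at an arbitrary height, is a cord. As a negatively curved surface with geodesic boundary carries a \emph{unique} orthogeodesic in each homotopy class of arcs rel $\partial\Sigma$, the set $\mathscr C$ of cords modulo $\Psi$ is a nonempty, countably infinite set (one element per homotopy class of arcs rel $\partial\Sigma$ modulo $\langle\phi\rangle$, equivalently per homotopy class of arcs $(I,\partial I)\to(N,T)$). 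For a variation $V=a(t)\partial_t+W(t)$ along a cord, the fact that $\partial_t$ is parallel and flat and that $T$ is totally geodesic makes the second variation form with the $\nu^*T$ boundary condition split as $I(V,V)=\int_0^1(a')^2\,dt+I_\Sigma(W,W)$, where $I_\Sigma$ is exactly the index form of $\bar c$ in $(\Sigma,\partial\Sigma)$; negative curvature together with the totally geodesic boundary makes $I_\Sigma$ positive definite (the surface-with-geodesic-boundary counterpart of Theorem \ref{thm:nondeg}), so $I$ is positive semidefinite with kernel precisely the constant vertical fields. Hence every cord has Morse index $0$ and nullity $1$, its critical manifold is the Seifert circle $\Psi\!\cdot\!c$, and a perfect Morse function on each such circle turns every class $\mathfrak c\in\mathscr C$ into two nondegenerate chords, $x^{\min}_{\mathfrak c}$ of degree $0$ and $x^{\max}_{\mathfrak c}$ of degree $1$. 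In particular $\widetilde{CW}^*(\nu^*T;H_{g_0})$ is concentrated in degrees $0$ and $1$, so $\widetilde{HW}^d(\del_\infty(S^3\setminus K))=0$ for all $d\ne 0,1$ (only $\widetilde{\mathfrak m}^1$ matters here, so nothing need be said about the higher operations).

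It remains to prove $\widetilde{\mathfrak m}^1=0$. Any Floer strip contributing to $\widetilde{\mathfrak m}^1$ has its two asymptotic cords in the \emph{same} homotopy class of arcs rel $T$ (together with its two boundary arcs on $\nu^*T$ it bounds a disk in $N$); by uniqueness of orthogeodesics the two cords then lie on the same Seifert circle, and the energy identity forces the strip to be constant. (Equivalently, the analogue for strips of Theorem \ref{thm:classify-3dim} places $\pi\circ u$ in a totally geodesic ideal \emph{bigon}, which is degenerate in nonpositive curvature.) So $\widetilde{\mathfrak m}^1$ reduces to the Morse differential on the critical circles, which vanishes for a perfect Morse function on $S^1$. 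Therefore $\widetilde{HW}^*(\del_\infty(S^3\setminus K))=\widetilde{CW}^*(\nu^*T;H_{g_0})$ as graded abelian groups, which is free of countably infinite rank (indexed by $\mathscr C$) in each of the degrees $0$ and $1$ and zero otherwise; in particular it is nonzero in degrees $0$ and $1$ and zero in all other degrees. This proves Theorem \ref{thm:torus}.

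The one genuinely delicate point is the first paragraph: one must verify that the $\H^2\times\R$ metric $g_0$---which is \emph{not} the restriction to $N$ of a smooth metric on $S^3$, so the invariance statement of \cite{BKO} does not literally apply---is admissible for the construction and that $HW(\nu^*T;H_{g_0})$ does compute $HW(\del_\infty(S^3\setminus K))$; this is the Seifert-metric analogue of Theorem \ref{thm:comparison-intro}, and it needs both the horizontal $C^0$--estimate on solutions of \eqref{eq:CRXH} (keeping all moduli in the compact core) and the usual continuation argument. Granting that, the remaining steps are routine: the curvature sign and the totally geodesic boundary make the index computation no harder than in the hyperbolic case, the splitting $I=\int(a')^2+I_\Sigma$ reduces index-$0$-ness to a statement about the surface $\Sigma$, and the vanishing of $\widetilde{\mathfrak m}^1$ is the very ``no geodesic bigon'' phenomenon that, for geodesic triangles, underlies Theorem \ref{thm:classify-3dim}.
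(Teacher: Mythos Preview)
Your approach is correct and structurally parallel to the paper's: both exploit the $\H^2\times\R$ geometry of the torus knot complement to show that geodesic cords are ``horizontal'' (contained in hyperbolic surface slices), come in $S^1$-families indexed by the Seifert fiber direction, have Morse index $0$ and nullity $1$, and hence---after a Morse--Bott perturbation on each circle---generate a complex concentrated in degrees $0$ and $1$ with vanishing $\widetilde{\mathfrak m}^1$.

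The genuine difference is the choice of model. The paper realizes $S^3\setminus K$ via an explicit fundamental domain $D=P\times[0,q]\subset\H^2\times\R$ with face-pairings; the horizontal slices $S_t$ are then \emph{complete} hyperbolic $p$-punctured surfaces, and the torus $T$ is a ``semi-horo-torus'' (horocycle $\times\,\R$). You instead use the surface-bundle description $N\simeq\Sigma\times\R/\langle\phi\rangle$ with $\Sigma$ the Seifert surface carrying a $\phi$-invariant hyperbolic metric with \emph{geodesic boundary}, so $T$ is totally geodesic rather than horocyclic. Both are legitimate $\H^2\times\R$-type metrics, and both make the key index computation go through---in the paper via the positive mean curvature of the semi-horo-torus (the 2D analogue of Proposition~\ref{prop:meancurvature}), in your setup via the vanishing second fundamental form of the totally geodesic $T$ together with strict negativity of the fiberwise curvature. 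Your index computation is a shade cleaner for the product splitting $I(V,V)=\int(a')^2+I_\Sigma(W,W)$, while the paper's model connects more directly to the rest of the paper (horo-boundary, cusped surfaces) and lets them package the result as $\widetilde{CW}^*(\partial_\infty(S^3\setminus K))\cong\widetilde{CW}^*(\partial_\infty S_t)\oplus\widetilde{CW}^*(\partial_\infty S_t)[1]$, reducing the vanishing of $\widetilde{\mathfrak m}^1$ to the already-established 2D case rather than to an ad hoc bigon argument.

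You correctly flag the one real gap: verifying that your Seifert metric (which is neither the restriction of a smooth $S^3$-metric nor cylindrical at infinity before you attach the end) is admissible and computes $HW(\del_\infty(S^3\setminus K))$. The paper handles this by taking a cylindrical adjustment invariant under the longitudinal $\R$-translation and invoking the 2D comparison argument; your sketch would need the analogous step made precise.
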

We refer to Theorem \ref{thm:torusKnot}] for the precise statement of this non-triviality
result.

Combining Theorem \ref{thm:comparison-intro} and Corollary \ref{cor:d=0}, we have derived
\begin{cor} The knot Floer cohomology $\widetilde{HW}^*(\partial_\infty(S^3\setminus K))$
differentiates hyperbolic knots from torus knots.
\end{cor}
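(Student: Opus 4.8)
The plan is to derive this corollary by combining the two computations already established: by Corollary \ref{cor:d=0}, every hyperbolic knot $K \subset S^3$ has $\widetilde{HW}^d(\partial_\infty(S^3\setminus K)) = 0$ for all $d \neq 0$, while by Theorem \ref{thm:torus}, every torus knot $K$ satisfies $\widetilde{HW}^1(\partial_\infty(S^3\setminus K)) \neq 0$. Since the reduced knot Floer cohomology is an isotopy invariant of $K$ (this is the content of the well-definedness statement inherited from \cite{BKO} via Theorem \ref{thm:comparison-intro}), the nonvanishing of $\widetilde{HW}^1$ is a property that distinguishes the isotopy class of any torus knot from the isotopy class of any hyperbolic knot. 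More precisely, if $K$ were simultaneously isotopic to a torus knot and hyperbolic, then $\widetilde{HW}^1(\partial_\infty(S^3\setminus K))$ would be both nonzero (by Theorem \ref{thm:torus}) and zero (by Corollary \ref{cor:d=0}), a contradiction; hence no knot is both, and the invariant $\widetilde{HW}^*$ separates the two families.

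The key steps, in order, are: first, invoke Theorem \ref{thm:comparison-intro} to identify $\widetilde{HW}^*(\partial_\infty(S^3\setminus K))$ with the wrapped Floer cohomology $\widetilde{HW}^*(\nu^*T; H_h)$ for hyperbolic $K$, so that Corollary \ref{cor:d=0} applies and gives $\widetilde{HW}^d = 0$ for $d \geq 1$; second, quote Theorem \ref{thm:torus} for the torus-knot case to produce a class in degree $d=1$; third, observe that both sides are invariants of the isotopy class of $K$ in $S^3$ (from \cite{BKO}), so the dichotomy on the value of $\widetilde{HW}^1$ is intrinsic; fourth, conclude by the contradiction argument above that the two knot classes are disjoint and are detected by the degree-$1$ part of the invariant. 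No genuinely new analysis is needed beyond assembling these inputs.

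The main obstacle here is not in the corollary itself — which is essentially a one-line deduction — but in making sure the hypotheses of the two cited results are compatible, in particular that the reduced cohomology appearing in Theorem \ref{thm:torus} (defined for arbitrary knots via the construction of \cite{BKO}) is literally the same graded group as the one appearing in Corollary \ref{cor:d=0} (a priori computed with the hyperbolic metric and the Sasakian $J_h$). This is exactly what Theorem \ref{thm:comparison-intro} guarantees: the metric-independence established in \cite{BKO} together with the isomorphism $HW^d(\nu^*T;H_h)\cong HW^d(\partial_\infty(M\setminus K))$ lets us pass freely between the two models, so the apparent obstacle dissolves. Thus the proof is simply: "Immediate from Corollary \ref{cor:d=0} and Theorem \ref{thm:torus}, using that $\widetilde{HW}^*(\partial_\infty(S^3\setminus K))$ is a knot invariant."
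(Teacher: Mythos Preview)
Your proposal is correct and matches the paper's own argument: the corollary is stated in the introduction immediately after Theorem \ref{thm:torus} with the one-line justification ``Combining Theorem \ref{thm:comparison-intro} and Corollary \ref{cor:d=0}, we have derived'' (with Theorem \ref{thm:torus} implicitly in play from the preceding sentence). Your write-up simply makes explicit the contradiction between $\widetilde{HW}^1 \neq 0$ for torus knots and $\widetilde{HW}^1 = 0$ for hyperbolic knots, which is exactly the intended deduction.
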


\begin{rem}\label{rem:alternative} There is an alternative approach one could take towards
proving Theorem \ref{thm:comparison-intro} following the scheme \cite{ASc}, \cite{APSc}. It goes as follows.
Compare the Morse complex of the kinetic energy functional $E_g: \CP(M \setminus K, T) \to \R$ and
the associated Floer complex $\Omega(\nu^*T; T^*(M\setminus K))$. If $N = M \setminus K$ were a closed
manifold, the main theorem of \cite{APSc} would imply that $HF(\nu^*K, T^*N; H_g)$ is isomorphic
to the Morse homology of $E(g)$ and so to the singular homology of $\CP(N, T)$ \emph{for any metric} $g$.
To perform similar scheme for $\CP(N,T)$ on the open manifold $N = M \setminus K$, one
needs to first identify the set of \emph{admissible} metrics for which relevant analysis entering in the
Morse theory, Floer theory and the comparison of the two theory can be performed. After these preparations, one
expects that the same isomorphism holds for \emph{such a metric}.

In our situation, we consider two different metrics, a hyperbolic metric $h$ and its cylindrical
adjustment $h_0$, on the \emph{open} manifold $M \setminus K$ and directly compare them
in the Floer theory level. Note that the hyperbolic metric $h$ of finite volume
in $M \setminus K$ has injectivity radius zero, and so the global Sobolev inequality fails to hold and
 cannot be used \emph{unless relevant $C^0$-estimates
is preceded.} This makes the scheme used in \cite{APSc} not directly applicable for
such a metric on the open manifold $M \setminus K$. In this regard, a large part of the present paper
is devoted to verifying that the hyperbolic metric and the cylindrical adjustment thereof
are such admissible metrics by overcoming such analytical difficulties that are anticipated
to meet even in this alternative route.
\end{rem}

\subsection{Hyperbolic geometry and the Bochner techniques}

The main ingredients of the proofs of both theorems mentioned above are various applications of
the Bochner-type techniques both in the pointwise version and in the integral version.
A brief outline of how we apply these techniques is now in order.

Theorem \ref{thm:formality} is a consequence of Theorem \ref{thm:nondeg} and
a degree counting argument. Therefore we
will focus on  Theorem \ref{thm:nondeg} and Theorem \ref{thm:classify-3dim}.
The proof of Theorem \ref{thm:nondeg} is an explicit calculation of the second
variation of the energy functional for the paths satisfying the free boundary
condition associated to the horo-torus $T$. Then the explicit formula
we obtain manifestly establishes the positivity and nondegeneracy of the second variation,
which is thanks to the special geometric property of the horo-torus
(See Section \ref{sec:index}.):
it has a constant positive mean curvature relative to the outward unit normal to $T$
pointing to the cusp direction, which we also compute.

For the proof of Theorem \ref{thm:classify-3dim}, we apply some isometry element $g \in \psl$,
and first reduce the classification problem to that of $\H^2 \simeq \{x=0\}$.

For these purposes, we exploit
\begin{enumerate}
\item the \emph{negative} constant curvature property of hyperbolic
metric on $\H^3$,
\item the constant mean-curvature property of the horo-torus $T$ with the correct sign,
\item a usage of (strong) maximum principle based on rather delicate
calculation of the Laplacian of an indicator function and subtle rearrangement of the
terms appearing in the computed Laplacian. (See Appendix \ref{sec:Deltaxz}.)
\end{enumerate}

\subsection{Conventions}
\label{subsec:sasakian}

In the literature on symplectic geometry, Hamiltonian dynamics, contact geometry and
the physics literature, there are various conventions used which are different from
one another one way or the other. In the mathematics literature, there are two
conventions that have been dominantly appeared, which are
summarized in the preface of the book \cite{oh:book1}: one is the convention that has been
consistently used by the third named author and the other is the one that is
called Entov-Polterovich's convention in \cite{oh:book1}.

The major differences between the two conventions lie in the choice of the following three definitions:
\begin{itemize}
\item {\bf Definition of Hamiltonian vector field:} On a symplectic manifold $(P,\omega)$,
the Hamiltonian vector field associated to a function $H$ is given by the formula
$$
\omega(X_H,\cdot) = dH \, (\text{ resp. } \, \omega(X_H,\cdot ) = - dH),
$$
\item{\bf Compatible almost complex structure:} In both conventions, $J$ is compatible to $\omega$ if
the bilinear form $\omega(\cdot, J \cdot)$ is positive definite.
\item{\bf Canonical symplectic form:} On the cotangent bundle $T^*N$, the canonical symplectic form
is given by
$$
\omega_0 = \sum_{i=1}^n dq^i \wedge dp_i, (\text{resp. }\,   \sum_{i=1}^n  dp_i \wedge dq^i),
$$
\end{itemize}
It appears that in the physics literature (e.g., \cite{AENV} and others) the canonical symplectic form
is taken as $dq \wedge dp$ as well as in \cite{klingenberg,EENS} and \cite{oh:jdg}-\cite{oh:book2}.
For the convenience of designating the conventions, let us call the first Convention I and
the second Convention II in the paragraph below. Our current convention is consistent with that of \cite{fooo:book}.

We will utilize various forms of the (strong) maximum principle for the equation
$$
(du - \beta \otimes X_{H_h})_{J_h}^{(0,1)} = 0
$$
with the \emph{negative} sign in front of $\beta \otimes X_{H_h}$.
In the strip coordinate $(\tau,t)$, the equation becomes
\be\label{eq:CRJH}
\frac{\del u}{\del \tau} + J\left(\frac{\del u}{\del t} - X_{H_h}(u)\right) = 0.
\ee
Applicability of the maximum principle is very sensitive to the choice of conventions and the signs
in the relevant equations in general such as \eqref{eq:CRXH} in the present study.
We study the Cauchy-Riemann equation \eqref{eq:CRXH} on $T^*N$ associated to the triple
$$
(\omega_0,J_h, H_h)
$$
where $\omega_0$, $H_h$ and $J_h$ are defined on $T^*N$ following Convention I.
Under these circumstances, it turns out that it is essential to adopt Convention I
to be able to apply the various maximum principles we need for the equation \eqref{eq:CRXH}.
(See the calculations provided in Appendix and Subsection \ref{subsec:z-coordinate},
especially Lemma \ref{lem:positive}, to see how these arise.)

In addition to these, the Floer continuation map is defined over the homotopy of Hamiltonian
\emph{in the increasing direction.} To be able to obtain the necessary energy estimates
in the wrapped setting, we consider the action functional associated to Hamiltonian $H$ on $T^*N$ by
$$
\CA_H(\gamma) = - \int \gamma^*\theta + \int_0^1 H(t, \gamma(t))\, dt
$$
which is the negative of the classical action functional. For the kinetic energy Hamiltonian $H = H_g(x)$, we have
\be\label{eq:CA-E}
\CA_H(\gamma_c) = - E_g(c)
\ee
where $\gamma_c$ is the Hamiltonian chord associated to the geodesic $c$ and $E_g(c)$ is the energy of
$c$ with respect to the metric $g$.

While this paper is written as a sequel to \cite{BKO}, its content is largely
independent of that of \cite{BKO} except that we adopt the same convention as thereof.
Except in Section \ref{sec:knot-algebra} and \ref{sec:comparison}, we directly work with
the given hyperbolic metric for the study of perturbed Cauchy-Riemann equation above
without taking the cylindrical adjustment. This forces us to establish a new form of horizontal
$C^0$ estimates (see Theorem \ref{thm:z-coord}) directly applicable to the hyperbolic metric
without taking a cylindrical adjustment. One important difference of the hyperbolic metric
from that of cylindrical metric is that a geodesic issued even inward from
$\del N^{\text{\rm cpt}}$ for $N^{\text{\rm cpt}} = M \setminus N(K) \subset M\setminus K$
may go out of the domain $N^{\text{\rm cpt}}$, get closer to the knot $K$ and then
come back to the domain $N^{\text{\rm cpt}}$.
In Section \ref{sec:comparison},
we compare the wrapped Floer cohomology associated to $(\nu^*T, H_h)$ with
the Knot Floer cohomology defined for a general knot,
not necessarily a hyperbolic knot, via a cylindrical adjustment
of a smooth metric $g$ on $M$ restricted to $M \setminus K$.

\bigskip

\noindent{\bf Acknowledgement:} Y. Bae thanks Research Institute for Mathematical Sciences, Kyoto
University for its warm hospitality. {The authors also thank C. Viterbo for pointing out
that the isomorphism constructed in Theorem \ref{thm:comparison-intro} may be a consequence of
classical topology of the path space $\CP(N, T)$. The alternative route we outline in Remark \ref{rem:alternative}
is the result of our afterthought.

\section{Definition of Knot Floer algebra in \cite{BKO}}
\label{sec:knot-algebra}

We first provide the construction of Knot Floer algebra introduced in \cite{BKO}
without the details of its construction.

Let $g$ be a smooth Riemannian metric on $M$.
Consider a tubular neighborhood $N(K)$ of $K$. We denote its boundary by
$T = \del(N(K))$ and $L = \nu^*T$, the conormal bundle of the torus $T$.

We define a cylindrical adjustment $g_0$ of the metric $g$ on $M$ by
$$
g_0 = \begin{cases} g \quad & \text{on } M\setminus N'(K) \\
da^2 \oplus g|_{\del N(K)} & \text{on } N(K) \setminus K
\end{cases}
$$
which is suitably interpolated on $N'(K) \setminus N(K)$ and fixed.
Then we denote
\[
W(K) = T^*N(K) \subset T^*(M\setminus K).
\]

We denote by $\mathfrak X(L;H_{g_0})= \mathfrak X(L,L;H_{g_0})$ the set of Hamiltonian chords of $H_{g_0}$
attached to a Lagrangian submanifold $L$ in general.
We have
$$
\mathfrak X(L;H_{g_0}) = \mathfrak X_0(L;H_{g_0}) \coprod \mathfrak X_{ < 0}(L;H_{g_0})
$$
where the subindex of $\mathfrak X$ in the right hand side denotes the action of the Hamiltonian chords of $H_{g_0}$.
We also define
\be\label{eq:spectrum}
\text{\rm Spec}(L;H_{g_0}) = \{\CA_{H_{g_0}}(\gamma) \in \R \mid \gamma \in \frak X(L;H_{g_0})\}
\ee
and call the action spectrum of the pair $(L;H_{g_0})$. By definition of the kinetic energy Hamiltonian

We note that $\mathfrak X_0(L;H_{g_0}) \cong \T^2$ and the component is clean in the sense of Bott
as follows. The following general proposition seems to be interesting of its own.

\begin{prop}\label{prop:Bott-clean} Consider an arbitrary Riemannian manifold $(N,g)$
of any dimension $n$. Let $T \subset N$ be any compact submanifold of dimension $0 \leq k \leq n$. Then
\begin{enumerate}
\item The set $\widehat T$ of constant Hamiltonian chords of $H_{g}$
attached to $\nu^*T$ consist of constant paths valued in $o_{\nu^*T}$. Hence
$\widehat T$ is in one-one correspondence with $\T^2$.
\item
The set $\widehat T$ is normally nondegenerate in the path space
$$
\Omega_{[0,1]}(\nu^*T;T^*N) = \{ \gamma: [0,1] \to T^*N \mid \gamma(0), \, \gamma(1) \in \nu^*T\}
$$
and is diffeomorphic to $T \cong \T^2$.
\end{enumerate}
\end{prop}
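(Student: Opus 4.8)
The plan is to handle the two statements in sequence. Statement (1) is a matter of locating the zeros of the Hamiltonian vector field $X_{H_g}$ among paths obeying the conormal boundary condition. Under our sign convention $X_{H_g}$ is the (co)geodesic spray of $g$; in particular its horizontal component at $(q,p)$ equals $p^\sharp$, so $X_{H_g}(q,p)=0$ exactly when $p=0$. Hence a Hamiltonian chord $\gamma$ of $H_g$ is constant if and only if its fibre coordinate vanishes identically, i.e. $\gamma$ is a constant path in the zero section $o_N$; imposing $\gamma(0)\in\nu^*T$ gives $\gamma\equiv(q_0,0)$ with $q_0\in T$, so $\gamma\in o_N\cap\nu^*T=o_{\nu^*T}$, and conversely every point of $o_{\nu^*T}$ is visibly a constant chord. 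The tautological identification $o_{\nu^*T}\cong T$ then yields the asserted one-one correspondence (which is with $\T^2$ in the case of interest).

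For statement (2) I would linearize the chord equation $\dot\gamma=X_{H_g}(\gamma)$, together with the Lagrangian boundary condition $\gamma(0),\gamma(1)\in\nu^*T$, at a constant chord $\gamma\equiv(q_0,0)$, and show the kernel of this linearization is exactly $T_{q_0}\widehat T$; this is precisely the assertion that $\widehat T$ is a normally nondegenerate (Bott-clean) critical manifold of the action functional on $\Omega_{[0,1]}(\nu^*T;T^*N)$. Splitting $T_{(q_0,0)}T^*N\cong T_{q_0}N\oplus T^*_{q_0}N$ via the Levi-Civita connection, the fibrewise Hessian of $H_g=\tfrac12 g^{ij}p_ip_j$ at $(q_0,0)$ is $\mathrm{diag}(0,g^{-1}(q_0))$, since every mixed or purely base second derivative carries a factor of $p$. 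So the linearized equation for $V=(\xi,\eta)$ is simply $\dot\xi=\eta^\sharp$, $\dot\eta=0$, whence $\eta\equiv\eta_0$ and $\xi(t)=\xi_0+t\,\eta_0^\sharp$. Since $T_{(q_0,0)}\nu^*T=T_{q_0}T\oplus\nu^*_{q_0}T$, the conditions at the two endpoints read $\xi_0\in T_{q_0}T$, $\eta_0^\sharp\perp T_{q_0}T$, and $\xi_0+\eta_0^\sharp\in T_{q_0}T$; the last two force $\eta_0^\sharp\in T_{q_0}T\cap(T_{q_0}T)^\perp=\{0\}$, so $V\equiv(\xi_0,0)$ with $\xi_0\in T_{q_0}T$. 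This kernel is $T_{q_0}\widehat T$, and the diffeomorphism type of $\widehat T$ is settled by (1).

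As a consistency check one can run the dual computation on the base: the index form of the energy $E_g$ on $\CP(N,T)=\{c:[0,1]\to N\mid c(0),c(1)\in T\}$ at the constant geodesic $c\equiv q_0$ collapses — the curvature term and both second-fundamental-form boundary terms disappear because $\dot c\equiv 0$ — to $I(V,V)=\int_0^1|\nabla_t V|^2\,dt$ on vector fields $V$ along $c$ with $V(0),V(1)\in T_{q_0}T$, which is positive semidefinite with nullspace the constant $T_{q_0}T$-valued fields, again $T_{q_0}\widehat T$. The computation is short once (1) is established; the one place that demands care is the boundary bookkeeping — correctly computing $T_{(q_0,0)}\nu^*T$ at a zero-section point, matching the free endpoint condition defining $\CP(N,T)$ with the conormal Lagrangian condition, and tracking the signs of the two shape-operator terms in $I$ (a non-issue here since they vanish). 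Non-compactness of $N$ plays no role, as the entire argument is local near $q_0\in T$.
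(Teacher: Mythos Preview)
Your proof is correct and follows essentially the same route as the paper's. For (1), both you and the paper observe that $X_{H_g}(q,p)=0$ forces $p=0$, so constant chords are exactly the points of $o_{\nu^*T}\cong T$. For (2), both linearize at a constant chord and arrive at the system $\dot\xi=\eta^\sharp,\ \dot\eta=0$ with boundary values in $T_{q_0}T\oplus\nu^*_{q_0}T$, and solve it to find the kernel equals the constant $T_{q_0}T$-valued fields. The paper reaches this system by computing the second variation $d^2\CA_{H_g}$ and integrating by parts (their $(\xi^\parallel,\xi^\perp)$ is your $(\xi,\eta)$); you reach it by directly linearizing the chord equation and reading off $DX_{H_g}|_{(q_0,0)}$ from the fibrewise Hessian. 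These are equivalent computations.

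One difference worth noting: after identifying the kernel, the paper also writes down the formal $L^2$-adjoint boundary value problem, solves it (the solution space is the constant $(T_{q_0}T)^\flat$-valued covector fields), and checks that this cokernel meets the annihilator $(T_{\gamma_q}\widehat T)^\circ$ trivially. You omit this. For the bare Morse--Bott statement---critical set is a manifold and $\ker(\text{Hessian})=T\widehat T$---your kernel computation already suffices, since $\widehat T\cong T$ is visibly a manifold by (1). The paper's cokernel check is a Fredholm-theoretic refinement, confirming that the linearized operator has index equal to $\dim\widehat T$; this matters for the moduli theory downstream but is not required for the proposition as stated. Your index-form consistency check on the base $\CP(N,T)$ is a correct and pleasant reformulation, though it does not add independent information beyond the cotangent computation.
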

\begin{proof}
The statement (1) is a direct
consequence of  the boundary condition $x(0), \, x(1) \in \nu^*T$, since
any constant solution of $\dot x = X_{H_{g}}(x)$ has zero momentum, i.e., $p = 0$.

The remaining proof will be occupied by the proof of Statement (2).
Let $\gamma_q: [0,1] \to T^*N$ with $q \in T$ be a constant Hamiltonian cord
valued at $(q,0) \in \nu^*T$.

We decompose vector field $\xi$ along $\gamma_q$
into $\xi = \xi^\parallel + \xi^\perp$
under the decomposition $T_{(q,0)}(T^*N) \cong T_q N \oplus T_q^*N$.
For each pair of vector fields $\xi_1, \, \xi_2$ along $\gamma_q$ satisfying
$$
\xi_i(0), \, \xi_i(1) \in T_{(q,0)}(\nu^*T)
$$
a straightforward calculation by integration by parts give rise to
the formula for the Hessian of $\CA_{H_g}$ at $\gamma_q$
\beastar
d^2\CA_{H_g}(\gamma_q)(\xi_1,\xi_2) & = & \int_0^1 \omega \left(\frac{D \xi_2}{dt}(t), \xi_1(t)\right)
+ g^\flat(\xi_1^\perp(t),\xi_2^\perp(t)))\, dt \\
& = & \omega(\xi_2(1), \xi_1(1)) - \omega(\xi_2(0),\xi_1(0)) \\
&{}&
- \int_0^1 \omega\left(\xi_2(t),\frac{D \xi_1}{dt}(t)\right)+ g^\flat(\xi_1^\perp(t),\xi_2^\perp(t)))\, dt\\
& = & - \int_0^1 \omega\left(\xi_2(t),\frac{D \xi_1}{dt}(t)\right)+ g^\flat(\xi_1(t)^\perp,\xi_2^\perp(t)))\, dt
\eeastar
where the last equality follows since $\nu^*T$ is Lagrangian.
(We refer readers to the proof of \cite[Proposition 18.2.8]{oh:book2} for the calculation of the
first term in the first line, the Hessian of the functional $\gamma \to \int_ \gamma^*\theta$.)
Using the decomposition $\xi = \xi^\parallel + \xi^\perp$, we have
\beastar
\omega\left(\xi_2,\frac{D \xi_1}{dt}(t)\right) &  = &
\left\langle \xi_2^\parallel(t), \frac{D \xi_1^\perp}{dt}(t)\right\rangle -
\left\langle \xi_2^\perp(t), \frac{D \xi_1^\parallel}{dt}(t)\right\rangle\\
& = & g^\flat\left((\xi_2^\parallel)^\flat(t), \frac{D \xi_1^\perp}{dt}(t)\right)
-  g^\flat\left((\xi_2^\perp(t), \frac{D (\xi_1^\parallel)^\flat}{dt}(t)\right)\\
\eeastar
where the pairing in the first line is the canonical pairing between $TM$ and $T^*M$.
Therefore substituting this into above, we obtain
\beastar
d^2\CA_{H_g}(\gamma_q)(\xi_1,\xi_2) & = &
\int_0 ^1 g^\flat\left((\xi_2^\parallel)^\flat(t), \frac{D \xi_1^\perp}{dt}(t)\right)\, dt\\
&{}& - \int_0^1 g^\flat\left((\xi_2^\perp(t), \frac{D (\xi_1^\parallel)^\flat}{dt}(t) - \xi_1^\perp(t)\right)
\, dt
\eeastar

Therefore a kernel element of $d^2\CA_{H_g}(\gamma_q)$ is given by
the vector field $\xi \in T_{\gamma_q}\Omega(\nu^*T, T^*N)$ satisfying
\be\label{eq:kernel}
\begin{cases}
\frac{D (\xi^\parallel)^\flat}{dt}(t) - \xi^\perp(t) = 0, \quad \frac{D \xi^\perp}{dt}(t) = 0 \\
\xi^\parallel(0), \, \xi^\parallel(1) \in T_qT, \quad  \xi^\perp(0), \, \xi^\perp(1) \in \nu^*_qT.
\end{cases}
\ee
We solve the equation and obtain general solution
$$
\xi^\perp (t) \equiv \alpha, \quad \xi^\parallel(t) = v + (\alpha)^\sharp t
$$
for some vectors $\alpha \in \nu^*_qT, \, v \in T_qM$ that satisfy
\be\label{eq:xiparallelat1}
\alpha^\sharp = \xi^\parallel(0), \, v + \alpha^\sharp = \xi^\parallel(1)\in T_qT.
\ee
Since $\alpha^\sharp \in \nu_qT$ already, the first equation implies $\alpha^\sharp \in T_qT \cap \nu_qT=\{0\}$ and hence $\alpha = 0$.
On the other hand, the second equation then implies $v$ can be chosen arbitrarily from
$T_qT$. This proves
\be\label{eq:kernel-set}
\ker d^2\CA_{H_g}(\gamma_q) = \left\{\xi: [0,1] \to T_q M \oplus T_q^*M
\mid \xi(t)\equiv (v, 0), \, v \in T_qT \right\}
\ee
and hence $\ker d^2\CA_{H_g}(\gamma_q) \cong T_{\gamma_q}\widehat T \cong T_qT \cong \R^k$.

Taking the inner product with a test function $\eta$ of the equation \ref{eq:xiparallelat1} and
performing integration by parts, we obtain
the $L^2$-adjoint equation of \eqref{eq:kernel}, which is
\be\label{eq:cokernel}
\begin{cases}
\frac{D (\eta^\perp)}{dt}(t) + (\eta^\parallel)^\flat(t) = 0, \quad \frac{D \eta^\parallel}{dt}(t) = 0 \\
\eta^\parallel(0), \, \eta^\parallel(1) \in \nu_qT, \quad  \eta^\perp(0), \, \eta^\perp(1) \in (T_qT)^\flat.
\end{cases}
\ee
Solving this, we obtain
$$
\eta^\parallel(t) = w, \quad \eta^\perp (t) = \beta - t w^\flat
$$
for some $\beta \in (T_qT)^\flat$ and $w \in \nu_q T$ satisfying
$$
\beta = \eta^\perp (0), \,  \beta - w^\flat = \eta^\perp(1) \in (T_q T)^\flat.
$$
Since  $\beta^\sharp \in T_qT$ already, we have $w = 0$ and so $\eta^\parallel \equiv 0$
and $\eta^\perp(t) \equiv \beta$. This proves that the $L^2$-cokernel becomes
$$
\coker d^2\CA_g(\gamma_q)= \left\{ \eta: [0,1] \to T_qM \oplus T_q^*M \mid \eta(t) \equiv (0,\beta), \, \beta \in (T_qT)^\flat \right\}
$$
From this explicit expression of $\coker d^2\CA_g(\gamma_q)$, we obtain
$$
\coker d^2\CA_g(\gamma_q) \cap (T_{\gamma_q} \widehat T)^\circ = \{0\}
$$
where  $(T_{\gamma_q} \widehat T)^\circ$ is the annihilator of $T_{\gamma_q} \widehat T$
which is the dual of $L^2$-orthogonal complement of $T_{\gamma_q} \widehat T$ in
$
L^2(\gamma_q^*(T(T^*M)), (\del \gamma_q)^*T(\nu^*T)))
$
which is the set of $L^2$-sections of $\gamma_q^*(T(T^*M))$ with boundary values at $(\del \gamma_q)^*T(\nu^*T))$.

Combining the two, we have finished the proof of (2).
\end{proof}

We apply this proposition to the pair of Riemannian manifold $(M\setminus K, g_0)$ and
the torus $T \subset M \setminus K$ in our current context of hyperbolic knot.
Take
\be\label{eq:CW-morsebott}
CW(\nu^*T,\nu^*T; T^*(M\setminus K); H_{g_0}) := C^*(T) \oplus \Z\langle \mathfrak X_{< 0}(L;H_{g_0})\rangle
\ee
where $C^*(T)$ is a cochain complex of $T$, e.g., $C^*(T) = \Omega^*(T)$ the de Rham complex
and associate an $A_\infty$ algebra following the construction from \cite{fooo:book}.
It was shown in \cite{BKO} that $HW_g(T,M\setminus K)$ does not depend on the choice of
smooth metric $g$ on $M$ and of the tubular neighborhood $N(K)$ but depends only on the
isotopy type of the knot $K$.

\begin{defn}[Knot Floer algebra \cite{BKO}]
We denote by
$$
HW(\del_\infty(M\setminus K)) = HW_g(T,M \setminus K)
$$
the resulting common (isomorphism class of the) group
and call it the knot Floer algebra of $K$ in $M$.
\end{defn}

To facilitate our calculation of this algebra and its comparison with Knot Floer algebra $HW(\del_\infty(M\setminus K)$,
we now take the Morse complex model for $C^*(T)$, and realize the model \eqref{eq:CW-morsebott} as
a nondegenerate wrapped Floer complex of a perturbed $\nu^*T$ as follows.

Take a compactly supported smooth function $k: M\setminus K \to \R$ such that
$\nu^*T \pitchfork \Image dk $.
We then consider the translated conormal $\nu^*_k T \to T$ whose fiber is given by
\be\label{eq:nufT}
(\nu^*_{k} T)_q: = \{\alpha + dk(q) \in T_q^*N \mid \alpha  \in \nu_q^*T\}.
\ee
Then it is easy to check that we have one-one correspondence between
$\nu^*T \cap \nu_k^*T $ and $\nu^*T \cap \Image dk$ and
the intersection is transversal by the hypothesis $\nu^*T \pitchfork \text{\rm Image } dk$.

We then take a radially cut-off function $\rho: T^*N \to \R$ satisfying $\rho(q,p) = \rho_q(|p|)$ where
$\rho_q: \R_+ \to [0,1]$ is a monotonically decreasing function satisfying
$$
\rho_q(r) = \begin{cases} 0 \quad & \text{for } r \geq 3\|dk\|_{C^0} \\
1 \quad & \text{for } r \leq 2\|dk\|_{C^0}
\end{cases}
$$
and consider the function
$
f: \nu^* T \to \R
$
defined by
$$
f(\alpha) = \rho(\alpha) k (\pi(\alpha)).
$$
Then we take a Darboux-Weinstein chart $\Phi: V \subset T^*(\nu^*T) \to U \subset T^*N$ of $\nu^*T$ and then consider the exact Lagrangian submanifold
\be\label{eq:nu-krho-T}
\nu^*_{k,\rho} T : =  \Phi(\Image df) \subset T^*N.
\ee
In other words, $\nu^*_{k,\rho} T = \Image \iota_{k,\rho}$ for the Lagrangian embedding
$
\iota_{k,\rho}=\iota_{k,\rho}^\Phi: \nu^*T  \to U \subset T^*N
$
defined by
$$
\iota_{k,\rho}^\Phi(\alpha) = \Phi(d(\rho k\circ \pi)|_\alpha).
$$
As usual, we require $\Phi$ to satisfy
\be\label{eq:dPhi}
\Phi|_{o_{T^*(\nu^*T)}} = id|_{\nu^*T}, \quad d\Phi|_{o_{T(T^*(\nu^*T))}} = id|_{T(\nu^*T)}
\ee
under the canonical identifications of $o_{T^*(\nu^*T)}\cong \nu^*T$ and
$$
T(o_{T^*(\nu^*T)}) \cong T(\nu^*T).
$$

It is easy to check that $f$
satisfies $\iota_{k,\rho}^*\theta = df$ and so $\nu^*_{k,\rho} T$ is an exact Lagrangian submanifold.
We also note that
$$
\nu^*_{k,\rho} T = \begin{cases} \nu^*T \quad & \text{\rm for } |p| \geq 3\|dk\|_{C^0}\\
 \nu^*_{k} T \quad & \text{\rm for } |p| \leq 2\|dk\|_{C^0}
 \end{cases}
$$
and $f(\beta) = 0$ for $|\beta| \geq 3\|dk\|_{C^0}$ and $f(\beta) = k(\beta)$ for
$|\beta| \leq 2\|dk\|_{C^0}$.

Next we denote by $\mathfrak G_{g_0}(T)$ the energy of the shortest geodesic cord of $T$
relative to the metric $g_0$. Then the following lemma is an immediate
consequence of the implicit function theorem.

\begin{lem}\label{lem:perturbed-cord} Let $k$ be the function given above such that
$\nu^*T \pitchfork \text{Image } dk$. Then
there exists some $0 < \epsilon_0 < \frac{\mathfrak G_{g_0}(T)}2$ such that
$$
\frak X_{<- \epsilon_0}(\nu^*_{k,\rho} T, \nu^*T) = \frak X_{< - \epsilon_0}(\nu^* T, \nu^*T)
$$
and
$$
\frak X_{\geq -\epsilon_0}(\nu^*_{k,\rho} T, \nu^*T) \cong \text{\rm Image } dk \cap \nu^*T
$$
provided $\|k\|_{C^2} \geq - \epsilon_0$. Here $\cong$ means one-one correspondence.
\end{lem}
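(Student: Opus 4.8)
The plan is to split the set of Hamiltonian chords of $H_{g_0}$ attached to $(\nu^*_{k,\rho}T,\nu^*T)$ according to their action, using two elementary facts about the kinetic energy Hamiltonian: along every chord $\gamma$ the quantity $|p(\gamma(t))|$ is constant (conservation of energy), and, by \eqref{eq:CA-E}, $\CA_{H_{g_0}}(\gamma)=-\tfrac12|p|^2$; thus a chord has action $<-\epsilon$ precisely when $|p|>\sqrt{2\epsilon}$ and action $\ge-\epsilon$ precisely when $|p|\le\sqrt{2\epsilon}$. I would fix $\epsilon_0$ with $0<\epsilon_0<\tfrac12\mathfrak G_{g_0}(T)$, say $\epsilon_0=\tfrac14\mathfrak G_{g_0}(T)$, and require $\|k\|_{C^2}$ (hence $\|dk\|_{C^0}\le\|k\|_{C^2}$) small. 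By the normalizations following \eqref{eq:nu-krho-T}, $\nu^*_{k,\rho}T$ agrees with $\nu^*T$ on $\{|p|\ge 3\|dk\|_{C^0}\}$; so once $3\|dk\|_{C^0}<\sqrt{2\epsilon_0}$, any chord of $(\nu^*_{k,\rho}T,\nu^*T)$ of action $<-\epsilon_0$ has $|p|>3\|dk\|_{C^0}$, i.e.\ both endpoints lie where the two Lagrangians coincide, so it is a chord of $(\nu^*T,\nu^*T)$ of the same action; the converse is the same computation. This yields the first displayed equality.

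For the remaining chords, those of action $\ge-\epsilon_0$, one has $|p|\le\sqrt{2\epsilon_0}<\sqrt{2\mathfrak G_{g_0}(T)}$, so their base geodesic cords are strictly shorter than the shortest non-constant geodesic cord of $(N,g_0)$ at $T$. A compactness argument (Arzel\`a--Ascoli, using the uniform speed bound) shows that as $\|k\|_{C^2}\to0$ every such chord subconverges to a chord of $(\nu^*T,\nu^*T)$ of length $<\sqrt{2\mathfrak G_{g_0}(T)}$, which must be constant; hence for $\|k\|_{C^2}$ small each chord in $\frak X_{\ge-\epsilon_0}(\nu^*_{k,\rho}T,\nu^*T)$ is $C^1$-close to the clean critical manifold $\widehat T\cong T$ of constant chords and lies in the region $\{\rho\equiv1\}$, where $\nu^*_{k,\rho}T$ equals the genuine fibrewise translate $\nu^*_kT=\psi_{dk}(\nu^*T)$, $\psi_{dk}(q,p)=(q,p+dk(q))$, of $\nu^*T$ by the closed form $dk$. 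By Proposition~\ref{prop:Bott-clean} the manifold $\widehat T$ is normally nondegenerate in the path space, and $\psi_{dk}$ is $C^1$-close to the identity for $\|k\|_{C^2}$ small, so a standard Lyapunov--Schmidt reduction along $\widehat T$ turns the chord equation near $\widehat T$ into the equation $d(k|_T)=0$ on $T\cong\T^2$ — the relative primitive of $\psi_{dk}(\nu^*T)$ with respect to $\theta$ being $k|_T$ up to negligible higher-order terms. Since $\nu^*T\pitchfork\operatorname{Image}dk$ says exactly that the critical points of $k|_T$ are nondegenerate and correspond bijectively to $\operatorname{Image}dk\cap\nu^*T$, the implicit function theorem produces one (nondegenerate) chord near each of them and no others, giving $\frak X_{\ge-\epsilon_0}(\nu^*_{k,\rho}T,\nu^*T)\cong\operatorname{Image}dk\cap\nu^*T$. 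Replacing $\epsilon_0$ by the minimum of its value and the smallness threshold on $\|k\|_{C^2}$ puts the statement in the quoted form.

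The step I expect to be the real obstacle is the ``spectral gap'' underlying the previous paragraph: that for $\|k\|_{C^2}$ small the perturbation creates no short non-constant chords away from $\widehat T$ — equivalently, that every chord with $|p|<\sqrt{2\mathfrak G_{g_0}(T)}$ in fact has $|p|<2\|dk\|_{C^0}$, so that the Lyapunov--Schmidt/implicit-function argument is carried out entirely inside $\{\rho\equiv1\}$, where $\nu^*_{k,\rho}T$ is a bona fide graphical perturbation of $\nu^*T$ (in the transition region $2\|dk\|_{C^0}<|p|<3\|dk\|_{C^0}$ the cut-off $\rho$ makes $\nu^*_{k,\rho}T$ only $C^0$-close but not $C^1$-close to $\nu^*T$). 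This is exactly the place where the normal nondegeneracy of $\widehat T$ from Proposition~\ref{prop:Bott-clean}, together with finiteness of $\operatorname{Image}dk\cap\nu^*T$, is used; granting it, the rest is routine.
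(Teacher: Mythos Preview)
Your argument is correct and follows the same underlying mechanism as the paper's own proof: the paper's proof is a single sentence invoking ``the Sard--Smale implicit function theorem via the definition of the cut-off function $\rho$ and $C^1$-smallness of $df$, and the requirement \eqref{eq:dPhi}.'' What you have written is precisely a fleshed-out version of that sentence --- the energy/action identity $\CA_{H_{g_0}}(\gamma)=-\tfrac12|p|^2$ to separate the high-$|p|$ regime (where $\nu^*_{k,\rho}T=\nu^*T$ tautologically) from the low-$|p|$ regime, and the implicit function theorem near the clean Bott manifold $\widehat T$ to identify the low-$|p|$ chords with $\operatorname{Crit}(k|_T)=\operatorname{Image}dk\cap\nu^*T$.

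One remark: your honesty about the transition region $2\|dk\|_{C^0}<|p|<3\|dk\|_{C^0}$ is well-placed and in fact goes beyond the paper, which asserts ``$C^1$-smallness of $df$'' without comment. Since $\rho$ is chosen to depend on $\|dk\|_{C^0}$, the derivative $d\rho$ scales like $\|dk\|_{C^0}^{-1}$, so the term $k\,d\rho$ in $df$ is \emph{not} automatically small as $\|k\|_{C^2}\to0$. Your resolution --- that any chord with $|p|<\sqrt{2\epsilon_0}$ must in fact satisfy $|p|\le 2\|dk\|_{C^0}$ (hence lie in $\{\rho\equiv1\}$) once $\|k\|_{C^2}$ is small, by the Arzel\`a--Ascoli/spectral-gap argument combined with Proposition~\ref{prop:Bott-clean} --- is the right way to close this; the paper simply does not address it.
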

\begin{proof}  Both identities then are immediate consequences of Sard-Smale implicit function theorem via
the definition of the cut-off function $\rho$ and $C^1$-smallness of $df$, and
the requirement \eqref{eq:dPhi}.
\end{proof}

By the generic transversality proof under the perturbation of Lagrangian
boundary from \cite{oh:fredholm}, we can choose $k$ so that
$\frak X(\nu^*_{k,\rho} T, \nu^*T)$ is nondegenerate.
We have one-one correspondence
$$
\frak X_{< - \epsilon_0}(\nu^* T, \nu^*T) \cong \text{\rm Crit } k
$$
and hence
$$
\Z \langle  \frak X_{< - \epsilon_0}(\nu^* T, \nu^*T) \rangle \cong
\Z \langle \text{\rm Crit } k \rangle.
$$
We denote $L = \nu_{k,\rho}^*T$.
Then we define a Floer chain complex
$$
CW_{g}^d(T, M\setminus K): = CW^d(L,L; T^*(M\setminus K); H_{g_0}).
$$
Here the grading $d$ is given by the grading of the Hamiltonian chords $|x|$.
Then the construction in \cite{BKO} associates an $A_\infty$ algebra to
$CW_{g_0}(T, M\setminus K)$. We denote the associated cohomology by
\be\label{eq:HWHh}
HW_g(T,M \setminus K) =  HW(L,L; T^*(M\setminus K); H_{g_0}).
\ee

It follows from the Bott-Morse property of $(\nu^*T,H_{g_0})$ that
the complex $CW_{g}^d(T, M\setminus K)$ has such a decomposition
\be\label{eq:decompose}
CW_g(T,M\setminus K): =
\Z\langle \mathfrak X_{\geq - \epsilon_0}(H_{g_0};L,L)\rangle\oplus
\Z\langle \mathfrak X_{< - \mathfrak G_{g_0}(T)/2}(H_{g_0};L,L)\rangle.
\ee

Moreover $\Z\langle\mathfrak X_{\geq - \epsilon_0}(H_{g_0};L,L)\rangle$ is a subcomplex
of $CW_g(T,M\setminus K)$ which is isomorphic to
the Morse complex $(C^*(k|_T),d)$ of the Morse function $k|_T : T \to \R$.

\begin{prop}
The operator $\mathfrak m^1$ has  the matrix form
$$
\mathfrak m^1 = \left(\begin{matrix} \pm d & 0 \\
* & \mathfrak m^1_{< 0}
\end{matrix}\right)
$$
with respect to the above decomposition.
\end{prop}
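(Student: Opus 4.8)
The plan is that, of the four blocks in the claimed matrix, only two carry content: the vanishing of the off-diagonal ``$0$'' entry, and the identification of the diagonal entry on $\Z\langle\mathfrak X_{\geq-\epsilon_0}(H_{g_0};L,L)\rangle$ with $\pm d$. The lower-right entry is merely the \emph{definition} of $\mathfrak m^1_{<0}$, namely the differential that $\mathfrak m^1$ induces on the relevant summand once the ``$0$'' block is known, and the remaining off-diagonal entry $*$ is unconstrained. So I would organize the proof around the first two points.

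For the ``$0$'' block I would use the action filtration. Recall that $\mathfrak m^1$ counts rigid solutions of the perturbed Cauchy--Riemann equation for $(\nu^*_{k,\rho}T,\nu^*T)$ with Hamiltonian $H_{g_0}$, and that each such solution obeys the energy identity relating its two asymptotic chords to the difference of their $\CA_{H_{g_0}}$-actions; with the sign conventions fixed in Subsection \ref{subsec:sasakian} this means $\mathfrak m^1$ respects the action filtration of $CW_g(T,M\setminus K)$. By Lemma \ref{lem:perturbed-cord} every generator has action either $\geq -\epsilon_0$ or $<-\mathfrak G_{g_0}(T)/2$, with $\epsilon_0<\mathfrak G_{g_0}(T)/2$, so no Floer strip can connect a generator of $\mathfrak X_{<-\mathfrak G_{g_0}(T)/2}$ with one of $\mathfrak X_{\geq -\epsilon_0}$ in the direction forbidden by the energy inequality. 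This is exactly the vanishing of the ``$0$'' block with respect to the decomposition \eqref{eq:decompose}; consequently $\mathfrak m^1$ descends to a differential on the complementary summand, which is what we call $\mathfrak m^1_{<0}$, and the last off-diagonal entry is then simply named $*$, with nothing further to prove there.

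For the $\pm d$ block I would invoke the Morse--Bott picture. By Proposition \ref{prop:Bott-clean} the constant chords form a normally nondegenerate component $\widehat T\cong\T^2$ of the path space, and by construction $\nu^*_{k,\rho}T$ agrees near $\widehat T$ with the graph of $d(\rho\,k\circ\pi)$, that is, with a $C^1$-small Hamiltonian deformation of $\nu^*T$ governed by the Morse function $k|_T$ on $T$. A Floer strip with both asymptotics in $\mathfrak X_{\geq -\epsilon_0}$ then has energy at most $2\epsilon_0$, hence (for $\epsilon_0$ small, as permitted by Lemma \ref{lem:perturbed-cord}) stays $C^0$-close to $\widehat T$ by an a priori estimate, so it lies in the region where the equation is a small perturbation of the constant one; by the Morse--Bott machinery of \cite{fooo:book} such strips are in bijection, in the adiabatic limit, with the negative gradient trajectories of $k|_T$ on $T$. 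Hence the block of $\mathfrak m^1$ on $\Z\langle\mathfrak X_{\geq -\epsilon_0}\rangle\cong C^*(k|_T)$ is the Morse differential of $k|_T$, up to the usual global sign discrepancy between Floer and Morse orientations, which accounts for the ``$\pm$''; this is the chain-level refinement of the isomorphism with $(C^*(k|_T),d)$ quoted just above the statement. As a consistency check, $(\mathfrak m^1)^2=0$ then yields $d^2=0$, $(\mathfrak m^1_{<0})^2=0$, and that $*$ intertwines $d$ and $\mathfrak m^1_{<0}$ up to sign and shift.

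The main obstacle is the last step: one must be sure that the near-constant block of $\mathfrak m^1$ receives \emph{no} contribution from a long Floer strip that leaves a neighborhood of the horo-torus, approaches the knot $K$, and comes back. This is precisely where the energy bound $\leq 2\epsilon_0$ supplied by the spectral gap of Lemma \ref{lem:perturbed-cord}, together with the relevant $C^0$-estimates and the compactness/gluing package of \cite{fooo:book}, does the work; everything else in the argument is bookkeeping with the action filtration.
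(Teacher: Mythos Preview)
Your argument for the vanishing ``$0$'' block is correct and is exactly the paper's proof: the energy identity for a Floer strip forces $\CA_{H_{g_0}}(\gamma_+)\geq\CA_{H_{g_0}}(\gamma_-)$, while a strip running from the low-action summand to the high-action summand would require the opposite inequality, contradicting $\epsilon_0<\mathfrak G_{g_0}(T)/2$. This is the only block the paper's proof actually treats; the $\pm d$ identification is not argued there but is taken as given from the Morse--Bott discussion preceding the statement, so your additional justification via the small-energy/adiabatic-limit correspondence with Morse trajectories of $k|_T$ is more than the paper supplies, and is the right reasoning.
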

\begin{proof}
Suppose $\gamma_- \in \mathfrak X_{\geq - \epsilon_0}(H_{g_0};L,L)$
and $\gamma_+ \in \mathfrak X_{\geq + \epsilon_0}(H_{g_0};L,L)$. Then we have
$$
\CA_{H_{g_0}}(\gamma_-) \geq - \epsilon_0, \quad \CA_{H_{g_0}}(\gamma_+) \leq - \mathfrak G_{g_0}(T)/2
$$
and hence
\be\label{eq:<0}
\CA_{H_{g_0}}(\gamma_+) - \CA_{H_{g_0}}(\gamma_-) \leq - \mathfrak G_{g_0}(T)/2 + \epsilon_0 < 0.
\ee
On the other hand, \emph{if there exists a solution $u$ for \eqref{eq:CRJH}
satisfying $u(\pm \infty)) =\gamma_\pm$}, then
$$
0 \leq \int \left|\frac{\del u}{\del \tau}\right|^2_{J_{g_0}} = \CA_{H_{g_0}}(u(+\infty)) - \CA_{H_{g_0}}(u(- \infty))
= \CA_{H_{g_0}}(\gamma_+) - \CA_{H_{g_0}}(\gamma_-)
$$
which contradicts to \eqref{eq:<0}. This finishes the proof.
\end{proof}

\begin{defn}[Reduced Knot Floer complex] Denote by $(\widetilde{CW}_g(T,M\setminus K), \widetilde{\mathfrak m}^1)$ the quotient complex
$$
\left(CW_g(T,M\setminus K)/  C^*(T), [ \mathfrak m^1_{< 0}]\right).
$$
We call this complex by the reduced Knot Floer complex.
\end{defn}

We also note $ \mathfrak m^1_{< 0} \circ  \mathfrak m^1_{< 0} = 0$ and so
$$
\left(\Z\left\langle\mathfrak X_{< - \mathfrak G_{g_0}(T)/2}(H_{g_0};L,L)\right\rangle,
\mathfrak m^1_{< 0}\right)
$$
is naturally isomorphic to the reduced complex $(\widetilde{CW}_g(T,M\setminus K), \widetilde{\mathfrak m}^1)$.

Therefore the reduced complex is nothing but the complex generated by non-constant
Hamiltonian chords.

We emphasize that in the context of hyperbolic knots which is the case of our main interest in the
present paper the given hyperbolic metric $h$ on $M\setminus K$ is neither cylindrical at
infinity nor smoothly extends to the whole space $M$. Because of this,
we cannot directly use the hyperbolic metric $h$ defined on $M \setminus K$
for the calculation of $HW(\del_\infty(M\setminus K))$. The rest of the paper is occupied by
the construction of this wrapped Floer complex associated to the kinetic energy
Hamiltonian $H_h$ of the hyperbolic metric on $M \setminus K$ whose injectivity radius is zero.

\section{Preliminary on hyperbolic 3-manifold of finite volume} \label{sec:prelim}
In this section, we briefly review several well-known facts. For a reference, Martelli's book \cite{Mart} is readable and enough to know some basics about hyperbolic geometry and 3-manifold theory used in this article.

Let $N$ be a complete hyperbolic 3-manifold.
The universal cover  $\widetilde N$ is identified with the hyperbolic 3-space
\[
\H^3=\{(x,y,z)\in\R^3 \mid x,y\in \R, z\in\R^{+}\}
\]
and $N$ is isometric to $\H^3 /\, \Gam$ with a discrete group $\Gam \cong \pi_1(N)$. If $N$ is orientable, $\Gam$ consists of orientation preserving isometries. Hence, from now on we identify the  hyperbolic space $\H^3$ and the group of orientation preserving isometries $\Is(\H^3)$ with a upper half space and  $\psl$ respectively. The \emph{ideal boundary} $\bdH$ of $\H^3$ is identified with $\Ch := \C \cup \{\infty\}$ and $\psl$ acts on $\Ch$ and $\H^3$ as M\"obius transformations, and the Poincar\'e extensions, respectively.

Let $N$ be a knot complement of a orientable closed 3-manifold $M$, i.e., $N=M\setminus K$. Then $N$ is homeomorphic to the interior of the knot exterior, a compact 3-manifold denoted by $\overline N$,
that has a torus boundary of $N$ and the complete hyperbolic structure should be of finite volume by the torus boundary condition.

\subsection{$\varepsilon$-thick-thin decomposition by the Busemann function}
\smallskip

The first important fact in this situation would be the uniqueness of the hyperbolic metric $h$  by Mostow-Prasad rigidity, i.e., $h$ is unique up to isometry. Therefore any hyperbolic metric invariant can be regarded as a topological invariant, as we have a canonical Riemannian metric.

Secondly, we can nicely separate compact part and the non-compact end part of $N$ as follow.
	\begin{prop}
		There is a constant $\varepsilon_0>0$ such that the $\varepsilon_0$-thin part,
		\be
		N_{(0,\varepsilon_0]} := \{x\in N \mid \text{\rm inj}_x(N)\leq \varepsilon_0\} ,
		\ee
		is homeomorphic to the end of $N$, which is $\T^2 \times [0,+\infty)$.
		Thus the $\varepsilon$-thick part $N_\varepsilon$ is compact and the interior is homeomorphic to $N$ itself once we take $\varepsilon \leq \varepsilon_0$,
		\be
		N_\varepsilon := N_{[\varepsilon,+\infty)} \approx N.
		\ee	
	\end{prop}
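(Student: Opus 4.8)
The plan is to reduce the statement to the Margulis lemma in dimension three together with the standard structure theory of the thin part of a complete hyperbolic manifold, and then to use finiteness of the volume to control the thick part. First I would recall the Margulis lemma: there is a universal constant $\mu_3>0$ such that for every complete hyperbolic $3$-manifold $N=\H^3/\Gam$ and every $x\in\H^3$, the subgroup
\[
\Gam_{\mu_3}(x):=\langle\, \gamma\in\Gam \mid d(x,\gamma x)\le\mu_3 \,\rangle
\]
is virtually nilpotent. Applying this to the $\varepsilon$-thin part $N_{(0,\varepsilon]}$ with $\varepsilon\le\mu_3/2$ yields the classical dichotomy: each connected component of $N_{(0,\varepsilon]}$ is either a \emph{Margulis tube} around a short closed geodesic (where $\Gam_{\mu_3}(x)$ is virtually cyclic, generated by a loxodromic element) or a \emph{cusp neighborhood} (where $\Gam_{\mu_3}(x)$ is virtually parabolic, hence virtually $\Z$ or $\Z^2$).

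Next I would invoke Mostow--Prasad rigidity: $h$ is the unique hyperbolic metric on the \emph{fixed} manifold $N$, so its systole $\operatorname{sys}(N)>0$, the length of the shortest closed geodesic, is a well-defined positive number. Choosing
\[
\varepsilon_0\le\min\Big\{\tfrac{\mu_3}{2},\ \tfrac{\operatorname{sys}(N)}{2}\Big\}
\]
rules out Margulis-tube components entirely, since such a component is a neighborhood of a closed geodesic of length $\le 2\varepsilon_0$. Hence every component of $N_{(0,\varepsilon_0]}$ is a cusp neighborhood. As $N=M\setminus K$ is a knot complement, the knot exterior $\overline N$ has a single torus boundary component, so $N$ has exactly one end and there is exactly one such cusp component, associated to a rank-two parabolic subgroup $\Gam_\infty\cong\Z^2$. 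Conjugating in $\psl$ so that $\Gam_\infty$ fixes $\infty\in\Ch$, its generators act on the horospheres $\{z=\text{const}\}$ by a lattice of Euclidean translations, and the cusp component is the quotient of a horoball $\{z\ge z_0\}$; this quotient is diffeomorphic to $(\R^2/\Z^2)\times[z_0,\infty)\cong\T^2\times[0,+\infty)$, which proves the first assertion.

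Finally I would check compactness of the thick part. For $\varepsilon\le\varepsilon_0$ and $x\in N_\varepsilon$ the ball $B(x,\varepsilon)$ embeds in $N$, so it has volume bounded below by a constant $v(\varepsilon)>0$ depending only on $\varepsilon$; since $\vol(N)<\infty$, any $\varepsilon$-separated subset of $N_\varepsilon$ is finite, so $N_\varepsilon$ is totally bounded, and being closed in the complete manifold $N$ it is compact. Its complement $N\setminus N_\varepsilon$ is the open thin part, which by the previous step is the open cusp collar $\cong\T^2\times(0,\infty)$; thus $N_\varepsilon$ is a compact manifold with torus boundary $\partial N_\varepsilon\cong\T^2$ (after an innocuous smoothing of $\partial N_\varepsilon$), and identifying the collar $\T^2\times[0,\infty)$ with a boundary collar of $N_\varepsilon$ exhibits a homeomorphism $\operatorname{int}(N_\varepsilon)\cong N$, which is the meaning of $N_\varepsilon\approx N$.

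The substantive input is the Margulis lemma and the accompanying structure theory of the thin part (the Kazhdan--Margulis theorem and the Bieberbach-type analysis of virtually nilpotent discrete subgroups of $\psl$); once a small $\varepsilon_0$ below the systole is fixed, the remaining steps — the product description of a horoball quotient and the volume estimate forcing compactness of the thick part — are routine. So the main obstacle is really only that one must \emph{quote} the Margulis lemma rather than reprove it, and be a little careful that the chosen $\varepsilon_0$ simultaneously lies below $\mu_3/2$ and below half the systole so that no tube components survive.
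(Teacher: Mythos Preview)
Your proof is correct and follows essentially the same approach as the paper: invoke the Margulis thick--thin decomposition, observe that the thin components are either tubes or cusps, and then shrink $\varepsilon_0$ below (half) the systole so that no tube components remain. The paper's proof is a two-sentence sketch of exactly this; you have supplied considerably more detail, including the explicit horoball-quotient description of the cusp and the volume argument for compactness of the thick part, which the paper simply takes for granted. One minor remark: your appeal to Mostow--Prasad rigidity is not really needed here --- for any fixed complete hyperbolic metric the systole is a well-defined positive number, and rigidity only tells you that this metric is unique, which is irrelevant to the present argument.
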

\begin{proof}
	By the thick-thin decomposition using the Margulis constant $\varepsilon_0$, we have two kinds of thin parts,
	 thin-tubes $S^1\times D^2$ and truncated cusp $\T^2 \times [0,1)$.
When we take $\varepsilon_0$ less than smallest injective radius among thin-tubes, then the only possible $\varepsilon_0$-then part is the truncated cusp of the boundary.
\end{proof}

By the structural property of the thick-thin decomposition \cite[ Section 4]{Mart}, we know that $N_{(0,\varepsilon]}$ is a \emph{truncated cusp} and thus $N_{(0,\varepsilon]} \cap N_\varepsilon = \partial N_{(0,\varepsilon]} = \partial N_\varepsilon$ is a Euclidean torus.
We describe $N_\varepsilon$ by using the Busemann function instead of injective radius.

\begin{defn}
	Let  $\delta:[0,+\infty)\to N$ be a geodesic ray satisfying \[d_h(\delta(t),\delta(t'))=|t-t'|.\]
	The {\em Busemann function} $b_\delta:W\to\R$ is defined by
	\begin{align*}
	b_\delta(q):=\lim_{t\to\infty}(d(q,\delta(t))-t).
	\end{align*}
\end{defn}

	Without loss of generality, we can take a lifting $\widetilde \delta$ in $\H^3$ of $\delta$,
	\begin{align*}
	\widetilde \delta (t) := (0,0,e^t) \in \H^3,
	\end{align*}
such that the lifted Busemann function $b_{\widetilde \delta} $  for $\H^3$ is given by
	\begin{align*}
	b_{\widetilde \delta}((x,y,z))=\lim_{t\to\infty}\big(d((x,y,z),\widetilde\delta(t))-t \big)
	=-\log z,
	\end{align*}
	and the level set of $b_{\widetilde\delta}$ is a horosphere  centered at $\{z=+\infty\}$,
	\be\label{eq:bdelta}
	b_{\widetilde \delta}^{-1}(t) = \{(x,y,z) \in \H^3 \mid z= e^{-t}\}.
	\ee
These are direct consequences of a hyperbolic distance formula in \cite[ Section  III.4]{Fen}.
We remark that there are many  other lifts of $\delta$ which may tend to the other  ideal points of $\partial \Hb$.

\begin{prop}\label{prop:thinthickdecomp}
	We have a decomposition $N=N_\varepsilon \cup N_{(0,\varepsilon]}$  by a Busemann function  $b_\delta:N\to\R$ where
	\begin{align*}
	N_\varepsilon  &=  b_{\delta}^{-1}([t_0, \infty)) \\
	N_{(0,\varepsilon]} & = b_{\delta}^{-1}((-\infty, t_0]) \approx b_{\delta}^{-1}(t_0) \times [0,\infty).
	\end{align*}
We call $N_\varepsilon$ a $\varepsilon$-thick compact part and  $N_{(0,\varepsilon]}$ a $\varepsilon$-thin cusp part, respectively.
\end{prop}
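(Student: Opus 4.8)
The plan is to identify the cutting horo-torus of the thick--thin decomposition established above with a level set of the Busemann function $b_\delta$, and to check that $b_\delta$ separates the compact part from the cusp. Since $K$ is a knot, $N=M\setminus K$ has a single cusp, and with the normalisation already made, $\infty\in\Ch$ is its parabolic fixed point, $\Gamma_\infty<\Gamma$ (the subgroup fixing $\infty$) is a rank-two lattice of Euclidean translations acting on each horosphere $\{z=c\}$, and $\widetilde\delta(t)=(0,0,e^t)$ with $b_{\widetilde\delta}(x,y,z)=-\log z$ as in \eqref{eq:bdelta}. I would then invoke the standard structure of the cusp (see \cite[Section~4]{Mart}): there is $z_0=z_0(\varepsilon)>0$ such that the horoball $B_{z_0}=\{z\ge z_0\}$ embeds under $\H^3\to N$ onto the $\varepsilon$-thin part, i.e., $\Gamma_\infty\backslash B_{z_0}=N_{(0,\varepsilon]}$, with the remaining $\Gamma$-translates $\gamma B_{z_0}$ ($\gamma\notin\Gamma_\infty$) pairwise disjoint from $B_{z_0}$.

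The first real step is to compute $b_\delta$ on $N_{(0,\varepsilon]}=\Gamma_\infty\backslash B_{z_0}$: I claim $b_\delta(q)=-\log z(\widetilde q)$ for any lift $\widetilde q\in B_{z_0}$ of $q$. For $t$ large we have $\widetilde\delta(t)\in B_{z_0}$, and geodesic convexity of the horoball $B_{z_0}$ together with disjointness of the remaining $\Gamma$-translates forces a minimising $N$-geodesic from $q$ to $\delta(t)$ to lift to one that stays inside $B_{z_0}$; hence $d_N(q,\delta(t))=\min_{\gamma\in\Gamma_\infty}d_{\H^3}(\widetilde q,\gamma\widetilde\delta(t))$. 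Since each $\gamma\in\Gamma_\infty$ acts as a Euclidean translation of $\{z=e^t\}$, the distance formula $\cosh d_{\H^3}\big((p_1,z_1),(p_2,z_2)\big)=1+\frac{|p_1-p_2|^2+(z_1-z_2)^2}{2z_1z_2}$ gives $d_{\H^3}(\widetilde q,\gamma\widetilde\delta(t))=t-\log z(\widetilde q)+o(1)$ as $t\to\infty$, uniformly in $\gamma\in\Gamma_\infty$, so indeed $b_\delta(q)=-\log z(\widetilde q)$. Thus $b_\delta|_{N_{(0,\varepsilon]}}$ is the descent of $-\log z$; putting $t_0:=-\log z_0$, the level set $b_\delta^{-1}(t_0)$ is exactly the horo-torus $\del N_{(0,\varepsilon]}=\del N_\varepsilon$, we get $b_\delta^{-1}\big((-\infty,t_0]\big)=\Gamma_\infty\backslash B_{z_0}=N_{(0,\varepsilon]}$, and the horospherical foliation $s\mapsto\Gamma_\infty\backslash\{z=z_0e^{s}\}$, $s\in[0,\infty)$, provides the product structure $N_{(0,\varepsilon]}\approx b_\delta^{-1}(t_0)\times[0,\infty)$ with each slice diffeomorphic to $\T^2$.

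It then remains to show $b_\delta\ge t_0$ on $N_\varepsilon$, with equality only on $\del N_\varepsilon$. For $q\notin N_{(0,\varepsilon]}$ and $t$ large, every path from $q$ to $\delta(t)$ must cross the horo-torus $\del N_\varepsilon$; taking $q_t$ to be the first crossing point of a minimising geodesic, the sub-arc from $q_t$ to $\delta(t)$ lies in the cusp, so the cusp computation gives $d_N(q,\delta(t))=d_N(q,q_t)+d_N(q_t,\delta(t))\ge d_N(q,\del N_\varepsilon)+\big(t-\log z_0+o(1)\big)$. Letting $t\to\infty$ yields $b_\delta(q)\ge t_0+d_N(q,\del N_\varepsilon)$, which exceeds $t_0$ unless $q\in\del N_\varepsilon$. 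Combined with the previous paragraph this gives $N_\varepsilon=b_\delta^{-1}([t_0,\infty))$ and $N_{(0,\varepsilon]}=b_\delta^{-1}((-\infty,t_0])$, which is the assertion.

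The step I expect to be the main obstacle is the claim used twice above that, on the cusp neighbourhood, a minimising $N$-geodesic from a point $q$ to $\delta(t)$ (for $t$ large) does not dip out of the cusp and back, equivalently that $d_N(q,\delta(t))$ is realised by a $\Gamma_\infty$-translate of $\widetilde\delta(t)$ rather than by some $\gamma\widetilde\delta(t)$ with $\gamma\notin\Gamma_\infty$; this is where geodesic convexity of horoballs and the Margulis-lemma disjointness of the horoball neighbourhoods are genuinely needed. Everything else reduces to the explicit hyperbolic distance formula or to the standard cusp-structure facts reviewed in \cite{Mart}.
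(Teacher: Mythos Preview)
Your argument is correct and fills in the Busemann-function side of the story more carefully than the paper does, but it proceeds along a genuinely different line. The paper invokes the Epstein--Penner decomposition to produce a fundamental domain $\pD\subset\H^3$ built from ideal polyhedra, then chooses $t_0\ll0$ so that the $\Gamma$-translates of the horosphere $b_{\widetilde\delta}^{-1}(t_0)$ centred at the ideal vertices of $\pD$ are pairwise disjoint; truncating $\pD$ along these horospheres and regluing yields $N_\varepsilon$ directly. In particular the paper never computes $b_\delta$ on $N$, it instead works entirely with the lifted Busemann function on the fundamental domain and reads off the decomposition from the combinatorics of the truncated polyhedra.

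What your approach buys is that it avoids the nontrivial input of Epstein--Penner: you only use the precisely invariant horoball coming from the Margulis lemma, the explicit distance formula in $\H^3$, and convexity of horoballs. You also obtain the sharper inequality $b_\delta(q)\ge t_0+d_N(q,\del N_\varepsilon)$ on the thick part, which the paper's argument does not isolate. Conversely, the paper's route is shorter once Epstein--Penner is granted, and it makes the compactness of $N_\varepsilon$ immediate (a finite truncated polyhedron glued along faces). The delicate point you flag---that a minimising $N$-geodesic to $\delta(t)$ with one end in the cusp lifts to a $\Gamma_\infty$-translate rather than some other $\gamma\widetilde\delta(t)$---is exactly the place where the two arguments diverge: the paper sidesteps it via the fundamental-domain picture, while you handle it with the horoball-disjointness estimate, which is sound.
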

\begin{proof}
	It is succinct to use Epstein-Penner decomposition \cite{EpPe}. Let $N$ be obtained from a finite union of  convex hyperbolic ideal polyhedra with face gluings in $\Is(\H^3)$.
Hence we have a fundamental domain $\pD$ whose boundary is made up of  totally geodesic convex ideal polygons such that
\[ \H^3 = \bigcup\limits_{g\in\Gam}  g \cdot \pD,\]
Without loss of generality, we can assume $ \widetilde\delta(t)  \in \pD$ and $\widetilde\delta{(\infty)} =\infty \in \widetilde V_{\pD}\subset \Ch$,
where
\begin{align*}
\widetilde V_{\pD}=\bigcup\limits_{g\in\Gam}  g \cdot V_{\pD}
\end{align*}
and $V_{\pD}$ is the finite set of ideal vertices of $\pD$.
We now take a sufficiently large negative number $t_0$ such that the  horospheres centered at $\widetilde V_{\pD}$ are mutually disjoint, i.e.,
\begin{align*}
g \cdot b_{\widetilde \delta}^{-1}(t_0) \cap g' \cdot b_{\widetilde \delta}^{-1}(t_0) = \varnothing,
\end{align*}
for any distinct pair of $g,\, g'\in \Gamma$.
Then the truncated fundamental domain,
\be
\pD  \setminus   \bigcup\limits_{i=1,\dots,n} g_i\cdot b_{\widetilde \delta}^{-1}(t_0),
\ee
produces $N_\varepsilon$ by face gluing isometries originally used to make  $N$. The injective radius $\varepsilon$ is taken at any point in $b_{\delta}^{-1}(t_0)$.
\end{proof}

\subsection{Infinite tame geodesics and geodesic cords of horo-torus}
\smallskip

Denote by $T = \del N_\varepsilon$.
We call the boundary torus of $N_\varepsilon$ as a \emph{horo-torus} $T$, which is given by a level set  $b_\delta^{-1}(t_0)$ of the Busemann function.

\begin{defn}\label{defn:cord} We define
$$
{\rm Cord}(T) = \{ c:[0,1] \to N \mid \nabla_t \dot c = 0,\,c(i) \in T, \,\, \dot c(i) \perp T, \,\text{for } i=0, 1\}.
$$
We call an element of  $\text{Cord}(T)$ a \emph{geodesic cord} of $T$.
\end{defn}

Next we consider an \emph{infinite tame geodesic} $\delta: (-\infty,\infty) \to N$.

\begin{defn}\label{defn:tame-geodesic}
	An infinite geodesic $\delta : \R \to N$ is called \emph{tame} if
	if there is a continuous map $\alpha : [0,1] \to $ with $\alpha(\{0,1\})\in \overline N \setminus N$ such that $\delta$ and $\alpha |_{(0,1)}$ has the same image in $\overline N$. We denote by
$\mathscr{G}=\mathscr{G}_N$ the set of images of all infinite tame geodesics in $N$.
\end{defn}

Now we would like to emphasize the following proposition which gives a crucial intuition for our purpose.

\begin{prop}\label{prop:cordG}
	Let $T$ be a horo-torus in hyperbolic knot complement $N$. Then there is a one-one correspondence
 between $\text{\rm Cord}(T)$ and $\mathscr{G}$.
\end{prop}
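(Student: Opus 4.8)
The plan is to pass to the universal cover $\H^3$ and use the explicit horosphere picture from \eqref{eq:bdelta}. Lift the horo-torus $T = b_\delta^{-1}(t_0)$ to the horosphere $\Sigma_0 := b_{\widetilde\delta}^{-1}(t_0) = \{z = e^{-t_0}\}$ centered at the ideal point $\infty \in \Ch$, so that the cusp subgroup $\Gamma_\infty := \Stab_\Gamma(\infty)$ acts on $\Sigma_0$ by Euclidean translations (parabolics fixing $\infty$), with $\Sigma_0/\Gamma_\infty \cong T$. Geodesics in $\H^3$ meeting $\Sigma_0$ orthogonally are exactly the vertical Euclidean half-lines $\{(x_0,y_0,z)\mid z>0\}$; each such vertical geodesic has $\infty$ as one endpoint and some finite point of $\Ch$ as the other. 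The orthogonality condition at \emph{both} endpoints of a cord forces a geodesic segment that hits $\Sigma_0$ perpendicularly, travels into the cusp region $\{z > e^{-t_0}\}$, and comes back perpendicularly; I expect that such a segment lifts to a \emph{vertical} segment from a point of $\Sigma_0$ up to its apex and back down is impossible for a genuine geodesic unless one instead thinks of the cord as going \emph{away} from the cusp. So the correct picture is: a cord $c$ of $T$ lifts to a geodesic arc in $\H^3$ from a point $q_0 \in \Sigma_0$ that is perpendicular to $\Sigma_0$ there, hence lies on the vertical line through $q_0$, travels downward (out of the cusp), and must return perpendicularly to $g\cdot\Sigma_0$ for some $g \in \Gamma$; perpendicularity to $g\cdot\Sigma_0$ means the arc lies on the vertical line through the center $g\cdot\infty$. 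Thus the lifted geodesic is a complete geodesic in $\H^3$ whose two ideal endpoints are $\infty$ and $g\cdot\infty$, two distinct points of the parabolic-fixed-point orbit $\Gamma\cdot\infty \subset \Ch$.

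The key steps, in order. \emph{Step 1: from a cord to a tame geodesic.} Given $c \in \Cord(T)$, lift to $\widetilde c$ as above; it extends to a complete geodesic $\widetilde\gamma$ in $\H^3$ with endpoints $\infty, g\cdot\infty \in \Gamma\cdot\infty$. Project $\widetilde\gamma$ to $N$; by Epstein--Penner / the thick-thin structure (Proposition \ref{prop:thinthickdecomp}), a geodesic with both ideal endpoints at cusp points is tame (both ends run out to the cusp, i.e.\ escape every compact set, limiting to the ideal points of $\overline N \setminus N$), giving an element of $\mathscr G$. \emph{Step 2: well-definedness.} Different lifts of $c$ differ by an element of $\Gamma$, producing the same image in $N$, and a reparametrization/choice of which endpoint of $c$ sits on $\Sigma_0$ vs.\ $g\Sigma_0$ just swaps the roles, giving the same \emph{image}. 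So the assignment $c \mapsto \operatorname{Image}(\widetilde\gamma \text{ in } N)$ is well-defined into $\mathscr G$. \emph{Step 3: from a tame geodesic to a cord.} Given an infinite tame geodesic $\delta' \colon \R \to N$, both ends leave every compact set, so (thick-thin decomposition) both ends eventually enter and stay in the thin cusp $N_{(0,\varepsilon]}$; lift to a geodesic in $\H^3$ whose forward ideal endpoint is a cusp point, which we may conjugate to be $\infty$, so the lift is eventually vertical, hence crosses $\Sigma_0 = \{z = e^{-t_0}\}$ exactly once and \emph{orthogonally} (vertical lines meet horizontal horospheres perpendicularly). Similarly the backward end: its ideal endpoint is another point of $\Gamma\cdot\infty$, say $g\cdot\infty$, and the geodesic meets $g\cdot\Sigma_0$ orthogonally and once. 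The geodesic segment between these two orthogonal intersection points projects to a cord of $T$ (it exits $N_\varepsilon$ near each end into the cusp, but the truncating crossings are the first/last times it meets the $\Gamma$-orbit of $\Sigma_0$). \emph{Step 4: the two maps are mutually inverse.} Composing Step 1 then Step 3 recovers the same complete geodesic and the same orthogonal truncation points (the truncation by $T$ of a tame geodesic is canonical), and composing Step 3 then Step 1 recovers the original geodesic image; uniqueness here uses that a complete geodesic is determined by its pair of ideal endpoints and that the orthogonal crossing of a given horosphere is unique.

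The main obstacle I anticipate is \emph{Step 3}, specifically verifying that the first and last crossings with the $\Gamma$-orbit of the horosphere are unique and orthogonal, and that between them the arc stays in $N_\varepsilon$ in the sense needed to call it a cord of $T$ — this is where the disjointness of the horoball cusp neighborhoods (the choice of $t_0$ in Proposition \ref{prop:thinthickdecomp} making $g\cdot b_{\widetilde\delta}^{-1}(t_0)$ mutually disjoint) is essential, and where one must rule out a tame geodesic re-entering a cusp region in the middle. A secondary point requiring care is matching \emph{orientations/parametrizations}: $\Cord(T)$ as defined consists of parametrized paths $c\colon[0,1]\to N$, while $\mathscr G$ consists of \emph{images}; one should either note that each unoriented geodesic image corresponds to exactly one cord up to the orientation-reversing reparametrization $t \mapsto 1-t$ (and that this is the intended identification, consistent with the uniqueness statement preceding Theorem \ref{thm:nondeg}), or restrict attention to images on both sides. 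I would handle this by phrasing everything at the level of images and pointing to the reparametrization ambiguity as harmless. Everything else is the standard dictionary between $\Gamma$-invariant geometry in $\H^3$ and geometry in $N$, so no heavy computation is needed.
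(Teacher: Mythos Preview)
Your approach is essentially the same as the paper's: lift to $\H^3$, use that geodesics perpendicular to a horosphere are exactly those with one ideal endpoint at its center, and read off the correspondence between cords and complete geodesics with both ideal endpoints in the parabolic orbit $\Gamma\cdot\infty$. The paper phrases the normalization via an arbitrary $\psl$ element rather than fixing a single lift $\Sigma_0$ and using the deck group, but the content is identical.

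One correction to your anticipated obstacle: you worry in Step 3 about showing the arc ``stays in $N_\varepsilon$ in the sense needed to call it a cord of $T$,'' but no such condition is part of Definition~\ref{defn:cord}. A cord is merely a geodesic with endpoints on $T$ meeting $T$ perpendicularly there; the paper explicitly remarks right after this proposition that interior points of a cord may meet $T$ (non-perpendicularly) and that cords need not lie in any fixed $N_i$. So the disjointness of horoballs is not needed for this step, and your concern about ``re-entering a cusp region in the middle'' is moot. What you do need (and what you have) is that the lifted complete geodesic meets the $\Gamma$-orbit of $\Sigma_0$ perpendicularly at exactly two horospheres, namely those centered at its two ideal endpoints; this follows immediately since a geodesic meets a horosphere orthogonally iff it passes through its center. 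Your observation about the parametrization ambiguity $t\mapsto 1-t$ is well taken; the paper's proof is silent on this point and implicitly works at the level of images on both sides.
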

\begin{proof} Denote by $c$ and $\delta$ a geodesic cord of $T$ and an infinite tame geodesic respectively.

Let $\delta$ be an infinite tame geodesic on $N$.
We take a lift $\widetilde{\delta}$ to the universal cover $\H^3$ and denote its $\alpha$-limit
point by $q \in \del \overline \H^3$, i.e., $\lim_{t \to -\infty}\widetilde \delta(t) = q$.
Then we take any lift $\widetilde{T}$. We can
pick an element $g \in \text{PSL}(2,\C)$ that maps the center of the horo-sphere $\widetilde T$ to $q$.
By applying the action by another $\text{PSL}(2,\C)$ element, we may assume that $g \cdot \widetilde T$
has its center at $q = \infty$ so that $g \cdot \widetilde{T}= \{z=z_0\}$ for some $z_0 \in \R_+$.
Since any horo-sphere intersects perpendicularly to the geodesic on $\H^3$ issued at its center,
$g \cdot \widetilde \delta$ is perpendicular to $\{z=z_0\}$ and so a vertical infinite geodesic in $\H^3$.
This implies that $g \cdot\widetilde \delta$ is perpendicular to $\{z = z_0\}$ for all $z_0 \in \R_+$.
In particular $\delta$ is perpendicular to all the horo-spheres with centers at either of $q^\delta_\pm$,
its asymptotic limits at $\pm \infty$ respectively. Therefore the restriction of
$\delta \in \mathscr{G}$ on $[0,1]$ is perpendicular to $T$ at its end points at $t =0, \, 1$.
In particular $c: = \delta_{[0,1]}$ is contained in $\text{\rm Cord}(T)$.
Denote the corresponding cord by $c_\delta$.

Conversely, let $c \in \text{Cord}(T)$. Take lifts $\widetilde T_\pm$ of $T$ and $\widetilde{c}$ so that
$$
\widetilde c(0) \in \widetilde T_-, \, \widetilde c(1) \in \widetilde T_+.
$$
We denote by $q_\pm \in \del \overline \H^3$ the centers of the horo-spheres $\widetilde T_\pm$
respectively. Again choose $g \in \text{PSL}(2,\C)$ as above so that $\widetilde T_-$
has its center at $\infty$ so that $g \cdot \widetilde T_- = \{z=z_0\}$ for some $z_0$.
Since the geodesic $g\cdot \widetilde c$ is perpendicular to $\{z=z_0\}$ at $t=0$, it
must be a vertical geodesic segment. Moreover $g \cdot \widetilde T_+$ is perpendicular to
$ g \cdot \widetilde c$ at $t=1$
which follows since $\dot c(1) \perp T$ by definition of $\text{Cord}(T)$.
Therefore $g \cdot\widetilde c$ is a part of infinite
vertical geodesic $\delta$ issued at an ideal point $q_+ \in \bdH$.
This proves that $\widetilde c$ is a part of the infinite hyperbolic geodesic
$g^{-1}\cdot \widetilde \delta$. Since the covering projection is local isometry,
its projection is an infinite geodesic which is an extension of $c$ and tame to $T$.
 Denote by $\delta_c$ the corresponding geodesic.

By construction, it follows $c = c_{\delta_c}$ and $\delta = \delta_{c_\delta}$ and so
finishes the proof of one-one correspondence.
\end{proof}
We remark that some interior points in $c$ or $\gamma$ may intersect $T$ non-perpendicularly and some geodesic cords may not be contained in a fixed $N_i$ in an exhaustion sequence $\{N_i\}$. In fact, only a finite number of geodesic cords can be contained in a fixed $N_i$.

	Finally, we recall the following result which follows from the well-known argument
in hyperbolic geometry. 	
\begin{prop}
	For each non-trivial element $e \in \pi_1(N, T)$,
	there exists a unique geodesic cord $c_e$ attached to $T$.
\end{prop}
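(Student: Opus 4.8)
The plan is to lift to the universal cover $\H^3$ and reduce everything to the elementary hyperbolic fact that two horospheres with distinct centres have a unique common orthogonal geodesic arc. Fix once and for all a lift $\widetilde T_0\subset\H^3$ of the horo-torus $T=\del N_\varepsilon=b_\delta^{-1}(t_0)$; by \eqref{eq:bdelta} it is a horosphere, say with centre $q_0\in\bdH$, and $q_0$ is a parabolic fixed point of $\Gam$. The first step is the peripheral dictionary. The $\Gam$-translates of $\widetilde T_0$ are exactly the components of the preimage of $T$, and for $\varepsilon\le\varepsilon_0$ these horospheres --- together with the open horoballs they bound --- are pairwise disjoint (Proposition \ref{prop:thinthickdecomp}); consequently there is exactly one such horosphere centred at each point of $\Gam\cdot q_0$, so $\Stab_\Gam(\widetilde T_0)=\Stab_\Gam(q_0)$, and this common subgroup is the peripheral subgroup, i.e.\ the image of $\pi_1(T)\hookrightarrow\pi_1(N)=\Gam$ (indeed $\widetilde T_0\to T$ is the universal cover of $T$). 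It is here that the finiteness of the volume and the torus-cusp structure are used.

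Second, I would translate the homotopy data of $e$. Lifting a representative path of $e$ with its initial endpoint placed on $\widetilde T_0$ produces a second lift $\widetilde T_1=g_e\cdot\widetilde T_0$ of $T$, well defined up to the $\pi_1(T)$-action, and by the homotopy-lifting property two cords lie in the same class of $\pi_1(N,T)$ exactly when they determine the same $\widetilde T_1$ modulo that action; let $q_1$ be the centre of $\widetilde T_1$. The key observation is that $e$ is nontrivial if and only if $q_0\neq q_1$: the class $e$ is trivial precisely when $g_e\in\pi_1(T)$, and by the first step $g_e\in\pi_1(T)=\Stab_\Gam(q_0)$ if and only if $g_e$ fixes $q_0$, i.e.\ if and only if $q_1=q_0$.

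Third, existence and uniqueness. Assume $q_0\neq q_1$ and let $\ell$ be the complete geodesic of $\H^3$ with ideal endpoints $q_0$ and $q_1$. A complete geodesic is orthogonal to a horosphere with centre $q$ if and only if $q$ is one of its ideal endpoints (immediate in the upper half-space model with $q=\infty$), so $\ell$ is orthogonal to both $\widetilde T_0$ and $\widetilde T_1$ and meets each of them in exactly one point; let $\widetilde c$ be the compact subarc of $\ell$ between these two points, which has positive length because the two horoballs are disjoint. Then $c_e:=\pi\circ\widetilde c$ is a geodesic cord of $T$ in the class $e$: it is a geodesic (the projection is a local isometry) meeting $T$ orthogonally at both endpoints, and it represents $e$ by the construction of $\widetilde T_1$. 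For uniqueness, any $c\in\text{\rm Cord}(T)$ representing $e$ lifts to an arc $\widetilde c'$ with $\widetilde c'(0)\in\widetilde T_0$ and, by the second step, $\widetilde c'(1)\in\widetilde T_1$; extending $\widetilde c'$ to a complete geodesic, orthogonality at the two endpoints forces its ideal endpoints to be $q_0$ and $q_1$, so this complete geodesic is $\ell$, whence $\widetilde c'=\widetilde c$ and $c=c_e$.

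The hyperbolic geometry in the third step is elementary and is essentially the argument already used for Proposition \ref{prop:cordG}. I expect the only delicate point to be the bookkeeping in the second step --- matching ``$e$ nontrivial in $\pi_1(N,T)$'' with ``the two lifts have distinct centres'' --- which depends entirely on the peripheral identification $\Stab_\Gam(q_0)=\pi_1(T)$ of the first step; this is the main, though mild, obstacle.
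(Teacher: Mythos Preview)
Your proof is correct and your uniqueness argument is essentially identical to the paper's: lift two putative cords so that both start on the same horosphere $\widetilde T_0$, observe that the homotopy hypothesis forces them to end on the same horosphere $\widetilde T_1$, that non-triviality of $e$ forces the two centres to be distinct, and then invoke the uniqueness of the common perpendicular to two horospheres with distinct centres. The only cosmetic difference is that the paper phrases uniqueness as a proof by contradiction, while you give a direct identification with the arc of the geodesic joining $q_0$ to $q_1$.

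For existence the two proofs diverge. The paper appeals to the calculus of variations: $T$ is compact, so a minimizing sequence for the energy in the given non-trivial class of $\pi_1(N,T)$ stays in a compact region of $N$ and converges to a geodesic cord. You instead construct the cord directly in $\H^3$ as the subarc of the geodesic $\ell$ with ideal endpoints $q_0,q_1$ lying between $\widetilde T_0$ and $\widetilde T_1$, using that $\ell$ is orthogonal to every horosphere centred at either endpoint and that the two horoballs are disjoint. This is exactly the ``hyperbolic geometry argument'' the paper alludes to parenthetically but does not spell out. Your route has the advantage of being purely synthetic and of producing existence and uniqueness in one stroke; the variational route has the advantage of requiring no hyperbolic bookkeeping and of making transparent that the cord is an energy minimizer, a fact implicitly used later when Morse indices are computed. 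One small point worth tightening: in your uniqueness paragraph you say $\widetilde c'(1)\in\widetilde T_1$, but by your own second step this is only true up to the $\pi_1(T)$-action; you should note that after replacing $\widetilde c'$ by an appropriate $\pi_1(T)$-translate (which still starts on $\widetilde T_0$) one may assume $\widetilde c'(1)\in\widetilde T_1$ on the nose, and then conclude.
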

\begin{proof} We first prove uniqueness result by contradiction.
	Suppose that there are two different $c$ and $c'$ in $\text{\rm Cord}(N,T)$ in the same homotopy class of $\pi_1(N,T)$.
  We consider their liftings denoted by $\widetilde c$ and $\widetilde c'$ respectively. We can
   choose the same lifted horo-tori (i.e., horo-spheres) $\widetilde T_0$ containing both $\widetilde c(0)$ and $\widetilde c'(0)$ since $c(0), \, c'(0)$ are in $T$ and $T$ is connected.
    Furthermore the lifted horo-tori of $\widetilde T_1$ and $\widetilde T'_1$ corresponding to
    the horo-tori $T$ respectively containing $\widetilde c(1)$ and $\widetilde c'(1)$ also coincide since $c, \, c'$ are assumed to be in the same homotopy class.
    Moreover the centers of the horo-spheres $\widetilde T_0$ and $\widetilde T_1$ are distinct because $[c], \, [c']$ are non-trivial in $\pi_1(N,T)$. But both $\widetilde c$ and $\widetilde c'$ connect the same pair of
    tori $\widetilde T_0$ and $\widetilde T_1$ and intersect them perpendicularly thanks to the Neumann
    boundary condition put on the geodesic cords. Recall that the geodesic perpendicularly meeting
    two horo-spheres with distinct centers is unique in $\H^3$. Therefore their projections $c, \, c'$
    also coincide, which contradicts to the assumption that
    $c$, $c'$ are distinct. This completes the proof of uniqueness.
    For the existence, this follows from the standard result in the calculus of variations
    concerning the existence of an energy minimizing geodesic path
    in each given non-trivial homotopy class $\pi_1(M\setminus K,T)$,
    noting that $T$ is a compact submanifold
    and so a minimizing sequence can be put into a compact region of $M \setminus K$.
   (Alternatively one could also employ hyperbolic geometry argument similarly as in the above
    uniqueness proof.) This finishes the proof.
\end{proof}
We remark that for a cord $e$ is trivial in $\pi_1(N,T)$, there are $T$-many constant geodesic cords in the same trivial homotopy class in $\pi_1(N,T)$.

\section{Cotangent bundle of hyperbolic knot complement}

In this section, we summarize some basic facts on the Riemannian geometry of
the cotangent bundle of hyperbolic knot complement.

\subsection{The Sasakian almost complex structure on the cotangent bundle}\label{sec:metric_cotangent_bundle}
For the later purpose we discuss about an induced metric on the cotangent bundle $\widetilde \pi:T^*\H^3\to\H^3$ of
$$
\H^3=\{(x,y,z)\in\R^3 \mid z > 0\}
$$
with a complete hyperbolic metric $h=h_{\H^3}$ given by
$$
(h_{ij})_{1\leq i,j\leq 3}=\big(\frac{1}{z^2}\delta_{ij}\big)_{1\leq i,j\leq 3}.
$$
Let us start with Levi-Civita connection $\nabla=\nabla^{h}$ and an induced (co-)frame fields $H_i,V_i$ (and $H^i,V^i$) on $T^*\H^3$ which are given as follows:
\begin{align*}
H_i&= \partial_{q^i}+p_a\Gamma^a_{ij}\partial_{p_j},& V_i&=\partial_{p_i};\\
H^i&= dq^i,& V^i&=dp_i-p_a\Gamma^a_{ij}dq^j.
\end{align*}
Here $\Gamma^a_{ij}$ are Christoffel symbols for the connection $\nabla$ and we used the Einstein summation convention.

A direct calculation using the definition of Christoffel symbols for the hyperbolic metric
$$
h = \frac{dx^2 + dy^2 + dz^2}{z^2}
$$
on $\H^3$ for the standard coordinate $(x,y,z)$ with $ z \geq 0$ gives rise to
\begin{lem}\label{lem:Gammaijk}
\bea\label{eq:R-chris}
\Gamma_{11}^3& = &1/z, \quad  \Gamma_{13}^1=\Gamma_{31}^1=-1/z,\nonumber\\
\Gamma_{22}^3 &= &1/z, \quad  \Gamma_{23}^2=\Gamma_{32}^2=-1/z, \nonumber\\
\Gamma_{33}^3 &= & -1/z, \quad \text{ and all other symbols are zero.}
\eea
\end{lem}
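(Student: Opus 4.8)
The plan is to compute the Christoffel symbols directly from the Koszul formula, exploiting that the metric $h = (dx^2 + dy^2 + dz^2)/z^2$ is diagonal in the coordinates $(x^1, x^2, x^3) := (x, y, z)$. First I would record that $h_{ij} = z^{-2}\delta_{ij}$, hence $h^{ij} = z^2\delta_{ij}$, so that
$$
\Gamma^k_{ij} = \frac12 h^{kl}\bigl(\partial_i h_{jl} + \partial_j h_{il} - \partial_l h_{ij}\bigr) = \frac{z^2}{2}\bigl(\partial_i h_{jk} + \partial_j h_{ik} - \partial_k h_{ij}\bigr),
$$
so that only the diagonal entries and their derivatives can contribute.

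Next I would note that, since $h$ depends on $z = x^3$ alone, the only nonvanishing first derivative of a metric component is $\partial_3 h_{ii} = \partial_z(z^{-2}) = -2/z^3$ for $i = 1, 2, 3$, while all $\partial_1$ and $\partial_2$ derivatives vanish. Substituting this into the reduced formula, every $\Gamma^k_{ij}$ with $3 \notin \{i, j, k\}$ vanishes, as does every $\Gamma^k_{ij}$ with $i, j, k$ pairwise distinct. The nonzero cases that remain are $\Gamma^3_{ii} = \frac{z^2}{2}(-\partial_3 h_{ii}) = 1/z$ for $i = 1, 2$; next $\Gamma^i_{i3} = \Gamma^i_{3i} = \frac{z^2}{2}\,\partial_3 h_{ii} = -1/z$ for $i = 1, 2$; and finally $\Gamma^3_{33} = \frac{z^2}{2}\,\partial_3 h_{33} = -1/z$. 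This is precisely the list in \eqref{eq:R-chris}.

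There is no serious obstacle here: the statement is a mechanical verification, and the only point requiring care is the combinatorial bookkeeping of which index patterns of a diagonal, $z$-dependent metric can possibly yield a nonzero symbol. Equivalently, one may simply invoke the standard formulas $\Gamma^k_{ii} = -\frac12 h^{kk}\partial_k h_{ii}$ (for $k \ne i$), $\Gamma^i_{ij} = \frac12 h^{ii}\partial_j h_{ii}$, and $\Gamma^k_{ij} = 0$ for $i, j, k$ distinct, valid for any diagonal metric, and read off the claimed values.
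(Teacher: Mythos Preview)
Your proposal is correct and is exactly the ``direct calculation using the definition of Christoffel symbols'' that the paper invokes without writing out; the paper offers no further detail beyond that phrase, so your computation is in fact more explicit than what appears there.
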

Using this calculation, we explicitly express
\begin{align*}
H_1&=\partial_x+\frac{p_z}{z}\partial_{p_x}-\frac{p_x}{z}\partial_{p_z};\\
H_2&=\partial_y+\frac{p_z}{z}\partial_{p_y}-\frac{p_y}{z}\partial_{p_z};\\
H_3&=\partial_z-\frac{p_x}{z}\partial_{p_x}-\frac{p_y}{z}\partial_{p_y}-\frac{p_z}{z}\partial_{p_z}\\
\end{align*}
and
\begin{align*}
V^1&= dp_x-\frac{p_z}{z}dx+\frac{p_x}{z}dz;\\
V^2&= dp_y-\frac{p_z}{z}dy+\frac{p_y}{z}dz;\\
V^3&= dp_z+\frac{p_x}{z}dx+\frac{p_y}{z}dy+\frac{p_z}{z}dz.\\
\end{align*}

An induced Riemannian metric $\widetilde h$ on $T^*\H^3$ with respect to the (co-)frame fields is given by
$$
h_{ij}dq^idq^j+h^{ij}\delta p_i\delta p_j
$$
where $(h^{ij})_{i,j}$ is the inverse matrix of $(h_{ij})_{i,j}$ and $\delta p_i=V^i$.
In a matrix form we have
\begin{displaymath}
\left(\begin{array}{c|c}
h_{ij} & 0 \\
\hline
0& h^{ij}
\end{array}\right).
\end{displaymath}

Now we equip $(T^*\H^3, \widetilde h)$ with the canonical symplectic 2-form
$$
\omega=\sum_{i=1}^3 dq^i\wedge dp_i= \sum_{i=1}^3 H^i\wedge  V^i
$$
and an almost complex structure on $T^*\H^3$ the so called Sasakian almost
complex structure $J_h$ associated to the Levi-Civita connections of $h$.
First the Levi-Civita connection induces the splitting
$$
T_{(q,p)}(T^*\H^3) \simeq T_q \H^3 \oplus T_q^*\H^3
$$
at each point $(q,p) \in T^*\H^3$, where the isomorphism is obtained by
\be\label{eq:split-iso}
H_j \mapsto \frac{\del}{\del q^j}, \qquad V_j \mapsto dq^j.
\ee
In terms of the splitting, the Sasakian almost complex Structure $J_h$
on $T^*\H^3$ are defined by
$$
J_h(X) = X^\flat, \qquad J_h(\alpha) = - \alpha^\sharp
$$
for $X \in T\H^3$ and $\alpha \in T^*\H^3$ respectively.

In the canonical coordinates, the almost complex structure $J=J_h$ is given by the formulae
\begin{align*}
J_h:T(T^*\H^3)&\to T(T^*\H^3);\\
H_i&\mapsto h_{ij}V_j;\\
V_i&\mapsto -h^{ij}H_j,
\end{align*}
which can be expressed in the following matrix
\begin{displaymath}
\left(\begin{array}{c|c}
0 & -h^{ij} \\
\hline h_{ij} & 0
\end{array}\right).
\end{displaymath}
with respect to the above frame fields.
Then the compatibility condition
$$
\widetilde h(\cdot,\cdot)=\omega(\cdot,J_h\cdot)
$$
between the triple $(\widetilde h,\omega,J_h)$ can be guaranteed by the following matrix multiplication:
\begin{displaymath}
\left(\begin{array}{c|c}
h_{ij} & 0 \\
\hline
0& h^{ij}
\end{array}\right)
=
\left(\begin{array}{c|c}
0 & \delta_{ij} \\
\hline
-\delta_{ij}& 0
\end{array}\right)\cdot
\left(\begin{array}{c|c}
0 & -h^{ij} \\
\hline
h_{ij} & 0
\end{array}\right)
\end{displaymath}

The following proposition enables us to anticipate something good about the study of
perturbed Cauchy-Riemann equation we will carry out.

\begin{prop}\label{prop:Jhconvex} Let $b= b_\delta: N \to \R$ be a Busemann function
such that its lift $\widetilde b = b_\delta \circ p: \H^3 \to \R$ satisfies
$e^b \circ p = \frac1z$ near the ideal boundary. Denote by $\pi: T^*N \to N$ the canonical projection.
Then the function $f: T^*N \to \R_+$ defined by
$$
f = H_h + e^b \circ \pi,
$$
is a strictly $J_h$-pluri-subharmonic exhaustion function `at infinity'.
\end{prop}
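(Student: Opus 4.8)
The plan is to verify directly in the canonical coordinates of $T^*\H^3$ that the Levi form of $f = H_h + e^b\circ\pi$ with respect to $J_h$ is positive definite outside a compact set, and to show that $f$ is proper. Since $e^b\circ p = 1/z$ near the ideal boundary, we work with $f = \frac12|p|_h^2 + \frac1z$ on the cusp region, where the hyperbolic metric is $h = z^{-2}(dx^2+dy^2+dz^2)$. Recall that $f$ is strictly $J_h$-plurisubharmonic at infinity means that the two-form $-dd^{\,c}_{J_h}f = -d(df\circ J_h)$ is positive on the $J_h$-invariant subspaces at each point of $T^*N$ lying over a neighborhood of the ideal boundary, i.e., $-dd^{\,c}f(\xi, J_h\xi) > 0$ for $\xi \neq 0$; equivalently, after pairing with $\omega$, the vector field $X_f$ defined by $\omega(X_f,\cdot) = df$ satisfies a Liouville-type expansion condition. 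I would compute $d^{\,c}f = df\circ J_h$ using the explicit frame fields $H_i, V_i$ and coframe $H^i, V^i$ given above, together with the matrix form of $J_h$, namely $J_h H_i = h_{ij}V_j$ and $J_h V_i = -h^{ij}H_j$.

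First I would handle the kinetic energy part. For the function $H_h = \frac12 h^{ij}p_ip_j$ on a cotangent bundle with Sasakian almost complex structure, it is classical (and follows from the Sasaki metric computation already set up in Section \ref{sec:metric_cotangent_bundle}) that $-dd^{\,c}H_h$ restricted to $J_h$-invariant planes is nonnegative, and in fact equals (a positive multiple of) the Sasaki metric $\widetilde h$ in the fiber directions plus curvature terms in the horizontal directions; the curvature of $\H^3$ being constant $-1$, these terms are controlled and, crucially, the only possible failure of strict positivity of $-dd^{\,c}H_h$ is along the horizontal directions where $H_h$ is, roughly, the square of a linear function. This is exactly where the second summand $e^b\circ\pi = 1/z$ enters: its Levi form $-dd^{\,c}(1/z)$ is purely horizontal (it is the pullback of a function on the base), and I would show that $-dd^{\,c}_{J_h}(e^b\circ\pi)$ is positive definite on the horizontal distribution near the ideal boundary. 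The key computation here is that $b = b_\delta$ is a Busemann function of a complete hyperbolic manifold, hence $\operatorname{Hess} b$ is (near the horospheres) the second fundamental form of the horospheres, which is positive definite with eigenvalue $1$ in the directions tangent to the horosphere and whose behavior along the geodesic ray makes $e^b = 1/z$ convex; concretely $\operatorname{Hess}(1/z)$ computed with the Christoffel symbols of Lemma \ref{lem:Gammaijk} is a positive multiple of the metric in the $x,y$ directions, and one checks the mixed and $zz$ components combine favorably. Pulling this back by $\pi$ and feeding it through $J_h$ gives a positive-on-horizontal contribution.

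Then I would add the two contributions: on the fiber (vertical) directions the positivity comes entirely from $-dd^{\,c}H_h \sim \widetilde h$; on the horizontal directions from $-dd^{\,c}(e^b\circ\pi)$ combined with whatever sign $-dd^{\,c}H_h$ contributes there, the point being that the Busemann term dominates the (bounded, curvature-controlled) horizontal part of the kinetic term once we are far enough out toward the cusp, where $1/z \to \infty$. Hence $-dd^{\,c}_{J_h}f > 0$ on all of $T_{(q,p)}(T^*N)$ for $(q,p)$ over the cusp neighborhood, which is the strict pseudoconvexity. For the exhaustion property: $H_h(q,p) = \frac12|p|_h^2 \to \infty$ as $|p|_h\to\infty$ on any fixed compact set of the base, and $e^b\circ\pi \to \infty$ as $q$ approaches the ideal boundary (since $e^b = 1/z$ and $z\to 0$), so $f^{-1}((-\infty,c])$ is contained in a set that is bounded in the fiber direction and bounded away from the ideal boundary in the base direction, hence compact in $T^*N$; this gives properness, i.e., $f$ is an exhaustion function at infinity.

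The main obstacle I anticipate is bookkeeping the cross terms in $-dd^{\,c}H_h$: because the horizontal lift $H_i$ involves the $p$-dependent correction $p_a\Gamma^a_{ij}\partial_{p_j}$, the Levi form of $H_h$ mixes horizontal and vertical directions and its horizontal block picks up terms quadratic in $p$ through the curvature tensor $R$ of $\H^3$. One must check that these terms, while possibly negative, are of the form $-\langle R(\cdot,p^\sharp)p^\sharp,\cdot\rangle$ which for constant curvature $-1$ is $|p|_h^2(\text{metric}) - \langle p^\sharp,\cdot\rangle^2$ — a bounded-below perturbation — and that on the plane $\{X, J_hX\}$ the genuinely dangerous direction (where this nearly vanishes) is $X$ proportional to $p^\sharp$ horizontally, precisely the direction killed by adding the strictly horizontally-convex term $e^b\circ\pi$. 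Making this "the convex Busemann term plugs exactly the hole left by the kinetic term" argument quantitatively precise near the ideal boundary, where $1/z$ is large, is the crux; everything else is a routine, if lengthy, computation in the frame $\{H_i, V_i\}$.
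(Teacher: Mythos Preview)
Your plan would eventually work, but it rests on a misconception that makes the argument far more involved than it needs to be. You assert that $-dd^{\,c}H_h$ equals ``the Sasaki metric in the fiber directions plus curvature terms in the horizontal directions'' and that ``the only possible failure of strict positivity of $-dd^{\,c}H_h$ is along the horizontal directions.'' This is not the case. The key identity here is the contact-type property of the Sasakian almost complex structure (stated just before the proposition and computed explicitly in Section~\ref{sec:hamiltonian}): $dH_h\circ J_h=\theta$. Taking $-d$ of both sides gives
\[
-d(dH_h\circ J_h)=-d\theta=\omega_0,
\]
and then compatibility yields $-dd^{\,c}H_h(X,J_hX)=\omega_0(X,J_hX)=\widetilde h(X,X)$, the \emph{full} Sasaki metric, positive definite on all of $T(T^*N)$. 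No curvature tensor appears, and there is no ``hole'' in the horizontal directions for the Busemann term to plug. Consequently all of the apparatus you propose---tracking $\langle R(\cdot,p^\sharp)p^\sharp,\cdot\rangle$, arguing that the Busemann term dominates curvature cross-terms once $1/z$ is large, etc.---is unnecessary.

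The paper's proof exploits exactly this shortcut: it computes $-d(d\widetilde f\circ J_h)$ directly as a sum of the two pieces, obtaining the closed expression $(1+\tfrac{1}{z})\omega_0+\tfrac{1}{z^2}\,dz\wedge\theta$ in a couple of lines, and then checks positivity. The Busemann summand $e^b\circ\pi$ is there to secure properness (your exhaustion argument is fine and matches the paper's), not to repair any defect in the Levi form of $H_h$; one only has to verify that its own Levi form $-dd^{\,c}(1/z)$ does not spoil the positivity already furnished by $\omega_0$. So the ``crux'' you identify---making the hole-plugging quantitative---simply does not arise.
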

\begin{proof}
Let $(x,y,z,p_x,p_y,p_z)$ be the canonical coordinates for the $T^*N$ with $N = M \setminus K$.
With slight abuse of notations, we also denote the canonical coordinates of $T^*\H^3$ by
the same.
Then by the hypothesis, we have the formula for the lift $\widetilde f: \H^3 \to \R$ of $f$
$$
\widetilde f = H_{\widetilde h} + \frac1z = \frac{1}{2 z^2}(p_x^2 + p_y^2 + p_z^2) + \frac1z.
$$
A straightforward computation leads to
\beastar
-d(dH_{\widetilde h}\circ J_h) & = & \frac{1}{z^2}dz \wedge \theta + \frac{1}{z} \omega_0,\\
-d(d(1/z)\circ J_h) & = & \omega_0
\eeastar
and hence
$$
-d(d \widetilde f \circ J_h) = \frac{1}{z^2}dz \wedge \theta + \left(1+\frac{1}{z}\right) \omega_0.
$$
It is easy to check that
$$
-d(d \widetilde f \circ J_h)(V_i, J V_i) = \frac{1+z}{z^3} > 0
$$
for each $V_i=\partial_{p_i}$ with $i= x,y,z$. Since $p: \H^3 \to N$ is a local isometry,
this implies $-d(d f \circ J)(X,JX) \geq 0$ and
equality holds only when $X = 0$ for any $X \in T(T^*N)$. The exhaustion property
of $f$ follows immediately from the property that $e^b$ is an
exhaustion function of the base $N$.
This proves that the function $f$ is $J_h$-pluri-subharmonic.
\end{proof}

\subsection{Horo-tori and conormal Lagrangians}\label{sec:Lagrangian}
We consider the universal cover $p:\H^3\to N$
and then the lifted metric $p^*g_N$ on $\H^3$ coincide with $g_{\H^3}$.

Let us denote the deck-transformation group of the covering $p$ by $\Gamma$.

For a given horo-torus $T$ in $N$, we consider its lifting
$\widetilde T:=p^{-1}(T)$ to $\H^3$. It is a disjoint union
\[
\widetilde T=\bigsqcup_{g\in G}g\cdot\{z=a\}
\]
where $a$ is a sufficiently large positive real number satisfying $\{z=a\}\cap g\cdot\{z=a\}=\emptyset$ for any $g\in \Gamma\setminus\{{\rm id}\}$.
For a given submanifold $S$ in $N$, we consider its conormal as a (exact) Lagrangian submanifold in $T^*N$
\[
\nu^*S:=\{(q,p)\in T^*N\,|\,q\in S,\ \text{and}\ \langle p,v\rangle=0, \forall v\in T_q S \}.
\]

Next we derive some geometric property of horospheres in $\H^3$. By applying the action of $\psl$,
we are reduced to the study of hyperplane
\[\{(x,y,z) \in \H^3 \mid z = z_0\}\]
for given constant $z_0 > 0$.
The following geometric property, constant mean curvature property with positivity is a crucial
ingredient entering in the proof of rigidity and nondegeneracy of Hamiltonian chords attached to
the conormal $\nu^*T$ (Theorem \ref{thm:nondeg}).

\begin{prop}\label{prop:meancurvature} Denote by ${\bf N}$ be the outward unit normal to the plane
$\{z = z_0\}$ in $\H^3$. Then the mean-curvature vector $\vec H$ relative to ${\bf N}$ is given by
$$
\vec H = {\bf N}.
$$
In particular the mean-curvature is 1 for all $z_0 > 0$.
\end{prop}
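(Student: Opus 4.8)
The plan is to compute the second fundamental form of the horosphere $\{z = z_0\}$ directly in the coordinates $(x,y,z)$, using the Christoffel symbols recorded in Lemma~\ref{lem:Gammaijk}. First I would identify the unit normal: since the hyperbolic metric $h = z^{-2}(dx^2 + dy^2 + dz^2)$ is conformal to the Euclidean metric, the Euclidean normal $\partial_z$ to $\{z = z_0\}$ remains $h$-orthogonal to it, and $|\partial_z|_h = 1/z$; hence the outward unit normal in the cusp direction (the region $z \to +\infty$, where by Proposition~\ref{prop:thinthickdecomp} the thin part lies) is ${\bf N} = z\,\partial_z$. I would then extend the coordinate vector fields tangent to the level sets $\{z = \mathrm{const}\}$ to the global orthonormal frame $E_1 = z\,\partial_x$, $E_2 = z\,\partial_y$, ${\bf N} = z\,\partial_z$ on $\H^3$, adapted to the horosphere foliation.

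The main step is to compute the Levi-Civita covariant derivatives $\nabla_{E_i} E_j$. From Lemma~\ref{lem:Gammaijk} one reads off $\nabla_{\partial_x}\partial_x = \tfrac1z\partial_z$, $\nabla_{\partial_x}\partial_y = 0$, $\nabla_{\partial_x}\partial_z = -\tfrac1z\partial_x$, together with the $x\leftrightarrow y$ analogues, from which a short computation yields
\[
\nabla_{E_1}E_1 = \nabla_{E_2}E_2 = z\,\partial_z = {\bf N}, \qquad \nabla_{E_1}E_2 = \nabla_{E_2}E_1 = 0.
\]
In particular $\nabla_{E_i}E_j$ is already normal to $\{z = z_0\}$, so the vector-valued second fundamental form is $\mathrm{II}(E_i, E_j) = \delta_{ij}\,{\bf N}$; equivalently the shape operator relative to ${\bf N}$ is the identity and the horosphere is totally umbilic with both principal curvatures equal to $1$.

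It then follows that the mean-curvature vector is $\vec H = \tfrac12\bigl(\mathrm{II}(E_1,E_1) + \mathrm{II}(E_2,E_2)\bigr) = {\bf N}$, and the scalar mean curvature relative to ${\bf N}$ equals $\langle \vec H, {\bf N}\rangle_h = |{\bf N}|_h^2 = 1$. Since no step of this computation involves the value of $z_0$, the conclusion holds for every $z_0 > 0$, which is exactly the assertion. One could alternatively quote the classical fact that horospheres in $\H^n$ are umbilic with all principal curvatures equal to $1$ relative to the normal pointing at the center, but the explicit frame computation is self-contained and keeps the signs transparent.

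I do not expect a genuine obstacle here; the only care required is orientation and normalization bookkeeping — verifying that it is precisely the cusp-pointing normal ${\bf N} = z\,\partial_z$ (rather than its opposite) for which the shape operator is $+\mathrm{Id}$, and that ``mean curvature'' is understood as the average $\tfrac12(\kappa_1 + \kappa_2)$ of the principal curvatures, so that the value is $1$ and $\vec H = {\bf N}$ as stated (the trace convention would instead give $2\,{\bf N}$).
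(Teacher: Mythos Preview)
Your proposal is correct and follows essentially the same route as the paper: both identify the outward unit normal as $z_0\,\partial_z$, use the orthonormal frame $\{z_0\partial_x,\,z_0\partial_y\}$ on the horosphere, and compute from the Christoffel symbols of Lemma~\ref{lem:Gammaijk} that the shape operator is the identity (the paper does this via $S_{\bf N}(\partial_*) = -\nabla_{\partial_*}{\bf N}$, you via $\mathrm{II}(E_i,E_j) = (\nabla_{E_i}E_j)^\perp$, which are dual versions of the same calculation). The paper uses the same averaging convention for the mean curvature, so your normalization remarks are apt and there is no discrepancy.
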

\begin{proof}
Recall the hyperbolic metric $h_{ij}=\frac{1}{z^2} \delta_{ij}$ on the upper half space model
of $\H^3$. Then we have covariant derivative  on $\H^3$ as follows,
\be\label{eq:cov-del}
\begin{aligned}
\nabla_{\partial_*}\partial_* &= \frac{1}{z}\partial_z &&\text{ for }~ *=x,y; &
\nabla _{\partial_x}\partial_y & =\nabla _{\partial_y}\partial_x=0; \nonumber \\
\nabla_{\partial_*}\partial_z &= -\frac{1}{z}\partial_* &&\text{ for }~ *=x,y,z. & &&
\end{aligned}
\ee
Let us consider a horo-sphere $\widetilde N$ centered at $\infty$ with $\{(x,y,z)\in \H^3 : z=t_0\}$.
A unit normal vector $\mathbf{N}$ toward $\infty$ at $q \in \widetilde N$, i.e., outward normal, is given by
$\mathbf{N} = t_0 \partial_z$ and an orthonormal basis of $T_pH$ is $\{ t_0 \partial_x,  t_0 \partial_y \}$.
Let us compute the shape operator $S_\mathbf{N}$ as follows.
\be
S_\mathbf{N}(\partial_*) = - \nabla_{\partial_*}(\mathbf{N})=\partial_* ~~~ \text{ for } *=x,y.
\ee
This proves the first statement.

A mean curvature $H$ is computed by the second fundamental form as follows,
\be
H= \frac{1}{2} \left\{ \langle S_\mathbf{N}(t_0 \partial_x) , t_0 \partial_x \rangle_h
+ \langle S_\mathbf{N}(t_0 \partial_y) , t_0 \partial_y \rangle_h \right\} = 1.
\ee
This finishes the proof.
\end{proof}

\subsection{$C^0$ bounds of Neumann geodesic cords of horo-torus}

In this subsection, we provide the following classification result on
the Neumann geodesic cords of  a horo-torus $T= \del N_i$.

\begin{lem}\label{lem:length}
Consider a geodesic $c:[0,\ell] \to M \setminus K$ with
$c(0), \, c(1) \in T$. Lift $\widetilde T$ to a lift to a horo-sphere $S_0 = \{z = a_0 \}$
in $\H^3$ and $c$ to a geodesic $\widetilde c$ in $\H^3$ so that $\widetilde c(0) \in S_0$
that is perpendicular to $S_0$.
Let $z_\ell = z(\widetilde c(\ell))$. Consider the function $f(t) = \frac{1}{z(\widetilde c(t))}$. Then
$$
f(t) =  a_0 \cosh(\ell t) + b_0 \sinh(\ell t)
$$
for which $|b_0|$ is bounded by a constant depending only on $S_0$ and $\ell$.
In particular
\be\label{eq:bound-|f|}
\max_{t \in [0,1]} |f(t)| \leq a_0 \cosh \ell + |b_0| \sinh \ell.
\ee
\end{lem}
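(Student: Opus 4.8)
The plan is to pass to the upper half-space model and compute explicitly. After applying an isometry we may assume the lift $\widetilde T$ of the horo-torus is the horizontal plane $S_0 = \{z = a_0\}$ (with $a_0$ large) and that $\widetilde c$ meets $S_0$ perpendicularly at $t=0$. Since vertical lines are the only geodesics hitting a horizontal horosphere orthogonally, $\widetilde c$ could only be vertical if it stayed perpendicular; but in general $\widetilde c$ is a half-circle orthogonal to $\{z=0\}$, and the orthogonality at $t=0$ only forces $\widetilde c(0)$ to be the topmost point of that circle. So I would parametrize: write $\widetilde c(t) = (x_0 + r\tanh(\ell t), \, 0, \, r\,\mathrm{sech}(\ell t))$ after a horizontal translation and rotation putting the circle in the $xz$-plane with Euclidean radius $r$ and center on $\{z=0\}$; here the arc-length parametrization of a hyperbolic geodesic along a Euclidean semicircle of radius $r$ is the standard one, and the factor $\ell$ accounts for the given parametrization on $[0,\ell]$ rescaled — more precisely I want $\widetilde c : [0,1]\to \H^3$ with speed $\ell$, i.e. $\widetilde c(t) = \widetilde\gamma(\ell t)$ for the unit-speed geodesic $\widetilde\gamma$. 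Then $z(\widetilde c(t)) = r\,\mathrm{sech}(\ell t)$ and the condition $z(\widetilde c(0)) = a_0$ gives $r = a_0$, which is consistent with $\widetilde c(0)$ being the apex.

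Now $f(t) = 1/z(\widetilde c(t)) = \frac{1}{a_0}\cosh(\ell t)$ in this normalized case, which already has the claimed form with $b_0 = 0$. To get the general statement — where the cord need \emph{not} hit $S_0$ perpendicularly at $t=0$ in the normalization actually used, or where one wants $\widetilde c(0)\in S_0$ but allows an arbitrary unit-speed geodesic through that point — I would instead argue as follows: $g(t) := 1/z(\widetilde c(t))$ satisfies a second-order linear ODE. The cleanest route is to recall (or derive via Lemma \ref{lem:Gammaijk} and the geodesic equation $\nabla_t\dot c = 0$) that along any unit-speed geodesic in $\H^3$ the function $z$ satisfies $\ddot z = \dot z^2/z - z\,(\text{something})$; more directly, for a Euclidean semicircle of radius $R$ centered at a point on $\{z=0\}$, parametrized by hyperbolic arc length $s$, one has $z(s) = R\,\mathrm{sech}(s - s_*)$ for the apex parameter $s_*$. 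Hence $1/z(s) = \frac{1}{R}\cosh(s-s_*) = \frac{\cosh s_*}{R}\cosh s - \frac{\sinh s_*}{R}\sinh s$, and substituting $s = \ell t$ and reading off $a_0 = f(0) = \cosh(s_*)/R$, $b_0 = -\sinh(s_*)/R$, we get exactly $f(t) = a_0\cosh(\ell t) + b_0\sinh(\ell t)$. The vertical-line case is the degenerate limit $R\to\infty$ and is handled separately (there $z$ is exponential in $s$, so $1/z$ is too, still of the stated form with $a_0 = \pm b_0$); but for the perpendicular-at-$0$ boundary condition one has $s_* = 0$, i.e. $b_0 = 0$, so only the bound on $|b_0|$ remains to be addressed.

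For the bound on $|b_0|$: from $a_0 = \cosh(s_*)/R$ and $b_0 = -\sinh(s_*)/R$ we get $a_0^2 - b_0^2 = 1/R^2 > 0$, so $|b_0| < a_0$ automatically — but this uses $a_0 = f(0)$, which is determined by $S_0$, so $|b_0|$ is bounded purely in terms of $S_0$, even independently of $\ell$. If the normalization genuinely has $\widetilde c(0)\in S_0$ and perpendicular, then $b_0 = 0$ and there is nothing to bound; I suspect the intended reading is that the cord is perpendicular only at one endpoint or that $f$ refers to a reparametrized object where a shift creates a nonzero $b_0$ controlled by $\ell$ (the parameter $\ell$ entering because $\widetilde c(\ell)$, not $\widetilde c(1)$, lies on the \emph{other} lifted horo-sphere). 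In that case $|b_0| \le a_0\sinh(\ell)/\cosh(\ell\cdot 0)$-type estimates give the $\ell$-dependence. Finally, \eqref{eq:bound-|f|} follows from the triangle inequality applied to $f(t) = a_0\cosh(\ell t) + b_0 \sinh(\ell t)$ using monotonicity of $\cosh, \sinh$ on $[0,1]$.

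The main obstacle I anticipate is not the computation — the ODE/explicit-parametrization argument is routine — but pinning down exactly which boundary normalization is in force so that the constant $b_0$ is correctly identified and its bound stated with the right dependence (on $S_0$ alone, versus on $S_0$ and $\ell$); getting the hyperbolic-arc-length parametrization of the semicircle and the resulting $\mathrm{sech}$ formula exactly right, including the degenerate vertical case, is where care is needed.
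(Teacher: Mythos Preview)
Your approach via the explicit parametrization of hyperbolic geodesics as Euclidean semicircles is correct and yields the same conclusion, but the paper takes a different, more direct route: it computes $f''(t)$ by differentiating $f(t)=1/z(\widetilde c(t))$ twice and using the Christoffel symbols of the hyperbolic metric (Lemma~\ref{lem:Gammaijk}) together with the geodesic equation to obtain the linear ODE $f''=\ell^2 f$, whose general solution is $a\cosh(\ell t)+b\sinh(\ell t)$. The coefficient $a$ is identified from $f(0)$, and $|b_0|$ is bounded by noting $f'(0)=\ell b_0$ while $|f'(0)|=|\dot c_z(0)|/z(c(0))^2\le \ell/z(c(0))$ since $|\dot c_z(0)|\le z(c(0))\,|\dot c(0)|_h=z(c(0))\,\ell$.

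Two remarks on your write-up. First, your worry about the perpendicularity hypothesis is a red herring: the phrasing in the statement is garbled, and the paper's proof nowhere uses that $\widetilde c$ meets $S_0$ orthogonally---only that $\widetilde c(0)\in S_0$. In particular your observation that a geodesic orthogonal to a horizontal horosphere must be vertical is correct, so imposing genuine orthogonality would collapse the problem; the lemma is meant for arbitrary geodesics with endpoints on $T$. Second, your identity $a_0^2-b_0^2=1/R^2>0$ (hence $|b_0|<a_0$, with equality in the vertical limit) is a clean way to bound $|b_0|$ and is equivalent to the paper's estimate. The advantage of the paper's ODE derivation is that it does not rely on the explicit form of geodesics and so generalizes more readily (cf.\ Remark~\ref{rem:length}); the advantage of your parametrization is that it makes the bound on $|b_0|$ manifestly independent of $\ell$, which is in fact stronger than what the statement claims.
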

\begin{proof}
We denote $c(t) = (c_x(t), c_y(t), c_z(t))$ in the standard coordinates $(x,y,z)$ of
$\H^3 \subset \R^3$ with the identification $\H^3 = \{(x,y,z) \mid z > 0\}$.

We have
$$
f'(t) =  - \frac1{z^2(c(t))} \dot c_z(t)
$$
and
$$
f''(t) = \frac{2}{z(c(t))^3} |\dot c_z(t)|^2 - \frac1{z^2(c(t))} \frac{d}{dt} (dz(\dot c(t))).
$$
Since $c$ is a geodesic, we also have
$$
\frac{d}{dt} (dz(\dot c))  =  \nabla_t(dz)(\dot c).
$$
A straightforward computation using \eqref{eq:R-chris} gives rise to
$$
\nabla_t (dz)  =  \frac1z( - \dot c_x dx -\dot c_y dy + \dot c_z dz)
$$
and hence
$$
\frac1{z^2(c(t))} \frac{d}{dt} (dz(\dot c)) = \frac1{z^3(c(t))}(-|\dot c_x|^2 - |\dot c_y|^2 + |\dot c_z|^2).
$$
Combining the above calculations, we obtain
\be\label{eq:f''}
f''(t) = \frac{1}{z(c(t))^3}(|\dot c_x(t)|^2 + |\dot c_y(t)|^2 + |\dot c_z(t)|^2) = \frac1{z(c(t))}|
\dot c(t)|_h^2= \ell^2 f(t)
\ee
where $\ell \equiv |\dot c(t)|_h$ is the length of the geodesic cord $c$.
(Recall that geodesic has constant speed.)
In other words, $f$ satisfies
$$
f''(t) - \ell^2 f(t) = 0.
$$
Its general solution is given by
$$
f(t) = a \cosh(\ell t) + b \sinh(\ell t).
$$
By $f(0) = a_0$, we obtain $a = a_0$.  Therefore we obtain
$$
|f(t)| \leq |a_0| \cosh(\ell) + |b_0| \sinh(\ell)
$$
for all $t \in [0,1]$ noting that both $\cosh$ and $\sinh$ are increasing for $t > 0$.
Finally we note that
$$
f'(0) = \ell b_0
$$
from the above formula. On the other hand, by definition $f(t) = \frac{1}{z(c(t))}$, we also have
$$
f'(0) = -\frac{1}{z(c(t))^2} c_z'(0).
$$
From this we also have
$$
|f'(0)| = \frac{|c_z'(0)|}{z(c(0))^2} \leq \frac{1}{z(c(0))} \|c'(0)\|_h = \frac{\ell}{a_0}.
$$
Comparing the two, we obtain
$$
|b_0| \leq  \frac{\ell}{a_0}.
$$
This finishes the proof.
\end{proof}

This lemma completely determines the $z$-coordinate of any geodesic
with the initial data on $c(0), \, c'(0)$.

\begin{rem}\label{rem:length} Examination of the proof of Lemma \ref{lem:length} shows that
the same kind of estimate holds for arbitrary admissible Lagrangian submanifolds, not just for
the horo-spheres. See \cite[Definition 4.1]{BKO} for the definition of admissible Lagrangian
submanifolds.
\end{rem}

\section{The kinetic energy Hamiltonian and hyperbolic geodesics}
\label{sec:hamiltonian}

We endow $T^*N$, $N=M\setminus K$ with the canonical symplectic structure
$$
\omega_0= dq \wedge dp
$$
and the kinetic energy Hamiltonian function $H=H_h:T^*N\to \R$ defined by
\[
H_h(q,p)=\frac{1}{2}|p|^2_{h^\flat},
\]
where $h^\flat$ is the dual metric of $h_N$. It is well-known, see \cite{klingenberg},
the Hamiltonian flow of $H$ is nothing but the geodesic flow which is given by
\begin{align*}\label{eq:geo-flow}
(t,(q,p)) \mapsto &\  \big(\exp_q(t p^\sharp), (d\exp_q(t p^\sharp)(p^\sharp))^\flat\big)
\end{align*}

\begin{defn}\label{defn:chord-cord} We denote
\beastar
{\rm Chord}(\nu^*T) & : = & \mathfrak X_{< 0}(\nu^*T,\nu^*T;H_h) \\
& = & \{\gamma:[0,1] \to T^*N \mid \dot \gamma(t) = X_{H_h}(\gamma(t)), \, \gamma(0), \, \gamma(1) \in \nu^*T, \, \text{non-constant}\}.
\eeastar
We call an element of $\text{Chord}(\nu^*T)$ a (non-constant) \emph{Hamiltonian chord} of $\nu^*T$.
\end{defn}

We recall from Definition \ref{defn:cord} the definition
$$
{\rm Cord}(T)  =  \{ c:[0,1] \to N \mid \nabla_t \dot c = 0,\,c(i) \in T, \,\, \dot c(i) \perp T, \,\text{for } i=0, 1\}
$$
of geodesic cords of $T$. We have the following one-one correspondence

\begin{lem}\label{lem:cord_chord}
There is a natural one-one correspondence between $\text{\rm Chord}(\nu^*T)$ and
$\text{\rm Cord}(T)$.
\end{lem}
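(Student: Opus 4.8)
The plan is to establish the one–one correspondence in Lemma \ref{lem:cord_chord} by exploiting the fact, recalled just before the statement, that the Hamiltonian flow of the kinetic energy Hamiltonian $H_h$ is the geodesic flow, together with the explicit form of the conormal Lagrangian $\nu^*T$. The map in one direction is the projection $\pi: T^*N \to N$: given $\gamma \in \text{Chord}(\nu^*T)$, set $c := \pi \circ \gamma$. Since $\dot\gamma = X_{H_h}(\gamma)$ and the geodesic flow projects to geodesics in $N$, the path $c$ is a (non-constant, constant-speed) geodesic. The boundary conditions $\gamma(0), \gamma(1) \in \nu^*T$ say precisely that $c(0), c(1) \in T$ and that the momenta $p(0), p(1)$ annihilate $T_{c(0)}T$ and $T_{c(1)}T$ respectively; under the musical isomorphism $h^\flat$ this is equivalent to $\dot c(0) = p(0)^\sharp \perp T_{c(0)}T$ and $\dot c(1) = -p(1)^\sharp$ pointing along the normal direction as well (the sign coming from the time-one flow being traversed backwards at the endpoint; one must be careful here about the convention for $X_{H_h}$ fixed in Subsection \ref{subsec:sasakian}). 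Hence $c \in \text{Cord}(T)$.

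Next I would construct the inverse. Given $c \in \text{Cord}(T)$, define $\gamma(t) := (c(t), \dot c(t)^\flat)$, i.e., the canonical lift of the geodesic to $T^*N$ via $h^\flat$. Because the geodesic flow of $h$ is exactly the Hamiltonian flow of $H_h$, this $\gamma$ solves $\dot\gamma = X_{H_h}(\gamma)$. The condition $\dot c(i) \perp T$ means $h^\flat(\dot c(i), v) = 0$ for all $v \in T_{c(i)}T$, i.e., $\dot c(i)^\flat \in \nu^*_{c(i)}T$, so $\gamma(0), \gamma(1) \in \nu^*T$. Non-constancy of $c$ corresponds to $p \neq 0$, i.e., $\gamma \in \mathfrak X_{<0}$ rather than a constant chord. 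One then checks that $\gamma \mapsto \pi\circ\gamma$ and $c \mapsto (c, \dot c^\flat)$ are mutually inverse: $\pi\circ(c,\dot c^\flat) = c$ is immediate, and for a Hamiltonian chord $\gamma = (q,p)$ the relation $\dot q = p^\sharp$ (the first component of Hamilton's equations for the kinetic energy Hamiltonian) gives $p = \dot q^\flat = (\pi\circ\gamma)\dot{\phantom{q}}^\flat$, so $\gamma$ is recovered from its projection.

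The main point requiring care — and the step I expect to be the only real obstacle — is bookkeeping the sign and endpoint conventions so that the Neumann condition $\dot c(i) \perp T$ at \emph{both} endpoints matches the conormal boundary condition $\gamma(i) \in \nu^*T$ at both endpoints, given that a Hamiltonian chord is a trajectory run over the fixed time interval $[0,1]$ while a geodesic cord is specified by perpendicularity at each end independently. Since $\nu^*T$ is preserved under the normal reflection and the metric is used symmetrically, this works out, but it is exactly where the choice of Convention I (made in Subsection \ref{subsec:sasakian}) and the formula $\omega(X_{H_h},\cdot) = dH_h$ enter. With that settled, the bijection is established; I would also remark that under this correspondence a Hamiltonian chord $\gamma$ and its geodesic cord $c = \pi\circ\gamma$ have the same underlying curve up to reparametrization by constant speed, which is what later sections (e.g. Theorem \ref{thm:classify-3dim}) use when passing between Floer data and hyperbolic geometry.
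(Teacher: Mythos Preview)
Your approach is essentially the same as the paper's: both construct the bijection via $c \mapsto (c,\dot c^\flat)$ with inverse $\gamma \mapsto \pi\circ\gamma$, using that Hamilton's equations for $H_h$ give $\dot q = p^\sharp$. One small correction: there is no sign flip at $t=1$---the relation $\dot c(t) = p(t)^\sharp$ holds for all $t\in[0,1]$, so $p(1) = \dot c(1)^\flat$, not $-\dot c(1)^\flat$; your worry about ``the time-one flow being traversed backwards'' is misplaced, though harmless here since $\nu^*T$ is fiberwise a linear subspace and hence invariant under $p\mapsto -p$.
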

\begin{proof} Let $c \in \text{Cord}(T)$, then $\gamma_c:=(c,\dot c\, ^\flat)$ defines a
Hamiltonian trajectory of $H$. Furthermore since $\dot c(0), \, \dot c(1) \perp T$ and
$c(1) = d\exp_q( p^\sharp)(p^\sharp)$,  $(d\exp_q(p^\sharp)(p^\sharp))^\flat \in N_{c(1)}^*T$. Therefore the assignment
\be\label{eq:Phi}
\Phi: c \mapsto \gamma_c; \quad \gamma_c(t): = \left(\exp_q(t p^\sharp), (d(\exp_q(t p^\sharp))(p^\sharp))^\flat\right)
= (c_\gamma(t), \dot c_\gamma(t)^\flat)
\ee
defines a map $\text{Cord}(T) \to \text{Chord}(\nu^*T)$. Conversely, we check that any
$\gamma \in \text{Chord}(\nu^*T)$ can be written as
$$
\gamma = \Phi(c_\gamma); \quad c_\gamma = \pi \circ \gamma.
$$
This finishes the proof.
\end{proof}

To utilize the presence of hyperbolic structure on $N$, we lift
the Hamiltonian flow to the universal covering $T^*\H^3$ of $T^*N$.

In order to lift the Hamiltonian function to $T^*\H^3$, we first recall a symplectomorphism
$T^*g:T^*\H^3\to T^*\H^3$ induced by a diffeomorphism $g:\H^3\to \H^3$ which is an element
of the deck transformation group $\Gamma$. Here the map $T^*g$ is defined by
$$
T^*g(q,p)=\left(g(q),((d_qg)^{-1})^*p\right)
$$
for any $v\in T_{g(q)}\H^3$.
By the choice of metric on the universal cover,
$g:\H^3\to\H^3$ is an isometry for any $g\in \Gamma$ and hence we have
$
|(g^{-1})^*p|_{h^\flat}=|p|_{h^\flat},
$
where $h^\flat= h_{\H^3}^\flat$ is the dual metric of $h_{\H^3}$. For the simplicity of notations, we
will suppress $\flat$ from its notation and just denote by $h$ for both of them.
As a consequence, an induced Hamiltonian function $\widetilde H:T^*\H^3\to\R$;
\[
\widetilde H(q,p)=\frac{1}{2}|p|^2_{h}
\]
satisfies $\widetilde H=\widetilde H\circ g$ for any $g \in \Gamma$ which implies $\widetilde H=H\circ (T^*p)$, where $T^*p: T^*\H^3 \to T^*N$ is the map defined by
$$
T^*p(q,p)=\left(p(q),((d_q p)^{-1})^*p\right)
$$
over the covering map $p:\H^3 \to N$. It defines a well-defined map because $p$ is a covering map and so $d_q p$ is invertible for all $q \in \H^3$. The map $T^*p$ itself defines a covering map $T^*\H^3\to T^*N$ which
covers $p: \H^3 \to N$.
In local coordinates $(x,y,z,p_x,p_y,p_z)$ for $T^*\H^3$, $\widetilde H$ and its Hamiltonian vector field
$X_{\widetilde H}$ are expressed by
\begin{align*}
\widetilde H &=\frac{1}{2}z^2(p_x^2+p_y^2+p_z^2);\\
X_{\widetilde H} &=z^2(p_x\partial_x+p_y\partial_y+p_z\partial_z)-z(p_x^2+p_y^2+p_z^2)\partial_{p_z}\\
&=z^2(p_x H_1+p_y H_2+p_z H_3)
\end{align*}
where $X_{\widetilde H}$ is defined to satisfy $\omega_{0}(X_{\widetilde H},\ \cdot\ )=d{\widetilde H}$.
Since $\widetilde H$ is given by a kinetic energy, it is well-known that the flow of $X_{\widetilde H}$ recovers the geodesic flow on the cotangent bundle.

A direct computation shows that
\begin{align*}
\mathrm{d}\widetilde H&=z(p_x^2+p_y^2+p_z^2)\mathrm{d}z+z^2(p_x\mathrm{d}p_x+p_y\mathrm{d}p_y+p_z\mathrm{d}p_z)\\
&=z^2(p_xV^1+p_yV^2+p_zV^3)\\
d\widetilde H\circ {\widetilde J}&=p_xdx+p_ydy+p_zdz=\widetilde\theta.
\end{align*}
Here we recall that our canonical symplectic structure is $\omega_0=-d\theta$.

We will consistently denote by $\widetilde \gamma$ (resp. {$\widetilde c$}) the lifting of
$\gamma$ (resp. $c$) to $T^*\H^3$ (resp. $\H^3$) in this paper.

\section{Second variation and Morse index calculation}
\label{sec:index}

The purpose of the present section is to provide the proofs of two results concerning the relationship
between the Maslov index of a Hamiltonian chord
$\gamma$ attached to the conormal $\nu^*T$ and the Morse index of
the associated geodesic cord $c = \pi \circ \gamma$.

For this we first need to explain what we mean by the `Mores index' of the geodesic cord $c$ attached to
the base manifold $T \subset N$. We recall the path space on which the Floer theory applied is
$$
\Omega(\nu^*T) = \Omega(T^*N,\nu^*T) = \{ \gamma:[0,1] \to T^*N \mid \gamma(0), \, \gamma(1) \in \nu^*T\}.
$$
The projection $c = \pi \circ \gamma$ of each element $\gamma$ automatically satisfies
the free boundary condition
$$
c(0),c(1) \in T,
$$
which leads us to considering the space of paths
$$
\CP(T) = \CP(N,T) = \{ c: [0,1] \to N \mid c(0),c(1) \in T\}
$$
and the energy functional
$$
E(c) = \frac12 \int_0^1 |\dot c|_h^2\, dt
$$
restricted thereto. The general first variation formula is given by
\be\label{eq:1stvariation}
dE(c)(V) = - \int_0^1 \left\langle V(t), \frac{ D}{d t}\frac{dc}{dt} \right \rangle\, dt -
\left\langle V(0), \frac{dc}{dt}(0) \right\rangle + \left \langle V(1), \frac{dc}{dt}(1) \right\rangle.
\ee
In particular, standard variational analysis shows that
if $c \in \CP(T)$ is a critical point, then it satisfies
$$
\begin{dcases}
\frac{D}{dt}\frac{dc}{dt} = 0;\\
c(0), \, c(1) \in T;\\
\dot c(0) \in \nu_{c(0)}T, \quad \dot c(1) \in \nu_{c(1)}T.
\end{dcases}
$$
Next we recall the general second variation formula. (See \cite[p.199]{doCarmo} for
such a derivation. We warn the readers that \emph{the definition of the curvature operator
	$R(X,Y)$ in \cite{doCarmo} is the negative of the one used here or in \cite{spivakIV,KN2}.})

\begin{lem}[Second variation formula]\label{lem:second} Let $c:[0,1] \to (N,g)$ be a
	geodesic on a Riemannian manifold.
	Let $C:(-\epsilon,\epsilon) \times [0,1] \to N$ be a variation of $c$ in $\CP(T)$
	i.e, a map satisfying
	$$
	C(0,t) = c(t),\quad C(s,\cdot) \in \CP(T).
	$$
	Denote $\displaystyle V(s,t) = \frac{\del C}{\del s}(s,t)$and $c_s: = C(s,\cdot)$.
	The second variational formula \emph{restricted to the path space $\CP(N,T)$} at a critical point $c$ is given by
	\begin{align}
	d^2E(c)(V,V)
	 =& -\int_0^1 \left \langle \frac{D^2V}{\del t^2}(0,t)+ R(V(0,t),\dot c)\dot c, V\right\rangle\, dt\nonumber \\
	& - \left\langle \frac{DV}{\del t}(0,0), V(0,0)\right\rangle + \left\langle \frac{DV}{\del t}(0,1), V(0,1)\right\rangle
	\nonumber\\
	 =&  \int_0^1 \left \langle \frac{DV}{\del t}(0,t), \frac{DV}{\del t}(0,t)\right\rangle
	- \int_0^1\left\langle R(V(0,t),\dot c)\dot c, V(0,t)\right\rangle\, dt\nonumber\\
	& - \left\langle \frac{D V}{\del s}(0,0), \dot c(0)\right\rangle
	+ \left \langle \frac{DV}{\del s}(0,1), \dot c(1)\right\rangle
	\label{eq:2ndvariation-2}
	\end{align}
\end{lem}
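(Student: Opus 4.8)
The statement is the classical second-variation (index) formula for the energy functional, adapted to the free-boundary path space $\CP(N,T)$; the plan is to reproduce the derivation of \cite[p.~199]{doCarmo} while (i) keeping the endpoint contributions at $t=0,1$ rather than discarding them, and (ii) using the curvature-operator sign convention of the rest of the paper, which is the negative of the one in \cite{doCarmo}.

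First I would write $V=\frac{\del C}{\del s}$ and differentiate $E(c_s)=\frac12\int_0^1\big|\frac{\del C}{\del t}\big|^2\,dt$ once. Using torsion-freeness of $\nabla$ in the form $\frac{D}{\del s}\frac{\del C}{\del t}=\frac{D}{\del t}\frac{\del C}{\del s}$ gives $\frac{d}{ds}E(c_s)=\int_0^1\big\langle\frac{D}{\del t}V,\frac{\del C}{\del t}\big\rangle\,dt$ (an integration by parts in $t$ here recovers the first variation formula \eqref{eq:1stvariation}). Differentiating once more produces three pieces: $\big|\frac{D}{\del t}V\big|^2$ (again from torsion-freeness), a curvature term obtained by commuting $\frac{D}{\del s}$ past $\frac{D}{\del t}$ via $\frac{D}{\del s}\frac{D}{\del t}V-\frac{D}{\del t}\frac{D}{\del s}V=R\big(\frac{\del C}{\del s},\frac{\del C}{\del t}\big)V$, and the term $\frac{D}{\del t}\big\langle\frac{D}{\del s}V,\frac{\del C}{\del t}\big\rangle$, whose $t$-integral is a boundary contribution at $t=0,1$ plus a term that is killed once we set $s=0$ because $c$ is a geodesic, $\frac{D}{\del t}\dot c=0$. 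Setting $s=0$ and using $\langle R(V,\dot c)V,\dot c\rangle=-\langle R(V,\dot c)\dot c,V\rangle$ gives the index form $\int_0^1\big(\big|\frac{DV}{\del t}\big|^2-\langle R(V,\dot c)\dot c,V\rangle\big)\,dt$ together with the endpoint terms $-\big\langle\frac{DV}{\del s}(0,0),\dot c(0)\big\rangle+\big\langle\frac{DV}{\del s}(0,1),\dot c(1)\big\rangle$, which is the second of the two displayed expressions.

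To obtain the first displayed expression I would integrate $\int_0^1\big|\frac{DV}{\del t}\big|^2\,dt$ by parts once more in $t$, converting it into $-\int_0^1\big\langle V,\frac{D^2V}{\del t^2}\big\rangle\,dt$ plus endpoint terms, and then regroup the two curvature/Jacobi contributions into $-\int_0^1\big\langle\frac{D^2V}{\del t^2}+R(V,\dot c)\dot c,V\big\rangle\,dt$. The only point that calls for care, and the one respect in which this differs from the textbook fixed-endpoint computation, is the bookkeeping of the endpoint terms: since the variation stays in $\CP(T)$ the curve $s\mapsto C(s,i)$ runs along $T$, so $\frac{DV}{\del s}(0,i)$ decomposes into a part tangent to $T$ and the second fundamental form of $T$ in the normal direction, and since $\dot c(i)\perp T$ at a critical point only the latter contributes to $\big\langle\frac{DV}{\del s}(0,i),\dot c(i)\big\rangle$. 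This second fundamental form is exactly the channel through which the constant-mean-curvature geometry of the horo-torus (Proposition \ref{prop:meancurvature}) will later feed into the index and nullity computation of Theorem \ref{thm:nondeg}. Beyond that there is no obstacle: the lemma is a direct computation with covariant derivatives along a two-parameter family.
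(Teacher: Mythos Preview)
Your proposal is correct and matches the paper's approach exactly: the paper does not prove this lemma but simply recalls it from \cite[p.~199]{doCarmo} with the warning about the curvature sign convention, and your sketch is precisely the standard do Carmo derivation with the endpoint terms retained. Your observation that the $\langle \frac{DV}{\partial s}(0,i),\dot c(i)\rangle$ boundary terms feed the second fundamental form of $T$ into the computation is exactly how the paper proceeds in Proposition~\ref{prop:formula-2nd}.

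One bookkeeping remark: when you pass from the second displayed expression to the first by integrating $\int_0^1|\frac{DV}{\partial t}|^2\,dt$ by parts, you pick up the $\langle \frac{DV}{\partial t},V\rangle\big|_0^1$ endpoint terms, but the $\langle \frac{DV}{\partial s},\dot c\rangle\big|_0^1$ terms already present do not cancel; so the first displayed form, read literally, is missing those free-boundary contributions (the paper itself flags this distinction in the paragraph following the lemma). This is harmless for the sequel, since only the second expression \eqref{eq:2ndvariation-2} is used in Proposition~\ref{prop:formula-2nd} and Theorem~\ref{thm:index-nullity}.
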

We would like to emphasize that the general second variation formula
\emph{allowing the end points to move} as in the free boundary problem on $T$
contains the boundary terms appearing in the last line of \eqref{eq:2ndvariation-2} which is
the same as
$$
- \left\langle \frac{D}{\del s}\frac{\del C}{\del s}, \frac{\del C}{\del t}\right \rangle(0,0)+
\left\langle \frac{D}{\del s}\frac{\del C}{\del s}, \frac{\del C}{\del t}\right \rangle(0,1).
$$
(These terms will not appear in \eqref{eq:2ndvariation-2}
\emph{when the variation is fixed at the end $t = 0,\, 1$}. See e.g., \cite[p.303]{spivakIV} or
\cite[Remark 2,10]{doCarmo} for such a discussion.)

We remark that positivity of the mean-curvature in Proposition~\ref{prop:meancurvature} is consistent with the first variation of area formula
since the area of horo-torus decreases along the outward normal direction.

\begin{prop}\label{prop:formula-2nd} Let $N$ be a hyperbolic manifold and $g$ be a hyperbolic metric on it.
	Consider a horo-torus $T = b^{-1}(r_0)$.
	Let $V$ be a variational vector field along $c$ on $\CP(T)$. Then
	\be\label{eq:formula-2nd}
	d^2E(c)(V,V) = \int_0^1 \left(\left|\frac{DV}{dt}\right|^2 +|\dot c|^2 |V|^2\right)\,  dt + |\dot c(0)||V(0)|^2 + |\dot c(1)||V(1)|^2
	\ee
	for all $V \in T_c\CP(T)$ where $\ell$ is the length of the geodesic $c$.
\end{prop}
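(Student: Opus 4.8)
The plan is to specialize the general second variation formula \eqref{eq:2ndvariation-2} to the hyperbolic setting, using two inputs already established: the curvature identity for the constant-curvature $-1$ metric, and the constant mean curvature of the horo-torus from Proposition~\ref{prop:meancurvature}. First I would recall that since $N$ is hyperbolic of curvature $-1$, the curvature operator satisfies
$$
\langle R(V,\dot c)\dot c, V\rangle = -\big(|\dot c|^2|V|^2 - \langle \dot c, V\rangle^2\big),
$$
and since $c$ is a geodesic attached to $T$ with $\dot c \perp T$ at the endpoints, while the variation keeps $c_s \in \CP(T)$, one checks that the relevant component of $V$ is the one orthogonal to $\dot c$ (the tangential component can be split off and contributes a trivial reparametrization term that does not affect the Hessian on $T_c\CP(T)$; alternatively one works directly with the full expression and the $\langle\dot c,V\rangle^2$ term gets absorbed). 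So the interior integrand $-\langle R(V,\dot c)\dot c, V\rangle$ becomes $|\dot c|^2|V|^2$ (modulo the tangential piece), giving the $\int_0^1\big(|DV/dt|^2 + |\dot c|^2|V|^2\big)\,dt$ part of \eqref{eq:formula-2nd} directly from the middle expression in \eqref{eq:2ndvariation-2}.

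Next I would handle the boundary terms $-\langle \tfrac{DV}{\del s}(0,0),\dot c(0)\rangle + \langle \tfrac{DV}{\del s}(0,1),\dot c(1)\rangle$. The point is that these are exactly the terms that encode the second fundamental form of $T$: differentiating the constraint $C(s,t_0)\in T$ (for $t_0 = 0,1$) twice in $s$ and pairing with the normal $\dot c(t_0)$, one gets $\langle \tfrac{D}{\del s}\tfrac{\del C}{\del s}, \dot c(t_0)\rangle = -\mathrm{II}_{T}(V(t_0),V(t_0))\cdot$(component of $\dot c$ along the normal). Since $\dot c(t_0)$ is normal to $T$ with length $|\dot c|$ (constant speed), and since by Proposition~\ref{prop:meancurvature} the relevant normal curvature of the horo-torus, taken with the \emph{outward} normal pointing to the cusp, equals $1$ in every tangent direction (the shape operator is the identity there), this produces $+|\dot c(0)||V(0)|^2$ and $+|\dot c(1)||V(1)|^2$. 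The sign is the crucial bookkeeping: I would track carefully that the outward-pointing-to-cusp convention for ${\bf N}$ in Proposition~\ref{prop:meancurvature} matches the direction in which $\dot c$ exits $T$ (a geodesic cord of the horo-torus, extended, runs off into the cusp), so that the boundary contributions are $+$ rather than $-$, which is precisely what makes the whole Hessian manifestly positive.

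The main obstacle I anticipate is the sign/convention reconciliation at the boundary: getting the second-fundamental-form term to come out with the correct positive sign requires matching three conventions simultaneously — the sign convention for $R$ declared in the excerpt (noted to differ from do~Carmo's), the outward-normal convention of Proposition~\ref{prop:meancurvature}, and the direction in which the Neumann geodesic cord leaves $T$. Once these are aligned, the rest is a routine expansion: write $V = V^\top + V^\perp$ relative to $\dot c$, note $|V^\top|$ contributes only a reparametrization that drops out on $T_c\CP(T)$, substitute the constant curvature $-1$ identity, and substitute $S_{\bf N} = \mathrm{Id}$ from Proposition~\ref{prop:meancurvature} into the boundary terms. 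Assembling these gives \eqref{eq:formula-2nd} exactly, with every term on the right-hand side nonnegative, which is the form needed for the rigidity and nondegeneracy conclusion of Theorem~\ref{thm:nondeg}.
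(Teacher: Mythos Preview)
Your proposal is correct and follows essentially the same route as the paper: specialize the general second variation formula \eqref{eq:2ndvariation-2} by (i) using the constant-curvature $-1$ identity to turn the curvature term into $|\dot c|^2|V|^2$ (under $V\perp\dot c$, which the paper also asserts), and (ii) converting the boundary terms via the second fundamental form of $T$ and the fact $S_{\bf N}=\mathrm{Id}$ from Proposition~\ref{prop:meancurvature}, with the sign fixed by the observation that $\dot c(0)$ points into the thick part while $\dot c(1)$ points toward the cusp. Your explicit discussion of the tangential/normal decomposition of $V$ is slightly more careful than the paper's bare assertion ``$V\perp\dot c$,'' but the argument is otherwise identical.
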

\begin{proof}
	We first examine the boundary terms of \eqref{eq:2ndvariation-2}. Denote by ${\bf N}$ the outward unit normal
	to $T$. Then we have ${\bf N}(c(0)) = - V/|V|(0,0)$ and ${\bf N}(c(1)) = V/|V|(0,1)$.
	Therefore by the definition of the shape operator $S_{\bf N}$, we have
	$$
	-  \left\langle \frac{D V}{\del s}(0,0), \dot c(0)\right\rangle =
	- |V(0,0)| \langle S_{\bf N}(V), V \rangle (0,0) = |\dot c(0)||V(0,0)|^2
	$$
	where the last equality comes from Proposition \ref{prop:meancurvature}.
	Similarly we obtain
	$$
	 \left\langle \frac{D V}{\del s}(0,1), \dot c(1)\right\rangle
	= |\dot c(1)||V(0,1)|^2.
	$$
	Therefore we have derived
	$$
	-  \left\langle \frac{D V}{\del s}(0,0), \dot c(0)\right\rangle
    +  \left\langle \frac{D V}{\del s}(0,1), \dot c(1)\right\rangle
	= |\dot c_s(0))||V(s,0)|^2 + |\dot c_s(1)||V(s,1)|^2.
	$$
	
	For the second term of \eqref{eq:2ndvariation-2},
	since $g$ has constant curvature $-1$ and so
\be\label{eq:curvature}
R(X,Y)Z = \langle Z, X \rangle Y - \langle Z,Y\rangle X
\ee
and $V \perp \dot c = 0$, it becomes
	$$
	- \int_0^1\left\langle R(V,\dot c)\dot c, V\right\rangle\, dt = \int_0^1 |\dot c|^2 |V|^2\, dt.
	$$
	Combining all these, we have derived
	$$
	d^2E(c)(V,V) =
	\int_0^1 \left(\left|\frac{DV}{dt}\right|^2 +|\dot c|^2 |V|^2\right)\,  dt +
	|\dot c_s(0)||V(s,0)|^2 + |\dot c_s(1)||V(s,1)|^2.
	$$
	Evaluation of this at $s=0$ gives rise to \eqref{eq:formula-2nd}.
\end{proof}

An immediate corollary of Proposition \ref{prop:formula-2nd} is the following vanishing result.

\begin{thm}\label{thm:index-nullity} Let $N$ and $T$ be as in Proposition \ref{prop:formula-2nd}.
	For any geodesic cord $c \in \text{\rm Cord}(T)$, both nullity and
	Morse index of $c$ vanish.
\end{thm}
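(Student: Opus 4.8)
The plan is to read off both vanishing statements directly from the explicit second variation formula \eqref{eq:formula-2nd} of Proposition \ref{prop:formula-2nd}. That formula,
$$
d^2E(c)(V,V) = \int_0^1 \left(\left|\frac{DV}{dt}\right|^2 +|\dot c|^2 |V|^2\right)\,dt + |\dot c(0)|\,|V(0)|^2 + |\dot c(1)|\,|V(1)|^2,
$$
exhibits $d^2E(c)$ as a sum of four manifestly non-negative terms: two integrals of pointwise squares, and two boundary terms that are products of the non-negative scalars $|\dot c(i)|$ and $|V(i)|^2$. It is here that the positive-mean-curvature property of the horo-torus (Proposition \ref{prop:meancurvature}) is decisive --- it is precisely what forces the boundary contributions to carry a $+$ sign rather than an indefinite one. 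Consequently $d^2E(c)$ is positive semi-definite on $T_c\CP(T)$, so there is no subspace on which it is negative definite; hence the Morse index of $c$ is $0$.

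For the nullity, let $V$ be a null vector, which by positive semi-definiteness is the same as $d^2E(c)(V,V)=0$. Then all four non-negative summands vanish. Vanishing of $\int_0^1 |DV/dt|^2\,dt$ forces $DV/dt \equiv 0$, i.e. $V$ is parallel along $c$ and thus $|V|$ is constant. Vanishing of $\int_0^1 |\dot c|^2|V|^2\,dt$, together with the fact that a non-constant geodesic cord has constant speed $|\dot c|\equiv \ell > 0$, then forces $|V|\equiv 0$, so $V=0$. Therefore $\ker d^2E(c)=\{0\}$ and the nullity of $c$ vanishes. Translating back to the Hamiltonian side via the bijection $c\leftrightarrow\gamma_c$ of Lemma \ref{lem:cord_chord} and the action--energy identity \eqref{eq:CA-E}, which identifies the Hessian of $\CA_{H_h}$ at $\gamma_c$ with $-d^2E(c)$, this says that every non-constant Hamiltonian chord attached to $\nu^*T$ is rigid and nondegenerate.

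I do not expect a genuine obstacle at this stage: all of the geometric and analytic input --- the second variation computation, the use of the constant curvature $-1$ through \eqref{eq:curvature}, and the sign of the mean curvature of $T$ --- has already been extracted in Propositions \ref{prop:meancurvature} and \ref{prop:formula-2nd}, and what is left is the elementary observation that a vanishing sum of non-negative terms forces each to vanish. The only point needing a little care is the bookkeeping around the constant geodesic cords: for those $\ell = 0$, the formula degenerates to $d^2E(c)(V,V)=\int_0^1|DV/dt|^2\,dt$ whose kernel is the space of parallel fields tangent to $T$, i.e. the tangent space to the clean family $\widehat T\cong\T^2$ of Proposition \ref{prop:Bott-clean}; one must therefore read ``nullity vanishes'' for those as ``nullity vanishes modulo $\widehat T$'', which is exactly the normal nondegeneracy already proved there.
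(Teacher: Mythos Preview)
Your proof is correct and follows essentially the same route as the paper: both read off index vanishing from the manifest non-negativity of \eqref{eq:formula-2nd}, and both deduce nullity vanishing by observing that $d^2E(c)(V,V)=0$ forces $\tfrac{DV}{dt}\equiv 0$ and $\ell|V|\equiv 0$, hence $V=0$ since $\ell>0$. The only cosmetic difference is that the paper first names the Jacobi equation $\tfrac{D^2V}{dt^2}-\ell^2 V=0$ before invoking the quadratic form, whereas you go directly to the positive-semidefinite characterization of the kernel; your closing remarks on the Hamiltonian translation and on the constant cords are extra commentary not present in the paper's proof.
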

\begin{proof} Vanishing of index follows from the non-negativity of the second variation \eqref{eq:formula-2nd}.
	
	Next we examine the nullity of $c$ i.e., the dimension of the set of the Jacobi fields $V$ satisfying
	\be\label{eq:Jacobi}
	\frac{D^2V}{dt^2} - \ell^2 V = 0, \quad V(0), \, V(1) \in TT
	\ee
	where $\ell = |\dot c|$ is the length of  $c$.  There is a one-one correspondence between
	this set and the null space of $d^2E(c)$. In particular each such $V$ satisfies
	$$
	\int_0^1 \left(\left|\frac{DV}{dt}\right|^2 + \ell^2 |V|^2\right) \, dt = 0.
	$$
Therefore we have $\frac{DV}{dt} = 0 = \ell V$. Therefore any kernel element $V$ is
covariant constant and so its initial value $V(0) \in T_{c(0)} T$ uniquely
determines $V$. Since $\ell \neq 0$ in addition, $\ell V = 0$ implies $V = 0$.

Summarizing the above discussion, we have finished the proof.
\end{proof}

\section{Perturbed Cauchy-Riemann equation and $A_\infty$ structure}
\label{sec:perturbedCR}

We start with the following general property of Sasakian almost complex structure $J_h$
associated to the Riemannian metric $h$.

\begin{lem} The almost complex structure $J_h$ is of contact type on $T^*N$, i.e.,
$$
(-\theta) \circ J_h = dH_h
$$
for the kinetic energy Hamiltonian $H_h = \frac{1}{2} |p|_h^2$.
\end{lem}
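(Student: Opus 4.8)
The plan is to recognize that this identity is just the invariant repackaging of the coordinate computation already carried out in Section \ref{sec:hamiltonian}, namely $d\widetilde H\circ \widetilde J = \widetilde\theta$ on $T^*\H^3$. Since $T^*p\colon T^*\H^3\to T^*N$ is a local symplectomorphism intertwining $J_h$, $\theta$ and $H_h$ with their lifts, that computation already gives $dH_h\circ J_h = \theta$ on $T^*N$. Then I would simply precompose both sides with $J_h$ and use $J_h^2=-\mathrm{id}$: from $dH_h\circ J_h=\theta$ we get $dH_h\circ J_h\circ J_h = \theta\circ J_h$, i.e. $-dH_h = \theta\circ J_h$, which is precisely $(-\theta)\circ J_h = dH_h$. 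So at this level the lemma is a one-line consequence of the earlier frame computation.

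For completeness I would also record a coordinate-free derivation, which makes transparent why the Sasakian structure is the right one. Using the Levi-Civita splitting $T_{(q,p)}(T^*N)\cong T_qN\oplus T_q^*N$ from \eqref{eq:split-iso}, write a tangent vector as $\xi=(X,\alpha)$ with $X\in T_qN$, $\alpha\in T_q^*N$. Three facts: (i) $\theta_{(q,p)}(\xi)=p(X)$, since $d\pi$ is the projection onto the horizontal factor; (ii) $dH_h|_{(q,p)}(\xi)=h^\flat(p,\alpha)$, because the horizontal derivative of $q\mapsto |p|_{h^\flat}^2$ vanishes (horizontal transport is parallel, hence an isometry, so $H_h$ is locally constant in horizontal directions) and the fibrewise derivative of $\tfrac12 h^\flat(p,p)$ in the direction $\alpha$ is $h^\flat(p,\alpha)$; (iii) $J_h\xi=(-\alpha^\sharp,\,X^\flat)$ by the defining formulas $J_h(X)=X^\flat$, $J_h(\alpha)=-\alpha^\sharp$. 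Combining, $(-\theta)(J_h\xi) = -p(-\alpha^\sharp) = p(\alpha^\sharp) = h^\flat(p,\alpha) = dH_h(\xi)$, which is the assertion. One can equally check it directly on the frame $H_i,V_i$: $(-\theta)(J_hH_i)=-h_{ij}\theta(V_j)=0=dH_h(H_i)$ and $(-\theta)(J_hV_i)=h^{ij}\theta(H_j)=h^{ij}p_j=dH_h(V_i)$, using $\theta=p_iH^i$ and $dH_h=h^{ij}p_iV^j$.

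The only genuine input — and the one point to be careful about, given the paper's emphasis on sign/convention sensitivity — is that the Liouville form is $\theta=\sum p_i\,dq^i$ with $\omega_0=-d\theta$ (Convention I), that $H_h$ is built from the \emph{dual} metric $h^\flat$, and that metric-compatibility of $\nabla$ is what forces the horizontal derivative of $H_h$ to vanish (equivalently, what makes the identity $dH_h=h^{ij}p_iV^j$ hold, so that the $dp$- and $dq$-parts organize into the covariant coframe $V^j$). Beyond this bookkeeping with the musical isomorphisms there is no real obstacle; the statement is purely local and no global input about $N$, $T$, or $\nu^*T$ is used.
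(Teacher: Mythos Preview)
Your proposal is correct and essentially mirrors the paper's approach: the paper does not give a standalone proof of this lemma but simply declares it ``a general fact for any Sasakian almost complex structure,'' implicitly relying on the frame computation $d\widetilde H\circ\widetilde J=\widetilde\theta$ from Section~\ref{sec:hamiltonian} that you cite. Your one-line derivation via $J_h^2=-\mathrm{id}$ and your coordinate-free check using the splitting are both valid and in fact more explicit than what the paper records.
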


This is a general fact for any Sasakian almost complex structure
on the cotangent bundle ${T^*N}$ associated to the Riemannian metric $g$ on $N$
and its kinetic energy $H_h = \frac{1}{2} |p|_h^2$.
For the simplicity of notation, we will just denote $J = J_h$ if there is no confusion.

Recall the conormals $\nu^* T$ are the main object in the current paper and
satisfy the admissible Lagrangian conditions given in \cite[Definition 4.2]{BKO}.
Noting the vanishing $\theta|_{\nu^*T} = 0$ and from our convention of the action functional,
the relevant action functional is given by
\begin{align}\label{eq:action}
\mathcal A_{H_h}(x)= - \int_{0}^{1}x^*\theta  + \int_{0}^{1} H_h(x(t))\, dt
\end{align}
on the path space
$$
\Omega(\nu^*T,T^*N) = \{x \in [0,1] \to T^*N \mid x(0), \, x(1) \in \nu^*T\}.
$$
We recall the set of time-one Hamiltonian chords
$$
\frak{X}(H_h;\nu^*T,\nu^*T) = \{\gamma:[0,1] \to T^*N \mid \dot \gamma(t) = X_{H_h}(\gamma(t)),
\text{ and } \, \gamma(0), \, \gamma(1) \in \nu^*T\}
$$
and denoted $\text{\rm Chord}(\nu^*T)$ the subset of non-constant Hamiltonian chords in
Definition \ref{defn:chord-cord}.
In addition to Theorem \ref{thm:index-nullity}, it follows from
Proposition \ref{prop:Bott-clean} that the action functional $\CA_{H_h}: \Omega(T^*N, \nu^*T) \to \R$ associated
to the hyperbolic metric $h$ is nondegenerate in the sense of Bott.

The $L^2$-gradient vector field of $\CA_H$ is given by
$$
\text{\rm grad} \CA_H(\gamma) = -J (\dot \gamma - X_H(\gamma))
$$
and hence the relevant (positive) gradient flow equation is given by \eqref{eq:CRXH}.

The relevant energy $E(u)$ is defined by
\begin{align*}
\int_{\Sigma}\frac{1}{2}|du-X_{\bf H}(u)\otimes \beta|_{\bf J}^2
\end{align*}
for arbitrary smooth map $u: \Sigma \to T^*N$.
For a solution $u:\Sigma\to T^*N$ of \eqref{eq:duXHJ}, we have the following
\begin{align}
E(u)&=\int_{\Sigma}u^*\omega-u^*d{\bf H} \wedge \beta{\nonumber}\\
&\leq\int_{\Sigma}u^*\omega-u^*d{\bf H} \wedge \beta-\int_{\Sigma}u^*{\bf H}\cdot d\beta\label{eq:energy_estimate_1}\\
&=\int_{\Sigma}u^*\omega-d(u^*{\bf H}\cdot \beta)\label{eq:energy_estimate_2},
\end{align}
where the inequality in (\ref{eq:energy_estimate_1}) comes from $\mathbf H \geq 0$ and sub-closedness of $\beta$.
By the Stoke's theorem with the fixed Lagrangian boundary condition in (\ref{eq:duXHJ}) and $\beta|_{\partial\Sigma}=0$ implies that (\ref{eq:energy_estimate_2}) becomes
\begin{align*}
\mathcal A(x^0)- \sum_{j=1}^{k}\mathcal A(x^j).
\end{align*}

Recalling Proposition \ref{prop:Bott-clean}, we consider  $L = \nu_{k,\rho}^*T$ defined in \eqref{eq:nu-krho-T}.
Then we consider the reduced wrapped Floer complex
$$
C = \widetilde{CW}_{g}^d(T, M\setminus K): = \widetilde{CW}^d(L,L; T^*(M\setminus K); H_{g_0}).
$$
Then the construction in \cite{BKO} associates an $A_\infty$ algebra to
$CW_{g_0}(T, M\setminus K)$, whose construction we recall below in the context of
$A_\infty$ algebra. We denote the associated cohomology by
\be\label{eq:HWHh}
HW_g(T,M \setminus K) =  HW(L,L; T^*(M\setminus K); H_{g_0}).
\ee
To define the $A_\infty$-maps
$$
\frak{m}^k: C^{\otimes k}\to C[2-k],
$$
we need a moduli space of perturbed $J$-holomorphic curves with respect to the following Floer data.

Now we adopt the Floer data in \cite{BKO} to the current setup.
We first briefly recall the construction of one-form $\beta$ on the domain to be used in writing
down the perturbed Cauchy-Riemann equation.

For each given $k \geq 1$, let us consider a Riemann surface $(\Sigma,j)$ of genus zero with $(k+1)$-ends.
Each end admits a holomorphic embedding
\begin{align*}
\begin{cases}
\epsilon^0: Z_- :=\{\tau \leq 0\} \times [0,1] \to \Sigma;\\
\epsilon^\ell: Z_+ :=\{\tau \geq 0\} \times [0,1] \to \Sigma, \quad\text{ for }i=1,\dots, k
\end{cases}
\end{align*}
This is isomorphic to the closed unit disk $\D^2$ minus $k+1$ boundary points ${\bf z}=\{z^0, \dots, z^k\}$ in the counterclockwise direction. Suppose that the ends are decorated by a weight datum ${\bf w}=\{w^0,\dots , w^k\}$ satisfying the balancing condition
\begin{align}\label{eqn:balancing}
w^0=w^1+\dots + w^k.
\end{align}

Then the one form $\beta$ on $\Sigma$ satisfies the following conditions:
\be\label{eq:beta}
\begin{cases}
d\beta = 0  \\
d(\beta \circ j) = 0 \\
i^*\beta = 0 \quad & \text{for the inclusion }\, i: \del \Sigma \to \Sigma\\
(\epsilon^j)^*\beta = w^j dt \quad & \text{on a subset of $Z_\pm$ where $\pm \tau \gg 0$.}
\end{cases}
\ee

We utilize the slit domain for the construction of $\beta$.
Let us consider domains
\begin{align*}
Z^1 &=\{ \tau+\sqrt{-1}\, t\in \C \mid \tau\in\R,\ t \in [0, w^1] \};\\
Z^2 &=\{ \tau+\sqrt{-1}\, t\in \C \mid \tau\in\R,\ t \in [w^1, w^1 + w^2] \};\\
&\vdots\\
Z^k &=\{ \tau+\sqrt{-1}\, t\in \C \mid \tau\in\R,\ t \in [w^1+\cdots+w^k, w^0] \},
\end{align*}
and its gluing along the inclusions of the following rays
\begin{align*}
R^{\ell}=\{\tau+\sqrt{-1}\, t\in \C \mid \tau\leq s^{\ell},\ t=w^1+\cdots+ w^\ell \};\\
j_-^\ell:R^\ell\hookrightarrow Z^\ell,\qquad j_+^\ell:R^\ell\hookrightarrow Z^{\ell+1},
\end{align*}
for some $s^\ell\in \R$ and for $\ell=1,\dots,k-1$.
In other words, the glued domain becomes
\[
Z^{\bf w}({\bf s})=Z^1 \sqcup Z^2 \sqcup \cdots \sqcup Z^k/ \sim,
\]
where ${\bf s}=\{s^1,\dots, s^{k-1}\}$, and $\zeta\sim \zeta'$ means $(\zeta,\zeta')\in Z^{\ell}\times Z^{\ell+1}$ and $\zeta=\zeta'\in R^\ell$ for some $\ell=1,\dots,k-1$.
We may regards
\begin{align*}
Z^0\subset \{\tau+\sqrt{-1}\, t\in \C \mid \tau\in\R,\  t\in[0,w^0]\},
\end{align*}
with $(k-1)$-slits
\begin{align*}
S^{\ell}=\{\tau + \sqrt{-1}\, t_\ell \in \C \mid \tau\geq s^\ell,\ t_i=w^1+\cdots+w^{\ell} \}
\end{align*}
where $\ell=1,\dots,k-1$. Then for $k\geq 2$ there is a unique conformal mapping
\[
\varphi:\Sigma=\D^2\setminus \{z^0,\dots,z^k\}\to Z^{\bf w}({\bf s})
\]
for some rays $\{R^\ell\}_{\ell=1,\dots,k-1}$ with respect to ${\bf s}$ satisfying
the following asymptotic conditions
\begin{align*}
\begin{cases}
\lim_{\tau\to -\infty}\mathring\varphi^{-1}(\{\tau\}\times(\sum_{j=1}^{\ell-1}w^j,\sum_{j=1}^{\ell}w^j))=z^\ell \quad \text{ for } \ell=1,\dots,k;\\
\lim_{\tau\to \infty}\mathring\varphi^{-1}(\{\tau\}\times(0,w^0))=z^0.
\end{cases}
\end{align*}
For $k=1$, the conformal mapping is unique modulo $\tau$-translations.
Let us denote such a slit domain by $Z^{\bf w}$ or simply $Z$ if there is no confusion.
Then the one form $\beta$ is defined to be
$$
\beta= \varphi^*dt\in \Omega^1(\Sigma)
$$
with $dt\in \Omega^1(Z)$.

In the rest of this article, we often identify $\Sigma$ with the slit domain $Z= Z^{\bf w}$
via the conformal map $\varphi$.

We first recall the standard Floer data used in the construction of wrapped Floer
homology in general. We especially refer to \cite[Defnition 5.1 \& Remark 5.2]{BKO}
for some adjustment relate to Statement (4) below.

\begin{defn}[Floer data for $A_\infty$ map]\label{def:Floer data}
A {\em Floer datum} $\mathcal D_{\frak m}=\mathcal D_{\frak m}{(\Sigma,j)}$ on a stable disk $(\Sigma,j)\in \overline{\mathcal M}{}{}^{k+1}$ is the following:
\begin{enumerate}
\item {\bf Weights}: $\mathbf w=(w^0,\dots,w^k)$ a $(k+1)$-tuple of positive real numbers which is assigned to $\mathbf z=(z^0,\dots,z^k)$ satisfying the balancing condition (\ref{eqn:balancing}).

\item {\bf One-form}: $\beta\in\Omega^1(\Sigma)$ constructed above
 satisfying $\epsilon^{j*}\beta$ agrees with $w^jdt$.

\item {\bf Almost complex structure}:
A map $\mathbf J:\Sigma\to\mathcal J(T^*N)$ whose pull-back under $\epsilon^j$ uniformly converges to $\psi_{w^j}^*J_t$ near each $z^j$ for some $J_t\in\mathcal J_i$.

\item {\bf Vertical moving boundary}: $\eta\in C^\infty(\Sigma, [1,+\infty))$ which converges to $w^j$
near each $z^j$.
\end{enumerate}
\end{defn}

In general we need a domain dependent admissible Hamiltonian datum $\mathbf H$ which is uniformly converges to $\frac{H}{(w^{j})^2}\circ \psi_{w^j}$ near each $z^j$ for some admissible Hamiltonian $H$. Since the kinetic Hamiltonian $H$ is quadratic, it satisfies  $\frac{H}{(w^{j})^2}\circ \psi_{w^j}=H$ for any positive $w^j$, and we take domain {\em independent} Hamiltonian $H$.

Following \cite{BKO}, we make the following specific choice of
domain {\em dependent} almost complex structures ${\bf J}$ defined by
$$
{\bf J}(\sigma) = \psi_{\eta(\sigma)}^*J_h
$$
at each $\sigma\in \Sigma$.

Then for a given $(k+1)$-tuple $(\gamma^0;\vec{\gamma})$ of Hamiltonian chords in $\frak{X}_i$,
we consider the moduli space $\CM(\gamma^0;\vec \gamma;\mathcal D_{\frak m}(\Sigma,j))$ of
maps $u:\Sigma\to T^*N$ satisfying
\be\label{eq:duXHJ}
\begin{cases}
(du- X_{\mathbf H}\otimes \beta)_{\mathbf J}^{(0,1)}=0;&\\
u(z)\in\psi_{\eta(z)}(L), &\text{ for } z\in\partial \Sigma;\\
u\circ \epsilon^j(-\infty,t)=\psi_{w^j}\circ \gamma^j(t), &\text{ for }j=1,\dots,k;\\
u\circ \epsilon^0(\infty,t)=\psi_{w^0}\circ \gamma^0(t).
\end{cases}
\ee

Now we consider a parameterized moduli space
\begin{align*}
\mathcal M(\gamma^0;\vec{\gamma})=\bigcup_{(\Sigma,j)\in
\mathcal M{}^{k+1}} \CM(\gamma^0;\vec{\gamma};\mathcal D_{\frak m}(\Sigma,j)).
\end{align*}
In order to use the moduli space in the definition on $A_\infty$ structure maps $\frak m^k$, we need to overcome compactness and the transversality issue. These are dealt in the coming sections.

Here we recall from \cite[Part 2]{BKO} that the transformation
\be\label{eq:utov}
u \mapsto \psi_\eta^{-1}\circ u=: v
\ee
transforms \eqref{eq:duXHJ} into the
\emph{autonomous} equation
\be\label{eq:dvXHJ}
\begin{cases}
(dv - X_H \otimes \beta)_{J_h}^{(0,1)}=0,\\
\text{$v(z)\in L$, for $z\in\partial \Sigma$} \\
v\circ \epsilon^j(-\infty,t)= x^j(t), \text{ for }j=1,\dots,k.\\
v\circ \epsilon^0(\infty,t)= x^0(t).
\end{cases}
\ee

\section{The maximum principle and $C^0$-estimates}
\label{sec:C0estimates}

\subsection{Vertical $C^0$-estimates}
\label{subsec:vertical}

For the study of compactness properties of the moduli space of
\eqref{eq:duXHJ}, the following \emph{vertical} $C^0$-bound is an essential step in the case of
noncompact Lagrangian such as the conormal Lagrangian $L = \nu^*T$.
For given $\gamma^j \in \Chord(H;L)$ with $j = 0, \cdots, k$, we define
\be\label{eq:CHLgamma}
C(H,L;\{\gamma^j\}_{0 \leq j\leq k}): = \max_{0 \leq j \leq k} \|p \circ \gamma^j\|_{C^0}
\ee
Proof of the following proposition is a consequence of  the strong maximum principle
based on the combination of the following
\begin{enumerate}
\item $\rho = r \circ u = e^s \circ u$ with $r = |p|_h$,
\item the conormal bundle property of $L$ and
\item the special form of the Hamiltonian $H = \frac{1}{2} r^2$, which is a radial function.
\end{enumerate}

We refer to \cite[Proposition 5.3]{BKO} for the proof.

\begin{prop}[Proposition 5.3 \cite{BKO}]\label{prop:C0vertical}
 Let $(\gamma^0;\vec{\gamma})$ be a $(k+1)$-tuple of Hamiltonian chords in $\frak X_i$. Then
\be\label{eq:bound-z}
\max_{z \in \Sigma}|p\circ u(z)| \leq C(H,L;(\gamma^0;\vec{\gamma}))
\ee
for any solution $u:\Sigma\to T^*N$ of \eqref{eq:duXHJ}.
\end{prop}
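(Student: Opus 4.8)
The plan is to prove this by a \emph{vertical} maximum principle for the fibrewise radial coordinate $r = |p|_h$, in the spirit of the standard ``no escape''/monotonicity argument — this is \cite[Proposition 5.3]{BKO}, whose proof I would organize as follows.

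First I would reduce to the autonomous picture: by the substitution \eqref{eq:utov}, $v = \psi_\eta^{-1}\circ u$ solves the autonomous equation \eqref{eq:dvXHJ}, whose asymptotics at the $j$-th end are exactly the chords $\gamma^j$, so it suffices to bound $\max_\Sigma|p\circ v|$ by $C(H,L;(\gamma^0;\vec{\gamma}))$; I would also record that $L = \nu^*T$ is a \emph{conical} Lagrangian — it is Liouville-invariant, so $\psi_w(\nu^*T)=\nu^*T$ for every $w>0$, and $\theta|_{\nu^*T}=0$ — hence the boundary condition $v(\partial\Sigma)\subset L$ is stationary. I would then run the maximum principle on $H_h\circ v = \frac12|p\circ v|^2$, which is smooth across the zero section (equivalently one may use $\rho = r\circ v$ wherever $r>0$, matching the notation $\rho = e^s\circ u$).

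The core computation is the interior subharmonicity. Here I would combine: (i) the contact-type property $-\theta\circ J_h = dH_h$ established above; (ii) that $H_h = \frac12 r^2$ is a function of $r$ alone; (iii) $d\beta = 0$ and sub-closedness of $\beta$; (iv) $H_h\ge 0$. Substituting \eqref{eq:dvXHJ} into the Laplacian of $H_h\circ v$ and rearranging — the algebra is parallel to that behind the energy estimate \eqref{eq:energy_estimate_1}--\eqref{eq:energy_estimate_2} — should give a pointwise inequality $\Delta(H_h\circ v)\ge 0$, so $H_h\circ v$ is subharmonic on all of $\Sigma$. On $\partial\Sigma$ one has $i^*\beta = 0$, so $v$ is genuinely $J_h$-holomorphic near the boundary with values on the conical Lagrangian $\nu^*T$; the conical property then forces, via the Hopf boundary-point lemma, that $H_h\circ v$ cannot attain a local maximum at an interior point of $\partial\Sigma$ unless it is locally constant — this is where the ``conormal bundle property of $L$'' enters. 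Finally, at the $j$-th strip-like end $H_h\circ v\circ\epsilon^j(\tau,t)\to\frac12|p\circ\gamma^j(t)|^2$ uniformly in $t$, with supremum at most $\frac12\,C(H,L;(\gamma^0;\vec{\gamma}))^2$; thus $H_h\circ v$ attains its maximum on the natural compactification of $\Sigma$, and either that maximum is attained at an end (giving the bound immediately) or it is attained at an interior point of $\Sigma$ or of $\partial\Sigma$, in which case the strong maximum principle plus the two preceding steps forces $H_h\circ v$ to be constant on that component and hence equal to a common asymptotic value. Either way $\max_\Sigma|p\circ v|\le C(H,L;(\gamma^0;\vec{\gamma}))$, which controls $\max_\Sigma|p\circ u|$.

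The main obstacle will be the interior differential inequality: one must verify that every correction term produced by the inhomogeneity $X_{H_h}\otimes\beta$ carries the correct sign, which depends crucially on the contact-type normalization of $J_h$ (hence on Convention I, cf. Subsection \ref{subsec:sasakian}), on $H_h$ being radial, and on $d\beta = 0$ with $\beta$ sub-closed. Working with $H_h\circ v$ sidesteps any trouble with $r$ or $s=\log r$ near the zero section, and the boundary Hopf-lemma step is routine once the conical geometry of $\nu^*T$ is available.
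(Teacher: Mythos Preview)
Your proposal is correct and follows essentially the same approach the paper indicates: the paper does not give a proof here but lists exactly the three ingredients you use --- the radial coordinate $\rho = r\circ u$ with $r=|p|_h$, the conormal (conical) property of $L=\nu^*T$, and the radial form $H=\tfrac12 r^2$ --- and refers to \cite[Proposition~5.3]{BKO} for the details. Your reduction to the autonomous equation via \eqref{eq:utov}, the interior subharmonicity of $H_h\circ v$ from the contact-type identity and $d\beta=0$, and the Hopf boundary argument using $\theta|_{\nu^*T}=0$ fill in precisely these ingredients.
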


\subsection{Horizontal $C^0$ estimates}
\label{subsec:z-coordinate}
Because the base {$N$} is non-compact, we also need to
study the horizontal behavior of solutions $u$ of \eqref{eq:duXHJ}
for the study of compactness property of its moduli space.

We recall from \cite{BKO} that the way how Proposition \ref{prop:C0vertical} was proved is
to exploit the transformation \eqref{eq:utov} and the autonomous equation
\eqref{eq:duXHJ} for $v$. From now on, we will work with $v$ instead of $u$
in the rest of the paper, unless otherwise said.
By considering the lift of $v$ to a map, still denoted by $v$,
$(\Sigma, \del \Sigma) \to (\H^3, \widetilde L)$, it is
enough to consider maps to $\H^3$ whose study is now in order.

Now we consider $\rho=z^{-1}\circ v:\Sigma\to\R$ and its (classical) Laplacian $\Delta\rho$, i.e.,
$$
\Delta \rho = \frac{\del^2 \rho}{\del \tau^2} + \frac{\del^2 \rho}{\del t^2}
$$
in terms of the flat coordinates $(\tau,t)$ of $\Sigma$.
Recall $* d\rho = d\rho \circ j$. Therefore the geometric Laplacian $-\Delta \rho$ satisfies
$$
-\Delta \rho \, dA = \delta (d\rho) = - * d* d\rho = -* d(d\rho \circ j)
$$
and so $d(d\rho \circ j) = \Delta \rho \, dA$ with $dA = dA_h$ is the area form
associated to the metric $h$.

\begin{prop}\label{prop:Deltarho}
We have
$$
\Delta \rho \, dA \geq d\rho \wedge (\beta-v^*\theta)
$$
\end{prop}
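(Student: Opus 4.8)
The plan is to express $\Delta\rho\,dA$ via the identity $\Delta\rho\,dA = d(d\rho\circ j)$ recorded just above the statement, and then to compute $d(d\rho\circ j)$ directly from the equation \eqref{eq:dvXHJ} satisfied by $v$. Recall $\rho = z^{-1}\circ v$, so $\rho = e^{b}\circ\pi\circ v$ in the notation of Proposition \ref{prop:Jhconvex}; this is precisely the ``extra'' term in the $J_h$-plurisubharmonic exhaustion function $f = H_h + e^b\circ\pi$ of that proposition. The key computational input is the formula from the proof of Proposition \ref{prop:Jhconvex}, namely $-d\big(d(1/z)\circ J_h\big) = \omega_0$ on $T^*\H^3$, together with $d\widetilde H\circ \widetilde J = \widetilde\theta$ from Section \ref{sec:hamiltonian}. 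First I would fix flat coordinates $(\tau,t)$ on $\Sigma$ and write the $(0,1)$-part equation \eqref{eq:dvXHJ} in the strip form, so that $dv\circ j = J_h\circ(dv - \beta\otimes X_H)$ pointwise, i.e. for any tangent vector $\xi$ on $\Sigma$ one has $dv(j\xi) = J_h\big(dv(\xi) - \beta(\xi)X_H(v)\big)$.

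Next I would use this to rewrite $d\rho\circ j$. Since $d\rho = d(z^{-1}\circ v) = v^*(d(1/z))$, we get $(d\rho\circ j)(\xi) = d(1/z)\big(dv(j\xi)\big) = \big(d(1/z)\circ J_h\big)\big(dv(\xi) - \beta(\xi)X_H(v)\big)$. Using $d(1/z)\circ J_h = -\iota_{?}(\dots)$ — more precisely, pulling back the ambient identity $-d\big(d(1/z)\circ J_h\big) = \omega_0$ by $v$ and handling the $\beta\otimes X_H$ correction term separately — one computes
\[
d(d\rho\circ j) = v^*\Big(d\big(d(1/z)\circ J_h\big)\Big) - d\Big(\beta\cdot \big(d(1/z)\circ J_h\big)(X_H(v))\Big).
\]
The first term is $-v^*\omega_0$. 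For the second, I would evaluate $\big(d(1/z)\circ J_h\big)(X_H)$: since $J_h X_H$ relates to the Liouville/Reeb direction and $d(1/z)$ is the base-direction differential, a direct computation in the coordinates of Section \ref{sec:hamiltonian} (using $X_{\widetilde H} = z^2(p_xH_1+p_yH_2+p_zH_3)$ and $\widetilde J$ sending $H_i\mapsto h_{ij}V_j$) should give $\big(d(1/z)\circ J_h\big)(X_H) = -(1/z)\cdot(\text{something nonnegative})$, or more to the point produce the term $\beta - v^*\theta$ after combining with the contribution $v^*d\beta$-type terms coming from $d\beta$, $\beta|_{\partial\Sigma}=0$, and the relation $d\widetilde H\circ\widetilde J = \widetilde\theta$. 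Assembling, $d(d\rho\circ j) = -v^*\omega_0 + (\text{curvature/plurisubharmonicity term}) + d\rho\wedge(\beta - v^*\theta)$, where the sign of the curvature term is controlled by the strict $J_h$-plurisubharmonicity established in Proposition \ref{prop:Jhconvex} (that computation showed $-d(d(1/z)\circ J_h)$ evaluated on $V_i, J_hV_i$ contributes positively). Since $v^*\omega_0$ applied to a conformal frame equals $|dv - \beta\otimes X_H|^2_{J_h}\,dA \geq 0$ by compatibility of $(\omega_0, J_h, \widetilde h)$, one obtains $\Delta\rho\,dA = d(d\rho\circ j) \geq d\rho\wedge(\beta - v^*\theta)$, as claimed.

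The main obstacle I anticipate is bookkeeping the signs and the correction term coming from $\beta\otimes X_H$ in the inhomogeneous Cauchy-Riemann equation: the naive computation gives $d(d\rho\circ j) = -v^*\omega_0 - d\big(\beta\cdot(d(1/z)\circ J_h)(X_H(v))\big)$, and one must show the last term is exactly $d\rho\wedge(\beta - v^*\theta)$ (or bounds it below by that), which requires carefully using $d\beta = 0$, the precise identity $(d(1/z)\circ J_h)(X_H) = (\text{linear in }\rho)$, and the fact that $d\widetilde H\circ\widetilde J = \widetilde\theta$. This is the delicate ``subtle rearrangement of terms'' the introduction alludes to (cf. the reference to Appendix \ref{sec:Deltaxz}); it is essentially a careful but finite local-coordinate calculation on $T^*\H^3$, and the positivity of the curvature contribution is guaranteed in advance by Proposition \ref{prop:Jhconvex}, so no genuinely new geometric input is needed — only care.
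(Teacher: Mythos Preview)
Your outline has two genuine errors that would prevent the argument from going through.

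First, your rewriting of the Cauchy--Riemann equation is wrong. From $(dv-\beta\otimes X_H)^{(0,1)}_{J_h}=0$ one gets $(dv-\beta\otimes X_H)\circ j = J_h(dv-\beta\otimes X_H)$, hence
\[
dv\circ j \;=\; J_h\,dv \;+\; (\beta\circ j)\otimes X_H \;-\; \beta\otimes J_hX_H,
\]
not $dv\circ j = J_h(dv-\beta\otimes X_H)$. The extra term $(\beta\circ j)\otimes X_H$ is exactly what, after applying $d(z^{-1})$, produces the contribution $p_z(v)\,(\beta\circ j)$ in $d\rho\circ j$. Differentiating that term and feeding the equation back in a second time is what ultimately yields both the $d\rho\wedge\beta$ piece and the crucial correction $-\rho\,v^*dH\wedge\beta$; with your formula these are simply absent.

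Second, and more seriously, your positivity claim ``$v^*\omega_0$ applied to a conformal frame equals $|dv-\beta\otimes X_H|^2_{J_h}\,dA\geq 0$'' is false for solutions of the \emph{perturbed} equation. What is nonnegative is $v^*\omega - v^*dH\wedge\beta$ (this is Lemma~\ref{lem:positive} in the paper: evaluating on $(\partial_\tau,\partial_t)$ gives $|\partial_\tau v - \beta_\tau X_H(v)|^2$), not $v^*\omega$ alone. Relatedly, the pullback $v^*\big(d(d(1/z)\circ J_h)\big)$ is not $-v^*\omega_0$: since $d(1/z)\circ J_h = V^3$ and $-dV^3 = z^{-1}\omega_0 - d(z^{-1})\wedge\theta$, the pullback is $\rho\,v^*\omega - d\rho\wedge v^*\theta$. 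That factor of $\rho$ and the $-d\rho\wedge v^*\theta$ term are essential. The paper's computation carefully assembles exactly
\[
\Delta\rho\,dA \;=\; \rho\bigl(v^*\omega - v^*dH\wedge\beta\bigr) \;+\; 2\rho\,H(v)\,(\beta\circ j)\wedge\beta \;+\; d\rho\wedge(\beta - v^*\theta),
\]
and the first two summands are nonnegative because $\rho>0$, $H\geq 0$, and Lemma~\ref{lem:positive}. Your sketch never isolates the combination $v^*\omega - v^*dH\wedge\beta$, so the inequality cannot close. The ``curvature/plurisubharmonicity term'' you invoke from Proposition~\ref{prop:Jhconvex} does not by itself absorb the sign problem; the perturbation $X_H\otimes\beta$ must be accounted for explicitly, and that is precisely the content of the missing $-v^*dH\wedge\beta$ correction.
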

\begin{proof}
We recall the identity
\begin{align*}
V^3 & =  dz^{-1}\circ J\\
X_H & =  z^2(p_x H_1+p_y H_2+p_z H_3)\\
JX_H & = -p_x V_1-p_yV_2-p_zV_3.
\end{align*}
Using \eqref{eq:duXHJ}, we compute
\begin{align*}
-d\rho \circ j&= - d(z^{-1}) \circ dv\circ j \\
& = -d(z^{-1})(Jdu + \beta\circ j \cdot X_H(v) - \beta\cdot JX_H(v))\\
& = -V^3 \circ dv - \beta\circ j\cdot d(z^{-1})(X_H) \\
&=  -v^*V^3+p_z(v)\cdot\beta\circ j.
\end{align*}
The co-closed property of $\beta$, i.e., $d(\beta \circ j) = 0$ implies
$$
dd^j\rho =-v^*dV^3+d(p_z(v))\wedge\beta\circ j.
$$
Using the formula $V^3= dp_z + z^{-1} \theta$, we compute
$$
-dV^3 = -d(z^{-1}) \wedge \theta + z^{-1}\omega.
$$
Therefore
\be\label{Eq:v*dV3}
-v^*dV^3 = \rho v^*\omega - d\rho \wedge v^*\theta.
\ee
Next we note
\[
d(p_z(v)) \wedge \beta \circ j = -d(p_z(v))\circ j \wedge \beta.\]
Then using \eqref{eq:duXHJ}
on $\Sigma$, we
compute
\begin{align*}
d(p_z(v))\circ j & =  dp_z \circ dv \circ j\\
& = dp_z (J dv + \beta \circ j\cdot X_H - \beta\cdot JX_H)\\
& = dp_z(Jdu) + dp_z(X_H(v)) \beta \circ j - dp_z(JX_H(v)) \beta.
\end{align*}
But using $\theta \circ J = -d H$, we have
$$
dp_z(Jdu) = (dp_z \circ J)(dv) = (-d(z^{-1}) - z^{-1} \theta \circ J)(dv) =   d\rho - \rho v^*dH
$$
 and
$$
dp_z(X_H(v)) = -z(v)(p_x^2 + p_y^2 + p_z^2)(v) = - 2 \rho H(v).
$$
Combining the above, we have derived
\begin{align*}
\Delta \rho \, dA = -d(d^\rho \circ j) & = \rho v^*\omega -d\rho\wedge v^*\theta + d\rho\wedge \beta
- \rho v^*dH \wedge \beta + 2\rho H(v) (\beta \circ j) \wedge \beta \\
& =  \rho( v^*\omega - v^*dH \wedge \beta) + 2\rho H(v)\cdot (\beta \circ j) \wedge \beta
+ d\rho \wedge (-v^*\theta + \beta).
\end{align*}
We note that the second form is clearly nonnegative since
$$
(\beta\circ j)\wedge \beta=(\beta_\tau^2+\beta_t^2)ds\wedge dt
$$
We now prove the first form is also nonnegative.
\begin{lem}\label{lem:positive}
$$
(v^*\omega - v^*dH \wedge \beta)\left(\frac{\partial}{\partial \tau}, \frac{\partial}{\partial t}\right)
= \left|\frac{\partial v}{\partial \tau} - \beta_\tau X_H(v)\right|^2 \geq 0.
$$
\end{lem}
\begin{proof}
We evaluate the form against the pair $\big(\frac{\partial}{\partial \tau}, \frac{\partial}{\partial t}\big)$.
A straightforward calculation using \eqref{eq:duXHJ} then gives rise to the inequality.
\end{proof}
Therefore combining the above, we have finished the proof of
$$
\Delta \rho\, dA \geq d\rho \wedge (\beta-v^*\theta).
$$
\end{proof}

Now we fix an exhaustion sequence of $N = M \setminus K$
\be\label{eq:exhaustion}
N_1 \subset N_2 \subset \cdots \subset N_i \subset \cdots
\ee
with $\del N_i = T_i$ a horo-torus.

We are now ready to establish the following uniform horizontal $C^0$ estimates.

\begin{thm}\label{thm:z-coord} Let $L = \nu^*T$ be a horo-torus and $\ell > 0$ be given real
number. Suppose $\{\gamma^a\}_{a=0}^k$ is a $k$-tuple of Hamiltonian chords of $L$ with
$$
-\CA(\gamma^a) = E(c^a) < \frac{\ell^2}{2}.
$$
Then for any solution $u$ of \eqref{eq:duXHJ}, there exists constant $j = j(T,\ell)$ independent
of $u$ depending only on $L$ and $\ell$ such that
\be\label{eq:z-bound}
\Image \pi \circ u \subset N_j
\ee
\end{thm}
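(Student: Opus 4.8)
The plan is to prove this as a ``no escape into the cusp'' estimate. Since $N=M\setminus K$ has a single cusp end, it suffices to bound, on the preimage under $\pi\circ u$ of the cusp neighbourhood of $N$ (where the function $z$ is well defined, as parabolics fixing the cusp preserve it), the quantity $z$ from above; geometrically this says the solution is confined to a fixed compact piece $N_j$. The two ingredients are: the ODE analysis of Lemma~\ref{lem:length}, which confines the asymptotic geodesic cords $c^a$ (hence the asymptotic chords $\gamma^a$) to a compact $N_{j_0}$, and a strong maximum principle built on Proposition~\ref{prop:Deltarho}, which propagates this confinement into the interior of the domain $\Sigma$. Throughout I would work with the autonomous solution $v$ of \eqref{eq:dvXHJ} and with $\rho=z^{-1}\circ v$, so that ``deep in the cusp'' means $\rho$ small.

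First I record the soft confinement facts. Because $\gamma^a=(c^a,(\dot c^a)^\flat)$, we have $\|p\circ\gamma^a\|_{C^0}=\ell^a$, the length of $c^a$, and $E(c^a)<\ell^2/2$ forces $\ell^a<\ell$; so by Proposition~\ref{prop:C0vertical} every solution obeys $|p\circ u|\le\ell$ on all of $\Sigma$, a bound depending only on $\ell$. Lifting $c^a$ so that $\widetilde c^a(0)$ meets the horosphere $\{z=a\}$ perpendicularly ($a=e^{-t_0}$ being the height of $T=\partial N_\varepsilon$), Lemma~\ref{lem:length} gives $z(\widetilde c^a(t))=a\,e^{\pm\ell^a t}$, so $z\circ c^a\in[a e^{-\ell},a e^{\ell}]$; together with $v(\partial\Sigma)$ projecting into $\widetilde T$, which sits at cusp level $t_0$, both the strip-like ends and $\partial\Sigma$ are mapped into a fixed compact piece $N_{j_0}$, with $j_0=j_0(T,\ell)$ determined by $a e^{\ell}$.

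Now fix $Z=Z(T,\ell):=2a e^{\ell}$ (perturbed if needed to be a regular value of $z\circ v$) and set $\Sigma_{\mathrm{deep}}:=\{z\circ v>Z\}$. By the previous paragraph $\Sigma_{\mathrm{deep}}$ meets neither $\partial\Sigma$ (where $z\circ v=a<Z$) nor any strip-like end (where $z\circ v\to z\circ c^a\le a e^{\ell}<Z$); hence $\overline{\Sigma_{\mathrm{deep}}}$ is compact and $\partial\Sigma_{\mathrm{deep}}=\{z\circ v=Z\}=\{\rho=Z^{-1}\}$ is a smooth curve in the interior of $\Sigma$ on which $\rho$ is constant. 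If $\Sigma_{\mathrm{deep}}\neq\varnothing$, then $\rho$ has an interior minimum on $\overline{\Sigma_{\mathrm{deep}}}$ with value $<Z^{-1}$. The heart of the proof is to exclude this: starting from Proposition~\ref{prop:Deltarho}, one rearranges the three contributions to $\Delta\rho$ — the two manifestly nonnegative pieces and the a priori indefinite $d\rho\wedge(\beta-v^*\theta)$ — absorbing the indefinite term into a first-order term by means of the bound $|p\circ u|\le\ell$, the vanishing $\theta|_{\nu^*T}=0$, the contact-type identity $(-\theta)\circ J_h=dH_h$, and the positive mean curvature of the horo-torus in the cusp-ward direction (Proposition~\ref{prop:meancurvature}); the outcome is an elliptic differential inequality for $\rho$ with no zeroth-order term whose sign is such that $\rho$ cannot attain an interior minimum on $\Sigma_{\mathrm{deep}}$ unless it is constant. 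The strong maximum principle then makes $\rho$ locally, hence (propagating to the boundary) globally, equal to $Z^{-1}$ on $\overline{\Sigma_{\mathrm{deep}}}$, contradicting $\rho<Z^{-1}$ in the interior. Thus $\Sigma_{\mathrm{deep}}=\varnothing$, i.e. $z\circ v\le Z$; choosing $j=j(T,\ell)$ with $N_j$ containing both the compact core of $N$ and the part $\{z\le Z\}$ of the cusp neighbourhood yields \eqref{eq:z-bound}.

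The main obstacle is precisely the maximum-principle step: turning the a priori indefinite inequality of Proposition~\ref{prop:Deltarho} into a sign-definite one. This is the ``delicate calculation of the Laplacian and subtle rearrangement of the terms'' flagged in the introduction, carried out with the computations of Appendix~\ref{sec:Deltaxz}, and it is sensitive to the sign conventions (Convention~I) fixed in Subsection~\ref{subsec:sasakian}; in particular it is where the special geometry of the hyperbolic metric — constant negative curvature together with the correctly signed constant mean curvature of the horo-torus — is essential. By contrast the cord confinement and the topology of $\Sigma_{\mathrm{deep}}$ are soft, and the resulting $j$ depends only on $T$ (through $a$) and $\ell$, not on $k$ or on the individual chords.
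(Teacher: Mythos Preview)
Your overall plan is right in spirit — confine the cords and boundary, then push the bound inside by a maximum principle built on Proposition~\ref{prop:Deltarho} — but the crucial step is applied in the wrong direction. The identity derived in the proof of Proposition~\ref{prop:Deltarho} reads
\[
\Delta\rho\, dA \;=\; \rho\big(v^*\omega - v^*dH\wedge\beta\big) \;+\; 2\rho\,H(v)\,(\beta\circ j)\wedge\beta \;+\; d\rho\wedge(\beta - v^*\theta),
\]
and by Lemma~\ref{lem:positive} the first two terms are \emph{nonnegative} multiples of $dA$, while the third is \emph{first order} in $d\rho$. Hence $\rho$ satisfies $L\rho\ge 0$ for an elliptic operator $L=\Delta+b\cdot\nabla$ with \emph{no} zeroth–order term; this is a \emph{subsolution} condition. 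It yields a maximum principle for $\rho$ (no interior maximum), not a minimum principle. Your argument asks for the opposite: you define $\Sigma_{\mathrm{deep}}=\{z\circ v>Z\}=\{\rho<Z^{-1}\}$ and want to exclude an interior \emph{minimum} of $\rho$ there. There is no way to ``rearrange'' the identity to reverse the sign: the two nonnegative pieces stay nonnegative (they are $\rho$ times nonnegative densities, and $\rho>0$), and absorbing the first–order piece into the operator does nothing to change which principle applies. So the heart of your argument does not go through.

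For comparison, the paper's proof simply uses the inequality of Proposition~\ref{prop:Deltarho} as written. Since the right–hand side is first order in $\rho$, the interior maximum principle applies immediately; on $\partial\Sigma$ one has $\beta|_{\partial\Sigma}=0$ and $v^*\theta|_{\partial\Sigma}=0$ (because $\theta|_{\nu^*T}\equiv 0$), so the first–order term vanishes along the boundary and the strong maximum principle (Hopf) excludes a boundary maximum as well. Thus $\sup_\Sigma\rho$ is attained at the asymptotic chords, and Lemma~\ref{lem:length} bounds it uniformly in terms of $T$ and $\ell$. In particular no extra ``rearrangement'' is needed; the term $d\rho\wedge(\beta-v^*\theta)$ is already of the harmless (first–order) type. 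Two of your supporting references are also misplaced: the constant mean curvature of the horo-torus (Proposition~\ref{prop:meancurvature}) is what drives the index computation of Theorem~\ref{thm:index-nullity}, not this $C^0$ estimate, and Appendix~\ref{sec:Deltaxz} computes $\Delta(x/z)$ for the reduction-to-$\H^2$ argument (Theorem~\ref{thm:inx=0}), not $\Delta(z^{-1})$.
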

\begin{proof} Consider the lifts $\widetilde L, \, \widetilde \gamma^a$ of $L$ and of $\gamma^a$
respectively.
Using the inequality $\Delta \rho \, dA \geq d\rho \wedge (\beta-v^*\theta)$
we will apply the maximum principle for the function $\rho$. First of all this inequality
enables us to apply the interior maximum principle and so its supremum must occur
either on $\del \Sigma$ or along the asymptotic chords $\gamma^a$ for some $a = 0, \ldots, k$.

We introduce
\be\label{eq:C}
C: = \max \left\{z^{-1}(x) \Big\vert x \in \widetilde L \cup \bigcup_{a =0}^k {\widetilde \gamma^a}\right\}.
\ee
Along the boundary $\del \Sigma$, we recall $\beta|_{\del \Sigma} = 0$.
Furthermore we have
$$
v^*\theta(\frac{\del}{\del \tau}) = \theta\left(\frac{\del v}{\del \tau}\right)
$$
which vanishes because $\frac{\del v}{\del \tau}$ is tangent to $\nu^*T$ and $\theta|_{\nu^*T} \equiv 0$.
Therefore we can apply the strong maximum principle and so the maximum $\rho$ cannot be
achieved on $\del \Sigma$ either. Therefore the supremum of $\rho$ must be
achieved at some point of $\cup_{a=0}^k \widetilde \gamma^a$. This proves $\rho \leq C$.

On the other hand, we also derive the uniform upper bound of $C$ from Lemma \ref{lem:length}
$$
C \leq \max_{t \in [0,1]} |f(t)| \leq a_0 \cosh \ell + |b_0| \sinh \ell
$$
where $a_0$ depends only on $L$ and $|b_0|$ depends only on $(T,\ell)$.
By noting $\pi \circ u = \pi \circ v$ and translating this bound on $\widetilde u$ to that of $u$,
we have proved that there exists $j = j(T,\ell) > i$ such that
$$
\Image \pi \circ u = \Image \pi \circ v \subset N_j
$$
for all finite energy solution $u$ with $\{\gamma^a\}_{a=0}^k$ as its
asymptotic chords and with boundary condition on $L$.
\end{proof}

It is easy to see that all these $C^0$ estimates can be established for the Lagrangian boundary
conditions given by the $k+1$ tuple of admissible test Lagrangians
$$
(L^0, L^1, \cdots, L^k).
$$
(See Remark \ref{rem:length} of this paper.)

\section{Formality of $A_\infty$ algebra associated to hyperbolic knot}

 The $C^0$ estimates established in the previous section
enables us to directly construct a version of wrapped Fukaya category,
denoted by $\CW\CF(M\setminus K, H_h)$ without taking a cylindrical adjustment of $h$
unlike in \cite{BKO}.

\subsection{$A_\infty$ algebra associated to hyperbolic knot}

Let $L = \nu^*T$ as before and $H = H_h$ be the kinetic energy Hamiltonian associated to
the hyperbolic metric $h$. We now consider the conormal $\nu^*T$ of the horo-torus $T$ in $M \setminus K$
as an object in this category.

Then the definition in Section \ref{sec:knot-algebra} applied to the metric $h$ instead of $g_0$ associates an $A_\infty$ algebra
$$
CW_h(T, M\setminus K) := C^*(T) \oplus \Z\langle\mathfrak X_{< - \epsilon_0}(H_h;\nu^*T,\nu^*T)\rangle.
$$
We take the perturbed conormal $L = \nu^*_{k\rho}T$ and denote
$$
CW^d(L;H_h)=\bigoplus_{x \in \Chord^d(L;H_h)}\Z \cdot x,
$$
Here the grading $d$ is given by the grading of the Hamiltonian chords $|x|$. We denote its wrapped
Floer cohomology by
\be\label{eq:HWHh}
HW^d (L;H_h).
\ee

We can also define the reduced Floer chain complex as in Section \ref{sec:knot-algebra}, which we denote
by $\widetilde{CW}(L;H_h)$.
This is the complex generated by the set $\Chord^d_{< 0}(\nu^*T;H_h)$ consisting of non-constant Hamiltonian chords.
With this mentioned, we will directly work with $\nu^*T$ without taking its perturbation.

In this subsection, we establish a formality result for the complex $\widetilde{CW}(\nu^*T;H_h)$.

For given asymptotic data
\begin{align*}
\begin{cases}
\mathbf x=x^1\otimes\cdots\otimes x^k \in CW(\nu^*T;H_h)^{\otimes k}\\
x^0\in CW(L;H_h),
\end{cases}
\end{align*}
consider the moduli space $\mathcal M^{k+1}(x^0;\mathbf x)$ of maps
\[
u:(\Sigma,\partial\Sigma)\to(T^*N,\nu^*T)
\]
satisfying the condition \eqref{eq:duXHJ}.

Then the map $\widetilde{\mathfrak m}^k : \widetilde{CW}(\nu^*T;H_h)^{\otimes k}\to \widetilde{CW}(\nu^*T;H_h)[2-k]$ is defined by
\begin{align*}
\widetilde{\mathfrak m}^k(\mathbf x)=\sum_{x^0}|\mathcal M^{k+1}(x^0;\mathbf x)| \cdot x^0,
\end{align*}
where the sum runs over $x^0\in\Chord(H;\nu^*T)$ satisfying
\begin{align*}
|x^0|=\sum_{j=1}^{k}|x^j|+2-k,
\end{align*}
and $|\mathcal M^{k+1}(x^0;\mathbf x)|$ denotes the algebraic count of points in the oriented compact
$0$-dimensional manifold $\mathcal M^{k+1}(x^0;\mathbf x)$.
As usual, $[d]$ means the grading shifting of a graded module {\em down} by $d\in\Z$.

Now the $C^0$ estimates established in Section \ref{sec:perturbedCR} enables us to study
compactified moduli space $\mathcal M^{k+1}(x^0;\mathbf x)$ and the standard Fredholm theory
proves that the compactified moduli is a smooth manifold with boundary and corners
(after making a $C^\infty$-small perturbation of $J_h$, if needed).

Then Theorem \ref{thm:index-nullity} implies that all the degrees of generators $x^i$ are 0 and
so the above dimension formula reduces to $2-k$. Since all the nontrivial matrix coefficients
are given by zero dimensional moduli space, only the case of $k=2$, i.e. $\widetilde{\mathfrak m}^k = 0$ for all
$k \neq 2$.

Combining the above discussion, we have proved the following theorem.
{By taking an arbitrarily small tubular neighborhood $N(K)$ of $K$ and setting $T = \del N(K)$,
we may regard it as  the `ideal boundary' that appears in the title of the present article.

\begin{thm}\label{thm:formal} Suppose that $K \subset M$ is a hyperbolic knot and $h$ be
its associated hyperbolic metric $h$.
Let $\nu^*T$ be the conormal of any horo-torus $T \subset M \setminus K$.
The $A_\infty$ structure of $(\widetilde{CW}(\nu^*T;H_h), \{\widetilde{\mathfrak m}^k\}_{k=1}^\infty)$ is reduced to
an associative algebra whose product is given by $\mathfrak m^2$, i.e., it satisfies $\frak m^k =0$
unless $k =2$. In particular $\widetilde{HW}^d(\nu^*T;H_h) = 0$ for all $d > 0$ and
$\widetilde{HW}^0(\nu^*T;H_h)$ is a free abelian group generated by $\mathscr G_{M\setminus K}$.
In particular, the rank of $\widetilde{HW}^0(\nu^*T;H_h)$ is infinity.
\end{thm}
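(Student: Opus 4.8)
The plan is to reduce the entire statement to a degree count whose only non-formal input is the index and nondegeneracy statement of Theorem \ref{thm:index-nullity} together with the $C^0$-estimates of Section \ref{sec:C0estimates}. The starting observation is that, by Theorem \ref{thm:index-nullity} and the one-one correspondence of Lemma \ref{lem:cord_chord}, every non-constant Hamiltonian chord $\gamma\in\Chord(\nu^*T;H_h)$ is rigid and nondegenerate and carries grading $|\gamma|=0$; hence the reduced complex $\widetilde{CW}(\nu^*T;H_h)$ is a free abelian group \emph{concentrated in degree $0$}, with basis $\Chord(\nu^*T;H_h)$. (Working with $\nu^*T$ directly, rather than a perturbation $\nu^*_{k,\rho}T$, is legitimate precisely because of this nondegeneracy; and $(C^*(T),d)$ being a subcomplex, established in Section \ref{sec:knot-algebra}, makes the reduced complex well defined.)

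Next I would set up, for each $k\geq 1$ and each asymptotic datum $(x^0;\mathbf x)$, the moduli spaces $\mathcal M^{k+1}(x^0;\mathbf x)$ of solutions of \eqref{eq:duXHJ}, pass to the autonomous equation \eqref{eq:dvXHJ} via the gauge transformation \eqref{eq:utov}, and invoke the two $C^0$-bounds: Proposition \ref{prop:C0vertical} in the fibre direction and Theorem \ref{thm:z-coord} in the horizontal direction. The latter uses that each asymptotic chord has finite action $-\CA(\gamma^a)=E(c^a)$, so that the projections $\pi\circ u$ of all solutions stay inside one fixed compact piece $N_j$ of the exhaustion \eqref{eq:exhaustion} — which is exactly what compensates for the vanishing injectivity radius of $h$. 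Granting these bounds, Gromov--Floer compactness together with the standard Fredholm/transversality package (after a $C^\infty$-small perturbation of $J_h$ within the class for which the $C^0$-estimates persist, if needed) yields that $\mathcal M^{k+1}(x^0;\mathbf x)$ is a compact manifold with corners of dimension $|x^0|-\sum_{j=1}^k|x^j|+(k-2)$.

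Since $\widetilde{CW}(\nu^*T;H_h)$ is concentrated in degree $0$ while $\widetilde{\mathfrak m}^k$ shifts degree by $2-k$, every $\widetilde{\mathfrak m}^k$ with $k\neq 2$ vanishes for degree reasons alone: for $k=1$ and $k\geq 3$ there is simply no generator in the target degree $2-k$ (equivalently, the moduli space above has dimension $k-2\neq 0$, being empty for $k=1$ after the $\R$-quotient and of positive dimension for $k\geq 3$, so it contributes nothing to the count). Hence $\widetilde{\mathfrak m}^k=0$ for $k\neq 2$, the $A_\infty$ relations collapse to the associativity of $\widetilde{\mathfrak m}^2$, and $(\widetilde{CW}(\nu^*T;H_h),\widetilde{\mathfrak m}^2)$ is an associative $\Z$-algebra; since $\widetilde{\mathfrak m}^1=0$ we moreover get $\widetilde{HW}^*(\nu^*T;H_h)\cong\widetilde{CW}^*(\nu^*T;H_h)$ as graded groups, which vanishes in all degrees $\neq 0$. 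To finish, I would identify $\widetilde{HW}^0$ by composing the bijections $\Chord(\nu^*T;H_h)\leftrightarrow\text{Cord}(T)$ (Lemma \ref{lem:cord_chord}) and $\text{Cord}(T)\leftrightarrow\mathscr G_{M\setminus K}$ (Proposition \ref{prop:cordG}), exhibiting a free basis indexed by the images of the infinite tame geodesics of $M\setminus K$. Infinitude of that set — hence infinite rank — follows from the existence of a unique geodesic cord in each non-trivial class of $\pi_1(M\setminus K,T)$, there being infinitely many such classes (the knot group of a hyperbolic knot is infinite; concretely, the lift $\widetilde T\subset\H^3$ has infinitely many components, whose centres yield infinitely many $\Gamma$-inequivalent cords).

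The geometric heart of the argument — the vanishing of both Morse index and nullity of every geodesic cord of a horo-torus — is already supplied by Theorem \ref{thm:index-nullity}, so I expect the only genuine obstacle here to be the transversality bookkeeping: arranging simultaneous regularity of all the $\mathcal M^{k+1}(x^0;\mathbf x)$ and their boundary strata through a perturbation of $J_h$ compatible with Proposition \ref{prop:C0vertical} and Theorem \ref{thm:z-coord}, together with the routine gluing and orientation conventions needed to make the count and the resulting algebra structure rigorous. Everything past that point is a comparison of gradings.
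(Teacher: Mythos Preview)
Your proposal is correct and follows essentially the same route as the paper: both arguments reduce everything to the degree count, using Theorem~\ref{thm:index-nullity} to place all generators of $\widetilde{CW}(\nu^*T;H_h)$ in degree~$0$ and the $C^0$-estimates (Proposition~\ref{prop:C0vertical} and Theorem~\ref{thm:z-coord}) to justify compactness and Fredholm regularity of the moduli spaces, after which the dimension formula $2-k$ forces $\widetilde{\mathfrak m}^k=0$ for $k\neq 2$. Your write-up is in fact more explicit than the paper's on the final identification $\widetilde{HW}^0\cong\Z\langle\mathscr G_{M\setminus K}\rangle$ via Lemma~\ref{lem:cord_chord} and Proposition~\ref{prop:cordG}, and on the infinitude of the rank.
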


We would like to describe the product structure of this algebra in terms of the
hyperbolic geometry of the complement ${M} \setminus K$. In Section \ref{sec:reduction-2dim} of
Appendix,  we will prove some reduction theorem as the first step towards this goal.
In relation to this goal, Conjecture \ref{conj:main} is crucial for the full study of which
will be postponed elsewhere.

\subsection{A step towards comparison with Knot Floer Algebra}

Let $L = \nu^*_{k,\rho}T$ be the perturbed conormal given in Section \ref{sec:knot-algebra}.
We assume
\be\label{eq:suppkN0}
\supp k \subset N_0 \subset N
\ee
so that $L = \nu^*_{k,\rho}T$ for the region given by $(p,q)$ satisfying
$$
q \in N \setminus N_0, \quad |p|\geq 3 \|dk\|_{C^0}.
$$

As the first step towards a comparison result between the wrapped Floer homology $HW^d(L;H_h)$
and the Knot Floer algebra
$HW(\del_\infty(M \setminus K))$, we consider a sequence of cylindrical adjustments $h_j$ of the given metric $h$ associated to the exhaustion \eqref{eq:exhaustion}: $h_j$ is defined by
$$
h_j = \begin{cases} h \quad & \text{\rm on } N_j' \\
da^2 \oplus h|_{N_j} \quad & \text{\rm on } (M\setminus K) \setminus N_j
\end{cases}
$$
where $N_j'$ is another subdomain of $N_j$ such that $ \overline N_j' \subset N_j$.
(See Section \ref{sec:comparison} for precise details on this definition.)

Utilizing the $C^0$ bound given in Proposition \ref{prop:Deltarho} and similar bound
for $h_i$ obtained in \cite{BKO}, we obtain the $A_\infty$ algebras $CW(\nu^*T, h_i)$.
Then we consider any pair $i, \, j$ with $i\leq j$. Note the
\be\label{eq:Hs-monotone}
H_{h_j} \geq H_{h_i}
\ee
for $j \geq i$ since $h_j \leq  h_i$.
We consider a homotopy $s \mapsto  H^s$ associated to the metrics
$$
h^s_{ij} = (1-s) h_i + s h_j
$$
with $H^s = H_{h^s_{ij}}$. By the monotonicity \eqref{eq:Hs-monotone}, we have
an $A_\infty$ homomorphism
$$
\iota_{ij}: CW^d(L, H_{h_i}) \to CW^d(L, H_{h_j})
$$
for $i \leq j$ which induces a (homotopy) direct system
\be\label{eq:limCWhi}
CW^d(L, H_{h_1}) \to CW^d(L, H_{h_2}) \to \cdots \to CW^d(L, H_{h_i}) \to  \cdots.
\ee
(See \cite[Section 7.2.12]{fooo:book}, \cite{seidel:biased} for a detailed explanation of such
a procedure of taking the limit $A_\infty$ structures and homomorphisms.)

Next we prove the following existence result on the continuation map.
We would like to emphasize that while the vertical $C^0$-estimate
for the homotopy of Hamiltonians in the direction of monotonically
increasing direction, e.g., for the homotopy $s \mapsto (1-s)H_{h_i} + sH_h$
can be established and well-known (see \cite{floer-hofer}, \cite{abou-seidel} for example),
the horizontal $C^0$-estimate is new.

\begin{rem}
The $C^0$-estimates obtained in Subsection \ref{subsec:z-coordinate}
and in \cite[Section 11]{BKO} dealt with the autonomous cases for the hyperbolic metric
and for the cylindrical metric respectively. However none of them apply to the current case
since we need to establish the horizontal $C^0$-estimate
for the \emph{non-autonomous} equation. The proof will clearly exhibits that existence of
such a continuation morphism strongly relies on the direction of the homotopy in terms of
the relevant Hamiltonians. This horizontal $C^0$-bound will follows from the
elliptic estimates (e.g., \cite[Theorem 3.7]{gil-trud}) and the vertical $C^0$-bound
established in Proposition \ref{prop:C0vertical}.
\end{rem}

\begin{prop}\label{prop:continuation} There exists a natural monotonicity $A_\infty$ morphism
$$
\iota_{jh}: CW^d(L, H_{h_j}) \to CW^d(L,H_{h})
$$
induced by the linear homotopy $s \mapsto (1-s)h_j + s h$.
\end{prop}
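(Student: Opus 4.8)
The plan is to realize $\iota_{jh}$ as the $A_\infty$-morphism counted by parametrized moduli spaces of solutions to a continuation version of \eqref{eq:duXHJ}, and then to reduce the whole construction to two $C^0$-estimates: the vertical one, which is essentially known, and the horizontal one for a \emph{non-autonomous} Hamiltonian term, which is the genuinely new point flagged in the Remark above. Concretely, for each stable disk $(\Sigma,j)\in\overline{\mathcal{M}}^{\,k+1}$ I would fix a domain-dependent Hamiltonian datum $\mathbf H=\mathbf H_{jh}$ built from the linear homotopy $h^s_{jh}=(1-s)h_j+sh$, converging to a rescaling of $H_{h_j}$ near each input puncture and to a rescaling of $H_h$ near the output puncture, and interpolating through the kinetic Hamiltonians $H_{h^s_{jh}}$ monotonically in the relevant $\Sigma$-direction. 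Because $\tfrac{d}{ds}h^s_{jh}=h-h_j\le 0$ as a quadratic form on each cotangent fiber, the dual metrics increase in $s$ and hence $\partial_s H_{h^s_{jh}}\ge 0$; this is precisely the sign for which the energy computation \eqref{eq:energy_estimate_1}--\eqref{eq:energy_estimate_2}, augmented by the extra term $-\int_\Sigma u^*(\partial_s\mathbf H)\,ds\wedge\beta\le 0$, still yields the a priori bound $E(u)\le \sum_{j=1}^k\CA_{H_{h_j}}(\gamma^j)-\CA_{H_h}(\gamma^0)$.

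Granting compactness and transversality, the construction of the morphism and the verification of the $A_\infty$-morphism relations then follow the standard scheme for monotone continuation morphisms (\cite{fooo:book}, \cite{seidel:biased}, \cite{abou-seidel}), the relations being read off from the codimension-one boundary strata of the one-dimensional parametrized moduli spaces. For the \emph{vertical} bound $\|p\circ u\|_{C^0}\le C(H;\{\gamma^a\})$ I would argue as in Proposition \ref{prop:C0vertical}: at every domain point the datum $\mathbf H^\sigma$ is (a rescaling of) a kinetic-energy Hamiltonian, hence a radial function of $p$, so the strong maximum principle applied to $|p|\circ u$ goes through with only the lower-order $\sigma$-dependent corrections, which have the favorable sign in the monotone direction (compare \cite{floer-hofer}, \cite{abou-seidel}).

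The \emph{horizontal} bound is where the work lies. After the gauge change \eqref{eq:utov} and passing to the lift $v:(\Sigma,\del\Sigma)\to(\H^3,\widetilde{L})$, I would again examine $\rho=z^{-1}\circ v$ and retrace the computation of Proposition \ref{prop:Deltarho}. The non-autonomous term contributes, on top of $d\rho\wedge(\beta-v^*\theta)$ and the manifestly nonnegative two-forms, one additional term governed by $v^*(\partial_\tau\mathbf H)\,d\tau\wedge\beta$; using the already-established vertical bound this is a first/zeroth-order term with $L^\infty$ coefficients, and its leading part carries the good sign because of the chosen direction of the homotopy. Hence $\rho$ satisfies on the cusp region a differential inequality $\Delta\rho\ge -a\,|\nabla\rho|-b$ with $a,b$ bounded in terms of $\max_a\|p\circ\gamma^a\|_{C^0}$ and the fixed geometry of $h_j$. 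Applying the Gilbarg--Trudinger maximum principle \cite[Theorem 3.7]{gil-trud} on the truncated end, with boundary data controlled on $\del\Sigma$ by $\theta|_{\nu^*T}\equiv 0$ together with the geometry of $\widetilde{L}$, and along the asymptotic ends by Lemma \ref{lem:length} (each $\widetilde{\gamma}^a$ has $z^{-1}\le a_0\cosh\ell+|b_0|\sinh\ell$), I obtain $\rho\le C_2=C_2(T,\ell,j)$, i.e. $\Image\,\pi\circ u\subset N_{j'}$ for a fixed $j'=j'(T,\ell,j)$, exactly as in Theorem \ref{thm:z-coord} but now for the non-autonomous equation.

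With both bounds, continuation solutions are confined to a fixed compact subset of $T^*N$; exactness of $\nu^*_{k,\rho}T$ and of $(T^*N,\omega_0)$ excludes sphere and disk bubbling, so Gromov--Floer compactness applies, and the Fredholm and transversality package of \cite{oh:fredholm} (after a $C^\infty$-small perturbation of $\mathbf J$, if needed) makes the relevant parametrized moduli spaces compact manifolds with boundary and corners of the expected dimension. Counting the rigid solutions defines the components of $\iota_{jh}$, the codimension-one degenerations give the $A_\infty$-morphism equations, and the usual one-parameter argument shows independence of the auxiliary data up to $A_\infty$-homotopy; this is the same bookkeeping as in \cite{BKO}. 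The main obstacle is the horizontal $C^0$-estimate: one has to extract from the non-autonomous term a differential inequality of Gilbarg--Trudinger type whose coefficients are genuinely controlled by the vertical bound, rather than the cleaner autonomous inequality of Proposition \ref{prop:Deltarho} that sufficed for Theorem \ref{thm:z-coord}, and to check that the maximum of $\rho$ still migrates to $\del\Sigma$ or to the asymptotic chords as in the autonomous case.
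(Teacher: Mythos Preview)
Your overall architecture matches the paper's: set up the continuation equation for the monotone homotopy, use monotonicity for the energy bound, get the vertical $C^0$-bound as in Proposition~\ref{prop:C0vertical}, then establish a horizontal $C^0$-bound by an elliptic estimate \cite[Theorem~3.7]{gil-trud}, and finish with standard compactness/transversality. The genuine difference is in how the horizontal bound is obtained.

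You propose to use $\rho=z^{-1}\circ v$ and ``retrace'' Proposition~\ref{prop:Deltarho}, treating the non-autonomous contribution as a lower-order perturbation with a favourable sign. This is where the proposal has a gap. The computation in Proposition~\ref{prop:Deltarho} rests on identities such as $V^3=d(z^{-1})\circ J_h$ and $dH_h\circ J_h=-\theta$ that are specific to the \emph{hyperbolic} triple $(h,J_h,H_h)$. In the continuation problem both $J^{\chi}$ and $H^{\chi}$ are built from the interpolated metric $h^{\chi}=(1-\chi)h_j+\chi h$, and at the $\chi=0$ end the metric is \emph{cylindrical}; those identities simply fail there, so the ``autonomous part'' of your inequality is not $d\rho\wedge(\beta-v^*\theta)$ plus nonnegative forms. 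Moreover, the monotonicity sign $\partial_sH_{h^s}\ge0$ governs the energy identity, not the Laplacian of $\rho$; there is no reason the extra term in $\Delta\rho$ should inherit a good sign from it.

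The paper sidesteps this by switching to the cylindrical coordinate $a$ on the end, where both $h$ and $h_j$ have the split form $da^2+\varphi(a)(dx^2+dy^2)$. It extracts the $(a,p_a)$-component of \eqref{eq:dvXHJ-non} explicitly, obtaining a genuine second-order linear elliptic equation $L\,a(v)=f$ on $v^{-1}([a_0,\infty))$, with the first-order coefficient of $L$ bounded by the vertical estimate and $\|f\|_{C^0}$ bounded using, in addition, a $C^1$-bound on $v$ (from energy plus vertical $C^0$ via the standard bubbling argument on $T^*\H^3$, which has bounded geometry). No sign argument is used; one simply bounds and applies \cite[Theorem~3.7]{gil-trud}. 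Your route could probably be completed by redoing the Laplacian computation for the whole family $h^{\chi}$ and noting that the same $C^1$-bound is needed to control the zeroth-order remainder, but as written the step you correctly flag as ``the main obstacle'' is not actually carried out.
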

\begin{proof} We consider the non-autonomous version of \eqref{eq:dvXHJ} associated to
by the linear homotopy $s \mapsto (1-s)h_j + s h$ which becomes
\be\label{eq:dvXHJ-non}
\begin{cases}
\frac{\del v}{\del \tau} + J^{\chi} \left(\frac{\del v}{\del t} - X_{H^{\chi}}(v)\right) = 0\\
\text{$v(z)\in L$, for $z\in\partial \Sigma$} \\
v\circ \epsilon^j(-\infty,t)= x^j(t), \text{ for }j=1,\dots,k.\\
v\circ \epsilon^0(\infty,t)= x^0(t).
\end{cases}
\ee
Again it remains to ensure the $C^0$-estimate hold. As mentioned above, the vertical $C^0$-estimate
is standard and so omitted. We will focus on the horizontal $C^0$-estimate.

First, we mention that by \eqref{eq:suppkN0} and the remark right afterwards we may just work with
the conormal $\nu^*T$ instead of $L$.
Then, to establish the horizontal $C^0$-estimate, we will follow the approach taken in \cite{BKO} by
decomposing the equation into the vertical and the horizontal components in terms of
the cylindrical coordinates $(a,x,y)$ and its associated canonical coordinates
$(a,x,y,p_a, p_x,p_y)$ on $T^*(M\setminus K$) on $N_i^{\text{\rm end}} \cong [0,\infty) \times T$.

Recall from \eqref{eq:bdelta}
that the function $-a$ is the lift of the Busemann function on $N(K) \setminus K$ with $z=e^a$.
The hyperbolic metrics $h$ on $N'(K)$ is written in terms of $(z,x,y)$ and $(a,x,y)$ as follows,
\be\label{eq:hypmetric}
\begin{aligned}
h &~=~ z^{-2}(dz^2 +dx^2 +dy^2 )    \\
&~=~  da^2 + e^{-2a}dx^2 + e^{-2a} dy^2
\end{aligned}
\ee
From this, we consider the exhaustion $N_i$ such that $N\setminus N_i = a^{-1}((i,\infty))$ and
their cylindrical adjustments $h_i$'s for $i\geq 1$ of $h$
\be\label{eq:hypcyl}
h_i =
\begin{cases} h \quad & \text{on } N_{i-1/2}\\
da^2+ e^{-2i}(dx^2+ dy^2) & \text{on } N\setminus N_{i}.
\end{cases}
\ee
Then
$$
H_h = \frac12\left(p_a^2 + e^{2a}(p_x^2 + p_y^2)\right), \quad H_{h_i} = \frac12\left(p_a^2 + e^{2i}(p_x^2 + p_y^2)\right).
$$
Therefore we have
\beastar
\pi_{T^*[0,\infty)}(X_{H^{\chi(\tau)}}(a,x,y)) & = & p_a \frac{\del}{\del a}, \\
\pi_{T^*[0,\infty)}(J^{\chi(\tau)} X_{H^{\chi(\tau)}}(a,x,y))
& = &\left( e^{-2i} + \chi(\tau)(e^{-2a} - e^{-2i})\right) \frac{\del}{\del p_a}.
\eeastar
Recalling $\beta = dt$, we compute the $(a,p_a)$-component of \eqref{eq:dvXHJ-non}
\begin{align}\label{eq:a-component}
\begin{dcases}
\frac{\del a(v)}{\del \tau} - \frac{\del p_a(v)}{\del t} = 0 \\
\frac{\del p_a(v)}{\del \tau} + \frac{\del a(v)}{\del t}
- \left( e^{-2i} + \chi(\tau)(e^{-2a} - e^{-2i})\right) p_a(v)= 0
\end{dcases}
\end{align}

A straightforward calculation using these identities proves

\begin{lem}
Let $v$ be a solution of \eqref{eq:a-component}. Then
\be\label{eq:Deltaau}
\Delta (a(v))+ 2 \chi(\tau) e^{-2a} p_a(v)\frac{\del a}{\del t} = \left(e^{-2i} + \chi(\tau)(e^{-2a(v)} - e^{-2i})\right) \frac{\del p_a}{\del t}
\ee
on $a^{-1}((-\infty,a_0]) \subset \Sigma$ for any solution $v$ of \eqref{eq:dvXHJ-non}.
\end{lem}
\begin{proof} By differentiating the first equation of \eqref{eq:a-component} by $\frac{\del}{\del \tau}$ and
the second by $\frac{\del}{\del t}$ and adding them up, we get
\beastar
\Delta(a(v)) & = & \frac{\del}{\del t}\left(\left( e^{-2i} + \chi(\tau)(e^{-2a(v)} - e^{-2i})\right) p_a(v)\right)\\
& = & -2 \chi(\tau)  e^{-2a(v)} p_a(v) \frac{\del a(v)}{\del t} + \left( e^{-2i} + \chi(\tau)(e^{-2a(v)} - e^{-2i})\right)
\frac{\del p_a(v)}{\del t}
\eeastar
\end{proof}
Now we define a function $f: a^{-1}((-\infty,a_0])\cap \Sigma \to \R$ by
$$
f(\tau,t) = \left(e^{-2i} + \chi(\tau)(e^{-2a} - e^{-2i})\right) \frac{\del p_a}{\del t},
$$
and rewrite the equation \eqref{eq:Deltaau} into
\be\label{eq:Lvf}
Lv = f, \, L = \Delta + 2 \chi(\tau) e^{-2a(v)}  p_a(v)\frac{\del}{\del t}.
\ee
We get
$$
\|\chi(\tau) e^{-2a(v)}  p_a(v)\|_{C^0} \leq \|\chi(\tau) e^{-2a(v)}\|_{C^0} \| p_a(v)\|_{C^0} \leq e^{-2a_0} \|p_a(v)\|_{C^0}
$$
on $a^{-1}((-\infty,a_0]) \subset \Sigma$.  We also note
$$
\|f\|_{C^0} \leq  \left|e^{-2i} + \chi(\tau)(e^{-2a} - e^{-2i})\right\|_{C^0}
\left\|\frac{\del p_a}{\del t}\right\|_{C^0}.
$$
Here we derive a bound for $\left\|\frac{\del p_a}{\del t}\right\|_{C^0}$ which is a consequence of
the $C^1$-bound of $v$,
which follows from the energy bound and the vertical bound from Proposition \ref{prop:C0vertical}.
Therefore $L$ is a uniformly elliptic second-order
partial differential operator and there exists a constant $C_0> 0$ independent of $v$ such that
$$
\|f\|_{C^0} \leq C_0
$$
where $C_0$ depends only on $\{N_i\}$. Then by applying the classical elliptic estimate
(see \cite[Theorem 3.7]{gil-trud} for example) to $\pm a(v)$ on each connected component of
$a^{-1}((-\infty,a_0])\cap \Sigma$ separately, we prove
$$
\|a(v)\|_{C^0} \leq \|a(v)|_{\del \Sigma}\|_{C^0} + C_1 \|f\|_{C^0} \leq a_0 + C_1C_0
$$
applied on the domain $\Omega = v^{-1}([a_0,\infty)) \subset \Sigma$.

Therefore we can find $n=n(\ell)$ such that
the image of $u$ is contained in $W_{n(\ell)}$ since
its asymptotic chords are also assumed to be contained in $W_\ell$.

Once we have this uniform  $C^0$-estimate established, construction of
the continuation map proceeds as usual. This finishes the proof of the proposition.

\end{proof}

The following lemma then immediately follows from the standard construction of the Floer
theory once we have Proposition \ref{prop:continuation} in our disposal.

\begin{lem}
The homomorphism $\iota_{jh}: CW^d(L, H_{h_j}) \to CW^d(L, H_h)$
is compatible with the above direct system, i.e., that satisfies
$$
\iota_{ih} \sim \iota_{ij} \circ \iota_{jh}
$$
for all $i \leq j$, where $\sim$ denotes `being homotopic relative to the ends'.
\end{lem}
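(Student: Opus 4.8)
The plan is to run the standard ``homotopy-of-homotopies'' argument of Floer theory (compare \cite[Section 7.2.12]{fooo:book}, \cite{seidel:biased}); the only nonformal input is the horizontal $C^0$-estimate, which is of precisely the type already established in the proof of Proposition \ref{prop:continuation}.

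Recall first that $\iota_{ij}$, $\iota_{jh}$ and $\iota_{ih}$ are each defined by counting solutions of the non-autonomous equation \eqref{eq:dvXHJ-non} for a monotone linear homotopy of metrics ($s\mapsto (1-s)h_i+sh_j$, and so on), with the $A_\infty$ structure encoded on the domains $\Sigma\in\CM^{k+1}$ carrying the Floer data of Definition \ref{def:Floer data} and a cut-off function $\chi$ interpolating the two ends. To compare $\iota_{ih}$ with $\iota_{ij}\circ\iota_{jh}$ I would introduce, for each gluing parameter $R\in[0,\infty]$, the concatenated homotopy $\widehat h^{(R)}$ from $h_i$ to $h$ obtained by first running the $h_i\to h_j$ homotopy, then remaining at $h_j$ for parameter length $R$, and finally running the $h_j\to h$ homotopy; counting solutions of \eqref{eq:dvXHJ-non} for $\widehat h^{(R)}$ defines an $A_\infty$ morphism $\widehat\iota^{(R)}$. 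The usual broken-trajectory/gluing analysis of the boundary of the corresponding one-parameter family of moduli spaces gives $\widehat\iota^{(\infty)}\sim\iota_{ij}\circ\iota_{jh}$, while at $R=0$ the homotopy $\widehat h^{(0)}$ is an honest (piecewise-linear) monotone homotopy from $h_i$ to $h$, so that $\widehat\iota^{(0)}$ is simply a continuation morphism for that homotopy.

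It then remains to see that $\widehat\iota^{(0)}\sim\iota_{ih}$, i.e., that the continuation morphism is independent, up to homotopy relative to the ends, of the choice of monotone homotopy from $h_i$ to $h$. This too is standard: the set of monotone homotopies $s\mapsto h^s$ with $h\le h^s\le h_i$ (equivalently $H_{h_i}\le H_{h^s}\le H_h$) for which the sub-closedness of $\beta$ is preserved along the family is convex, hence contractible, so the linear homotopy defining $\iota_{ih}$ and the broken homotopy $\widehat h^{(0)}$ are joined by a path of such homotopies; the associated parametrized moduli spaces furnish the $A_\infty$ homotopy between $\widehat\iota^{(0)}$ and $\iota_{ih}$. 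Composing the two homotopies yields $\iota_{ih}\sim\iota_{ij}\circ\iota_{jh}$, and the same argument gives the compatibility needed to pass to the limit in \eqref{eq:limCWhi}.

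The heart of the matter, and the main obstacle, is that all of the above requires Gromov--Floer compactness for the moduli spaces attached to the full two-parameter family $(R,s)$ of homotopy data, which rests on a uniform $C^0$-estimate. The vertical estimate follows as in Proposition \ref{prop:C0vertical} and \cite{floer-hofer}, \cite{abou-seidel}, since every Hamiltonian in the family is nonnegative and sandwiched between $H_{h_i}$ and $H_h$. For the horizontal estimate one repeats the proof of Proposition \ref{prop:continuation} for each member of the family: decompose \eqref{eq:dvXHJ-non} into its $(a,p_a)$-component \eqref{eq:a-component}, derive the elliptic inequality \eqref{eq:Deltaau} whose inhomogeneous term is bounded solely in terms of the exhaustion $\{N_i\}$ and the energy (via the $C^1$-bound, itself a consequence of the energy bound and Proposition \ref{prop:C0vertical}), and apply \cite[Theorem 3.7]{gil-trud} to $\pm a(v)$ on each component of $a^{-1}((-\infty,a_0])$. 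Since the parameter set $[0,\infty]\times[0,1]$ is compact and the data vary continuously, the resulting constants are uniform, and the standard cobordism arguments go through. What truly needs care is therefore only the bookkeeping: arranging the two-parameter family so that monotonicity, $\beta$-sub-closedness, and the asymptotic normalizations all persist, and checking that the elliptic estimate of Proposition \ref{prop:continuation} is genuinely uniform in the extra parameters. No geometric ingredient beyond Theorem \ref{thm:nondeg} and Proposition \ref{prop:continuation} is needed.
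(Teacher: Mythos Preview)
Your proposal is correct and is exactly the ``standard construction of the Floer theory'' the paper invokes without further detail; the only nonformal ingredient is the uniform horizontal $C^0$-estimate along the two-parameter family, and you have correctly identified that this is supplied by repeating the argument of Proposition \ref{prop:continuation}. One cosmetic point: the composition in the displayed formula should read $\iota_{jh}\circ\iota_{ij}$ for the domains and targets to match (your description of the concatenated homotopy makes clear you have the correct order in mind).
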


This induces a natural $A_\infty$ map
\be\label{eq:CWhih}
\iota_\infty: \lim_{\longrightarrow} CW^d(L, H_{h_j}) \to CW^d(L,H_h)
\ee
which in turn induces a homomorphism
$$
(\iota_\infty)_*: \lim_{\longrightarrow} HW^d(L, H_{h_j}) \to HW^d(L,H_h).
$$
In fact we have the following extension of Theorem \ref{thm:index-nullity} to the
cylindrical adjustment $h_i$ of $h$.

\begin{prop}\label{prop:vanishing-hi}
	Let $T \subset M \setminus K$ as above. Then we can find $h_i$ so that it has
	non-positive curvature, i.e., all sectional curvature $K(X,Y): = \langle R(X,Y)Y,X\rangle \leq 0$.
	In particular for any geodesic cord $c \in \text{\rm Cord}_{h_i}(T)$, both Morse index and nullity of $c$
	vanish.
\end{prop}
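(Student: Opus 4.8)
The plan is to construct the cylindrical adjustments $h_i$ within the class of warped product metrics on the cusp, so that non-positivity of the curvature is visible by inspection, and then to repeat the second variation computation of Theorem~\ref{thm:index-nullity} almost verbatim, the only change being that the curvature term in \eqref{eq:2ndvariation-2} is now bounded using the hypothesis $K_{h_i}\le 0$ instead of being computed from constant curvature.

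Recall from \eqref{eq:hypmetric} that on the cusp region $N\setminus N_{i-1/2}\cong\T^2\times[i-1/2,\infty)$ (which lies in the $\varepsilon$-thin cusp once $i$ is large) the hyperbolic metric is the warped product $h=da^2+e^{-2a}(dx^2+dy^2)$, while the model cylindrical metric is $da^2+e^{-2i}(dx^2+dy^2)$. First I would fix a smooth function $\phi\colon[0,\infty)\to(0,\infty)$ with $\phi(a)=e^{-a}$ for $a\le i-1/2$, with $\phi(a)=e^{-i}$ for $a\ge i$, and with $\phi''\ge0$ everywhere. Such a convex interpolation exists: $a\mapsto e^{-a}$ is itself convex and attains the value $e^{-i}$ at $a=i$ with slope $-e^{-i}<0$, so the corner of $\max(e^{-a},e^{-i})$ at $a=i$ can be rounded off inside $[i-1/2,i]$ while keeping $\phi''\ge0$ (one realizes a non-negative $\phi''$ with the prescribed infinite jets at both ends and the two integral constraints that pin down $\phi'(i)$ and $\phi(i)$; this is elementary). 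Then set $h_i=h$ on $N_{i-1/2}$ and $h_i=da^2+\phi(a)^2(dx^2+dy^2)$ on the cusp. On $N_{i-1/2}$ this is the hyperbolic metric, so $K_{h_i}\equiv-1$ there. On the cusp $h_i$ is a warped product of the flat $\T^2$ over an interval, whose sectional curvatures are $-\phi''/\phi$ for $2$-planes containing $\partial_a$ and $-(\phi'/\phi)^2$ for $2$-planes tangent to $\T^2$; the former is $\le0$ by convexity of $\phi$ and the latter is $\le0$ automatically. Hence $h_i$ has $K_{h_i}\le0$ everywhere.

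For the vanishing statement, let $c\in\text{\rm Cord}_{h_i}(T)$ be a non-constant geodesic cord. Since $T$ is a fixed horo-torus, for $i$ large it lies inside $N_{i-1/2}$, so near the endpoints $c(0)$ and $c(1)$ the metric $h_i$ coincides with $h$; in particular, by Proposition~\ref{prop:meancurvature}, the shape operator of $T$ with respect to the outward unit normal is the identity near these points. The boundary terms in the second variation formula \eqref{eq:2ndvariation-2} are therefore computed exactly as in Proposition~\ref{prop:formula-2nd}, and we obtain, for every $V\in T_c\CP(T)$,
\[
d^2E_{h_i}(c)(V,V)=\int_0^1\left(\left|\frac{DV}{dt}\right|^2-\langle R(V,\dot c)\dot c,V\rangle\right)dt+|\dot c(0)|\,|V(0)|^2+|\dot c(1)|\,|V(1)|^2,
\]
where $R$ denotes the curvature tensor of $h_i$. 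The integrand term $\langle R(V,\dot c)\dot c,V\rangle$ is precisely the (unnormalized) sectional curvature $K_{h_i}(V,\dot c)\le0$, so all three summands on the right are non-negative; in particular $d^2E_{h_i}(c)\ge0$, which forces the Morse index of $c$ to vanish. If $V$ lies in the null space of $d^2E_{h_i}(c)$, then $d^2E_{h_i}(c)(V,V)=0$ forces $\frac{DV}{dt}\equiv0$ and $|\dot c(0)|\,|V(0)|^2=0$; since $c$ is non-constant, $|\dot c(0)|\ne0$, whence $V(0)=0$, and a parallel field vanishing at a point vanishes identically. Thus the nullity vanishes as well. (The constant cords constitute the Bott-clean torus $\widehat T$ of Proposition~\ref{prop:Bott-clean} and are accounted for there.)

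The one genuinely delicate point is producing the convex warping function $\phi$ that interpolates exactly between $e^{-a}$ and the constant $e^{-i}$ while keeping $h_i$ of class $C^\infty$; I expect this to be the only place that needs care, since the remainder is a line-by-line reprise of Section~\ref{sec:index} with the constant-curvature identity \eqref{eq:curvature} replaced by the sign estimate $K_{h_i}\le0$.
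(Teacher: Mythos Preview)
Your proposal is correct and follows essentially the same strategy as the paper: write the cylindrical adjustment $h_i$ as a warped product $da^2+\phi(a)^2(dx^2+dy^2)$ on the cusp and arrange the warping function so that the sectional curvatures $-\phi''/\phi$ and $-(\phi'/\phi)^2$ are non-positive. The paper carries out this step more explicitly, using the ansatz $\rho_i=e^{-B_i}$ with $B_i'=A_{i,\varepsilon_0}$ built from concrete cut-off functions, so that the required inequality becomes $A^2-A'\ge 0$ rather than your direct convexity condition $\phi''\ge 0$; this sidesteps the two-integral-constraint smoothing you flagged as delicate, at the cost of a longer formula-heavy construction. Conversely, the paper's proof stops once $K_{h_i}\le 0$ is established and leaves the ``In particular'' to the reader, whereas you write out the second-variation argument (using that $h_i=h$ near $T$ so Proposition~\ref{prop:meancurvature} supplies the boundary terms, and then killing the kernel via $\tfrac{DV}{dt}=0$ and $|\dot c(0)|\,|V(0)|^2=0$); this is exactly the argument one would expect and it is good that you recorded it.
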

\begin{proof}
Let us consider a cylindrical adjustment $h_i$ of  $h$ of the form
\ben
h_i =  da^2 + \rho_i^2(dx^2 +  dy^2)
\een
with an interpolated function
\be\label{eq:interpolatedrho}
\rho_i(a) :=
\begin{cases}
	e^{-a} \quad & \text{ for } a <i-\frac{1}{2}\\
	e^{-i} & \text{ for } a \geq i.
\end{cases}
\ee

A straightforward calculation gives rise to the following covariant derivatives of the Levi-Civita connection of $h_i$:
\ben
\begin{aligned}
	&\nabla_{\del_a}\del_a=\nabla_{\del_x}\del_y=\nabla_{\del_y}\del_x=0, \\
	&\nabla_{\del_x}\del_x =\nabla_{\del_y}\del_y= -\rho_i \rho_i' \del_a, \\
	&\nabla_{\del_a}\del_* =\nabla_{\del_*}\del_a= \frac{\rho_i'}{\rho_i}\del_* ~~\text{ for }~ *=x,y.
\end{aligned}
\een
By direct computation, we obtain
\be\label{eq:curvature h_i}
K(\del_x,\del_y)=-\left(\dfrac{\rho'_i}{\rho_i}\right)^2, \quad K(\del_a,\del_x)=K(\del_a,\del_y)=-\dfrac{\rho''_i}{\rho_i}. 	
\ee

Now we are going to construct $\rho_i$ satisfying (\ref{eq:interpolatedrho}) explicitly.
First, let us consider a smooth cut-off function
$$
\tau_{0,1}(t):= \begin{cases} 1  \quad & \text{for } t \leq 0 \\
\frac{e^{1/t}}{e^{{1}/(1-t)}+e^{1/t}} \quad & \text{for } 0 < t < 1\\
0 \quad & \text{for } t \geq 1
\end{cases}
$$
and define $\tau_{a,b}:=\tau_{0,1}(\frac{x-a}{b-a})$ for a given $a<b$. Then
$$
\tau_{a,b}(t)=\begin{cases}
1 \quad\text{ for } t \leq a \quad & \text{for } 0 < t < 1\\
0 \quad\text{ for } t \geq b.
\end{cases}
$$
In particular,  we have the following inequality when $0<b-a<1$,
\be\label{eq:intercondition}
0 \geq \tau_{a,b}^2-\tau_{a,b} \geq \tau_{a,b}'.
\ee
Next we consider a smooth function
$$E(t)= 1+e^{-1/t} \quad\text{ for } t>0$$ and check
\be\label{eq:inequalityE}
E^2 \geq E \geq E' \quad\text{ for } t>0.
\ee
Now we define a smooth function
$$
A_{i,\varepsilon}(t):=\begin{cases}
1 &\quad\text{ for } t\leq i-\frac{1}{2}, \\
E(t-i+\frac{1}{2}) &\quad\text{ for }  i-\frac{1}{2} < t \leq i-\varepsilon,\\
E(t-i+\frac{1}{2})\tau_{i-\varepsilon,i}(t), &\quad\text{ for }    i-\varepsilon < t.
\end{cases}
$$
Then we can find $0<\varepsilon_0 <1$ such that  $\int_0^i A_{i,\varepsilon_0}(t) dt = i$ because $\int_0^i A_{i,1}(t) dt <i$  and $\int_0^i A_{i,0}(t) dt >i$.

Using this function, we define a smooth function
$$
B_i(a) := \int_0^a A_{i,\varepsilon_0} dt,
$$
which satisfies
$$
B_i(a)= \begin{cases}
a \quad\text{ for } a\leq i-\frac{1}{2},\\
i \quad\text{ for } a \geq i.
\end{cases}
$$
Finally we obtain an interpolated function
$$
\rho_i(a) := e^{-B_i(a)}
$$
satisfying (\ref{eq:interpolatedrho}) where the second derivative is
$$
\rho''_i(a)= (B'_i(a)^2 - B''_i(a) )e^{-B_i(a)}.
$$
We can check
$B'^2_i-B''_i=A_{i,\varepsilon_0}^2 - A'_{i,\varepsilon_0} \geq 0$ by the inequality (\ref{eq:intercondition}) and (\ref{eq:inequalityE}). The non-negativity of the second derivative of $\rho_i$ completes the proof with (\ref{eq:curvature h_i}).
\end{proof}

One immediate consequence of this proposition is the following formality
\begin{cor} The boundary map $m^1: CW(\nu^*T,H_{h_i}) \to CW(\nu^*T,H_{h_i})$
is zero. Therefore $CW(\nu^*T,H_{h_i}) = \ker m^1$ and  the natural map
$$
CW(\nu^*T,H_{h_i}) \to HW(\nu^*T,H_{h_i})
$$
is an isomorphism.
\end{cor}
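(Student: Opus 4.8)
The plan is to run the degree‑counting argument of Theorem~\ref{thm:formal} with Proposition~\ref{prop:vanishing-hi} in place of Theorem~\ref{thm:index-nullity}, and to take care in addition of the summand $C^*(T)$ of constant chords that is present in the \emph{full} complex $CW(\nu^*T,H_{h_i})$, rather than only in the reduced one treated in Theorem~\ref{thm:formal}. Recall the decomposition
$$
CW(\nu^*T,H_{h_i})\;=\;C^*(T)\ \oplus\ \Z\langle\Chord(H_{h_i};\nu^*T)\rangle ,
$$
in which $C^*(T)$ is a subcomplex isomorphic to the Morse complex of $k|_T$ and the quotient is the reduced complex generated by the non‑constant chords, and that, as in Section~\ref{sec:knot-algebra}, the operator $\mathfrak m^1$ has the block form
$$
\mathfrak m^1=\begin{pmatrix}\pm d & 0\\ * & \mathfrak m^1_{<0}\end{pmatrix},
$$
where $\pm d$ is the Morse differential of $k|_T$, $\mathfrak m^1_{<0}=\widetilde{\mathfrak m}^1$ is the induced differential of the reduced complex, the displayed $0$ records the action estimate \eqref{eq:<0} (no Floer trajectory runs from a constant chord to a non‑constant one), and $*$ denotes the remaining block carrying non‑constant chords into $C^*(T)$.

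Three of these four blocks are dispatched directly. First, since by Proposition~\ref{prop:vanishing-hi} the metric $h_i$ may be chosen non‑positively curved, every non‑constant geodesic cord of $T$ has vanishing Morse index and nullity; invoking the identification of the Maslov index of a Hamiltonian chord with the Morse index of the underlying geodesic cord from Section~\ref{sec:index}—which applies verbatim to $h_i$ by Proposition~\ref{prop:vanishing-hi}—we conclude that every non‑constant chord is nondegenerate (so no Morse--Bott perturbation of the non‑constant part is needed) and lies in degree $0$. Since $\mathfrak m^1$ raises the grading by one and the reduced complex is concentrated in degree $0$, this forces $\mathfrak m^1_{<0}=\widetilde{\mathfrak m}^1=0$. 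Second, choose the perturbing function $k$ so that $k|_T$ is a perfect Morse function on $T\cong\T^2$—equivalently, simply use the admissible model $C^*(T)=H^*(T;\Z)$ with vanishing differential, exploiting that $\T^2$ is formal; then $\pm d=0$.

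It remains to show that $*$, which raises the grading from $0$ to $1$ and so takes values in $C^1(T)$, vanishes as well; this is the main point, and the one place where the non‑positive curvature of $h_i$ is used for more than fixing indices. A Floer strip $u$ contributing to $*$ is asymptotic at one end to a non‑constant chord $\gamma_-$—the lift of a geodesic cord $c_-$ of positive length—and at the other to a constant chord, and has positive energy $E(u)=\CA(\gamma_+)-\CA(\gamma_-)=E(c_-)>0$. The plan is to project $u$ to the base, $f=\pi\circ u$, and to argue as in Theorem~\ref{thm:classify-3dim}: the strip ($k=1$) version of that classification forces the image of $f$ to lie in the totally geodesic subspace of $\H^3$ spanned by the lift of $c_-$ and the point over $\gamma_+$, which—one of the two data being a single point—degenerates the problem to $\H^2$, and there a Stokes‑type area identity using the constant mean curvature of the horo‑torus with the correct (positive) sign, together with the non‑positive curvature of $h_i$, shows the relevant moduli space is empty. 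This is of course consistent with the topology: for $h_i$ non‑positively curved each non‑trivial component of the path space $\CP(N,T)$ carries a unique geodesic cord, which by Proposition~\ref{prop:vanishing-hi} is a nondegenerate minimum, so that component is contractible; hence $H^*(\CP(N,T))=C^*(T)\oplus\Z\langle\Chord(H_{h_i};\nu^*T)\rangle$, which matches the alternative route indicated in Remark~\ref{rem:alternative} and again yields $\mathfrak m^1=0$. Once $\mathfrak m^1=0$ is in hand the conclusion is immediate: $\ker\mathfrak m^1=CW(\nu^*T,H_{h_i})$, $\Image\mathfrak m^1=0$, so $HW(\nu^*T,H_{h_i})=CW(\nu^*T,H_{h_i})$ and the natural map between them is the identity, hence an isomorphism.
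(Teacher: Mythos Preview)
Your proof is considerably more careful than the paper, which offers no argument at all (``immediate consequence of this proposition''); the intended reasoning is presumably just the degree count on the reduced complex. You are right that for the \emph{full} complex one must also handle the Morse block $\pm d$ and the off--diagonal block $*$, and your treatment of the first two (perfect Morse function on $\T^2$, all non-constant chords in degree $0$) is fine.

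The gap is in your primary argument for $*=0$. You invoke the strip analogue of Theorem~\ref{thm:classify-3dim} and the Bochner/maximum--principle machinery of Appendix~\ref{sec:Deltaxz}, but those computations are carried out for the \emph{hyperbolic} metric $h$ on $\H^3$ and rely on its explicit form (constant curvature $-1$, the identities for $V^3$, $\theta\circ J_h=-dH_h$, etc.). The metric $h_i$ is hyperbolic only on $N_{i-1/2}$ and is cylindrical beyond $N_i$; its universal cover is a Cartan--Hadamard manifold but not $\H^3$, so there is no ``totally geodesic $\H^2$ spanned by $c_-$ and a point'' to reduce to, and the area identity you allude to has no obvious analogue. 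That line of argument does not go through as written.

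The correct and much shorter argument is the one you relegate to a consistency check: Floer strips provide homotopies in $\Omega(\nu^*T,T^*N)$, hence preserve $\pi_0\cong\pi_1(N,T)$; constant chords lie in the trivial class, while by Proposition~\ref{prop:vanishing-hi} (non-positive curvature of $h_i$) together with the positive mean curvature of the horo-torus $T$ the energy functional is strictly convex at every non-constant critical point, so each component of $\CP(N,T)$ carries a unique geodesic cord and in particular no non-constant cord is null-homotopic. Hence $*=0$ by homotopy reasons alone, no holomorphic-curve classification needed. Make this the main argument and drop the appeal to Theorem~\ref{thm:classify-3dim}.
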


Therefore we will freely regard the chain map \eqref{eq:CWhih} as its homological version for the discussion below.

\begin{thm}\label{thm:inftyrank} The homomorphism $(\iota_\infty)_*$
is an isomorphism for all integer $d \geq 0$.
\end{thm}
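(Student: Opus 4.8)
The plan is an exhaustion (direct‑limit) argument: filter every complex in sight by the symplectic action — equivalently, by the energy $E(c)=-\mathcal A(\gamma_c)$ of the underlying geodesic cord — show that on each fixed finite action window the tower $\{CW^\bullet(L,H_{h_j})\}_j$ stabilises, with all structure maps and continuation maps becoming the identity once $j$ is large, and then let the window grow. Concretely, for $A>0$ I would write $F_A CW^\bullet(L,\cdot)\subset CW^\bullet(L,\cdot)$ for the subgroup spanned by the generators of action $\geq -A$, i.e. by the $C^*(T)$‑block together with the geodesic cords of energy $\leq A$. Since the Floer differential $m^1$ is the positive gradient flow of $\mathcal A$, the action is non‑decreasing along $m^1$‑trajectories, so $F_A CW^\bullet$ is a subcomplex; moreover $CW^\bullet=\bigcup_{A>0}F_A CW^\bullet$, so, homology commuting with filtered colimits, $HW^d=\varinjlim_A H^d(F_A CW^\bullet)$, and likewise for each $H_{h_j}$. (By Theorem \ref{thm:formal} and Proposition \ref{prop:vanishing-hi} these complexes actually have trivial differential off the de Rham block of $T$, so the colimit is purely one of free abelian groups, but this is not needed.)

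The geometric input is $C^0$‑confinement. Fix $A>0$ and put $\ell=\sqrt{2A}$. By Theorem \ref{thm:z-coord} (for the hyperbolic metric $h$), by the corresponding $C^0$‑estimates of \cite{BKO} for the cylindrical metrics $h_j$, and by the horizontal $C^0$‑estimate established in the proof of Proposition \ref{prop:continuation} for the linear homotopies of these metrics, there is an index $j_0=j_0(A)$ such that every geodesic cord of $(N,T)$ of energy $\leq A$ — with respect to $h$ or to any $h_j$ with $j\geq j_0$ — lies in $N_{j_0}$, and every solution of \eqref{eq:duXHJ} as well as every continuation solution for a linear homotopy among the metrics $\{h_{j_0},h_{j_0+1},\dots,h\}$ whose asymptotic chords all have energy $\leq A$ has its image under $\pi$ contained in $N_{j_0}$. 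Because $h_j$ agrees with $h$ on $N_{j_0}$ for $j\geq j_0$, all data defining $F_A CW^\bullet$ — generators, $m^1$, and the $A_\infty$‑maps restricted to this action range — are supported over $N_{j_0}$, which yields canonical identifications $F_A CW^\bullet(L,H_{h_j})=F_A CW^\bullet(L,H_h)$ for $j\geq j_0$. Furthermore, on $N_{j_0}$ each of the relevant homotopies $(1-s)h_i+sh_j$ (with $j_0\leq i\leq j\leq\infty$, $h_\infty:=h$) is constant, equal to $h$; hence a continuation solution confined to $N_{j_0}$ solves the autonomous equation \eqref{eq:dvXHJ}, and the standard rigidity argument (a rigid autonomous solution is $\tau$‑independent, hence a constant cylinder over a chord) shows that the linear part of $\iota_{jh}$, and of $\iota_{ij}$ for $j_0\leq i\leq j$, restricts to $F_A CW^\bullet$ as the identity.

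Assembling this: for each $A$ the tower $\{F_A CW^\bullet(L,H_{h_j})\}_{j\geq j_0(A)}$ in \eqref{eq:limCWhi} is a tower of copies of $F_A CW^\bullet(L,H_h)$ joined by identity maps, so $\varinjlim_j F_A CW^\bullet(L,H_{h_j})=F_A CW^\bullet(L,H_h)$ and $\iota_\infty$ of \eqref{eq:CWhih} induces the identity there. Filtered colimits commute with one another and with $H^d$, so taking $\varinjlim_A$ shows that $\iota_\infty:\varinjlim_j CW^\bullet(L,H_{h_j})\to CW^\bullet(L,H_h)$ is an isomorphism of complexes, and applying $H^d$ gives that $(\iota_\infty)_*$ is an isomorphism for all $d\geq 0$.

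I expect the main obstacle to be the uniform $C^0$‑confinement in the second paragraph: one must verify that a $h_j$‑geodesic cord of bounded energy cannot escape $N_{j_0}$ — which calls on the cylindrical analysis of \cite{BKO} rather than Theorem \ref{thm:z-coord} alone, since, as stressed in the introduction, a geodesic issued inward from a horo‑torus may leave $N^{\mathrm{cpt}}$ and return — and that the horizontal estimate in Proposition \ref{prop:continuation} can be taken uniform in $j$ over a single compact region, together with the bookkeeping that this region is where all relevant homotopies are constant. The rigidity step, and the orientation sign making the diagonal entry $+1$ rather than $\pm 1$, are routine.
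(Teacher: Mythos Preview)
Your approach is correct and is organized differently from the paper's proof. The paper argues surjectivity and injectivity separately: for surjectivity it represents a class in $HW^d(L,H_h)$ by its unique cycle (using the formality of Theorem~\ref{thm:formal}), confines it to some $N_{i_0}$, and then \emph{extends the compatible sequence down to $i=1$} by invoking Lemma~\ref{lem:twolimits} (the quasi-isomorphism between any two cylindrical metrics); for injectivity it proves a stabilisation lemma (Lemma~\ref{lem:i0hj}) showing $\iota_{i_0 j}(\beta_{i_0})=\iota_{i_0 h}(\beta_{i_0})$ for $j$ large, and then again appeals to Lemma~\ref{lem:twolimits} to conclude $b_{i_0}=0$. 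Formality (Proposition~\ref{prop:vanishing-hi}) is used throughout to pass freely between chains and classes.

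Your filtration argument packages the same geometric confinement --- the content of the paper's Lemma~\ref{lem:i0hj} is exactly your ``the tower is eventually constant on $F_A$'' --- but avoids both Lemma~\ref{lem:twolimits} and the formality input: you never compare two different cylindrical metrics directly, only observe that a cofinal subsystem of the tower is constant on each action window and then commute the two filtered colimits. This is cleaner and, as you note, does not need $\widetilde{\mathfrak m}^1=0$. The price is that the uniformity you flag must really be checked: you need (i) $h_j$-cords of energy $\le A$ to lie in $N_{j_0}$ for \emph{all} $j\ge j_0$, not just in $N_j$, and (ii) the horizontal $C^0$-bound of Proposition~\ref{prop:continuation} to confine continuation trajectories to the same $N_{j_0}$ uniformly in $j$. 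Both are true --- for (i) any $h_j$-cord contained in $N_{j-1/2}$ is an $h$-cord and Lemma~\ref{lem:length} applies, while the cylindrical analysis of \cite{BKO} rules out excursions beyond $N_{j-1/2}$; for (ii) the constants in the proof of Proposition~\ref{prop:continuation} depend only on the energy bound and on $h$ over $N_{j_0}$ --- but they deserve an explicit sentence each rather than being deferred.
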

\begin{proof} We first prove surjectivity of the map. Let $a \in HW^d(L, H_h)$ be given.
By the formality of $CF(\nu^*T;H_h)$, there exists a \emph{unique cycle}
$$
\alpha = \sum_{m=1}^k  n_m \gamma_{\delta_m} \in CF(\nu^*T;H_h)
$$
representing $a$ with $\delta_m \in \mathcal G_{M\setminus K}$. Here $\gamma_{\delta_m} \in \text{Chord}(\nu^*T)$
is the Hamiltonian chord of $\nu^*T$ associated to the tame geodesic $\delta_m$ via
the one-one correspondence established by Proposition \ref{prop:cordG} and Lemma \ref{lem:cord_chord}.

We denote by $c_m = (\delta_m)_{(T)}$
the unique geodesic cord of $T$ corresponding to $\delta_m$ given by Proposition \ref{prop:cordG}.
Denote
$$
N_0 = \max_{m=1}^k \{\leng(c_m)\}.
$$
Then it follows from the $C^0$ estimate, Theorem \ref{thm:z-coord}, that there exists a sufficiently large
$i_0 = i_0(N_0)$ such that
$$
\supp c_m \subset N_{i_0}, \quad m=1, \ldots, k
$$
and so the chain
$$
\sum_{m=1}^k n_m \gamma_{c_m}
$$
becomes a cycle in $CF(\nu^*T;H_{h_i})$ for all $i \geq i_0$.
We denote the resulting cycle by $\alpha_i$ for each $i \geq i_0$.
Furthermore since $h_i \equiv h$ on $N_{i_0}$, we also
have $\iota_{ih}(\alpha_i) = \alpha$, for all $i \geq i_0$ by the
Floer theory construction of $\iota_{ih}$ and the definition of $\alpha_i$.
Then obviously they satisfy the compatibility relation
$$
\alpha_{i+1} = \iota_{i(i+1)}(\alpha_i)
$$
for all $i \geq i_0$. We extend the sequence $\alpha_i$ all the way up to $i =1$
by choosing any cycle $\alpha_i$ that satisfies
$$
\alpha_{i_0} = \iota_{ii_0}(\alpha_i)
$$
for $1 \leq i \leq i_0$. Such a cycle $\alpha_i$ always exists because
each $\iota_{ji_0}$ is a quasi-isomorphism by the following general
lemma (or rather from its proof in Appendix).

\begin{lem}\label{lem:twolimits} Let $g, \, g'$ be any metric on $M \setminus K$ with cylindrical ends.
Then we have a quasi-isomorphism
$$
CW(L,H_g) \cong CW(L, H_{g'})
$$
\end{lem}

Furthermore such cycle $\alpha_i$ is unique by the formality given in Proposition \ref{prop:vanishing-hi}.

Then by construction the sequence $\{\alpha_i\}_{i=1}^\infty$
is a compatible sequence of cycles and hence defines an element in
$\displaystyle{\lim_{\longrightarrow} HW^d(L, H_{h_j})}$. By construction, we have
$$
(\iota_{\infty})_*([\alpha_1 \to \alpha_2 \to \cdots]) = a
$$
which proves surjectivity.

For the proof of injectivity, we recall the energy identity for the continuation equation:
\bea\label{eq:lemma4.1}
\int \left|\frac{\del u}{\del \tau}\right|_{J_{\chi(\tau)}}^2 \, dt\, d\tau
& = & \mathcal A_{H^+}(z^+) - \mathcal A_{H^-}(z^-) \nonumber \\
&{}& - \quad
\int_{-\infty}^\infty \chi'(\tau) \left(\int_0^1
\frac{\del H_s}{\del s}\Big|_{s = \chi(\tau)}(u(\tau,t))dt\right)  d\tau.
\eea
(See \cite[(7.15)]{BKO} for the energy formula
for the non-autonomous equation in the convention used in the present paper.) Due to the
fact $\frac{\del H_s}{\del s} \geq 0$, we have
$$
\mathcal A_{H^+}(\gamma_{c^+}) \geq \int \left|\frac{\del u}{\del \tau}\right|_{J_{\chi(\tau)}}^2 \, dt\, d\tau
+ \mathcal A_{H^-}(\gamma_{c^-})
$$
and so $\mathcal A_{H^+}(\gamma_{c^+}) \geq \mathcal A_{H^-}(\gamma_{c^-})$. The last inequality is equivalent to
\be\label{eq:Ec+-}
E(c^+) \leq E(c^-)
\ee
by \eqref{eq:CA-E}. This bound for $\mathcal A_{H^+}(\gamma_{c^+})$ in particular implies that there are
only finitely many such $c^+$ for each given $c^-$.

With this preparation, we proceed with the proof of injectivity.
Suppose $(\iota_{\infty})_*(b_\infty) = 0$.
Represent $\displaystyle{b_\infty \in \lim_{\longrightarrow} HW^d(L, H_{h_j})}$ by a sequence of $b_j \in  HW^d(L, H_{h_j})$ so that
$$
b_\infty = [b_1 \to b_2 \to b_3 \to \cdots \to b_j \to \cdots].
$$
Using Proposition \ref{prop:vanishing-hi}, we will also regard $b_i$ as a chain (cycle)
abusing notation. We denote the unique cycle representing $b_i$ by $\beta_i$.

Then by the compatibility of the sequence and the formality of $CW^d(L,H_h)$, we have
$$
0 = \iota_{1h}(\beta_1) = \iota_{2h}(\beta_2) = \cdots
$$
as a chain.
Out of this, we will derive $\beta_i = 0$ for all sufficiently large $i$.

We first note that each geodesic  $c_\ell$ of $H_h$ is also a geodesic cord of $h_j$
whenever $\supp c_\ell \subset N_{j-1}$ since $h_j = h$ on $N_j$.
We denote by $I_j$ the finite subset of $\Z_+$ for such $\ell$'s
and by $\delta_{c_\ell}'$ the Hamiltonian chords of $H_j$ corresponding to
such geodesic cord of $h_j$. In particular we mention that the set
$
\{\delta_{c_\ell}'\}_{\ell \in I_j}
$
is linearly independent as chains in $CW(L;H_{h_j})$ for each $j \geq i_1$.

\begin{lem}\label{lem:i0hj}
Let $i_0$ be given. Express
\be\label{eq:iotai0h}
\iota_{i_0h}(\beta_{i_0}) = \sum_{\ell \in I} a_\ell\, \delta_{c_\ell} \in CW(L;H_h)
\ee
for a finite index set $I \subset \Z_+$ with $a_\ell \neq 0$ for $\ell \in I$.
Then there exists $i_1 > i_0$  such that
$$
\iota_{i_0j}(\beta_{i_0}) = \sum_{\ell \in I} a_\ell\, \delta_{c_\ell}'
$$
for all $j \geq i_1$.
\end{lem}
\begin{proof}
Since $h_{i_0}$ is cylindrical outside $N_{i_0}$, any Hamiltonian chords of $\nu^*T$ cannot
touch the cylindrical region $N \setminus N_{i_0}$. (See \cite[Proposition 4.3]{BKO}.)
Therefore we can choose a cycle $\beta_{i_0} \in CW(L,H_{h_{i_0}})$
representing $b_{i_0}$ with $\supp \beta_{i_0} \subset \Int W_{i_0}$.

On $W_{i_0}$, we have $H_h = H_{h_{i_0+1}}$. It follows from \eqref{eq:Ec+-} that for all $j \geq i_0$
we have the uniform bound
$$
E(c^+) \leq \ell(\beta_{i_0}): = \max \left\{ E(c'_\ell) \mid \beta_{i_0}
= \sum_{\ell =1}^n b_\ell \, \delta_{c_\ell'}, \, b_\ell \neq 0 \right\}
$$
for any $c^+ \in \text{\rm Cord}_{h_j}(T)$ appearing at $\tau = \infty$ for some solution $u$
of \eqref{eq:CRXH} satisfying $u(-\infty) \in \beta_{i_0}$. Since $\supp \beta_{i_0} \subset W_{i_0}$,
this energy bound for such $c^+$ forces $\supp c^+$ to be contained in $W_{i_0'}$ for a sufficiently large $i_0'$
\emph{for any $j \geq i_0'+1$}: This is because $\nu^*T \subset W_1$ and
$\text{\rm dist}(T, M \setminus N_j) \to \infty$ as $j \to \infty$.

We recall $H_h = H_{h_j}$ on $W_{i_0'}$ for any $j \geq i_0' +1$ and so
$\delta_{c_\ell}$ for $\ell \in I$ appearing in \eqref{eq:iotai0h} can be also regarded
as a cycle of $CW(L;H_{h_i})$.
Then it follows from the uniform energy bound \eqref{eq:lemma4.1} that
the image of any solution $u$ contributing to the continuation maps $\iota_{i_0h}$, $\iota_{i_0j}$
is contained in $W_{i_1}$ for sufficiently large $i_1$ depending only on
$i_0, \, i_0'$ but independent of $j \geq i_1+1$.

Since $H_h = H_{h_j}$ on $H_{i_1}$ and so the Cauchy-Riemann equations to solve for
the construction of the continuation maps $\iota_{i_0h}$, $\iota_{i_0j}$ on $W_{i_1}$ are
exactly the same equation, we obtain
$$
\iota_{i_0j}(\beta_{i_0})
= \sum_{\ell \in I} a_\ell\, \delta_{c_\ell}'
$$
by the construction of the continuation maps $\iota_{i_0h}$, $\iota_{i_0j}$
established in Proposition \ref{prop:continuation}.
This finishes the proof.
\end{proof}

Therefore using this lemma, the formality and the hypothesis $(\iota_\infty)_*(b_\infty) = 0$, we derive
$$
0 = \iota_{i_0h}(\beta_i)= \iota_{i_0i_1}(\beta_i)
$$
and so $(\iota_{i_0 i_1})_*(b_{i_0}) = 0$.
Since $(\iota_{i_1i_0})_*$ is an isomorphism by Lemma \ref{lem:twolimits}, this implies $b_{i_0} = 0$.
Since this holds for $b_{i_0}$ for all given $i_0$, this finishes the proof of injectivity. Hence we have
proved that $(\iota_\infty)_*$ is an  isomorphism.
\end{proof}

\section{Comparison with Knot Floer cohomology}
\label{sec:comparison}

In this section we would like to compare the Knot Floer algebra constructed in \cite{BKO} with
the algebra constructed in Theorem \ref{thm:inftyrank} directly on
$T^*({M} \setminus K)$ for $L = \nu^* T$ for a horo-torus $T$ using the hyperbolic metric.

\subsection{Comparison between $HW(\nu^*T;H_h)$ and $HW(\del_\infty(M\setminus K))$}

The main goal of this section is to study the relationship between the algebra
$HW(\nu^*T;H_h)$ constructed in the previous sections via the hyperbolic metric $h$
and the knot Floer algebra $HW(\del_\infty(M\setminus K))$.

For this purpose, we first choose the exhaustion sequence
\be\label{eq:h-Ni}
N_1 \subset N_2 \subset \cdots \subset N_i \subset \cdots
\ee
so that its boundary $\del N_i$ is a horo-torus $\del N_i= T_i$ with respect to
the given hyperbolic metric $h$ on $M \setminus K$. Then we take $W_i = T^*N_i$.

We now express the metric $h$ on $M \setminus K$ in the cylindrical representation
\be\label{eq:cusp-nbd}
N_{(0,\epsilon]} \cong [0,\infty) \times T_{\varepsilon}
\ee
with coordinates $(a, q)$ with $q \in T_{\varepsilon}$.

Then we prove the following that each $h_i$ is also a cylindrical adjustment of a smooth metric on $M$
restricted to $M \setminus K$. More precisely, we have

\begin{prop}\label{prop:hi-smoothadjust} We can construct a sequence of smooth metrics $g[i]$
defined on $M$ such that the followings hold:
\begin{enumerate}
\item For all  $i \geq 1$,
\be\label{eq:hi-gij}
h_i = g[i]_0
\ee
where $g[i]_0$ is the cylindrical adjustment outside $N_i$.
\item  For all $1 \leq i \leq k$,
\be\label{eq:gil-gkj}
g[i]_0 \geq g[k]_0.
\ee
\end{enumerate}
\end{prop}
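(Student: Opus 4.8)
The plan is to exhibit each $h_i$ as the cylindrical adjustment of an honest smooth metric on $M$ — morally, one ``caps off the cusp'' the way a Dehn filling does — and then to read the monotonicity statement (2) off from (1). First I record the normal form near the cusp. In the cylindrical coordinates $(a,q)$ of \eqref{eq:cusp-nbd}, with $q$ in the cross-section torus $\R^2/\Lambda$ carrying the fixed flat metric $g_\Lambda$, the hyperbolic metric is $h = da^2 + e^{-2a}g_\Lambda$ by \eqref{eq:hypmetric}, and its cylindrical adjustment has the form $h_i = da^2 + \rho_i(a)^2 g_\Lambda$ with $\rho_i$ smooth and positive, $\rho_i(a)=e^{-a}$ below the interpolation window (so $h_i=h$ there) and $\rho_i(a)=e^{-i}$ above it (so $h_i$ is genuinely cylindrical there). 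I will fix the interpolating family once and for all so that it is monotone, $\rho_i\ge\rho_k$ pointwise whenever $i\le k$; this is possible, for instance by writing $\rho_i=e^{-\beta_i}$ with $\beta_i$ smooth, $\beta_i'\in[0,1]$, $\beta_i(a)=a$ for $a\le i-1$ and $\beta_i(a)=i$ for $a\ge i$, for which $\beta_i\le\beta_k$ when $i\le k$. Any such cylindrical adjustment is admissible here; it need not coincide with the curvature-optimized family of Proposition \ref{prop:vanishing-hi}.

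Next, the construction of $g[i]$. Choose a basis $(v_\mu,v_\lambda)$ of the cusp lattice $\Lambda$ with $v_\mu$ the slope along which $M$ is obtained from the knot exterior by Dehn filling, so $v_\mu$ bounds a meridian disk in a tubular neighborhood of $K$. Pick tubular neighborhoods $N(K)\subset N'(K)$ of $K$ which in the coordinates $(a,q)$ are $\{a>a_1\}\subset\{a>a_0\}$ with $i<a_0<a_1$, so that $h_i=da^2+e^{-2i}g_\Lambda$ already on $\{a\ge a_0\}$. Set $g[i]:=h_i$ on $M\setminus N(K)=\{a\le a_1\}$; then $g[i]$ is the product metric $da^2+e^{-2i}g_\Lambda$ near $\partial N(K)$, and $g[i]|_{\partial N(K)}=e^{-2i}g_\Lambda$. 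On $N(K)\setminus K$ I cap off by the standard solid-torus metric: introduce a radial coordinate $\varrho\in(0,\varrho_1]$ with $\{\varrho=\varrho_1\}=\partial N(K)$ and $\varrho\to0$ along $K$, and take $g[i]$ of the warped form $d\varrho^2+\psi(\varrho)^2(\text{the }v_\mu\text{-direction})^2+(\text{const})(\text{the }v_\lambda\text{-direction})^2$ (with the appropriate cross term if $\Lambda$ is not rectangular), where $\psi$ is smooth, equal to the constant $e^{-i}|v_\mu|/2\pi$ near $\varrho=\varrho_1$ (so the metric matches the product metric in $C^\infty$ across $\partial N(K)$, after identifying $\varrho_1-\varrho$ with $a-a_1$) and $\psi(\varrho)=\varrho+O(\varrho^3)$ near $\varrho=0$ (so the metric closes up smoothly along the core $K$). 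The result is a smooth Riemannian metric $g[i]$ on all of $M$.

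For the verification of (1), form the cylindrical adjustment $g[i]_0$ of $g[i]$ using $N(K)\subset N'(K)$ and the constant interpolation on $N'(K)\setminus N(K)$, with the collar coordinate on $N(K)\setminus K$ taken to be $a-a_1$. On $M\setminus N'(K)=\{a\le a_0\}$ we have $g[i]_0=g[i]=h_i$; on $N(K)\setminus K$ we have $g[i]_0=da^2\oplus g[i]|_{\partial N(K)}=da^2+e^{-2i}g_\Lambda=h_i$ (using $a_0>i$); and on $N'(K)\setminus N(K)=\{a_0<a\le a_1\}$ the metric $h_i$ is already $da^2+e^{-2i}g_\Lambda$, so the interpolation between the two equal endpoints is the constant one and again equals $h_i$. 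Hence $g[i]_0=h_i$ on $M\setminus K$, which is (1). For (2): by (1), $g[i]_0=h_i$ and $g[k]_0=h_k$ as metrics on $M\setminus K$, so $g[i]_0\ge g[k]_0$ is equivalent to $h_i\ge h_k$; on the common region where $h_i=h=h_k$ this is an equality, and on the cusp it is $\rho_i^2\ge\rho_k^2$, which holds by the monotone choice of the family. This is precisely the inequality $h_j\le h_i$ for $j\ge i$ already invoked in the text.

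\textbf{Main obstacle.} The only step that is not bookkeeping with the definition of the cylindrical adjustment is the construction of $g[i]$: one must produce a metric that is genuinely $C^\infty$ on $M$ — in particular smooth along the core circle $K$, where the solid-torus coordinates degenerate — while being forced to equal the prescribed product form $da^2+e^{-2i}g_\Lambda$ on an entire collar of $\partial N(K)$. This is the metric-level analogue of Dehn filling a cusp and is standard, but it requires the usual care with the odd-vanishing condition on the warping factor $\psi$ at $\varrho=0$ and with the $C^\infty$-matching across $\partial N(K)$; everything else follows immediately from the definitions and from the monotonicity built into the family $\{h_i\}$.
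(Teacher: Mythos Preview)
Your proof is correct and follows essentially the same strategy as the paper: extend the cylindrical metric $h_i$ smoothly across the core $K$ by a solid-torus (Dehn-filling) cap that matches the product form $da^2+e^{-2i}g_\Lambda$ on a collar, then read off both (1) and (2) directly from $g[i]_0=h_i$ together with the built-in monotonicity $h_i\ge h_k$. The only difference is presentational---the paper writes the cap-off metric in explicit hyperbolic upper-half-space coordinates (eq.~\eqref{eq:cappedhyp}) and checks smoothness at the core via an explicit diffeomorphism to polar coordinates on a standard solid torus, whereas you use a generic warped-product ansatz and invoke the standard $\psi(\varrho)=\varrho+O(\varrho^3)$ criterion; both routes are valid and yield the same conclusion.
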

\begin{proof}
Let $N=M\setminus K$ be a hyperbolic knot complement and $N(K)=N_\varepsilon(K)$ be a sufficient small tubular neighborhood. Let $N_\varepsilon:=N\setminus N(K)$ be the compact thick part and $N'(K)$ be the deleted neighborhood $N(K)\setminus K$. The statement of the proposition is a (uniform) pointwise statement. Therefore we will lift the relevant metrics on $N$ to its universal covering space $\H^3$.

Denote by $p:\H^3 \to
 M \setminus K$ a universal covering map and put the lifting of the horo-torus  $T=\del N(K)$ to be the horo-sphere of $\{z=1\}$. Consider a holonomy representation $\rho$ for the  hyperbolic structure of $N$.
Since $N$ is a knot complement, there is a canonical choice of two generators $m$ and $l$ of $H_1(T)$, called  $\emph{meridian}$ and  $longitude$. Note that $l$ is a closed curve parallel to $K$ and $m$ bounds a disk in the tubular neighborhood $N(K)$.

 We can choose an explicit holonomy representative $\rho$ in the conjugacy class $[\rho]$ such that $\rho(m)$ and $\rho(l)$ preserve the horosphere $\{z=1\}$ and $\rho(m)$ is a translation of only  $x$-direction, as follows.
\be\label{eq:holonomy}
\begin{aligned}
\rho(m) &= (x,y)\mapsto(x+m_1,y+m_2)  &\text{ with }\quad m_2=0 \\
\rho(l) &= (x,y)\mapsto(x+l_1,y+l_2)  &
\end{aligned}
\ee
   Here $(m_1,m_2)$ and $(l_1,l_2)$ are linearly independent vectors in the $xy$-plane of $\H^3$, which are determined by the complete hyperbolic structure of $M\setminus K$, the choice of a horo-torus $T$ and the choice of a holonomy $\rho$.\footnote{ The complex number of $\frac{1}{m_1}(l_1+  \sqrt{-1}\,l_2 )$ is a hyperbolic knot invariant, called \emph{cusp shape}. For the details, there are many texts on hyperbolic knots. For instance, see \cite[ Section 11,13,14]{Mart}. }

Now we have an explicit coordinate of $p^{-1}(N'(K))$ in $\H^3$ as follows.
\be\label{eq:coordN}
\{(z,x,y) \mid z\geq 1 \text{ and }  x= \mu m_1 + \lambda l_1,\, y= \lambda l_2  \text{ for } 0\leq \mu, \lambda <1\}
\ee
Recall that $N'(K)$ is obtained from $p^{-1}(N'(K))$ identified by the holonomies $\rho(m)$ and $\rho(l)$ and hence the coordinate of (\ref{eq:coordN}) itself can be regarded as a global coordinate of $N'(K)$.

Now we consider cylindrical adjustments. Recall the hyperbolic metrics $h$ on $N'(K)$ is written as
\be\label{eq:hypmetric}
\begin{aligned}
h &~=~ z^{-2}(dz^2 +dx^2 +dy^2 )    \\
&~=~  da^2 + e^{-2a}dx^2 + e^{-2a} dy^2
\end{aligned}
\ee
in terms of $(z,x,y)$ and $(a,x,y)$, and
 their cylindrical adjustments $h_i$'s for $i\geq 1$ of $h$ are given by
\be\label{eq:hypcyl}
h_i =
\begin{cases} h \quad & \text{on } N_{i-1/2}\\
da^2+ e^{-2i}(dx^2+ dy^2) & \text{on } N\setminus N_{i}
\end{cases}
\ee
with respect to the exhaustion $N_i$ such that $N\setminus N_i = a^{-1}((i,\infty))$.
We remark that
\be
h \leq h_i \quad \text{ for all } i\geq 1.
\ee
Now we consider a metric $g[i]$ smooth on $M$ whose cylindrical adjustment coincides with $h_i$ as follows.

\be\label{eq:cappedhyp}
g[i] = \begin{cases} h_i \quad & \text{on } N_{i}\\
e^{-2a} da^2 + e^{-2a} dx^2 + e^{-2i} dy^2 \quad & \text{on } N \setminus N_{i+1/4}
\end{cases}
\ee
\begin{lem}
	The above metric $g[i]$ extends smoothly to $N(K)$, i.e. it is a smooth metric of the ambient manifold $M$.
\end{lem}
\begin{proof}
	To consider smoothly extended metrics on $N'(K)$,
	let us look at a standard solid torus $ST\subset \R^3$,
	$$ST:=D^2\times S^1=\{( \cos\varphi, \sin\varphi,0)+ (r\cos\theta \cos\varphi , r\cos\theta \sin\varphi, r \sin \theta) \} \subset  \R^3 $$
	with $ 0\leq r\leq r_0,~ 0\leq \theta,\varphi < 2\pi$
	and  a standard polar coordinate
	as follows,
	$$ST = \{(r,\theta,\varphi) \mid 0\leq r\leq r_0,~ 0\leq \theta,\varphi < 2\pi \}.$$
Here, we also consider a deleted solid torus $ST':=ST\setminus \{r=0\}$.
	We construct an explicit diffeomorphism $\Psi$ between $(z,x,y)$ of $N'(K)$ and $(r,\theta,\varphi)$ of $ST'$,
	\be\label{eq:zxypolar}
	(z,x,y)=\Phi(r,\theta,\varphi) := \left(1/r,~\frac{m_1}{2\pi}\theta + \frac{l_1}{2\pi}\varphi ,~ \frac{l_2}{2\pi}\varphi \right)
	\ee
Next, we consider a pullback of the metric $g[i]$ by $\Phi$ on $ST'$,
$$
\Phi^*g[i] = \begin{cases} h_i \quad & \text{on } N_{i}\\
dr^2 + \dfrac{r^2 (m_1^2\, d\theta^2 + l_1^2 \, d\varphi^2)+ e^{-2i} l_2^2\, d\varphi^2 }{4\pi^2} \quad & \text{on } N \setminus N_{i+1/4}
\end{cases}
$$	
 Recall that the standard smooth metric for $ST$ is given by
 $dr^2 + r^2 d\theta^2+d\varphi^2$.
 We can directly verify that $\Phi^* g[i]$ is also smooth at $r=0$ on $ST$.	

Note that the construction of (\ref{eq:coordN}) shows that
$\Phi$ between $N(K)'$ and $ST'$ extends continuously to a homeomorphism between $N(K)$ and $ST$. Therefore the metric completion of  $N'(K)$ with $g[i]$ also recovers the original $N(K)$ and it proves the claim.
\end{proof}
Now, we can say that a smooth metric $g[i]$ on $M$ and a complete hyperbolic metric $h$ of $M\setminus K$ has a common cylindrical adjustment $h_i$.

Moreover, as comparing (\ref{eq:hypcyl}) and (\ref{eq:cappedhyp}) we have an inequality
\be
h_i \geq g[i] \qquad\text{ for all } i\geq 1.
\ee

Next, let us define cylindrical adjustments $g[i]_j$ of $g[i]$ for $j\geq 0$ as follows
\be\label{eq:capcyl}
g[i]_j = \begin{cases} g[i] \quad & \text{on } N_{i+j-1/2} \\
	da^2 + e^{-2(i+j)}dx^2 + e^{-2(i+j)}dy^2 \quad & \text{on } N \setminus N_{(i+j)}
\end{cases}
\ee

Note that we can put $g[i]_0=h_i$.
By the direct comparing  (\ref{eq:cappedhyp}) and (\ref{eq:capcyl}), we obtain
	
	\be\label{eq:compmetrc}
	g[i]_0  \geq g[k]_0
	\ee
for all $1\leq i\leq k$.  This finishes the proof.
\end{proof}

Now we are ready to prove the following comparison theorem.

\begin{thm}\label{thm:comparison} Suppose $K$ is a hyperbolic knot on $M$. Then
we have an (algebra) isomorphism
$$
HW^d(\nu^*T;H_h) \cong HW^d(\del_\infty(M \setminus K))
$$
for all integer $d \geq 0$. In particular $HW^d(\del_\infty(M \setminus K)) = 0$ for all $d > 0$ and
$HW^0(\del_\infty(M \setminus K))$ is a free abelian group generated by $\mathscr G_{M\setminus K}$.
\end{thm}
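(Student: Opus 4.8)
The plan is to run the comparison through the telescope of cylindrical adjustments $\{h_j\}$ used in Section \ref{sec:comparison} and in Theorem \ref{thm:inftyrank}, recognizing each term of that telescope as a copy of the Knot Floer algebra of \cite{BKO} and then invoking Theorem \ref{thm:inftyrank} at the other end.

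First I would use Proposition \ref{prop:hi-smoothadjust}(1): for every $j\geq 1$ the metric $h_j$ equals the cylindrical adjustment $g[j]_0$ of the smooth ambient metric $g[j]$ on $M$. Because of this identity the $A_\infty$ algebra $CW(L;H_{h_j})$ coincides with the algebra $CW_{g[j]}(T, M\setminus K)$ recalled in Section \ref{sec:knot-algebra}, and since by \cite{BKO} the latter is independent of the choice of smooth metric on $M$ and of the tubular neighborhood of $K$, we get an algebra identification
$$
HW^d(L;H_{h_j}) \;=\; HW_{g[j]}(T, M\setminus K) \;\cong\; HW^d(\del_\infty(M\setminus K))
$$
for each $j$ and every $d\geq 0$. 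Moreover each $h_j$ is cylindrical at infinity, so the monotonicity $A_\infty$ morphisms $\iota_{ij}\colon CW^d(L,H_{h_i})\to CW^d(L,H_{h_j})$, $i\leq j$, occurring in the direct system \eqref{eq:limCWhi} are quasi-isomorphisms — this is precisely (the proof of) Lemma \ref{lem:twolimits}, already used in this form inside the proof of Theorem \ref{thm:inftyrank}. Hence the direct system $\{HW^d(L,H_{h_j}),(\iota_{ij})_*\}$ is essentially constant, and
$$
\lim_{\longrightarrow} HW^d(L,H_{h_j}) \;\cong\; HW^d(L,H_{h_1}) \;\cong\; HW^d(\del_\infty(M\setminus K)),
$$
all maps being algebra isomorphisms since every arrow in sight is a continuation ($A_\infty$) morphism and hence multiplicative on cohomology.

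Second, Theorem \ref{thm:inftyrank} supplies the algebra isomorphism $(\iota_\infty)_*\colon \lim_{\longrightarrow} HW^d(L,H_{h_j})\;\cong\;HW^d(L,H_h)=HW^d(\nu^*T;H_h)$. Composing the two displays yields the asserted $HW^d(\nu^*T;H_h)\cong HW^d(\del_\infty(M\setminus K))$ for all $d\geq 0$. The remaining statements then follow by transporting Theorem \ref{thm:formal} (and Corollary \ref{cor:d=0}) across this isomorphism: the isomorphism respects the subcomplex $C^*(T)$ on both sides, hence descends to the reduced complexes, and by formality these are concentrated in degree $0$ with $\widetilde{\mathfrak m}^k=0$ for $k\neq 2$; thus $\widetilde{HW}^d(\del_\infty(M\setminus K))=0$ for $d>0$, while in degree $0$ the reduced cohomology is freely generated by $\mathscr G_{M\setminus K}$ through the one-one correspondences of Proposition \ref{prop:cordG} and Lemma \ref{lem:cord_chord}.

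The step I expect to be the main obstacle is the claim that the direct system is essentially constant \emph{with the connecting maps $\iota_{ij}$}: one must know not merely that each $HW^d(L,H_{h_i})$ is abstractly isomorphic to the Knot Floer algebra, but that the specific monotonicity morphisms built from the linear homotopies $h^s_{ij}=(1-s)h_i+sh_j$ are quasi-isomorphisms. This rests on the $C^0$-control of the non-autonomous moduli spaces — in particular on a horizontal $C^0$-estimate of exactly the type established in Proposition \ref{prop:continuation}, applied at each finite stage of the telescope — and on matching these homotopies of cylindrical adjustments with the ones entering the metric-invariance argument of \cite{BKO}. Once that compatibility is secured, the rest of the argument is formal.
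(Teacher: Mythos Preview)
Your proposal is correct and follows essentially the same route as the paper: identify each $h_j$ with a cylindrical adjustment $g[j]_0$ via Proposition \ref{prop:hi-smoothadjust}, recognize each term of the telescope as the Knot Floer algebra, show the connecting maps $\iota_{ij}$ are quasi-isomorphisms via Lemma \ref{lem:twolimits} (the paper also invokes \cite[Proposition 10.5]{BKO} here), and then apply Theorem \ref{thm:inftyrank}. The paper packages your ``essentially constant'' step as a separate Lemma \ref{lem:gi-gilim}, proved by the same energy/formality argument already used in Theorem \ref{thm:inftyrank}, but otherwise the architecture is the same.
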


\begin{proof} Let $h_i$ be a cylindrical adjustment of $h$ given in the proof of Theorem \ref{thm:inftyrank}.
We first observe
\be\label{eq:hj<ji}
h_j \leq h_i \quad \text{for all }\, j > i.
\ee
Denote by $g[i]$ the smooth metric constructed in the proof of Proposition \ref{prop:hi-smoothadjust}
associated to $h_i$, and $g[i]_j$ its cylindrical adjustment with $g[i]_0 = h_i$ associated to
$$
N_i \subset N_{i+1} \subset \cdots.
$$

Using the monotonicity inequality \eqref{eq:gil-gkj}, we can find a sequence of
metrics
$$
g[1]_0 \geq g[2]_0 \cdots \geq g[i]_0 \geq \cdots.
$$
Then
$$
CW(\nu^*T; H_{g[i]_0}) = CW(\nu^*T; H_{h_i})
$$
whose cohomology satisfies
\be\label{eq:HWg[i]=}
HW(\nu^*T; H_{g[i]_0}) \cong HW(\del_\infty(M\setminus K))
\ee
for all $i$ by the definition of the latter in \cite{BKO}. Furthermore we have the monotonicity $A_\infty$ homomorphism
\be\label{eq:CWg[i][j]}
CW(\nu^*T;;H_{g[i]_0}) \to CW(\nu^*T; H_{g[j]_0})
\ee
and obtain an $A_\infty$ homomorphism
(see \cite[Section 7.2.12]{fooo:book} as before), and hence
a homomorphism
$$
(\varphi_{ij})_*: HW(\nu^*T; H_{g[i]_0}) \to HW(\nu^*T; H_{g[j]_0}).
$$
(In fact, this is an algebra homomorphism but the property will not be used in the present paper.)

We recall from Proposition \ref{prop:hi-smoothadjust} that $h_i = g[i]_0$ and so
$H(g[i]_0) = H(h_i)$  for all $i$. Therefore we have the monotonicity $A_\infty$ homomorphism
\eqref{eq:CWg[i][j]} coincides with
$$
\iota_{ij} :  CW(\nu^*T;H_{h_i}) \to CW(\nu^*T;H_{h_j})
$$
since $g[i]_0 = h_i$ by construction of $g[i]$ in Proposition \ref{prop:hi-smoothadjust}.

On the other hand, \cite[Proposition 10.5]{BKO} and Lemma \ref{lem:twolimits} respectively
imply that the maps
$$
(\iota_{ij})_*  = (\varphi_{ij})_*:  HW(\nu^*T;H_{g[i]_0}) \to HW(\nu^*T;H_{g[j]_0})
$$
are  isomorphisms for all $i \leq j$. Furthermore
$$
\lim_{\longrightarrow} HW(\nu^*T;H_{h_i}) \cong HW(\nu^*T; H_h)
$$
by Theorem \ref{thm:inftyrank}. Therefore the following lemma will finish the proof of the theorem.
\begin{lem}\label{lem:gi-gilim}
The canonical map
$$
HW(\nu^*T;H_{h_k}) \to \lim_{\longrightarrow} HW(\nu^*T;H_{h_i})
$$
is an isomorphism for all $k$.
\end{lem}
\begin{proof} Let $k =i_0$ be fixed.
We start with the proof of injectivity.
Let $a_{i_0} \in HW(\nu^*T;H_{h_i})$
be an element satisfying $\iota_i(a_{i_0}) = 0$. In other words, $\iota_i(a_{i_0})$ can be
represented by a sequence $a_i \in HW(\nu^*T;H_{h_i})$ satisfying
$$
(\iota_{i_0j})_*(a_{i_0}) = 0
$$
for sufficiently large $j \geq i_0$. Since $(\iota_{i_0j})_*$ is an isomorphism, we have $a_{i_0} = 0$.
This proves injectivity.

For the surjectivity, let $\displaystyle{a \in \lim_{\longrightarrow} HW(\nu^*T;H_{h_i})}$ be given
and let $\{a_i\}$ be a representative thereof.
It is enough to show
$
(\iota_{i_0})_*(a_{i_0}) = a
$
i.e., we have
$$
a_j = (\varphi_{i_0j})_*(a_{i_0})
$$
 for all sufficiently large $j \geq i_0$.
By the energy estimate \eqref{eq:Ec+-}, $(\varphi_{i_0j})_*(a_{i_0})$ is eventually stable, i.e.,
there exists some $i_1 > i_0$ such that
$$
(\varphi_{i_0j})_*(a_{i_0}) = (\varphi_{i_0i_1})_*(a_{i_0})
$$
for all $j \geq i_1$ similarly as in Lemma \ref{lem:i0hj}.

On the other hand, by compatibility of $\{a_i\}$, we also have
$$
a_j \sim (\iota_{i_0j})_*(a_{i_0})
$$
and in turn $a_j = (\iota_{i_1j})_*(a_{i_0})$ for all $j \geq i_0$ by formality.
Combining the above discussion, we have shown $a = (\iota_{i_0})_*(a_{i_0})$.
Since $k = i_0$ is arbitrarily given, the surjectivity for folds for all $k$ as required.
This finished the proof.
\end{proof}

Combining the above, we have finished the proof of Theorem \ref{thm:comparison}.
\end{proof}

Combination of Theorem \ref{thm:inftyrank} and \ref{thm:comparison} gives rise to
the following

\begin{cor}\label{cor:finite-rank} For any hyperbolic knot $K \subset M$, $\widetilde{HW}^d(\del_\infty(M\setminus K)) = 0$
for all $d \geq 1$ and the rank of $\widetilde{HW}^0(\del_\infty(M \setminus K))$  is
infinity. In particular if $\widetilde{HW}^d(\del_\infty(M \setminus K)) \neq 0$  for some $d \geq 1$,
the knot cannot be hyperbolic.
\end{cor}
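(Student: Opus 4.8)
The plan is to obtain Corollary~\ref{cor:finite-rank} by assembling Theorem~\ref{thm:formal}, Theorem~\ref{thm:inftyrank} and Theorem~\ref{thm:comparison}, together with the fact, established in \cite{BKO}, that $\widetilde{HW}^*(\del_\infty(M\setminus K))$ is a well-defined invariant of the isotopy class of $K$ for an \emph{arbitrary} knot $K\subset M$.

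First I would record the computation on the hyperbolic side. The formality statement of Theorem~\ref{thm:formal} gives $\widetilde{HW}^d(\nu^*T;H_h)=0$ for all $d\geq 1$ and identifies $\widetilde{HW}^0(\nu^*T;H_h)$ with the free abelian group on $\mathscr G_{M\setminus K}$; the latter has infinite rank since geodesic cords of the horo-torus $T$ are in bijection with $\mathscr G_{M\setminus K}$ (Proposition~\ref{prop:cordG}) and, by the uniqueness of the energy-minimizing cord in each homotopy class, with the nontrivial elements of $\pi_1(M\setminus K,T)$, which is an infinite set.

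Next I would transport this along the isomorphism $HW^d(\nu^*T;H_h)\cong HW^d(\del_\infty(M\setminus K))$ of Theorem~\ref{thm:comparison}. The only point requiring comment is that this isomorphism descends to the \emph{reduced} groups: it is assembled from the monotonicity morphisms $\iota_{ij}$, $\varphi_{ij}$ and the direct-limit map $\iota_\infty$ of Theorem~\ref{thm:inftyrank}, and each of these is an action-filtered chain map respecting the decomposition $\mathfrak X_{\geq -\epsilon_0}\coprod\mathfrak X_{<0}$; in particular it carries the subcomplex $C^*(T)$ of constant (zero-action) chords into itself, hence induces maps on the quotient complexes $\widetilde{CW}$ and an isomorphism $\widetilde{HW}^d(\nu^*T;H_h)\cong\widetilde{HW}^d(\del_\infty(M\setminus K))$ for every $d\geq 0$. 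Combining this with the previous paragraph yields $\widetilde{HW}^d(\del_\infty(M\setminus K))=0$ for $d\geq 1$ and $\widetilde{HW}^0(\del_\infty(M\setminus K))\cong\Z\langle\mathscr G_{M\setminus K}\rangle$ of infinite rank.

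Finally, the last assertion follows by contraposition: since $\widetilde{HW}^*(\del_\infty(M\setminus K))$ is defined for every knot and depends only on its isotopy class, a knot with $\widetilde{HW}^d(\del_\infty(M\setminus K))\neq 0$ for some $d\geq 1$ contradicts the vanishing just proved for hyperbolic knots, and therefore cannot be hyperbolic. I do not expect any serious obstacle here: all the analytic content is carried by the already-established theorems, and the sole bookkeeping subtlety is the compatibility of the comparison isomorphism with the reduction, which is routine once one notes that every map involved is action-filtered.
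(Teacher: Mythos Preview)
Your proposal is correct and follows essentially the same route as the paper, which simply records the corollary as an immediate combination of Theorem~\ref{thm:inftyrank} and Theorem~\ref{thm:comparison} (the formality input of Theorem~\ref{thm:formal} being already built into the latter). Your added remark that the comparison maps are action-filtered and hence respect the constant-chord subcomplex is a reasonable justification for passing to the reduced groups, a point the paper leaves implicit.
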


We would like to point out that the knot Floer algebra $HW(\del_\infty(M \setminus K))$ is
defined for arbitrary knots, while $HW(L;H_h)$ in the comparison result of Theorem \ref{thm:comparison}
should have
a description in terms of the hyperbolic geometry of ${M} \setminus K$. The upshot of
Theorem \ref{thm:comparison} is that it enables us to compute the topological invariant
$HW(\del_\infty(M \setminus K))$ in terms of the hyperbolic geometry of the complement $M \setminus K$
for the case of hyperbolic knots.

\section{The case of torus knots in $S^3$ and in $S^1 \times S^2$}

Let $K$ be a $(p,q)$-torus knot in $S^3$, where $p,q \geq 2$ and $\gcd(p,q)=1$.
Following \cite{Yi}, we first describe the $\hr$ geometry of $S^3 \setminus K$.
We will equip $S^3 \setminus K$ with $\hr$ geometry by
constructing the fundamental domain $D$ in $\hr$.
First, we consider a hyperbolic polygon $P$ in $\H^2 = \H^2\times\{0\}\subset\hr$ with $2p$ vertices of $v_1,\dots v_{2p}$ such that
\begin{enumerate}[(i)]
	\item each vertex $v_{2i}$ is ideal vertex for $i = 1,\dots,p$,
	\item each vertex $v_{2i-1}$ has the cone angle  $\frac{2\pi}{p}$ for $i = 1,\dots,p$,
	\item $P$ is  symmetric under every reflection in a geodesic from $v_i$ to $v_{i+p}$ for $i=1,\dots,p$.
\end{enumerate}
Then, there is a unique orientation preserving isometry  $\phi_i$ of $\H^2$ from $\overline{v_{2i-1}v_{2i}}$ to $\overline{v_{2i+1}v_{2i}}$ where $\overline{v_iv_{j}}$ denotes the oriented edge from $v_i$ to $v_{j}$. Note that each $\phi_i$ is a parabolic isometry of $\H^2$ and all of them are conjugate to  one another.
Moreover, denote by $\tau_k$ for  $k=1,\dots, p$ an isometry of $\H^2$ which maps $v_i$ to $v_{i+2k}$ for all $i$. Note that $\tau_k$ is a rotational isometry of $2 \pi k/ p$. It is convenient to use Poincar\'e disk model of $\H^2$ for visualizing a symmetric picture. See Figure \ref{fig:PinPoincaredisk}.

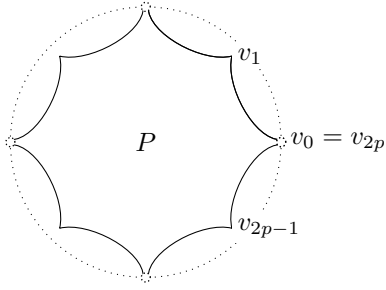
\begin{figure}[h!]
	\begin{tikzpicture}
	
	\pgfmathsetmacro\s{0.6}  
	\pgfmathsetmacro\n{4}  
	
	\pgfmathsetmacro\Angle{360 / \n}
	\pgfmathsetmacro\PoincareRadius{\s*3}
	\pgfmathsetmacro\Iradius{\PoincareRadius*0.9}
	\pgfmathsetmacro\curfactor{0.8}
	\pgfmathsetmacro\initangle{0}
	\pgfmathsetmacro\sfactor{1}
	
	\pgfmathsetmacro\Height{\s*6.0}
	\pgfmathsetmacro\Width{\s*4.0}
	\pgfmathsetmacro\LhoroRadius{\s*0.9}
	\pgfmathsetmacro\RhoroRadius{\s*1}
	\pgfmathsetmacro\geoRadius{\Width/2}
	\pgfmathsetmacro\LhoroAngle{2*atan(\LhoroRadius/\geoRadius)}
	\pgfmathsetmacro\RhoroAngle{2*atan(\RhoroRadius/\geoRadius)}
	
	\pgfmathsetmacro\domainRadius{\s*1.7}
	\pgfmathsetmacro\domainX{-\Width*2}
	\pgfmathsetmacro\domainY{\Height*0.6}

	\draw [ dotted](0,0) circle ( \PoincareRadius );
	
	\foreach \x in { 0,\Angle,...,360}
	\draw
	(\x+\initangle+\sfactor:\PoincareRadius) .. controls (\x+\initangle+\sfactor:\PoincareRadius*\curfactor) and (\x+\initangle+\Angle*0.4 :\Iradius*\curfactor) .. (\x+\initangle+\Angle/2:\Iradius)
	(\x+\initangle+\Angle*0.5:\Iradius) .. controls (\x+\initangle+\Angle*0.6:\Iradius*\curfactor) and (\x+\initangle+\Angle -\sfactor:\PoincareRadius*\curfactor) .. (\x+\initangle+\Angle-\sfactor:\PoincareRadius);
	
	\foreach \x in { 0,\Angle,...,360}
	\draw [line width=0.5pt,fill=white,densely dotted] (\x+\initangle:\PoincareRadius) circle (\s*0.1);	
	
	\draw  (0,0) node {$P$};
	\draw  (\initangle:\PoincareRadius) node[right] {$v_0=v_{2p}$};
	\draw  (\initangle+\Angle*1/2:\Iradius) node[right,fill=white,inner sep=2pt] {$v_1$};
	\draw  (\initangle-\Angle*1/2:\Iradius) node[right,fill=white,inner sep=2pt] {$v_{2p-1}$};
	\end{tikzpicture}
	\caption{A polygon $P$ in the Poincar\'e disk model of $\H^2$}
	\label{fig:PinPoincaredisk}	
\end{figure}

Let  $D:=P\times [0,q] \subset \hr$ which would be a fundamental domain of $S^3\setminus K$.
The face-paring maps in $\Is(\H^2\times \R)$  are given by
\begin{align*}
\phi_i'&:=\phi_i\times (x\mapsto x+1) \text{ for } i=1,\dots p, \text{ and }\\
\tau_q'&:=\tau_{q}\times(x\mapsto x+q).
\end{align*}
Now we have the following proposition.
\begin{prop}\label{prop:S3comple}
	The domain $D$ glued by $\{\phi_1',\dots,\phi_p',\tau_q'\}$  is homeomorphic to $S^3\setminus K$.  	
\end{prop}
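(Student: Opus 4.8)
Following \cite{Yi}, the plan is to recognize the quotient $X:=D/{\sim}\,=\hr/\Gamma$, where $\Gamma:=\langle\phi_1',\dots,\phi_p',\tau_q'\rangle\subset\Is(\hr)$, as a Seifert fibered $3$-manifold with one torus end, to pin down its base $2$-orbifold and its exceptional fibers, and finally to match these data with the standard Seifert presentation of the torus knot complement. The first step would be to verify the hypotheses of Poincar\'e's polyhedron theorem for $D=P\times[0,q]$ with the listed face-pairings: $\phi_i'$ pairs the side face over $\overline{v_{2i-1}v_{2i}}$ with the side face over $\overline{v_{2i+1}v_{2i}}$, and $\tau_q'$ pairs the bottom face $P\times\{0\}$ with the top face $P\times\{q\}$. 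Because each pairing is a product (an isometry of $\H^2)\times(\text{a translation of }\R)$ and $D$ is a metric product, the codimension-two ridge-cycle conditions decouple: the cycles along the vertical edges $\{v_i\}\times[0,q]$ are governed by the vertex cycles of $P$ in $\H^2$ --- the cone vertices $v_{2i-1}$ forming a single cycle of angle sum $p\cdot\frac{2\pi}{p}=2\pi$, and the ideal vertices $v_{2i}$ producing cusps --- while the ridges meeting the top and bottom faces are matched by $\tau_q'$ and are consistent since $\tau_q$ is a symmetry of $P$. This yields that $\Gamma$ is discrete, acts properly discontinuously and freely on $\hr$ (the torsion of the base group below is killed by the nonzero $\R$-translations) with fundamental domain $D$, so that $X=\hr/\Gamma=D/{\sim}$ is a complete orientable $3$-manifold with a single torus end.

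Next I would exhibit the Seifert fibration. The projection $\hr\to\H^2$ is equivariant over the natural homomorphism $\Gamma\to\bar\Gamma$, where $\bar\Gamma:=\langle\phi_1,\dots,\phi_p,\tau_q\rangle\subset\Is(\H^2)$, and since $(\tau_q')^p=\mathrm{id}_{\H^2}\times(x\mapsto x+pq)$ lies in the kernel, that kernel is infinite cyclic and central in $\Gamma$; hence the vertical lines $\{\cdot\}\times\R$ descend to circle fibers of a Seifert fibration $X\to O:=\H^2/\bar\Gamma$. By the standard dictionary between hyperbolic polygons with face-pairings and $2$-orbifolds, $O$ is a disk with two cone points --- one lying over the fixed point of the rotation $\tau_q$ (the center of $P$) and one over the point to which the cone vertices $v_{2i-1}$ are all identified --- and its single cusp arises from the ideal vertices $v_{2i}$, which are cyclically permuted by $\tau_q$ precisely because $\gcd(p,q)=1$. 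Reading off the orders, the two exceptional fibers have multiplicities $p$ and $q$, so $X$ is Seifert fibered over the disk with exactly two exceptional fibers, of orders $p$ and $q$. (Equivalently one checks $\bar\Gamma\cong\Z/p*\Z/q$, the orbifold fundamental group of the disk $D^2(p,q)$, generated by $\tau_q$ and a suitable product of the mutually conjugate parabolics $\phi_i$.)

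Finally I would match $X$ with $S^3\setminus K$. By Moser's classical analysis of torus knots, $S^3\setminus N(K)$ is exactly the Seifert fibered space over the disk with two exceptional fibers of orders $p$ and $q$ whose Seifert invariants are fixed by the requirement that Dehn filling along the regular-fiber slope returns $S^3$. Since a Seifert fibered space with nonempty boundary is determined up to fiber-preserving homeomorphism by its base orbifold together with the (unnormalized) Seifert invariants of its exceptional fibers, it suffices to compute those invariants for $X$ from the $\R$-translation parts $1,\dots,1,q$ of the pairings $\phi_1',\dots,\phi_p',\tau_q'$ and to check that they coincide with those of $S^3\setminus N(K)$; then $X\cong S^3\setminus N(K)$, and passing to interiors gives $D/{\sim}\,\cong S^3\setminus K$.

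The main obstacle is this last matching: extracting the precise Seifert data --- in particular the single integer ``Euler number/framing'' --- from the face identifications, and checking that it singles out the torus knot complement rather than some other Seifert space over the disk with two exceptional fibers of orders $p$ and $q$ (a different torus knot, or a knot complement in a lens space). In practice one does this either by computing $H_1(X)\cong\Z$ together with the peripheral curves $(m,l)$, or by cutting $X$ explicitly into two fibered solid tori and a $\text{(pair of pants)}\times S^1$ and recording the gluing matrices prescribed by $\phi_i'$ and $\tau_q'$; the Poincar\'e verification in the first step, by contrast, is routine once the product structure of $D$ and of the face-pairings is exploited.
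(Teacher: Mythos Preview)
Your approach differs substantially from the paper's. The paper gives a direct Heegaard-type argument: it cuts $P$ into the inner regular $p$-gon $P'$ on the odd vertices $v_1,v_3,\dots,v_{2p-1}$ and its complement $P''$, observes that $D'=P'\times[0,q]$ glued by $\tau_q'$ is a solid torus $S'$ (the mapping torus of a rotation of a disk) and that $D''=P''\times[0,q]$ glued by all the pairings is a second solid torus $S''$, and then checks that on the common boundary torus $\partial P'\times[0,q]/\tau_q'$ the longitude of $S'$ coincides with the meridian of $S''$. No Poincar\'e polyhedron theorem, no Seifert classification, no matching of invariants is invoked.

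Your Seifert route has a genuine gap well before the ``main obstacle'' you flag. First, the ridge cycles do \emph{not} decouple: each $\phi_i'$ shifts by $+1$ in the $\R$-factor while $D$ has height $q$, so the side face $e_{2i-1}\times[0,q]$ is carried to $e_{2i}\times[1,q+1]$, which is not a face of $D$, and the actual ridge cycles along the vertical edges weave the $\phi_i'$'s and $\tau_q'$ together. Second --- and this is the real problem --- your identification of the base orbifold is incorrect. Since the interior angles at the $v_{2i-1}$ sum to $2\pi$, Poincar\'e gives $\phi_p\cdots\phi_1=\mathrm{id}$ in $\Is(\H^2)$; hence $\langle\phi_1,\dots,\phi_p\rangle$ is torsion-free and the stabilizer of $[v_1]$ in $\bar\Gamma$ injects into $\bar\Gamma/\langle\phi_i\rangle\cong\Z/p$. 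Both cone points therefore have order $p$, and one gets $\bar\Gamma\cong\Z/p*\Z/p$, not $\Z/p*\Z/q$ (for $(p,q)=(3,2)$ one checks $(\tau_2\phi_1)^3=\phi_3\phi_2\phi_1=1$ directly). Worse, the element $(\phi_1')^{-1}\cdots(\phi_q')^{-1}\tau_q'\in\Gamma$ has $\R$-component $-q+q=0$ while its $\H^2$-component is a nontrivial rotation about $v_1$; thus $\Gamma=\langle\phi_i',\tau_q'\rangle$ does \emph{not} act freely on $\hr$ and $D$ is not a fundamental domain for it. So the glued space $D/{\sim}$ cannot simply be read as $\hr/\Gamma$, and any Seifert analysis would have to proceed quite differently from what you outline. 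The paper's hands-on splitting into two solid tori sidesteps all of this.
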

\begin{proof}
	Consider a regular hyperbolic compact $p$-gon $P'$ in $P$ whose vertices are $v_1,v_3,...,v_{2p-1}$. Let us look at  $P'':= P\setminus P'$. The closure of each connected component of $P''$ is a triangle of $v_{2i-1}, v_{2i}, v_{2i+1}$ for $i=1,\dots,p$.
	We consider a decomposition of $D=D' \cup D''$ where $D'=P'\times [0,q]$ and $D''= P''\times [0,q]$. Then $D'$ glued by $\tau'_q$ becomes a solid torus $S'$ and the closure of $D''$ glued by $\{\phi_1',\dots,\phi_p',\tau_q'\}$ also become a solid torus $S''$ in which the core curve is $v_{2i} \times [0,q]$. The common boundary of $S'\cap S''$ is a 2-torus $T$ given  by $\partial P'\times [0,q]$ glued by $\tau'_q$ since the decomposition of $P$ is invariant under $\tau_q$. Now we can see that the longitude curve of $S'$ and the meridian curve of $S''$ are in the same homotopy class in $T$. It completes the proof.
\end{proof}

We remark that the holonomy of a meridian $\mu$ of $K$ is conjugate to any $\phi_i'$. Moreover $(\tau_q')^p$ is the holonomy  of a longitude $\lambda$ of $K$ where the writhe of $\lambda$  is  $(p-1)q$.
Note that the sum of all cone angle of $v_1,v_3,\dots,v_{2p-1}$ is $2\pi$.
By construction, the resulting $3$-manifold obtained by the face pairings from $D$ is homeomorphic to $N:=S^3\setminus K$ and its universal cover $\w N$ with $\pi: \w N \to N$ is identified with $\hr$, where the deck transformation group $\Gamma$ is generated by $\{\phi_1',\dots,\phi_p',\tau_q'\}$, which acts properly discontinuously on $\hr$. This finishes a description of $\hr$ geometry of $S^3 \setminus K$.

\subsection{Semi-horo-torus}
From now on, we think of the upper half space model of $\H^2=\{(x,y) \mid y>0\}$. Then a standard metric $h$ for $\hr$ is given by
$$h=\frac{dx^2 +dy^2}{y^2}+dz^2.$$
Without loss of generality, we can assume that $v_0(=v_{2p})=\infty \in \H^2\subset \H^2\times\{0\}\subset \hr$.
Let us consider  $\w N_{y_0}:=\{(x,y,z) \mid y > y_0\}\subset \H^2\times\R$.  Then there exists  a sufficient large $y_0>0$ such that
$$ g\cdot \w N_{y_0} \cap \w N_{y_0} = \varnothing$$ for any $g\in \Gamma$, and $N_{y_0}:=p(\w N_{y_0})$ is a small tubular neighborhood of $K$ in $N$.
We remark that $T:=\partial N_{y_0}$ carries a natural Euclidean metric inherited from $h$.

Note that a connected component of the lifting of $T$ looks like a cylinder of a horocycle times $\R \subset \hr $. Motivated by this, we call $T$ a \emph{semi-horo-torus}.
If the center of the horocycle is $\infty \in \H^2$ then the connected component of  $\w T$ is a horizontal plane of $\{(x,y,z) \mid y=y_0\}$.

\subsection{Geodesic cords}\label{sec:GeodesicCords}

Consider a hyperbolic surface $S_t$ obtained from the set of polygons
$$P_{t+k}:= P\times \{t+k\} \subset \H^2 \times \{t\} \subset \hr \quad\text{ for } k=0,1,\dots,q-1$$
by gluing them by $\phi_i'$'s. Then $S_t$ become a $p$-punctured surface which is embedded in $N$ and carries a complete hyperbolic metric inherited from $\H^2$. Here, $\chi( \widehat S_t) = 2p+q-pq$ is
the Euler characteristic of $\widehat S_t$ where $\widehat S_t$ is obtained by capping off the $p$ punctures.
Let $C_t:= T \cap S_t$. Then $C_t$ is the boundary of a tubular neighborhood of the punctures of $S_t$.

Let us recall the definition of geodesic cords,
$$
\cord(N,T)=\{ c:[0,1] \to N \mid \nabla_t \dot c = 0,\,c(i) \in T, \,\, \dot c(i) \perp T, \,\text{for } i=0, 1\}.
$$
Note that $S_t$ is a totally geodesic surface in $N$.
\begin{lem}\label{lem:toruscord}
	A geodesic cord $c\in \cord(N,T)$ is contained in a $S_t$. Moreover
	$$ \cord(S_t,C_t) \subset \cord(N,T)$$
for all $t$.
\end{lem}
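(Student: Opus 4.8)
The plan is to pass to the universal cover $\w N = \hr$ and exploit two elementary facts. First, a geodesic of a product metric $h_{\H^2}\oplus dz^2$ has affine $z$-coordinate, so a geodesic whose velocity is at one instant $h$-orthogonal to $\partial_z$ has \emph{constant} $z$-coordinate. Second, the connected components of $p^{-1}(S_t)$ are exactly the horizontal totally geodesic planes $\H^2\times\{t+n\}$, $n\in\Z$ — this follows from the structure of the fundamental domain $D=P\times[0,q]$, in which $S_t$ is assembled from the slices $P\times\{t\},\dots,P\times\{t+q-1\}$, together with the fact that every element of $\Gamma=\langle\phi_1',\dots,\phi_p',\tau_q'\rangle$ translates the $z$-coordinate by an integer — while the connected components of $p^{-1}(T)$ are the horocycle cylinders $(\text{horocycle in }\H^2)\times\R$, the one centered at $\infty$ being $\{y=y_0\}$, as noted above.

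For the first assertion, let $c\in\cord(N,T)$ and lift it to $\w c\colon[0,1]\to\hr$ with $\w c(0)$ lying on a component $\w T_0$ of $p^{-1}(T)$; after composing with a deck transformation we may assume $\w T_0=\{y=y_0\}$, along which $\partial_z$ is everywhere tangent and $h$-orthogonal to $\partial_x,\partial_y$. Since $\dot{\w c}(0)\perp T_{\w c(0)}\w T_0\ni\partial_z$, the $z$-component of $\dot{\w c}(0)$ vanishes, so by the first fact $\w c$ lies in a single horizontal plane $\H^2\times\{z_0\}$, which is a component of $p^{-1}(S_{z_0})$; hence $c\subset S_{z_0}$. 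Since $S_{z_0}$ is totally geodesic, $c$ is a geodesic of $S_{z_0}$, its endpoints lie in $T\cap S_{z_0}=C_{z_0}$, and $\dot c(i)\perp T_{c(i)}T\supseteq T_{c(i)}C_{z_0}$ for $i=0,1$, so in fact $c\in\cord(S_{z_0},C_{z_0})$. In particular $c$ is contained in the surface $S_{z_0}$.

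For the inclusion $\cord(S_t,C_t)\subset\cord(N,T)$, let $c\in\cord(S_t,C_t)$. Then $c$ is a geodesic of $S_t$, hence of $N$ because $S_t$ is totally geodesic, and $c(0),c(1)\in C_t\subset T$; only the Neumann condition $\dot c(i)\perp T_{c(i)}T$ (computed in $N$) requires an argument. Fix $i\in\{0,1\}$ and lift $c$ to $\hr$; since $c\subset S_t$, the lift $\w c$ lies in a component of $p^{-1}(S_t)$, i.e.\ in a horizontal plane. Composing with a deck transformation carrying the component of $p^{-1}(T)$ through $\w c(i)$ onto $\{y=y_0\}$ — this preserves horizontal planes — we may assume $\w c\subset\Pi:=\H^2\times\{z_0\}$ and that the component of $p^{-1}(T)$ through $\w c(i)$ is $\{y=y_0\}$. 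Then $p^{-1}(C_t)$ near $\w c(i)$ is $\{y=y_0\}\cap\Pi$, a horocycle centered at $\infty$ whose tangent at $\w c(i)$ is $\partial_x$. As $\dot{\w c}(i)\in\operatorname{span}(\partial_x,\partial_y)$ and $\dot{\w c}(i)\perp\partial_x$ (this is the hypothesis $\dot c(i)\perp C_t$ inside $S_t$), we get $\dot{\w c}(i)\parallel\partial_y$, whence $\dot{\w c}(i)\perp\operatorname{span}(\partial_x,\partial_z)=T_{\w c(i)}\{y=y_0\}$, i.e.\ $\dot c(i)\perp T$ in $N$. As this holds at both endpoints, $c\in\cord(N,T)$.

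The step that deserves the most care is the compatibility between ``$\perp C_t$ inside $S_t$'' and ``$\perp T$ inside $N$'': although $T$ is two-dimensional and carries the extra direction $\partial_z$, once the cord has been forced into a horizontal slice, orthogonality to the horocycle $p^{-1}(C_t)$ automatically upgrades — via the product metric of $\hr$ — to orthogonality to the whole horocycle cylinder $p^{-1}(T)$. The other point that should be written out is the description of $p^{-1}(S_t)$ as a disjoint union of horizontal planes, which relies on the face pairings $\phi_i',\tau_q'$ acting by integer translations in the $z$-factor; the rest is routine bookkeeping with the deck group $\Gamma$.
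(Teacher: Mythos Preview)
Your argument is correct and follows essentially the same route as the paper's own proof: lift to $\hr$, normalize so the relevant component of $p^{-1}(T)$ is the plane $\{y=y_0\}$, and use the product structure of the metric to see that a geodesic whose initial velocity is orthogonal to $\partial_z$ stays in a single horizontal plane $\H^2\times\{z_0\}$, then invoke $\pi(\H^2\times\{z_0\})=S_{z_0}$; for the inclusion $\cord(S_t,C_t)\subset\cord(N,T)$ both you and the paper reduce to the observation that a vector in the horizontal plane orthogonal to the horocycle direction $\partial_x$ is automatically orthogonal to $\operatorname{span}(\partial_x,\partial_z)=T\{y=y_0\}$. The only differences are cosmetic: the paper phrases the first step as ``$\dot c(0)\perp T$ implies $\widetilde c$ is a vertical line'' (using both orthogonality conditions at once), whereas you use only $\dot{\widetilde c}(0)\perp\partial_z$ to get horizontal-plane containment, which is all that is needed; and you spell out the structure of $p^{-1}(S_t)$ via the integer $z$-shifts of the generators, which the paper leaves implicit.
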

\begin{proof}
	Consider a lifting $\w c$ to $\hr$ and take a lifting of $T$ containing $c(0)$ such that  $$\w T=\{(x,y,z)\mid y=y_0\} \quad\text{for some } y_0>0.$$
	Then  $\dot c(0) \perp T$ implies that $\w c$ is a vertical line. Hence $\w c$ is contained in a $\H^2 \times \{t\}$ for some $t$. The fact that $\pi(\H^2\times \{t\})=S_t$ implies $c \subset S_t$. Moreover, any geodesic in $S_t$ is also a geodesic in $N$ and any normal vector of a horo-cycle $C_t$ is also perpendicular to $T$. It proves the second assertion.   	
\end{proof}

\begin{prop}\label{prop:toruscord}
	For each $c\in \cord(N,T)$, there is an $S^1$-family of geodesic cords $\{c_\theta\mid \theta \in S^1\}$ containing $c$, where the parameter of $S^1$ is identified to
a longitude $\lambda$.
 Moreover, any geodesic cord $c'$ in the homotopy class $[c]$ for $[(I,\partial I),(N,T)]$ is contained in $\{c_\theta\}$.
\end{prop}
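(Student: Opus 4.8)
The plan is to exploit the product structure of the deck group. By construction, every face-pairing generator of $\Gamma$ --- each $\phi_i'$ and $\tau_q'$ --- is of the form (an isometry of the $\H^2$-factor)$\,\times\,$(a translation of the $\R$-factor), so $\Gamma\subset\Is(\H^2)\times\R$. Hence the pure $\R$-translation $R_s\colon\hr\to\hr$, $R_s(w,z)=(w,z+s)$, commutes with every element of $\Gamma$ and descends to an isometric $\R$-action $\{\bar R_s\}$ on $N=\hr/\Gamma$. Since $\tau_q$ is the rotation of $\H^2$ by $2\pi q/p$ (it permutes the $2p$ vertices by $v_i\mapsto v_{i+2q}$) and $\gcd(p,q)=1$, we have $\tau_q^{\,p}=\mathrm{id}$, so $(\tau_q')^{p}=\mathrm{id}_{\H^2}\times(z\mapsto z+pq)=R_{pq}\in\Gamma$; therefore $\bar R_{pq}=\mathrm{id}_N$ and the action factors through $S^1=\R/pq\,\Z$. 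Finally, each component of $\w T=\pi^{-1}(T)$ is a horocycle of $\H^2$ crossed with $\R$, which $R_s$ sends to itself, so $\bar R_s$ preserves $T$.

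With this in hand I would produce the family by $c_\theta:=\bar R_\theta(c)$: as $\bar R_\theta$ is an isometry preserving $T$, it carries geodesics meeting $T$ orthogonally to geodesics meeting $T$ orthogonally, so $c_\theta\in\cord(N,T)$ and $c_0=c$, and as $\theta$ runs over $S^1=\R/pq\,\Z$ these trace out a circle (the $\bar R$-orbit of $c$, which is a genuine circle, since $R_\theta\w c$ lies in the slice $\H^2\times\{t+\theta\}$ whereas every $\Gamma$-translate of $\w c$ lies in some slice $\H^2\times\{t+n\}$, $n\in\Z$). To identify the $S^1$-parameter with $\lambda$ I would note that the orbit loop $\theta\mapsto\bar R_\theta(x_0)$, $\theta\in[0,pq]$, lifts to $\theta\mapsto R_\theta(\w x_0)$, which ends at $R_{pq}(\w x_0)=(\tau_q')^{p}\w x_0$ and hence represents the holonomy $(\tau_q')^{p}$ of the longitude.

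For the second assertion, take $c'\in\cord(N,T)$ with $[c']=[c]$ in $[(I,\partial I),(N,T)]$, assuming $[c]$ nontrivial (in the trivial class the cords are constant and the family is the $\bar R$-orbit of a point of $T$). By Lemma \ref{lem:toruscord} I would lift $c$ and $c'$ to geodesics $\w c\subset\H^2\times\{t\}$ and $\w c'\subset\H^2\times\{t'\}$. Every component of $\w T$ being $\gamma\times\R$ for a horocycle $\gamma\subset\H^2$, and $\w c$ lying in a horizontal slice while meeting $\w T$ orthogonally, $\w c$ must be the $\H^2$-geodesic perpendicular to the two horocycles whose $\R$-cylinders it joins; thus $\w c=\mathrm{geod}_{\H^2}(\xi_-,\xi_+)\times\{t\}$ with $\xi_\pm$ the ideal centers of those horocycles, and $\xi_-\ne\xi_+$ because $[c]\ne 0$. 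Since $T$ is connected and $\w N$ is contractible, the class $[c]=[c']$ determines, up to the diagonal $\Gamma$-action, the unordered pair of components of $\w T$ that a lift joins; replacing $\w c'$ by a suitable $\Gamma$-translate --- which again has product form, hence again lies in a horizontal slice $\H^2\times\{t''\}$ --- we may assume $\w c'$ joins the same pair of components as $\w c$. Then $\w c'=\mathrm{geod}_{\H^2}(\xi_-,\xi_+)\times\{t''\}$, so $\w c'=R_{t''-t}(\w c)$ and $c'=\bar R_{t''-t}(c)=c_{t''-t}\in\{c_\theta\}$.

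The hard part will be the bookkeeping with lifts in the last paragraph: arranging that the chosen lifts of $c$ and $c'$ join the same pair of components of $\w T$, verifying that a $\Gamma$-translate of a cord lying in a horizontal slice still lies in a shifted horizontal slice, and --- the most delicate point --- confirming that the relative homotopy class really does pin down the unordered pair of ideal centers $\{\xi_-,\xi_+\}$ (the analogue of the uniqueness argument for horo-tori given earlier in the paper). The trivial homotopy class must be treated on its own, since there $T$ carries an entire torus of constant cords.
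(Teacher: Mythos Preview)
Your proof is correct and follows essentially the same route as the paper's: both use the pure $\R$-translation $R_s$ (the paper's $\ell_s$) on $\hr$, which preserves each component of $\widetilde T$ and commutes with $\Gamma$, to generate the $S^1$-family, and both prove the uniqueness clause by observing that any lift of a cord in $[c]$ must be the unique $\H^2$-geodesic joining the same pair of horocycle centers, hence differs from $\widetilde c$ only by an $\R$-shift. Your packaging is a bit more explicit---noting $\Gamma\subset\Is(\H^2)\times\R$ so that $R_s$ descends to an isometric action on $N$---and your period computation $(\tau_q')^{p}=R_{pq}$ follows the definitions directly, whereas the paper's line ``$\ell_p=(\tau_q)^p$'' appears to contain typos.
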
 	

\begin{proof}
	
	Let us consider a lifting $\w c$ such that
	$$\w c(0)=(x_0,y_0,z_0) \in \w T_0:= \{(x,y,z) \mid y=y_0\}.$$
	Take another lifting $ \w T_1$ at $\w c(1)$. Then $\w T_1$ and $\w T_0$ in $\H^2\times\R$ are invariant under a translation $\ell_s$ along the $z$-axis, i.e., $\ell_s: (x,y,z)\mapsto (x,y,z+s)$. Therefore $\ell_s \circ \w c$ is also perpendicular to $\w T_1$ and $\w T_0$, and hence $\pi(\ell_s\circ\w c) \in \cord(N,T)$. Moreover, $\pi(\ell_s\circ \w c) = \pi(\ell_{s'}\circ \w c)$ if and only if $s-s' \in p\Z$ because  $\ell_p = (\tau_q)^p$ is the holonomy of the longitude $\lambda$.
	In particular, $\w T_1$ is determined only by the homotopy class of $[c]$ for $ [(I,\partial I),(N,T)]$. For any geodesic cord $c' \in [c]$ should be a vertical line between the same $\w T_0$ and $\w T_1$ and hence $c' \in \{c_\theta\}$.
\end{proof}
%

In summary,
we have the following classification of geodesic cords for a semi-horo-torus $T$ in a torus knot complement $S^3\setminus K$.

\begin{enumerate}[(a)]
	\item $\T^2(\cong T)$ many constant cords.
	\item  $S^1(\cong \lambda \cong K)$ many oriented non-constant geodesic cords in each free homotopy class.
\end{enumerate}

\subsection{$(p,q)$-Torus knots in $S^2\times S^1$}
We now consider  a $(p,q)$-torus knot $K$ in $S^2\times S^1$ where $p>|q| \geq 2$ and $\gcd(p,q)=1$. It also admits $\hr$ geometry which is easier to see the geometric structure than the case of $S^3$.

The polygon $P \subset H^2\times\{0\}$ and
$D:=P\times [0,p] \subset \hr$ are the same as in the case of $S^3$, but
here, $D$ would be a fundamental domain of $S^2\times S^1\setminus K$. The face-paring maps in $\Is(H^2\times \R)$  are  given by
\begin{align*}
\phi_i''&:=\phi_i\times (x\mapsto x) \text{ for } i=1,\dots p, \text{ and }\\
\tau_q''&:=\tau_{q}\times(x\mapsto x+p).
\end{align*}
Then we have the following proposition as like Proposition \ref{prop:S3comple}.
\begin{prop}
	The domain $D$ glued by $\{\phi_1'',\dots,\phi_p'',\tau_q''\}$  is homeomorphic to $S^2\times S^1\setminus K$.  	
\end{prop}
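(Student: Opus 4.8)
The plan is to run the argument of Proposition~\ref{prop:S3comple} essentially verbatim, while tracking how the two changes in the face-pairings — that $\phi_i''=\phi_i\times(x\mapsto x)$ carries \emph{no} $\R$-translation (whereas $\phi_i'$ translated $x$ by $1$), and that $\tau_q''=\tau_q\times(x\mapsto x+p)$ translates $x$ by $p$ (whereas $\tau_q'$ translated by $q$) — replace the closed manifold $S^3$ by $S^2\times S^1$. First I would reuse the same polygonal decomposition: let $P'\subset P$ be the regular compact hyperbolic $p$-gon on the cone vertices $v_1,v_3,\dots,v_{2p-1}$, let $P''=P\setminus P'$ (whose closed connected components are the triangles $v_{2i-1}v_{2i}v_{2i+1}$, $i=1,\dots,p$), and split $D=D'\cup D''$ with $D'=P'\times[0,p]$ and $D''=P''\times[0,p]$.

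Next I would analyze the two pieces as before. Gluing $D'$ by $\tau_q''$ identifies $P'\times\{0\}$ with $P'\times\{p\}$ via the rotation $\tau_q$ of the disk $P'$, so $S':=D'/\tau_q''$ is a solid torus with meridian disk $P'\times\{0\}$. Gluing the closure of $D''$ by $\{\phi_1'',\dots,\phi_p'',\tau_q''\}$: the parabolic edge-pairings $\phi_i''$ glue the $p$ triangular wedges $\overline{P''}\times[0,p]$ into a neighborhood of the curve $\bigcup_i\bigl(v_{2i}\times[0,p]\bigr)/\!\sim$, and the further identification by $\tau_q''$ closes this curve up into a single circle; adjoining that circle turns $\overline{S''}$ into a solid torus whose core is this circle, which we call $K$. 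Exactly as in the $S^3$ case, $S'\cap\overline{S''}$ is the torus $T=\del P'\times[0,p]/\tau_q''$, and since the decomposition of $P$ is $\tau_q$-invariant, $D/\!\sim\,=S'\cup_T(\overline{S''}\setminus K)$ is the complement of $K$ in the closed manifold $Y:=S'\cup_T\overline{S''}$.

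The crucial step — and the one I expect to be the main obstacle — is to identify $Y$ with $S^2\times S^1$. For this I would fix a basis of $H_1(T;\Z)$ coming from $\del P'\times\{\ast\}$ and from the $\R$-direction (the Seifert fibre), and compute in this basis the slope of the meridian of $S'$ (which is $\del P'\times\{0\}$) and the slope of the meridian of $\overline{S''}$ (which is assembled from the $p$ triangular wedges, and therefore records the $\R$-translations accumulated in going once around $K$ under the gluings $\phi_i''$ and $\tau_q''$). The point is that, because the $\phi_i''$ now contribute no $\R$-translation and $\tau_q''$ shifts by $p$ instead of $q$, these two slopes no longer intersect once — as they do in the $S^3$ case, where ``the longitude of $S'$ and the meridian of $S''$ agree in $T$'' — but instead match in the pattern ``meridian glued to meridian, equivalently longitude glued to longitude'', which among the closed manifolds admitting a genus-one Heegaard splitting characterizes $Y=S^2\times S^1$. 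Once this slope bookkeeping is carried out, $K$ sits on the Heegaard torus $T$ of $S^2\times S^1$ as the $(p,q)$-curve, so $D/\!\sim\,\cong S^2\times S^1\setminus K$; and, just as for $S^3$, one records that $\phi_i''$ realizes the holonomy of a meridian of $K$ and $(\tau_q'')^p$ that of a longitude, with the framing adjusted for the vanishing $\R$-translations of the $\phi_i''$. Every other part of the argument is formally identical to the proof of Proposition~\ref{prop:S3comple}.
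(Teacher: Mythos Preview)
Your proposal is correct and follows essentially the same route as the paper: the paper's proof simply says ``the proof is the same as before, but the two meridians in $S'$ and $S''$ are in the same homotopy class in $T$ and hence the ambient manifold is $S^2\times S^1$,'' which is exactly your decomposition $D=D'\cup D''$ followed by the observation that the change in $\R$-translations forces meridian-to-meridian gluing. Your more explicit tracking of the slopes via the accumulated $\R$-shifts of $\phi_i''$ and $\tau_q''$ is a welcome elaboration of what the paper leaves implicit.
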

The proof is the same as before, but two meridians in $S'$ and $S''$ are the same homotopy class in $T$ and hence the ambient manifold are $S^2\times S^1$.
In summary,   $D$ with gluing by $\Gamma:=\{\phi_1'',\dots,\phi_p'',\tau_q''\}$ becomes a knot $K$ in $S^2\times S^1$ and $K$ is a closed curve of slope $(p,q)$ on an embedded torus in $S^2\times S^1$. Moreover $\Gamma$ generates a discrete subgroup of  $\Is(\hr)$ and $S^2\times S^1 \setminus K \cong \hr /\Gamma$.
The holonomy of a meridian $\mu$ or a longitude $\lambda$ of $K$ is also $\phi_i''$ or $(\tau_q'')^p$ respectively.

Lemma \ref{lem:toruscord} and Proposition \ref{prop:toruscord} also hold without any modification, but the embedded surface $S^t$ is simpler and  always becomes a $p$-puncture sphere
obtained by the polygon
$$P_t:= P\times \{t\} \subset \H^2 \times \{t\} \subset \hr$$
by gluing of $\phi_i\times\{t\}$'s.
Note that each connected component of $T\cap S_t$ is exactly a meridian $\mu$ unlike the case of $S^3\setminus K$.

By the exactly same argument,
we obtain the same classification of geodesic cords for a semi-horo-torus $T$ in a $(p,q)$ torus knot complement $S^2\times S^1 \setminus K$.

\begin{enumerate}[(a)]
	\item $\T^2(\cong T)$ many constant cords.
	\item  $S^1(\cong \lambda \cong K)$ many oriented non-constant geodesic cords in each free homotopy class.
\end{enumerate}

\subsection{Computation of wrapped Floer homology}
Now we consider the reduced wrapped Floer chain complex $\widetilde{CW}^*(\partial_\infty(S^3\setminus K))=CW^*(\partial_\infty(S^3\setminus K))/C^*(K)$, where $C^*(K)$ is the subchain complex of the constant chords. For the computation of the chain complex, we need to take a cylindrical adjustment of a restricted metric $g=g_{S^3}|_N$ on $N=S^3 \setminus K$.

By the observation in Subsection \ref{sec:GeodesicCords}, without loss of generality, we take a cylindrical adjustment $g_0$ such that its lifting $\widetilde g_0$ to the universal cover is invariant under the $\R$-translation induced by the longitude coordinate $\lambda$.
Under such adjustment $g_0$, Proposition~\ref{prop:toruscord} continues to hold .
By the Morse-Bott technique, we additionally consider a Morse function $f_c$ on each $S^1$-parameterized geodesic cords of homotopy type $c\in {\rm Cord}(N,T)$.
Especially we choose $f_c$ so that it has two critical points $c_{\rm max}$, $c_{\rm min}$ with $\mu_{\rm Morse}(c_{\rm max})=1$, $\mu_{\rm Morse}(c_{\rm min})=0$ and the zero differential with respect to the gradient flow line of $f_c$.

Now we have
\begin{align*}
\widetilde{CW}^*(\partial_\infty(S^3\setminus K)) \cong \widetilde{CW}^*(\partial_\infty S_t) \oplus \widetilde{CW}^*(\partial_\infty S_t)[1],
\end{align*}
where $[1]$ denotes the degree shift by $+1$.
Note that $S_t$ in Subsection \ref{sec:GeodesicCords} is a $p$-punctured hyperbolic surface.
Let us recall the comparison argument from \cite[Section 10]{BKO}, this time applied to the pair of
a cylindrical adjustment of the restricted metric and the complete hyperbolic metric.
Even though we only deal with the 3-dimensional case in {BKO}, the comparison argument does not depend on the dimension of the base manifold. In the 2-dimensional case, we obtain
\begin{align*}
\widetilde{CW}^*(\partial_\infty S_t) \cong \widetilde{CW}^*(\nu^* C_t, T^*S_t; H(h)),
\end{align*}
where $h$ is a metric on $S_t$ satisfying $\widetilde{h}=\frac{1}{y^2}(dx^2 + dy^2)$ on $\H^2$.

Since the metric $h$ is complete and hyperbolic, there exists a unique Hamiltonian chord $\gamma_x$ with $\mu(\gamma_x)=0$ for each relative homotopy class $x \in {\rm Chord}(\nu^* C_t, T^* S_t)$. In conclusion, we have
\begin{align*}
\widetilde{CW}^*(\partial_\infty(S^3\setminus K))=
\begin{cases}
\bigoplus_{x \in {\rm Chord}(\nu^* C_t, T^* S_t)}\Z\cdot \gamma_x &\text{if}\quad *=0,1;\\
0&\text{otherwise}.
\end{cases}
\end{align*}
Since the differential $\widetilde{\frak m}^1$ on $\widetilde{CW}^*(\partial_\infty S_t)$ and the differential of Morse-Bott function vanish, we have obtained
\begin{thm}\label{thm:torusKnot} Let $K \subset S^3$ be any torus knot in $S^3$. Then
\begin{align*}
\widetilde{HW}^*(\partial_\infty(S^3\setminus K))=
\begin{cases}
\bigoplus_{x \in {\rm Chord}(\nu^* C_t, T^* S_t)}\Z\cdot [\gamma_x] &\text{if}\quad *=0,1;\\
0&\text{otherwise}.
\end{cases}
\end{align*}
\end{thm}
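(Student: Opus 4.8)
The plan is to use the $\H^2\times\R$ description of $S^3\setminus K$ above to reduce the computation to a purely two-dimensional problem on the totally geodesic punctured hyperbolic surfaces $S_t$, and then to run the index-vanishing and formality machinery of Sections \ref{sec:index}--\ref{sec:comparison} in dimension two. By Lemma \ref{lem:toruscord} every geodesic cord of the semi-horo-torus $T$ lies in some $S_t$, and by Proposition \ref{prop:toruscord} each nonconstant cord belongs to an $S^1$-family parametrized by the longitude $\lambda$; the constant cords make up the $\T^2$-family spanning the subcomplex $C^*(K)$ that is killed in the reduced complex. Thus the only issue is to pin down the degree of each generator and to see that every differential vanishes.

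First I would fix a cylindrical adjustment $g_0$ of $g=g_{S^3}|_N$ whose lift to $\H^2\times\R$ is invariant under the $\R$-translation induced by $\lambda$, which is possible since outside a compact set $N$ already has the product form $\H^2\times\R$ with $\lambda$ acting as a $z$-translation; with this choice Proposition \ref{prop:toruscord} persists for $g_0$, and the wrapped complex is defined via the \cite{BKO} construction together with the $C^0$-estimates of Section \ref{sec:C0estimates}, which apply to the semi-horo-torus just as to the horo-torus (Remark \ref{rem:length}). Next I would carry out the Morse-Bott reduction over each circle of cords: picking a perfect Morse function $f_c$ on each $S^1$-family with a minimum $c_{\min}$ of Morse index $0$, a maximum $c_{\max}$ of Morse index $1$, and vanishing internal gradient differential, one gets a chain isomorphism
\[
\widetilde{CW}^*(\partial_\infty(S^3\setminus K))\;\cong\;\widetilde{CW}^*(\partial_\infty S_t)\,\oplus\,\widetilde{CW}^*(\partial_\infty S_t)[1],
\]
the two summands matching $c_{\min}$ and $c_{\max}$. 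To identify $\widetilde{CW}^*(\partial_\infty S_t)$ I would invoke the comparison argument of \cite[Section 10]{BKO}, which is insensitive to the base dimension, obtaining $\widetilde{CW}^*(\partial_\infty S_t)\cong\widetilde{CW}^*(\nu^*C_t,T^*S_t;H_h)$ for the complete hyperbolic metric $h$ on $S_t$; the two-dimensional instance of Theorem \ref{thm:index-nullity}, whose proof (Proposition \ref{prop:formula-2nd}) uses only the negative curvature of $h$ and the constant geodesic curvature $1$ of the horocycle $C_t$, then shows that every geodesic cord of $C_t$ has vanishing Morse index and nullity. Hence each relative homotopy class $x\in{\rm Chord}(\nu^*C_t,T^*S_t)$ carries a unique Hamiltonian chord $\gamma_x$, of degree $0$, so $\widetilde{CW}^*(\nu^*C_t,T^*S_t;H_h)$ is free on the $\gamma_x$ and concentrated in degree $0$. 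Combined with the Morse-Bott splitting this yields a complex concentrated in degrees $0$ and $1$; the differential $\widetilde{\mathfrak m}^1$ vanishes on $\widetilde{CW}^*(\partial_\infty S_t)$ by the degree-zero formality of Theorem \ref{thm:formal}, and the Morse-Bott differential vanishes by the choice of $f_c$, so $\widetilde{HW}^*$ equals the chain complex, which is exactly the asserted answer (and is nonzero in degrees $0$ and $1$ because the hyperbolic punctured surface $S_t$ has infinitely many geodesic cords).

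The main obstacle will be justifying the passage from the three-dimensional wrapped complex to the two-dimensional one, i.e. that the Floer complex genuinely splits along the totally geodesic slices $S_t$. This requires showing that no solution of \eqref{eq:CRXH} with asymptotics in a fixed $S^1$-family escapes a single $S_t$ --- a maximum-principle/reflection argument in the $\H^2\times\R$ coordinates exploiting that each $S_t$ is totally geodesic and that the conormal boundary condition and the Sasakian $J_h$ respect the product splitting --- together with the transversality of the Morse-Bott moduli spaces, so that the internal circle differential can be arranged to vanish while the $A_\infty$ operations stay defined. A secondary subtlety is that the metric $h$ on $S_t$ again has injectivity radius zero, so the degree-zero concentration must rest on re-running the limiting and comparison arguments of Sections \ref{sec:C0estimates}--\ref{sec:comparison} for the surface $S_t$; these arguments are dimension independent and should go through verbatim.
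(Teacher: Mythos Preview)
Your proposal is correct and follows essentially the same route as the paper: choose a $\lambda$-invariant cylindrical adjustment so that Proposition \ref{prop:toruscord} persists, apply the Morse--Bott perturbation on each $S^1$-family to obtain the splitting $\widetilde{CW}^*(\partial_\infty(S^3\setminus K))\cong\widetilde{CW}^*(\partial_\infty S_t)\oplus\widetilde{CW}^*(\partial_\infty S_t)[1]$, invoke the dimension-independent comparison of \cite[Section 10]{BKO} to pass to the hyperbolic surface, and finish with the two-dimensional index-zero statement and vanishing differentials. If anything, you are more explicit than the paper about the analytic point that the Morse--Bott splitting requires solutions of \eqref{eq:CRXH} to stay in a single slice $S_t$, which the paper does not spell out.
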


\appendix

\section{Proof of Lemma \ref{lem:twolimits}}
\label{sec:lemma-twolimits}

We consider two metrics $g, \, g'$, which are both
cylindrical outside $N_i$ for some $i$, and their
associated kinetic energy Hamiltonian. Consider their convex combination
$$
g_s = (1-s) g + s g'
$$
whose associated Hamiltonian satisfies similar identity
$$
H_s: = (1-s) H_{g} + s H_{g'}.
$$
We note that the metric $g_s$ is cylindrical on $N_j$ which has the form
$$
g_s = da^2 \oplus g_s|_{\del N_j} \quad \text{on } [0,\infty) \times \del N_j
$$
with $g_s|_{\del N_j} = (1-s) g|_{\del N_j} + s g'|_{\del N_j}$.

For the construction of the quasi-isomorphism
$$
\Phi : CW(\nu^*T ;H_{g_0}) \to CW(\nu^*T ;H_{g_0'}),
$$
for given pair $i, \, j$ with $i \leq j$,
we follow the construction given in \cite[Section 9]{BKO} over some \emph{monotone} homotopy $s \mapsto H_s$.
For the readers' convenience, we repeat verbatim with indication of the small change to
be made arising from the fact that \emph{the hyperbolic metric $h$} does not extend smoothly
to $M$.

For notational convenience, we denote the kinetic energy Hamiltonian $H_g$ also by $H(g)$ in this section.
We compare $CW(T^*N,H(g_0'))$ and $CW(T^*N,H(g_0))$ similarly as in \cite[Section 7]{BKO}
where the categorical version was constructed in terms of
two Riemannian metric $g_0'$ and $g_0$ on $M \setminus K$ that arise from smooth metrics $g, \, g'$ on $M$
as a pair of cylindrical adjustments of their restrictions to $M \setminus K$. They satisfy the inequality
 \be\label{eq:LipschitzCondition}
 \frac{1}{C} g_0 \leq g_0' \leq C g_0,
 \ee
for some constant $C = C(g,g') > 1$.

We can verbatim follow the construction of \cite{BKO}
in a simpler form to make comparison
between the case of metric $g$ coming from a smooth metric on $M$ and the hyperbolic metric $h$
on $M \setminus K$ in the current case of our interest.

Since the Hamiltonian is given by the {\em dual} metric, we have
\begin{align}\label{eq:confinal}
\begin{cases}
H(g_0') \leq H(\frac{1}{C}g_0);\\
H(g_0) \leq H(\frac{1}{C}g_0').
\end{cases}
\end{align}
Then there are two $A_\infty$ homomorphisms
\begin{align}\label{eqn:two vertical sequences}
\Phi  &:  CW(\nu^*T ,H(g_0')) \to CW(\nu^*T ,H(\tfrac{1}{C}g_0));\nonumber \\
\Psi  &: CW(\nu^*T ,H(g_0)) \to CW(\nu^*T ,H(\tfrac{1}{C}g_0')
\end{align}
which are defined by the standard $C^0$-estimates for the monotone homotopies.

Now consider the composition of the functors
\begin{align*}
\Psi \circ \Phi :  CW(\nu^*T ,H(g_0)) \to CW(\nu^*T ,H(\tfrac{1}{C^2}g_0));\\
\Phi \circ \Psi : CW(\nu^*T ,H(g_0')) \to CW(\nu^*T ,H(\tfrac{1}{C^2}g_0')).
\end{align*}
These are homotopic to natural isomorphisms induced by the rescaling of metrics
\begin{align*}
\rho_{C^2}&: CW(\nu^*T ,H(g_0)) \to CW(\nu^*T ,H(\tfrac{1}{C^2}g_0));\nonumber\\
\eta_{C^2}&: CW(\nu^*T ,H(g_0')) \to CW(\nu^*T ,H(\tfrac{1}{C^2}g_0')),
\end{align*}
respectively. This finishes the proof.

\section{Reduction of the classification problem to 2 dimension}
\label{sec:reduction-2dim}

We note that when all the asymptotic chords are non-constant,
solutions for (\ref{eq:duXHJ}) exist only for $k+1 = 3$ by
the degree reason: the degree of the $A_\infty$ maps $\mathfrak m^k$,
which is given by $2-k$, must be zero by Theorem \ref{thm:index-nullity}.
Therefore it remains to examine the case $k+1 = 3$.

We recall the transformation $u \mapsto \psi_\eta^{-1} \circ u =: v$ for which
$v$ satisfies the autonomous equation \eqref{eq:dvXHJ}.

\subsection{Rewriting of the Cauchy-Riemann equation lifted to $\H^3$}

We consider the lifting $\tilde{v}:\Sigma \to T^*\H^3$ of $v:\Sigma \to T^*N$
to $\H^3$. With slight abuse of notation, we also denote $\tilde v$ just by $v$
as long as there is no danger of confusion.
Then we compute the coordinate expression of the equation of \eqref{eq:dvXHJ}
lifted to $\H^3$.

Now let $(q,p)$ be the canonical coordinates of $T^*\H^3$ induced by the standard
coordinates $q = (x,y,z)$ of $\H^3 \subset \C^3$, and by $p = (p_x,p_y,p_z)$ the associated
fiber coordinates.  We decompose the derivative $dv$, a $T(T^*\H^3)$-valued one-form,
into the horizontal and vertical components
$$
dv = d^H v + d^V v.
$$
Here
\begin{align*}
d^H v & = d(x \circ v) \otimes H_1(v) + d(y  \circ v) \otimes H_2(v) + d(z  \circ v) \otimes H_3(v);\\
d^V v & = \nabla (p_x \circ v) \otimes V_1(v) + \nabla (p_y \circ v) \otimes V_2(v) + \nabla (p_z \circ v) \otimes V_3(v).
\end{align*}
More explicitly for each $i = 1, \,2, \, 3$, we have
$$
\nabla p_i = dp_i - \sum_{j=1}^3 \sum_{k=1}^3 p_k \Gamma^k_{ij} dq^j
$$
where $q^1 = x$, $q^2 = y, \, q^3 = z$.

Recall that the Levi-Civita connections of $g$ induces the splitting
\begin{align}\label{eqn:splitting}
T_{(q,p)}(T^*\H^3) \cong T_q \H^3 \oplus T_q^*\H^3
\end{align}
at each point $(q,p) \in T^*\H^3$. Also recall the lowering and raising operators
with respect to $g$ by
$$
\flat: T\H^3 \to T^*\H^3, \quad \sharp: T^*\H^3 \to T\H^3
$$
where $\flat(X) = \langle X, \cdot \rangle_h$ and $\sharp$ is its inverse. We may regard $\flat,\sharp$ as operations on $T(T^*\H^3)$ with respect to (\ref{eqn:splitting}). The Sasakian almost complex structure $J_h$ is given by
$$
J_h(X) = X^\flat, \quad J_h(\alpha) = - \alpha^\sharp
$$
for $X \in T \H^3$ and $\alpha \in T^* \H^3$, respectively, under the identification (\ref{eqn:splitting}).

Here $\nabla p_x,\, \nabla p_y,\,  \nabla p_z$ are nothing but the coefficients
of the covariant derivative of the one-form
$$
\alpha = p \circ v = p_x(v)\, dx|_f + p_y(v) \, dy|_f + p_z(v) \, dz|_f.
$$
Here $\alpha$ is considered as a section of $f^*(T^*\H^3) \to \Sigma$ along the map $f:=\pi \circ v : \Sigma \to \H^3$.
In other words, we have
$$
\nabla \alpha=  \nabla p_x(v) \, dx|_f + \nabla p_y(v)\,  dy|_f + \nabla p_z(v)\, dz|_f.
$$

We now derive the following coordinate expression of \eqref{eq:dvXHJ} for the map $v$.

\begin{lem}\label{lem:coord-expession} Let ${\bf J} = \psi^*_{\eta(\tau,t)} J_g$ and let
$u$ be a solution to  the equation $(du- X_{H}\otimes \beta)_{\mathbf J}^{(0,1)}=0$.
Let $v$ be the map associated to $u$ as above.
Then the coordinate expression of \eqref{eq:dvXHJ}
with respect to the frame fields $\{\partial_\tau,\partial_t\}$ and $\{H_i,V_j\}_{i,j=1,2,3}$
is given by
\begin{align*}
\begin{dcases}
dq^i(\partial_\tau v) -  z^2\nabla p_i(\partial_t v)=0\quad \text{ for } i=1,2,3;\\
\nabla p_i(\partial_\tau v)+\frac{1}{z^2}dq^i(\partial_t v)- p_i\circ v =0\quad \text{ for } i=1,2,3.
\end{dcases}
\end{align*}
\end{lem}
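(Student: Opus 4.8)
The plan is to rewrite \eqref{eq:dvXHJ} in the flat coordinates $(\tau,t)$ on a slit chart and then read off the horizontal and vertical components in the frame $\{H_i,V_i\}$. First I would use that on the slit domain $Z^{\bf w}$ one has $\beta = dt$, so that setting the $(0,1)$-part of $dv - X_H\otimes\beta$ to zero and pairing against $\partial_\tau$ (with the standard complex structure $j\partial_\tau=\partial_t$) reduces the Cauchy--Riemann part of \eqref{eq:dvXHJ} to
\[
\frac{\partial v}{\partial \tau} + J_h\!\left(\frac{\partial v}{\partial t} - X_H(v)\right) = 0,
\]
which is precisely \eqref{eq:CRJH} for $H = H_h$, now applied to the lift $v:\Sigma\to T^*\H^3$. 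The passage from \eqref{eq:duXHJ} to this autonomous equation via $u\mapsto \psi_\eta^{-1}\circ u = v$, together with the domain-independence of the quadratic Hamiltonian $H$, is taken from the discussion preceding the lemma.

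Next I would expand each of $\partial_\tau v$ and $\partial_t v$ along $v$ in the horizontal--vertical frame, writing
\[
\partial_\sigma v \;=\; \sum_{i=1}^{3} dq^i(\partial_\sigma v)\, H_i(v) \;+\; \sum_{i=1}^{3} \nabla p_i(\partial_\sigma v)\, V_i(v), \qquad \sigma \in \{\tau,\, t\},
\]
where $\nabla p_i = V^i = dp_i - p_a\Gamma^a_{ij}\,dq^j$ and $q^1=x,\ q^2=y,\ q^3=z$. Then I would substitute the closed form $X_{\widetilde H} = z^2(p_x H_1 + p_y H_2 + p_z H_3)$ recorded in Section \ref{sec:hamiltonian}; this is equivalent to saying $dq^i(X_H) = z^2 p_i$ while $\nabla p_i(X_H) = 0$, so the $H_i$- and $V_i$-coefficients of $\partial_t v - X_H(v)$ are $dq^i(\partial_t v) - z^2 p_i$ and $\nabla p_i(\partial_t v)$, respectively. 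Verifying that $X_{\widetilde H}$ has no $V_i$-component is the one place where the explicit Christoffel symbols of Lemma \ref{lem:Gammaijk} enter, and it is the only computation in this argument that requires genuine care; the rest is bookkeeping.

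Finally I would apply $J_h$ in the matrix form $J_h(H_i) = h_{ij}V_j = z^{-2}V_i$ and $J_h(V_i) = -h^{ij}H_j = -z^2 H_i$ (equivalently $J_h(X)=X^\flat$, $J_h(\alpha)=-\alpha^\sharp$ with $h_{ij}=z^{-2}\delta_{ij}$) to $\partial_t v - X_H(v)$, insert the result into the displayed equation above, and collect the coefficients of $H_i$ and of $V_i$ separately. The horizontal part gives $dq^i(\partial_\tau v) - z^2\nabla p_i(\partial_t v) = 0$ and the vertical part gives $\nabla p_i(\partial_\tau v) + z^{-2}dq^i(\partial_t v) - p_i\circ v = 0$, which are the two families of equations in the statement. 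I do not expect any serious obstacle beyond the one indicated above: the proof is the careful combination of $\beta|_{Z^{\bf w}} = dt$, the frame formulas for $J_h$ from Subsection \ref{sec:metric_cotangent_bundle}, and the formula for $X_{\widetilde H}$ from Section \ref{sec:hamiltonian}.
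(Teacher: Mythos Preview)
Your proposal is correct and follows essentially the same approach as the paper: expand $\partial_\tau v$ and $\partial_t v - X_H(v)$ in the frame $\{H_i,V_i\}$, apply $J_h$ via its matrix form, and read off the horizontal and vertical components. The paper's own proof is a terse one-line version of exactly this computation, writing the 6-tuple of frame coefficients and applying $J_g$ directly; your write-up is more explicit but logically identical.
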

\begin{proof} Using the above given frame fields, we compute the coordinate expression of
$(dv- X_{H}\otimes \beta)_{\mathbf J}^{(0,1)}=0$ as
\begin{eqnarray*}
&{}& (dx(\partial_\tau v),dy(\partial_\tau v),dz(\partial_\tau v),\nabla p_x(\partial_\tau v),\nabla p_y(\partial_\tau v),\nabla p_z(\partial_\tau v))\\
&{}& +J_g(dx(\partial_t v),dy(\partial_t v),dz(\partial_t v),\nabla p_x(\partial_t v),\nabla p_y(\partial_t v),\nabla p_z(\partial_t v))\\
&{}& - J_g ( z^2 p_x, z^2 p_y,  z^2 p_z,0,0,0)=0
\end{eqnarray*}
and deduce the following coordinate expression of \eqref{eq:dvXHJ}
\begin{equation}\label{eqn:J-hol_coord}
\begin{dcases}
dq^i(\partial_\tau v) - z^2\nabla p_i(\partial_t v)=0\quad \text{ for } i=1,2,3;\\
\nabla p_i(\partial_\tau v)+\frac{1}{ z^2}dq^i(\partial_t v)-  p_i\circ v =0\quad \text{ for } i=1,2,3,
\end{dcases}
\end{equation}
where $q^1=x,\ q^2=y,\ q^3=z$ and $p_1=p_x,\ p_2=p_y,\ p_3=p_z$.
\end{proof}

The coordinate expression \eqref{eqn:J-hol_coord} then
admits the following coordinate-free expression with boundary condition:
\be\label{eq:CRcoordfree}
\begin{dcases}
df -  (\nabla \alpha \circ j + \alpha \cdot dt)^\sharp = 0\\
f(z) \in S, \quad \alpha \in \nu^*_{f(z)}S \quad \text{ for } z \in \del \Sigma,
\end{dcases}
\ee
where $S$ is the union of horo-spheres in $\H^3$ which is the lift of the torus $T$ in $N$ to the universal cover. We emphasize that this equation is written purely in terms of the data $(f,\eta)$
which defined in terms of the data of the pull-back bundle $f^*(T^*N)$ over the base map $f:\Sigma \to N$.

\subsection{Reduction to 2 dimensional hyperbolic plane $\H^2$}

In this section we will provide a complete description of
the set of solutions of \eqref{eq:duXHJ} exploiting the following theorem for the
equation \eqref{eq:CRcoordfree} on the thrice punctured discs.
We will do this first by applying some elementary hyperbolic geometry on $\H^3$ and reducing the study to
the 2 dimensional case $\H^2$.
	
\begin{lem}\label{lem:3d-classify}
	Let $N$ be a complete hyperbolic 3-manifold with one cusp and $T$ is a horo-torus near the cusp.
	Consider three geodesic cords $c^0, \,  c^1, \, c^2$ perpendicular to $T$.
	Let $X$ be a hexagonal domain with edges labelled by $a^0, b^0, a^1, b^1, a^2, b^2$ counterclockwise.
	Suppose that there is a continuous map
	$$f : X \to N,$$
	satisfying $f(a^j)=c^j$ and $f(b^j) \subset T$ for $j=0,1,2$,
	  then the lifted three geodesic cords ${\widetilde c^j} \subset \H^3$  are coplanar, i.e. there is an action $g \in \psl$ sending all ${\widetilde c^j}$ into a hyperbolic plane $\{x=0\}$.
\end{lem}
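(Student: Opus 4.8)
The plan is to analyze the map $f\colon X \to N$ by lifting it to the universal cover $\H^3$ and using the coordinate-free form \eqref{eq:CRcoordfree} together with the special structure of the horo-torus. First I would lift $f$ to a map $\widetilde f\colon X \to \H^3$ (the hexagon $X$ is simply connected, so a lift exists and is unique once the image of one point is chosen). The three edges $a^0, a^1, a^2$ lift to geodesic segments $\widetilde c^0, \widetilde c^1, \widetilde c^2$, each perpendicular to a lift of $T$. The three edges $b^0, b^1, b^2$ lift to arcs contained in lifts $\widetilde T_0, \widetilde T_1, \widetilde T_2$ of the horo-torus $T$; by connectedness of the hexagon boundary, consecutive geodesic cords share a common lifted horo-sphere. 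After normalizing by an element $g \in \psl$, I would arrange one of these horo-spheres, say the one meeting $\widetilde c^0$ at its endpoint, to be the horizontal plane $\{z = z_0\}$, so that $\widetilde c^0$ becomes a vertical segment; the corresponding infinite geodesic through $\widetilde c^0$ then has $\infty$ as one of its ideal endpoints.

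The key geometric input is that the hexagon $X$, carrying the map $\widetilde f$, forces the three lifted cords to lie in a common totally geodesic plane. To see this I would consider the Busemann function $\widetilde b$ (equivalently $z^{-1}$) associated to one of the cusps, and more importantly set up a subharmonicity/maximum-principle argument for the horizontal coordinate of $\widetilde f$, in exactly the spirit of Proposition \ref{prop:Deltarho} and Theorem \ref{thm:z-coord}, but now applied to the coordinate function that measures deviation from the plane $\{x = 0\}$. Concretely: after normalizing via $g \in \psl$ so that two of the ideal vertices of the relevant ideal triangle lie on the line $\{x=0, z=0\}$ (e.g. one at $0$ and one at $\infty$), I would show the function $x \circ \widetilde f$ satisfies a differential inequality of the form $\Delta(x\circ \widetilde f) \geq (\text{good terms})$ with appropriate boundary behavior along the edges: along the $a^j$-edges $x\circ\widetilde f$ is pinned to a constant (the cords being vertical or semicircular arcs in the plane), and along the $b^j$-edges the horo-sphere condition plus the constant mean curvature property of Proposition \ref{prop:meancurvature} controls the normal derivative. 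Combining interior and boundary maximum principles then forces $x\circ\widetilde f \equiv 0$, placing the whole image — and in particular all three $\widetilde c^j$ — in $\{x=0\} \cong \H^2$. This is the reduction claimed.

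In carrying this out, I would proceed in the following order: (1) establish existence and coherence of the lift $\widetilde f$ and of the lifted horo-spheres, using connectedness of $\del X$; (2) normalize by $\psl$ so that a maximal collection of the configuration (one cord vertical, ideal endpoints on $\{x=0,z=0\}$) is in standard position, and record the explicit form of the geodesics and horo-spheres involved; (3) derive the Laplacian inequality for $x \circ \widetilde f$ from the coordinate expression \eqref{eqn:J-hol_coord} or \eqref{eq:CRcoordfree}, isolating the sign of each term — this parallels the computation in Appendix \ref{sec:Deltaxz}; (4) analyze the boundary terms along $a^j$ and $b^j$ using the conormal condition $\alpha \in \nu^*_{f(z)}S$ and the mean curvature $H = 1$ of the horo-torus; (5) apply the strong maximum principle to conclude $x \circ \widetilde f$ is constant, hence zero by the normalization, so all $\widetilde c^j$ are coplanar.

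The main obstacle I expect is step (3)–(4): finding the \emph{right} auxiliary function whose Laplacian has a manifestly favorable sign, and then matching the boundary conditions so that neither the interior nor the boundary maximum principle is obstructed. The subtlety is that the naive horizontal coordinate need not be subharmonic on the nose; one typically needs a carefully chosen indicator function (a combination involving $x$, $z$, and possibly $p$-coordinates) and a delicate rearrangement of the terms in the computed Laplacian — precisely the kind of calculation flagged in the paper's outline as requiring "subtle rearrangement" and deferred to Appendix \ref{sec:Deltaxz}. Getting the boundary contribution from the horo-sphere faces $b^j$ to have the correct sign is where the positivity of the mean curvature (Proposition \ref{prop:meancurvature}) must be used decisively, and aligning the conventions so the maximum principle applies — as emphasized in Subsection \ref{subsec:sasakian} — is a genuine bookkeeping hazard rather than a triviality.
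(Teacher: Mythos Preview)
Your approach has a genuine gap: the map $f$ in the lemma is only assumed to be \emph{continuous}. There is no perturbed Cauchy--Riemann equation in play here, so equations \eqref{eqn:J-hol_coord} and \eqref{eq:CRcoordfree} simply do not apply to $\widetilde f$, and you cannot compute or bound $\Delta(x\circ\widetilde f)$ --- indeed $x\circ\widetilde f$ need not even be differentiable. The maximum-principle machinery you describe is what the paper uses in the \emph{next} result (Theorem \ref{thm:inx=0}), where the input is an actual solution $v$ of the $J$-holomorphic equation; you have conflated the two statements.

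The paper's proof of this lemma is instead a short exercise in elementary hyperbolic geometry, using only the lift and the boundary combinatorics. After lifting $f$ to $\widetilde f:X\to\H^3$, the edge $b^j$ lands in a single horosphere $S^j$, and the two adjacent geodesic cords $\widetilde c^j$ and $\widetilde c^{j+1}$ are both perpendicular to $S^j$. The key fact is that every geodesic perpendicular to a horosphere, when extended, passes through the center of that horosphere (its ideal point in $\partial\overline{\H^3}$). Hence the three extended geodesics meet pairwise at three ideal points, i.e.\ they are the edges of a single ideal triangle $\Delta\subset\H^3$. Any ideal triangle in $\H^3$ spans a unique totally geodesic plane, and an element of $\psl$ carries that plane to $\{x=0\}$. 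No analysis is needed.
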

\begin{proof}
	Consider a lifting of $f$ into the universal cover $\H^3$,
	 $${\widetilde f} : X \to \H^3$$ with the lifted boundary curves denoted by ${\widetilde f( a^j) =: \widetilde A^j}$ and ${\widetilde f(b^j)=: \widetilde B^j}$.  Each lifted peripheral curve ${\widetilde B^j}$ is contained in a horo-sphere, let say, $S^j \in \H^3$ which is a connected component of the lift of $T$. Then {$\widetilde A^j$} and ${\widetilde A^{j+1}}$ are geodesics perpendicular to $S^j$ for $j=0,1,2$ $\mod 3$.
	Because of the fact that all inward geodesics perpendicular to a horosphere goes to the same ideal point called \emph{the center of the horosphere}, we have an ideal triangle $\Delta\subset \H^3$ where each ideal vertex is the center of a horosphere $S^j$. Therefore all {$\widetilde c^j$} are contained in the boundary of a triangle $\Delta$ and hence coplanar.
\end{proof}

The following is again derived using the strong maximum principle applied to the function
$\varphi = \phi \circ v$ with $\phi(x,y,z) = x/z$.

\begin{thm}\label{thm:inx=0}
 Let $(\widetilde \gamma^0,\widetilde \gamma^1,\widetilde \gamma^2)$ be a triple of three
	Hamiltonian chords that admit a solution $v: (\Sigma,\partial \Sigma) \to (T^*\H^3,S)$
	to the perturbed $J$-holomorphic equation (\ref{eq:duXHJ})	where ${\widetilde \gamma^i}$ is the Hamiltonian chord whose projection ${\widetilde c^i: = \pi \circ \widetilde \gamma^i}$
	is the geodesic whose image is contained in the plane $\{x = 0\}$. Then the image of $v$ is also contained in
	the plane $\{x = 0\}$.
\end{thm}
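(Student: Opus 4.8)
The plan is to apply the strong maximum principle to the scalar function $\varphi = \phi\circ v$ where $\phi(x,y,z) = x/z$, following the template already used in the excerpt for $\rho = z^{-1}\circ v$ in Proposition~\ref{prop:Deltarho}. First I would record that $\varphi$ is harmonic-like: the goal is to derive a differential inequality of the shape
\be
\Delta\varphi \, dA \;\geq\; d\varphi \wedge (\text{something vanishing on }\del\Sigma),
\ee
exactly parallel to $\Delta\rho\,dA \geq d\rho\wedge(\beta - v^*\theta)$, but now tailored to the coordinate $x/z$ rather than $1/z$. The computation of $\Delta\varphi$ will use the coordinate-free Cauchy--Riemann equation \eqref{eq:CRcoordfree} (equivalently the coordinate expression in Lemma~\ref{lem:coord-expession}), together with the explicit Christoffel symbols of Lemma~\ref{lem:Gammaijk} and the frame fields $H_i, V^i$. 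This is precisely the ``rather delicate calculation of the Laplacian of an indicator function and subtle rearrangement of terms'' promised in the introduction (Appendix~\ref{sec:Deltaxz}); I would defer the bookkeeping there and only extract the sign-definite conclusion here.

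Second I would analyze the boundary behavior. On $\del\Sigma$ we have $v(z)\in S$ where $S$ is the union of horo-spheres lifting $T$; by the reduction already in place (the asymptotic geodesics $\widetilde c^i$ lie in $\{x=0\}$, and by Lemma~\ref{lem:3d-classify} the three horo-spheres $S^0, S^1, S^2$ whose centers are the ideal vertices of the containing triangle all have those centers in $\{x=0\}$). For a horo-sphere centered at a point of $\{x=0\}$, the function $\phi = x/z$ restricted to it is governed by the Euclidean geometry of the horo-sphere; in the case the center is at $\infty$ the horo-sphere is $\{z = z_0\}$ and $\varphi = x/z_0$ is affine in $x$, so it has no interior extremum along the boundary arc unless it is locally constant. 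More precisely I want to show: along each boundary arc mapping into $S^j$, the outward normal derivative of $\varphi$ at a would-be boundary maximum is strictly positive (Hopf lemma setup), because the conormal boundary condition $\alpha\in\nu^*_{f(z)}S$ forces the relevant first-order term to vanish and the horo-sphere's constant mean curvature with the correct sign (Proposition~\ref{prop:meancurvature}) makes the comparison go the right way. Likewise at the asymptotic ends $\tau\to\pm\infty$, the limits are the chords $\widetilde\gamma^i$ with $\varphi\equiv 0$ there, so no interior maximum of $\varphi$ can be attained asymptotically.

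Third, combining the interior maximum principle from the differential inequality with the Hopf-lemma exclusion on $\del\Sigma$ and the vanishing at the ends, I conclude $\varphi$ attains its maximum and (applying the same argument to $-\varphi$, i.e. to $\phi = -x/z$, which satisfies the mirror inequality) its minimum only in the limiting situation, forcing $\varphi\equiv 0$, i.e. $x\circ v\equiv 0$, which is the claim. One should be slightly careful that the inequality for $\varphi$ has the right sign for \emph{both} $\varphi$ and $-\varphi$; this is where the precise form of the extra zeroth- and first-order terms matters, and I expect it to work because $x/z$ is, up to the $\psl$-action, an affine function of the horospherical coordinates transverse to $\{x=0\}$, so the ``curvature'' contribution is linear and the maximum principle applies to $\pm\varphi$ symmetrically.

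\textbf{Main obstacle.} The hard part will be the Laplacian computation for $\varphi = x/z\circ v$ and verifying that the lower-order terms assemble into something that (a) is nonnegative when paired against $(\del_\tau,\del_t)$ and (b) vanishes or has the correct sign on the boundary after imposing the conormal condition $\alpha\in\nu^*S$. Unlike $\rho = 1/z$, the numerator $x$ is not a radial/Busemann-type quantity, so the clean structural cancellations that made Proposition~\ref{prop:Deltarho} work are not automatic; one genuinely has to use that the boundary horo-spheres are all centered on $\{x=0\}$ (the output of Lemma~\ref{lem:3d-classify}) to kill the obstructing terms. I expect that after choosing the $\psl$-normalization so that one of the horo-spheres is $\{z=z_0\}$ and the other two have centers on the real line $\{x=0, z=0\}$, the computation in Appendix~\ref{sec:Deltaxz} produces an inequality of the form $\Delta\varphi\,dA \geq \varphi\cdot(\text{nonneg}) + d\varphi\wedge(\text{vanishing on }\del\Sigma)$, which suffices; pinning down that exact form is the technical crux.
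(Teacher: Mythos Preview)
Your proposal is essentially the paper's proof. You correctly identify the indicator function $\phi = x/z$, anticipate the exact form of the Laplacian identity (the paper's Proposition~\ref{prop:max-principle-z/x} gives the equality $\Delta\varphi\,dA = \varphi\bigl(v^*\omega - v^*dH\wedge\beta + 2H(v)(\beta\circ j)\wedge\beta\bigr) + d\varphi\wedge(\beta - v^*\theta)$, matching your predicted shape), and set up the interior maximum/minimum principle plus Hopf-lemma boundary exclusion exactly as the paper does.

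The one place your outline diverges is in the mechanism for the boundary contradiction. You invoke the constant mean curvature of the horo-sphere (Proposition~\ref{prop:meancurvature}) to force the normal derivative to have the wrong sign. The paper instead argues directly in the frame $\{H_i,V_j\}$: at a boundary maximum $m_0$ with $\varphi(m_0)>0$, the tangential maximum condition and the Lagrangian boundary condition $T_{v(m_0)}L=\langle H_1,H_2,V_3\rangle$ force $\partial_\tau v(m_0)\in\langle H_2,V_3\rangle$, and the Cauchy--Riemann equation then gives $\partial_t v(m_0)\in\langle V_2,H_3\rangle$; a separate isometry trick (inverting $z\mapsto z^{-1}$ and reusing the $z$-coordinate maximum principle) shows $dz(\partial_t v(m_0))\leq 0$, whence $d\phi(\partial_t v(m_0)) = -\tfrac{x}{z^2}dz(\partial_t v(m_0))\geq 0$, contradicting Hopf. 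This is more hands-on than your mean-curvature heuristic and does not literally cite Proposition~\ref{prop:meancurvature}, but the underlying geometry is the same; either route should close the argument.
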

\begin{proof}
	Let us first recall the construction of the Lagrangian $L$ which is a conormal lifting of $T$.
	Since $T$ is a level set of the Busemann function $-\log z$,
	$$
	L:=\nu^*T=\{(x,y,z_0,0,0,p_z)\}\subset T^*\H^3
	$$
	for some $z_0\in\R^+$. Also note that
	\be\label{eqn:tangent_lagrangian}
	T_{(q,p)}L=\langle H_1,H_2,V_3 \rangle_{(q,p)}\subset T_{(q,p)}T^*\H^3.
	\ee
	
For the purpose of classification problem of holomorphic triangles,
we will control the behavior of $w: = \pi \circ v: \Sigma \to \H^3$ in the $x$-direction on $\H^3$.
As the first try, it is natural to attempt to apply it to the coordinate function $x$ itself.
It turns out that this obvious choice of $x$ does not lead to a favorable formula for the Laplacian of $x$
in an application of the maximum principle unlike the case of coordinate function $z$ (or rather $1/z$).
What turns out to be the right choice is the quotient $x/z$. With the $C^0$ bound of
the $z$-coordinate away from $z = 0$ or $z = \infty$ already established,
the bound of $x$-coordinate is equivalent to a
bound of $\frac{x}{z}$ which turns out to be the right quantity to look at for the
application of maximum (or strong maximum) principle for the proof of the theorem.

\begin{prop}\label{prop:max-principle-z/x} Let $v$ be any solution of \eqref{eq:duXHJ}.
Define $\varphi = \phi \circ v$ with $\phi = x/z$ on $\H^2$. Then
\be\label{eq:finalddjvarphi}
\Delta \varphi\, dA = \varphi(v^*\omega - v^*dH \wedge \beta) + 2\varphi H(v)  (\beta \circ j) \wedge \beta
+ d \varphi \wedge (\beta- v^*\theta).
\ee
\end{prop}
Since the precise calculation of $\Delta(\varphi)$ is rather involved but straightforward, we
postpone its derivation till Appendix \ref{sec:Deltaxz}.

We recall from Lemma \ref{lem:positive} that the two form $v^*\omega - v^*dH \wedge \beta$ is a nonnegative form.
Therefore the \emph{maximum principle (resp. the minimum principle), especially the strong maximum principle,
at critical points of positive value (resp. of negative value) applies by the similar arguments used in the proof of Theorem \ref{thm:z-coord}.}

	Because of the asymptotic condition and thanks to the bound on the $z$-coordinate \eqref{eq:bound-z},
	the maximum (or the minimum) of the function $(x/z)\circ v:\Sigma\to\R$ is achieved at a point in
	$\Sigma$.
	The interior maximum (and minimum) principle is already done by the equation (\ref{eq:finalddjvarphi}),
	and hence the maximum (or minimum) is achieved on a boundary point in $\del \Sigma$.
	Again, thanks to the bound on the $z$-coordinate \eqref{eq:bound-z}, this also implies that
	the maximum (or the minimum) of the function $x\circ v:\Sigma\to\R$ is achieved.

	Now suppose the maximum of $z/x$ is achieved at $m_0\in\partial \Sigma$ and the maximum value is positive.
	We choose a complex coordinate $s+it$ on a neighborhood  $U$ of $m_0$ such that
	\beastar
	\Sigma\cap U\subset\R+i\R^{\geq0}\subset \C\\
	\partial \Sigma\cap U\subset\R+i\cdot0\subset \C
	\eeastar
	By the Lagrangian boundary condition $\tau\mapsto v(\tau+i\cdot0)$ defines a curve on $L=\nu^*T$ and
	\beastar
	x\circ v|_{\partial\Sigma\cap U}:\R&\to & \R\\
	s&\mapsto&  x(v(\tau+i\cdot0))
	\eeastar
	has a maximum at $\tau_0$ where $m_0=\tau_0+i\cdot 0$. In particular, we have
	$$
	dx(\partial_\tau v(m_0))=0
	$$
	i.e. $\partial_\tau v(m_0)\in\ker H^1_{v(m_0)}\cap T_{v(m_0)}L$.
	Then (\ref{eqn:tangent_lagrangian}) implies $\partial_\tau v(m_0)\in\langle H_2,V_3\rangle$
	and hence
	$$
	J\partial_\tau (m_0)\subset\langle V_2,H_3\rangle_{v(m_0)}\subset \ker dx_{v(m_0)}.
	$$
	On the other hand,
	$J$-holomorphic equation (\ref{eq:duXHJ}) with $\beta_\tau =0$ on $\partial\Sigma$ implies
	$$
	\partial_\tau v(m_0)+J\partial_t v(m_0)= \beta_t(m_0) J X_H(v(m_0)).
	$$
	and so
	$$
	\partial_t v(m_0) = J \partial_\tau v(m_0) + \beta_t(m_0) X_H(v(m_0)).
	$$
	Here we recall $X_H=z^2(p_x H_1+p_y H_2+p_z H_3)$ and $v(m_0)\in L=\{(x,y,z_0,0,0,p_z)\}$ which implies $X_H(v(m_0))\in\langle H_3 \rangle$. Since $J\partial_s v(m_0)\in\langle JH_2,JV_3 \rangle=\langle V_2,H_3 \rangle$, we have $$\partial_t v(m_0)\in\langle V_2,H_3\rangle.$$
	
	Now we claim that $z\circ v(\sigma)\leq z_0$ for any $\sigma\in\Sigma\cap U$. Suppose not, then we may assume that a (local) maximum is achieved at $\sigma_0\in\mathring U$. This assumption is possible because of $C^0$-estimate for the base coordinates $x,y,z$.

Let us consider an isometry
	$$
\psi:\H^3\to\H^3
$$
that restricts to
	 $(x_0,y_0,z) \mapsto (x_0,y_0,z^{-1})$ on the line $\ell = \{(x,y,z) \mid x=x_0, y=y_0\}$
with $\sigma_0=(x_0,y_0,z_0)$ and the induced symplectomorphism
	$$T^*(\psi)^{-1}:T^*\H^3\to T^*\H^3.$$
	Then $w:=T^*(\psi^{-1})\circ v:\Sigma\to T^*\H^3$ again satisfies the $J$-holomorphic equation with shifted asymptotic and boundary conditions with respect to $T^*(\psi)^{-1})$. It is easy to see that $\psi\circ w:\Sigma\cap U\to \R$ attains its (local) maximum at the interior point $\sigma_0\in \mathring U$.
	This is not possible by the estimate in  Subsection \ref{subsec:z-coordinate}.
	Since $\partial_t v \in T_{m_0}(\Sigma\cap U)$ is an inner normal direction, the above claim
	$$
	z\circ v(\sigma)\leq z_0\quad\text{for any }\sigma\in \Sigma\cap U
	$$
	implies that $dz(\partial_t v(m_0))\leq 0$.

	On the other hand, $\partial_t v(m_0)\in\langle V_2,H_3\rangle$ and $dz(\partial_t v(m_0))\leq 0$ implies
	\beastar
	d\phi(\partial_t v(m_0))&= & \frac{zdx-xdz}{z^2}(\partial_t v(m_0))\\
	&=& -\frac{x}{z^2}dz(\partial_t v(m_0))\quad\text{ (because $\partial_t v(m_0)$ has no $H_1$-factor)} \geq 0.
	\eeastar
	However this contradicts to the strong maximum principle $\frac{\del \varphi}{\del r} > 0$
	where  $\frac{\del}{\del r}$ is the (outward) normal derivative, since
	$\frac{\del}{\del t} = - \frac{\del}{\del r}$ noting that $\{\frac{\del}{\del s}, \frac{\del}{\del t}\}$ and
	$\{\frac{\del}{\del r}, \frac{\del}{\del s}\}$ has the same orientation on $\Sigma$.
	
	This proves that $\varphi$ cannot have a boundary maximum point either unless
	$\varphi \equiv 0$ which implies $\Im v \subset \text{Zero}(x/z) = \text{Zero}(x)$.
	Since $\lim_{\zeta \to z_i} x(v(\zeta)) = 0$, the latter must be the case which finishes the proof.
\end{proof}

Based on Theorem \ref{thm:inx=0}  we examine the case of $\H^2$. An immediate
consequence thereof is that for the given triple of geodesics prescribed here with the given
boundary horo-circles there is a parametrization $w: \Sigma \to \H^2$
whose image is the given domain bounded by the triple of geodesics and three
horo-circles. We denote this latter domain by $\Theta$. We also denote by
$\dot \Theta$ the subset of $\Theta$ with the three boundary geodesics removed.

\begin{figure}[h!]
	\centering
	\includegraphics{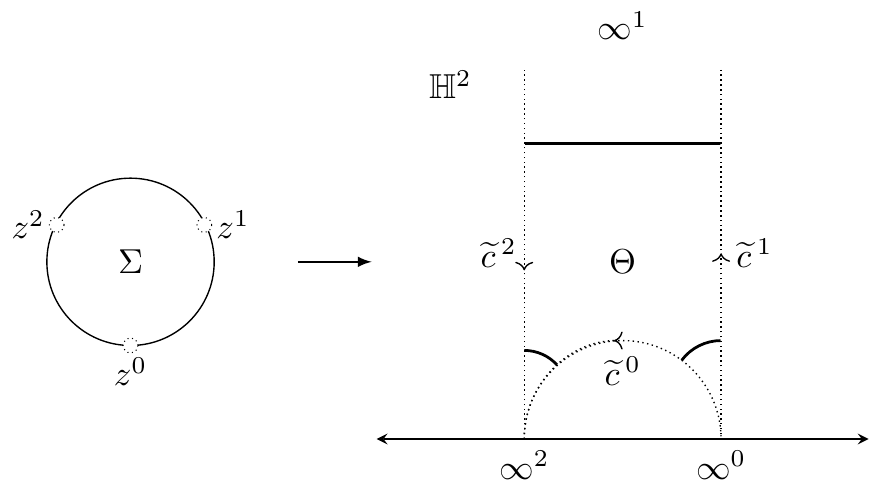}
	\caption{Truncated triangle in $\H^2$}
\end{figure}

In summary, we have proved
\begin{thm}\label{thm:hexagon} Let $v$ be a lifting of any solution of
\eqref{eq:duXHJ}. Then the following holds:
\begin{enumerate}
\item The image of $v: (\Sigma,\partial \Sigma) \to (T^*\H^3,S)$  is contained
in the totally geodesic plane containing the three given asymptotic cords.
\item Identify the totally geodesic plane with the plane $\{x = 0\}$ by applying an isometry of $\H^3$.
Then its image is projected $\H^2 \cong \{0\} \times \H^2$ is a truncated geodesic triangle $\dot\Theta$
as pictured in Figure 2.
\end{enumerate}
\end{thm}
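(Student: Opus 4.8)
The plan is to obtain Theorem~\ref{thm:hexagon} by assembling three results already in hand: the coplanarity Lemma~\ref{lem:3d-classify}, the confinement Theorem~\ref{thm:inx=0}, and the planar maximum-principle machinery behind Theorem~\ref{thm:z-coord} and Proposition~\ref{prop:max-principle-z/x}. For part (1), given a solution $v$ of \eqref{eq:duXHJ} with asymptotic Hamiltonian chords $\gamma^0,\gamma^1,\gamma^2$, I would first compactify: the exponential convergence of $v$ toward $\gamma^j$ at each puncture (which rests on the nondegeneracy from Theorem~\ref{thm:index-nullity}) together with the conormal boundary condition lets the map $f:=\pi\circ v$ extend continuously to a hexagon $X$ with edges $a^0,b^0,a^1,b^1,a^2,b^2$ in cyclic order, where $f(a^j)=c^j$ and $f(b^j)\subset T$. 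Then Lemma~\ref{lem:3d-classify} furnishes $g\in\psl$ with $g\cdot\widetilde c^j\subset\{x=0\}$ for $j=0,1,2$. Since $g$ is a hyperbolic isometry, $T^*g$ preserves the triple $(\omega_0,H_h,J_h)$ and carries a horo-torus conormal to a horo-torus conormal, so $T^*g\circ v$ is again a solution of \eqref{eq:duXHJ}, now with all asymptotic cords inside $\{x=0\}$; Theorem~\ref{thm:inx=0} then forces $\pi\circ(T^*g\circ v)$ to have image in $\{x=0\}$, and conjugating back $\pi\circ v$ lands in the totally geodesic plane $g^{-1}\cdot\{x=0\}$ spanned by $c^0,c^1,c^2$. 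This is part (1).

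For part (2), I would work in that plane, identified via $g$ with $\H^2=\{x=0\}$ carrying coordinates $(y,z)$, $z>0$, and put $w:=\pi\circ v:\Sigma\to\H^2$. Here the lift of $T$ is a union of horocircles, the cords $c^0,c^1,c^2$ are the truncated sides of an ideal triangle $\Delta$ with ideal vertices $q^0,q^1,q^2$, and $\Theta:=\Delta\setminus(B^0\cup B^1\cup B^2)$ is the truncated triangle, with $B^j$ the open horoball bounded by the truncating horocircle $C^j$. For each side $s^j\supset c^j$, choose an isometry $g_j$ of $\H^2$ taking $s^j$ to the vertical geodesic $\{y=0\}$ with $\overline{\Delta}$ on the side $\{y\ge 0\}$, on which $y/z\ge 0$. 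Since the Laplacian identity of Proposition~\ref{prop:max-principle-z/x} for $x/z$ does not distinguish the two horizontal coordinates, the function $\psi_j:=(y/z)\circ g_j\circ v$ satisfies the analogue of \eqref{eq:finalddjvarphi}, so the strong maximum/minimum principle applies to $\psi_j$ exactly as in the proof of Theorem~\ref{thm:inx=0}. Combined with $w(\partial\Sigma)\subset C^0\cup C^1\cup C^2\subset\overline{\Delta}$ and the asymptotics $w\to c^0\cup c^1\cup c^2\subset\partial\Delta$, this yields $\psi_j\ge 0$, i.e. $w$ stays on the $\overline{\Delta}$-side of $s^j$; running this for $j=0,1,2$ gives $w(\Sigma)\subset\overline{\Delta}$. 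The horizontal $C^0$-estimate of Theorem~\ref{thm:z-coord}, applied after moving each $q^j$ to $\infty$ as an upper bound on $1/z$, then confines $w$ outside each $B^j$, whence $w(\Sigma)\subset\overline{\Theta}$. Since the three boundary arcs of $\Sigma$ go to the three distinct horocircle arcs and the three ends converge to $c^0,c^1,c^2$, the connected image $w(\Sigma)$ is bounded by precisely these six arcs and is therefore the truncated triangle $\dot\Theta$ of Figure~2.

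The two delicate points, as I see them, are these. First, one must verify that the hexagon-filling $f:X\to N$ is genuinely continuous with the precise edge labelling required by Lemma~\ref{lem:3d-classify}; this relies on the exponential-decay estimates at the punctures and on the Fredholm/compactness setup for \eqref{eq:duXHJ}. Second — and I expect this to be the real technical burden — one must re-run the intricate Laplacian computation of Appendix~\ref{sec:Deltaxz} for the function $y/z$ and for each of the three isometrically normalized sides, and check in each case that the Hopf boundary term keeps the favorable sign after the extra normalization. By contrast, the truncation by the horocircles $C^j$ is essentially forced by the boundary condition $v(\partial\Sigma)\subset\widetilde L$ together with the $C^0$-bounds already established; and the sharper assertion that $w$ restricts to a \emph{bijection} onto $\Theta$ is exactly the content of Conjecture~\ref{conj:main}, and is deliberately not attempted here.
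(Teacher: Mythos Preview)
Your proposal is correct and takes essentially the same approach as the paper. The paper's own proof is a one-line invocation of the maximum principles for $1/z$ and $x/z$ from Proposition~\ref{prop:Deltarho} and Proposition~\ref{prop:max-principle-z/x}; your write-up simply unpacks this, correctly citing Lemma~\ref{lem:3d-classify} and Theorem~\ref{thm:inx=0} for part~(1) and, for part~(2), re-running the same Laplacian identity for $y/z$ after suitable isometries---legitimate by the $x\leftrightarrow y$ symmetry of $h_{\H^3}$ and the $\psl$-equivariance of \eqref{eq:duXHJ}---together with the $1/z$-bound for the horoball truncation.
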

\begin{proof} This is an immediate consequence of maximum principle which we verified can
be applied to the two functions $\frac1z$ and $\frac{x}{z}$ in the proofs of
Proposition \ref{prop:Deltarho} and Proposition \ref{prop:max-principle-z/x},
respectively.
\end{proof}

Conjecture \ref{conj:main} would be the converse to this theorem, i.e., there should
exist a unique solution $v$ for each given truncated geodesic triangle $\dot\Theta$.
We leave the study of this conjecture elsewhere but we mention that
it is enough to consider the problem in 2-dimensional hyperbolic space $\H^2$:
Let $\H^2$ be the hyperbolic two plane with coordinates $(y,z)$ equipped with
the metric
$$
h_{\H^2} = \frac{dy^2 + dz^2}{z^2}.
$$
It is isometrically embedded into $\H^3$ via the map $(y,z) \to (0,y,z)$
whose image is totally geodesic with respect to the metric $h_{\H^3}$.
This in turn induces a totally geodesic almost K\"ahler embedding $T^*\H^2 \hookrightarrow T^*\H^3$
with respect to the Sasakian almost complex structure.
These make the perturbed Cauchy-Riemann
equations on $\H^2$ whose solutions are described in Theorem \ref{thm:inx=0}, canonically provide a solution on $\H^3$.

\section{Computation of $\Delta(x/z)$: Proof of Proposition \ref{prop:max-principle-z/x}}
\label{sec:Deltaxz}

In this appendix, we prove Proposition \ref{prop:max-principle-z/x}. Let $\Delta \rho$ be the
classical Laplacian.

We first note
\be\label{eq:dJs}
-dq^i\circ J=z^2 V^i, \quad i = 1,\, 2,\, 3
\ee
and $- \Delta \varphi\, dA = - d(d\varphi \circ j)$.
Define the function $\phi$ on $\H^3$ by $\phi(x,y,z) = \frac{x}{z}$ and denote $\varphi = \phi \circ v$.
We apply $d\phi$ to \eqref{eq:duXHJ} and derive
$$
d\phi \big(dv + J dv \circ j - \beta X_H(v)- (\beta \circ j)\cdot JX_H(v)\big) = 0.
$$
By the fact that $JX_H$ is tangent to the fiber of $T^*\H^3$, this becomes
$$
v^*d\phi + d\phi(J dv \circ j) - \beta\cdot d\phi (X_H(v)) = 0
$$
which is equivalent to
\beastar
0 & = & v^*d\phi \circ j - d\phi\circ J\circ dv- v^*d\phi(X_H) \cdot(\beta \circ j)\\
& = &  v^*d\phi \circ j - v^*(d\phi\circ J)- v^*d\phi(X_H) \cdot(\beta \circ j)
\eeastar
By using $\varphi = \phi \circ v$ and taking the differential, we get
\be\label{eq:ddjvarphi}
- d(d\varphi \circ j) = - v^*(d(d\phi\circ J)) - d\big(v^*d\phi(X_H)\big) \wedge (\beta \circ j)
\ee
Now we compute the two terms in the right hand side separately. We first compute
\bea \label{eq:dJphi}
- d\phi \circ J & = &-d(\frac{x}{z})\circ J = -\left(\frac{zdx - xdz}{z^2}\right) \circ J \nonumber \\
& = & -\left(\frac{zdx\circ J - xdz\circ J}{z^2}\right)\nonumber \\
& = & zV^1 - xV^3 \nonumber\\
& = &  (zp_x - p_zdx + p_x dz - x dp_z) - \frac{x}{z}\theta\nonumber\\
& = & - d(x p_z - z p_x)- \frac{x}{z} \theta .
\eea
Therefore we obtain
\be\label{eq:ddJphi}
-d(d\phi\circ J) = - d(\frac{x}{z}) \theta - \frac{x}{z}d\theta =\phi\,\omega - d\phi \wedge \theta
\ee
and hence
\be\label{eq:v*ddJphi}
v^*(dd^J\phi) =\varphi v^*\omega - d\varphi \wedge v^*\theta.
\ee
For the second, we compute
$$
d\phi(X_H) = \frac{zdx - xdz}{z^2}(X_H) = \frac{z^3p_x - z^2xp_z}{z^2} = zp_x - xp_z.
$$
Therefore
$$
d\big(v^*(d\phi(X_H)\big) \wedge (\beta \circ j) = v^*d(zp_x - xp_z) \wedge \beta\circ j.
$$
Using \eqref{eq:dJphi}, we evaluate
\bea\label{eq:2ndterm}
 &{}&v^*d(zp_x - xp_z) \wedge \beta\circ j\nonumber\\
 & = & - v^*d(zp_x - xp_z) \circ j  \wedge \beta \nonumber\\
 &=& (d\phi \circ J - \phi\, \theta) (dv \circ j) \wedge \beta
 \nonumber\\
& = & (d\phi \circ J - \phi\, \theta)\big(Jdu + (\beta \circ j)\cdot X_H(v)- \beta \cdot  JX_H(v)\big) \wedge \beta\nonumber\\
& = & (d\phi \circ J - \phi\, \theta)(Jdu) \wedge \beta + (d\phi \circ J - \phi\, \theta)(X_H(v))\cdot (\beta\circ j) \wedge \beta
\nonumber\\
& = & (-d(v^*\phi) + \phi\, v^*dH) \wedge \beta  + \big(d\phi(JX_H(v)) - \phi(v) \theta(X_H(v))\big)\cdot (\beta \circ j) \wedge \beta
\nonumber\\
& = & \varphi v^*dH \wedge \beta - d\varphi \wedge \beta - 2\varphi H(v)  (\beta \circ j) \wedge \beta
\eea
Subtracting this from \eqref{eq:v*ddJphi}, we have derived the following key identity from \eqref{eq:ddjvarphi}
This finishes the proof of Proposition \ref{prop:max-principle-z/x}.

\end{document}